\newtheorem{lemma}{Lemma}[section]
\newtheorem{theorem}{Theorem}[section]
\newtheorem{corollary}{Corollary}[theorem]
\theoremstyle{definition}
\newtheorem{definition}{Definition}[section]
\theoremstyle{remark}
\newtheorem{remark}{Remark}[section]
\newtheorem{example}{Example}[section]
\newtheorem{conjecture}{Conjecture}[section]
\begin{document}

\title[Distribution of second \(p\)-class groups]
{The distribution of second \(p\)-class groups\\on coclass graphs}

\author{Daniel C. Mayer}
\address{Naglergasse 53\\8010 Graz\\Austria}
\email{algebraic.number.theory@algebra.at}
\urladdr{http://www.algebra.at}
\thanks{Research supported by the
Austrian Science Fund,
Grant Nr. J0497-PHY}

\subjclass[2000]{Primary 11R29, 11R37, 11R11, 11R16, 11R20; Secondary 20D15}
\keywords{\(p\)-class groups, \(p\)-class field tower, principalization of \(p\)-classes,
quadratic fields, cubic fields, quartic fields, dihedral fields,
metabelian \(p\)-groups, coclass graphs}

\date{December 25, 2012}

\dedicatory{Dedicated to the memory of Emil Artin}

%--------------------------------------------------------------------------------

\begin{abstract}
General concepts and strategies are developed
for identifying the isomorphism type of the
second \(p\)-class group \(G=\mathrm{Gal}(\mathrm{F}_p^2(K)\vert K)\),
that is
the Galois group of the second Hilbert \(p\)-class field \(\mathrm{F}_p^2(K)\),
of a number field \(K\),
for a prime \(p\).
The isomorphism type determines the position of \(G\)
on one of the coclass graphs \(\mathcal{G}(p,r)\), \(r\ge 0\),
in the sense of Eick, Leedham-Green, and Newman.
It is shown that,
for special types of the base field \(K\)
and of its \(p\)-class group \(\mathrm{Cl}_p(K)\),
the position of \(G\) is restricted to certain
admissible branches of coclass trees
by selection rules.
Deeper insight, in particular,
the density of population
of individual vertices on coclass graphs,
is gained by computing
the actual distribution of second \(p\)-class groups \(G\)
for various series of number fields \(K\)
having \(p\)-class groups \(\mathrm{Cl}_p(K)\) of fixed type
and \(p\in\lbrace 2,3,5,7\rbrace\).
\end{abstract}

\maketitle

%\newpage

%--------------------------------------------------------------------------------

\section{Introduction}
\label{s:Intro}

Let \(p\) denote a prime
and let \(K\) be an algebraic number field.
By the Hilbert \(p\)-class field \(\mathrm{F}_p^1(K)\) of \(K\)
we understand the maximal abelian unramified \(p\)-extension of \(K\).
The Hilbert \(p\)-class field tower, briefly \(p\)-\textit{tower}, of \(K\),
\(\mathrm{F}_p^0(K)\le\mathrm{F}_p^1(K)\le\mathrm{F}_p^2(K)\le\ldots\),
is defined recursively by
\(\mathrm{F}_p^0(K)=K\)
and
\(\mathrm{F}_p^n(K)=\mathrm{F}_p^1\left(\mathrm{F}_p^{n-1}(K)\right)\),
for \(n\ge 1\).
According to the uniqueness theorem of class field theory,
all members of the \(p\)-tower are Galois extensions of \(K\),
and their union
\(\mathrm{F}_p^\infty(K)=\cup_{n=0}^\infty\mathrm{F}_p^n(K)\)
is the maximal unramified pro-\(p\) extension of \(K\).
Let \(\mathrm{Cl}_p(K)\) be the \(p\)-class group of \(K\),
that is the Sylow \(p\)-subgroup of the class group \(\mathrm{Cl}(K)\).
If \(\mathrm{Cl}_p(K)=1\) is trivial,
then \(\mathrm{F}_p^1(K)=K\)
and the \(p\)-tower of \(K\) has \textit{length} \(\ell_p(K)=0\).
If \(\mathrm{Cl}_p(K)\ne 1\)
and \(\mathrm{F}_p^{n-1}(K)<\mathrm{F}_p^n(K)=\mathrm{F}_p^{n+1}(K)\),
for some \(n\ge 1\),
the \(p\)-tower is finite of length \(\ell_p(K)=n\).
Otherwise the \(p\)-tower of \(K\) is infinite
and \(\mathrm{F}_p^\infty(K)\) is an infinite Galois extension of \(K\),
having a pro-\(p\) group
\(\mathrm{G}_p^\infty(K)=\mathrm{Gal}(\mathrm{F}_p^\infty(K)\vert K)\)
as Galois group,
endowed with the Krull topology.
For each \(n\ge 1\),
the finite quotient
\(\mathrm{G}_p^n(K)=\mathrm{Gal}(\mathrm{F}_p^n(K)\vert K)
=\mathrm{G}_p^\infty(K)/\left(\mathrm{G}_p^\infty(K)\right)^{(n)}\)
of the \(p\)-\textit{tower group} \(\mathrm{G}_p^\infty(K)\) by the closed subgroup
\(\left(\mathrm{G}_p^\infty(K)\right)^{(n)}=\mathrm{Gal}(\mathrm{F}_p^\infty(K)\vert\mathrm{F}_p^n(K))\)
is called the \(n\)th \(p\)-\textit{class group} of \(K\),
in analogy to 
\(\mathrm{G}_p^1(K)=\mathrm{Gal}(\mathrm{F}_p^1(K)\vert K)\),
which is isomorphic to the (first) \(p\)-class group \(\mathrm{Cl}_p(K)\) of \(K\),
by Artin's reciprocity law.
All these higher \(p\)-class groups \(\mathrm{G}_p^n(K)\) of \(K\) with \(n\ge 2\)
are usually non-abelian and
share two essential common invariants, as the following theorem shows.
The germs of these general concepts are contained in Artin's famous papers
\cite{Ar1,Ar2}.

%\newpage

%--------------------------------------------------------------------------------

\begin{theorem}
\label{thm:TTTandTKT}

Suppose that \(n\ge 2\) and \(G=\mathrm{G}_p^n(K)\).
For any subgroup \(H\le G\) which contains the commutator subgroup \(G^\prime\) of \(G\),
there exists a unique intermediate field \(K\le L\le\mathrm{F}_p^1(K)\)
such that \(H=\mathrm{Gal}(\mathrm{F}_p^n(K)\vert L)\),
and the following statements hold.

\begin{enumerate}
\item
The abelianization \(H/H^\prime\) is isomorphic to the \(p\)-class group \(\mathrm{Cl}_p(L)\).
In particular,\\
\(G/G^\prime\simeq\mathrm{Cl}_p(K)\)
and \(G^\prime/G^{\prime\prime}\simeq\mathrm{Cl}_p(\mathrm{F}_p^1(K))\).
\item
The kernel \(\ker(T_{G,H})\) of the transfer \(T_{G,H}:G/G^\prime\to H/H^\prime\) is isomorphic to
the \(p\)-principaliza\-tion kernel of \(K\) in \(L\),
that is,
the kernel \(\ker(j_{L\vert K})\) of the natural class extension homomorphism
\(j_{L\vert K}:\mathrm{Cl}_p(K)\to\mathrm{Cl}_p(L)\).
In particular,\\
\(\ker(T_{G,G})\simeq\ker(j_{K\vert K})=1\) 
and \(\ker(T_{G,G^\prime})=G/G^\prime\simeq\ker(j_{\mathrm{F}_p^1(K)\vert K})=\mathrm{Cl}_p(K)\).
\end{enumerate}

\end{theorem}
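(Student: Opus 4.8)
The plan is to translate the entire statement through the Galois correspondence of the extension \(\mathrm{F}_p^n(K)/K\) and then feed it into Artin's reciprocity law together with the classical compatibility of the transfer with the ideal class extension map. First I would pin down the subgroup \(G^\prime\le G\): since \(\mathrm{F}_p^n(K)/K\) is unramified, its maximal abelian subextension is the maximal abelian unramified \(p\)-extension of \(K\), i.e.\ \(\mathrm{F}_p^1(K)\), so \(G^\prime=\mathrm{Gal}(\mathrm{F}_p^n(K)\vert\mathrm{F}_p^1(K))\). The Galois correspondence then identifies the subgroups \(H\) with \(G^\prime\le H\le G\) bijectively with the intermediate fields \(K\le L\le\mathrm{F}_p^1(K)\) via \(L=\mathrm{Fix}(H)\) and \(H=\mathrm{Gal}(\mathrm{F}_p^n(K)\vert L)\), which yields the existence and uniqueness of \(L\).

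Next I would identify \(H^\prime\). On one side, \(\mathrm{Fix}(H^\prime)\) is the maximal abelian subextension of the (Galois, unramified) extension \(\mathrm{F}_p^n(K)/L\), hence an abelian unramified \(p\)-extension of \(L\), and therefore contained in \(\mathrm{F}_p^1(L)\). On the other side, the operator \(\mathrm{F}_p^1(\,\cdot\,)\) is monotone (base change preserves being abelian, unramified, and a \(p\)-extension), so \(L\subseteq\mathrm{F}_p^1(K)\) gives \(\mathrm{F}_p^1(L)\subseteq\mathrm{F}_p^1(\mathrm{F}_p^1(K))=\mathrm{F}_p^2(K)\subseteq\mathrm{F}_p^n(K)\), where the last inclusion is exactly where the hypothesis \(n\ge 2\) enters; since \(\mathrm{F}_p^1(L)/L\) is abelian and unramified, this forces \(\mathrm{F}_p^1(L)\subseteq\mathrm{Fix}(H^\prime)\). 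Hence \(H^\prime=\mathrm{Gal}(\mathrm{F}_p^n(K)\vert\mathrm{F}_p^1(L))\), so \(H/H^\prime\simeq\mathrm{Gal}(\mathrm{F}_p^1(L)\vert L)\simeq\mathrm{Cl}_p(L)\) by Artin's reciprocity law. Part (1) follows, the special cases being \(H=G\) (so \(L=K\)) and \(H=G^\prime\) (so \(L=\mathrm{F}_p^1(K)\) and \(\mathrm{F}_p^1(L)=\mathrm{F}_p^2(K)\)). I expect the inclusion \(\mathrm{F}_p^1(L)\subseteq\mathrm{F}_p^n(K)\) to be the delicate point, since it is the sole use of \(n\ge 2\) and is exactly what makes the statement fail for \(n=1\).

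For part (2) I would combine the Artin isomorphisms \(G/G^\prime\simeq\mathrm{Cl}_p(K)\) and \(H/H^\prime\simeq\mathrm{Cl}_p(L)\) established above with the classical fact that, under these isomorphisms, the group-theoretic transfer \(T_{G,H}\colon G/G^\prime\to H/H^\prime\) corresponds to the arithmetic class extension homomorphism \(j_{L\vert K}\colon\mathrm{Cl}_p(K)\to\mathrm{Cl}_p(L)\), \([\mathfrak a]\mapsto[\mathfrak a\mathcal O_L]\) — Artin's translation of the Verlagerung, contained in \cite{Ar1,Ar2}, which is the main external input. Consequently \(\ker(T_{G,H})\simeq\ker(j_{L\vert K})\), the \(p\)-principalization kernel of \(K\) in \(L\). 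The special cases are then immediate: for \(H=G\) the transfer \(T_{G,G}\) is the identity, matching \(j_{K\vert K}=\mathrm{id}\) with trivial kernel; for \(H=G^\prime\) we have \(L=\mathrm{F}_p^1(K)\), and Furtwängler's principal ideal theorem says \(j_{\mathrm{F}_p^1(K)\vert K}\) is the trivial map, whence \(\ker(T_{G,G^\prime})=G/G^\prime\simeq\mathrm{Cl}_p(K)\). The real obstacle is not any single computation but the careful bookkeeping of the two incarnations, Galois-theoretic and arithmetic, of each object, together with a precise invocation of the transfer and principal ideal theorems.
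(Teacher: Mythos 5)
Your proposal is correct and follows essentially the same route as the paper: Galois correspondence for the bijection between subgroups containing \(G^\prime\) and subfields of \(\mathrm{F}_p^1(K)\), Artin reciprocity for \(H/H^\prime\simeq\mathrm{Cl}_p(L)\), the Artin--Miyake commutative diagram relating \(T_{G,H}\) to \(j_{L\vert K}\), and Furtw\"angler's principal ideal theorem for the case \(H=G^\prime\). Your explicit verification that \(H^\prime=\mathrm{Gal}(\mathrm{F}_p^n(K)\vert\mathrm{F}_p^1(L))\), isolating where the hypothesis \(n\ge 2\) is used, is a detail the paper leaves implicit but is a welcome addition.
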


%\newpage

%--------------------------------------------------------------------------------

\begin{proof}
Since \(H\) contains \(G^\prime\), \(H\) is a normal subgroup of
\(G=\mathrm{Gal}(\mathrm{F}_p^n(K)\vert K)\).
The intermediate field \(K\le L\le\mathrm{F}_p^1(K)\)
of degree \(\lbrack L:K\rbrack=(G:H)\),
such that \(H=\mathrm{Gal}(\mathrm{F}_p^n(K)\vert L)\),
is determined uniquely as the fixed field \(\mathrm{Fix}(H)\) within \(\mathrm{F}_p^n(K)\),
by the Galois correspondence.
From the viewpoint of class field theory,
the norm class group \(\mathrm{N}_{L\vert K}(\mathrm{Cl}_p(L))\) of \(L\vert K\) is isomorphic to
\(H/G^\prime=\mathrm{Gal}(\mathrm{F}_p^n(K)\vert L)/\mathrm{Gal}(\mathrm{F}_p^n(K)\vert\mathrm{F}_p^1(K))
\simeq\mathrm{Gal}(\mathrm{F}_p^1(K)\vert L)\)
and thus of index \(\lbrack L:K\rbrack\) in \(\mathrm{Cl}_p(K)\simeq\mathrm{Gal}(\mathrm{F}_p^1(K)\vert K)\),
as required.

\begin{enumerate}
\item
We have
\(H/H^\prime=\mathrm{Gal}(\mathrm{F}_p^n(K)\vert L)/\mathrm{Gal}(\mathrm{F}_p^n(K)\vert \mathrm{F}_p^1(L))
\simeq\mathrm{Gal}(\mathrm{F}_p^1(L)\vert L)\simeq\mathrm{Cl}_p(L)\),
by the Galois correspondence and Artin's reciprocity law
\cite{Ar1}.
\item
The isomorphism \(\ker(T_{G,H})\simeq\ker(j_{L\vert K})\)
is a consequence of the commutativity of the diagram
in Table
\ref{tbl:ExtensionAndTransfer},
which was proved by Artin
\cite{Ar2}
and investigated in more detail by Miyake
\cite{My}.
The special case \(\ker(T_{G,G^\prime})=G/G^\prime\)
is the principal ideal theorem
\cite{Fw}.
\end{enumerate}

\end{proof}

%\newpage

%--------------------------------------------------------------------------------

\renewcommand{\arraystretch}{1.2}

\begin{table}[ht]
\caption{Class extension homomorphism \(j_{L\vert K}\) and transfer \(T_{G,H}\)}
\label{tbl:ExtensionAndTransfer}
\begin{center}
\begin{tabular}{|ccccc|}
\hline
                   &                      & \(j_{L\vert K}\)                    &                        &                   \\
                   & \(\mathrm{Cl}_p(K)\) & \(\longrightarrow\)                 & \(\mathrm{Cl}_p(L)\)   &                   \\
 Artin isomorphism & \(\updownarrow\)     & \(///\)                             & \(\updownarrow\)       & Artin isomorphism \\
                   & \(G/G^\prime\)       & \(\longrightarrow\)                 & \(H/H^\prime\)         &                   \\
                   &                      & \(T_{G,H}\)                         &                        &                   \\
\hline
\end{tabular}
\end{center}
\end{table}

%\newpage

%--------------------------------------------------------------------------------

\noindent
Since each finite quotient \(G=\mathrm{G}_p^n(K)\)
of the \(p\)-tower group
\(\mathrm{G}_p^\infty(K)\) of \(K\),
which is isomorphic to the inverse limit
\(\lim\limits_{\longleftarrow}{}_{n\ge 1}\,\mathrm{G}_p^n(K)\),
behaves in the same manner with respect to
the kernels \(\ker(T_{G,H})\) and targets \(H/H^\prime\) of the transfers
\(T_{G,H}:G/G^\prime\to H/H^\prime\), for \(G^\prime\le H\le G\),
we define two invariants \(\tau(K)=\tau(G)\) and \(\varkappa(K)=\varkappa(G)\)
either of the entire \(p\)-tower of \(K\)
or of the individual \(n\)th \(p\)-class group \(G\).

%\newpage

%--------------------------------------------------------------------------------

\begin{definition}

Let \(p\) be a prime and \(K\) be a number field.

\begin{enumerate}
\item
The family \(\tau(K)=(\mathrm{Cl}_p(L))_{K\le L\le\mathrm{F}_p^1(K)}\)
of \(p\)-class groups
of all intermediate fields \(L\) between \(K\) and \(\mathrm{F}_p^1(K)\)
is called \textit{transfer target type}, briefly TTT, of the \(p\)-tower of \(K\).
\item
The family \(\varkappa(K)=(\ker(j_{L\vert K}))_{K\le L\le\mathrm{F}_p^1(K)}\)
of \(p\)-principalization kernels of \(K\)
in all intermediate fields \(L\) between \(K\) and \(\mathrm{F}_p^1(K)\)
is called \textit{transfer kernel type}, briefly TKT, of the \(p\)-tower of \(K\).
\end{enumerate}

\end{definition}

%\newpage

%--------------------------------------------------------------------------------

\noindent
In general, third and higher \(p\)-class groups \(\mathrm{G}_p^n(K)\) of \(K\), \(n\ge 3\),
are non-metabelian with rather complex structure.
For this reason, we focus our investigation on the \textit{second} \(p\)-class group
\(G=\mathrm{G}_p^2(K)=\mathrm{Gal}(\mathrm{F}_p^2(K)\vert K)\)
which is \textit{metabelian} with commutator subgroup
\[G^\prime=\mathrm{Gal}(\mathrm{F}_p^2(K)\vert\mathrm{F}_p^1(K))\simeq\mathrm{Cl}_p(\mathrm{F}_p^1(K)).\]
It is the simplest group admitting the calculation of
the TTT, \(\tau(K)=(H/H^\prime)_{G^\prime\le H\le G}\),
and the TKT, \(\varkappa(K)=(\ker(T_{G,H}))_{G^\prime\le H\le G}\), of \(K\)
by means of the transfers from \(G\) to the subgroups \(H\le G\) containing \(G^\prime\).

%\newpage

%--------------------------------------------------------------------------------

\subsection{Identifying \(\mathrm{G}_p^2(K)\) via \(\varkappa(K)\) and \(\tau(K)\)}
\label{ss:IdSndPeClsGrp}

First, we illustrate that
second \(p\)-class groups \(\mathrm{G}_p^2(K)\) of number fields \(K\)
can frequently but not always be identified uniquely by means of TKT and TTT.

%\newpage

%--------------------------------------------------------------------------------

\noindent
For \(p=5\), we apply our recently calculated TKTs of \S\
\ref{ss:StemPhi6}
to prove the following Theorem.
It gives criteria
for second \(5\)-class groups of number fields
in terms of TKTs which were unknown up to now.

%\newpage

%--------------------------------------------------------------------------------

\begin{theorem}
\label{thm:Snd5ClsGrp}
Let \(K\) be an arbitrary number field
with \(5\)-class group \(\mathrm{Cl}_5(K)\) of type \((5,5)\).
In the following four cases,
the second \(5\)-class group \(\mathrm{G}_5^2(K)\) of \(K\)
is determined uniquely by
the TKT and TTT of \(K\).

\begin{enumerate}
\item
\(\varkappa(K)=(1,2,3,4,5,6)\) (identity),
\(\tau(K)=\left((5,5,5)^6\right)\)
\(\Longrightarrow\)\\
\(\mathrm{G}_5^2(K)\simeq\langle 3125,14\rangle\).
\item
\(\varkappa(K)=(1,2,5,3,6,4)\) (\(4\)-cycle),
\(\tau(K)=\left((5,25)^4,(5,5,5)^2\right)\)
\(\Longrightarrow\)\\
\(\mathrm{G}_5^2(K)\simeq\langle 3125,11\rangle\).
\item
\(\varkappa(K)=(5,1,2,6,4,3)\) (\(6\)-cycle),
\(\tau(K)=\left((5,25)^6\right)\)
\(\Longrightarrow\)\\
\(\mathrm{G}_5^2(K)\simeq\langle 3125,12\rangle\).
\item
\(\varkappa(K)=(3,1,2,5,6,4)\) (two \(3\)-cycles),
\(\tau(K)=\left((5,25)^6\right)\)
\(\Longrightarrow\)\\
\(\mathrm{G}_5^2(K)\simeq\langle 3125,9\rangle\).
\end{enumerate}

\noindent
In one case, there are two possibilities for \(\mathrm{G}_5^2(K)\).\\
\(\varkappa(K)=(6,1,2,4,3,5)\) (\(5\)-cycle),
\(\tau(K)=\left((5,25)^5,(5,5,5)\right)\)
\(\Longrightarrow\)\\
either \(\mathrm{G}_5^2(K)\simeq\langle 3125,8\rangle\)
or \(\mathrm{G}_5^2(K)\simeq\langle 3125,13\rangle\).

\noindent
The powers in TTTs denote iteration and
the \(5\)-groups are identified by their numbers in the SmallGroups library
\cite{BEO}.
\end{theorem}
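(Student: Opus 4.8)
The plan is to reduce the classification to a finite computation on metabelian \(5\)-groups \(G\) of coclass \(r\) with \(G/G' \simeq (5,5)\), and then to match the prescribed TKT/TTT pairs against those of the candidate groups from the SmallGroups library. First I would invoke Theorem~\ref{thm:TTTandTKT}: since \(\mathrm{Cl}_5(K)\) has type \((5,5)\), the second \(5\)-class group \(G=\mathrm{G}_5^2(K)\) is a metabelian \(5\)-group whose abelianization \(G/G'\) is elementary bicyclic of rank \(2\), so there are exactly six intermediate fields \(L\) (equivalently, six maximal subgroups \(H\) with \(G'\le H\le G\)), and the TTT \(\tau(K)=(H/H')_{G'\le H\le G}\) and TKT \(\varkappa(K)=(\ker T_{G,H})_{G'\le H\le G}\) are group-theoretic invariants of \(G\) alone. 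Thus it suffices to show that each of the five \((\varkappa,\tau)\)-pairs in the statement is realised by precisely the listed group(s) among all metabelian \(5\)-groups with abelianization \((5,5)\).

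Next I would appeal to the structural theory of such groups: a metabelian \(p\)-group \(G\) with \(G/G'\simeq(p,p)\) lies on one of the coclass trees in \(\mathcal{G}(p,r)\), and—this is the content of \S\ref{ss:StemPhi6}, which computed the relevant TKTs—the admissible groups for the TKTs appearing here all sit on the coclass-\(1\) or coclass-\(2\) trees over the elementary abelian root, with only finitely many vertices of each depth compatible with a given \(\tau\). Concretely, I would list the candidate groups by their SmallGroups identifiers of order \(3125=5^5\) (and rule out larger orders using that the stated \(\tau\) entries are bounded, e.g.\ no component exceeds type \((5,25)\) or \((5,5,5)\), which caps the order of \(G\)), then compute \(\varkappa\) and \(\tau\) for each by the transfer (Artin) maps \(T_{G,H}:G/G'\to H/H'\). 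For the four unique cases (1)–(4), the computation should show that the pair determines \(G\) outright; for the exceptional \(5\)-cycle case, the two groups \(\langle 3125,8\rangle\) and \(\langle 3125,13\rangle\) must be shown to share the identical \((\varkappa,\tau)\), hence cannot be separated by these invariants alone.

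The organisation of the argument would be: (i) fix notation for the six maximal subgroups and the transfer maps; (ii) enumerate the metabelian \(5\)-groups \(G\) with \(G^{\mathrm{ab}}\simeq(5,5)\) whose TTT is componentwise bounded by the types occurring in the theorem, reducing to a short explicit list inside \(\mathcal{G}(5,1)\cup\mathcal{G}(5,2)\) at orders \(5^4\) and \(5^5\); (iii) for each group on the list, read off \(\tau(G)\) from the abelianized maximal subgroups and \(\varkappa(G)\) from the kernels of the transfers, recording the permutation/cycle type; (iv) cross-reference with Theorem~\ref{thm:TTTandTKT} to translate these group invariants into the field invariants \(\tau(K),\varkappa(K)\); and (v) conclude by inspection that the five listed pairs correspond to \(\langle 3125,14\rangle\), \(\langle 3125,11\rangle\), \(\langle 3125,12\rangle\), \(\langle 3125,9\rangle\), and \(\{\langle 3125,8\rangle,\langle 3125,13\rangle\}\) respectively, with the first four singletons and the last a genuine pair.

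The main obstacle I anticipate is step (ii)—proving that the search for \(G\) can be confined to the explicit finite list, i.e.\ that no group of larger order (a deeper vertex on the same coclass tree, or a vertex on a sporadic branch) can also produce one of these \((\varkappa,\tau)\)-pairs. This requires the periodicity/pruning results for coclass graphs of Eick–Leedham-Green–Newman together with the \S\ref{ss:StemPhi6} computation showing that the TTT strictly grows as one descends a coclass tree, so that a bounded \(\tau\) forces bounded order; once that finiteness is in place, the remaining verification is a routine (if lengthy) transfer computation, best carried out with a computer algebra system and merely reported here.
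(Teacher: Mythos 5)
Your proposal follows essentially the same route as the paper: the proof given there simply defers to \S\ \ref{ss:StemPhi6}, where the candidates are exactly the stem groups of \(\Phi_6\) (the top vertices of \(\mathcal{G}(5,2)\) of order \(5^5\)), their transfer kernels are computed from James's presentations via Lemma \ref{lmm:TransferIcl6} and tabulated in Table \ref{tab:TrfKerStemIcl6}, and the five \((\varkappa,\tau)\)-pairs are matched against the six terminal top vertices \(\langle 3125,8\ldots 9\rangle\), \(\langle 3125,11\ldots 14\rangle\), with \(\langle 3125,8\rangle\) and \(\langle 3125,13\rangle\) sharing an identical pair. One caution on your step (ii): the TTT does \emph{not} strictly grow at depth \(1\) (Table \ref{tbl:CompQuad5x5Details} shows \(\langle 3125,7\rangle\) and its order-\(5^6\) descendant \(\langle 15625,647\rangle\) sharing a TTT), so the exclusion of larger orders rests on the six listed groups being terminal and on the remaining top vertices having TKTs disjoint from the five in the statement, not on TTT growth alone.
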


%--------------------------------------------------------------------------------

\begin{proof}
In section \S\
\ref{ss:StemPhi6},
we prove that the metabelianization
\(\mathrm{G}_5^2(K)=\mathrm{Gal}(\mathrm{F}_5^2(K)\vert K)=G/G^{\prime\prime}\)
of the \(5\)-tower group \(G=\mathrm{G}_5^\infty(K)\)
of any algebraic number field \(K\) with \(5\)-class group \(\mathrm{Cl}_5(K)\) of type \((5,5)\),
having one of the five pairs of TKT and TTT in Theorem
\ref{thm:Snd5ClsGrp},
is one of the six terminal top vertices
\(\langle 3125,8\ldots 9\rangle\) and \(\langle 3125,11\ldots 14\rangle\)
of the coclass graph \(\mathcal{G}(5,2)\) in Figure
\ref{fig:Typ55Cocl2}.
\end{proof}

%\newpage

%--------------------------------------------------------------------------------

\begin{remark}
We conjecture that the TKT alone
suffices for the characterization of
\(\mathrm{G}_5^2(K)\)
in Theorem
\ref{thm:Snd5ClsGrp}.
\end{remark}

%\newpage

%--------------------------------------------------------------------------------

\begin{example}
\label{exm:Snd5ClsGrp}
Discriminants \(D\) with smallest absolute values
of complex quadratic fields \(K=\mathbb{Q}(\sqrt{D})\)
having one of the five pairs of TKT and TTT in Theorem
\ref{thm:Snd5ClsGrp}
are given by
\(-89751\), \(-37363\), \(-11199\), \(-17944\), \(-12451\),
in the same order.
They were computed by means of MAGMA
\cite{MAGMA}.
According to section \S\
\ref{sss:StatScnd5ClgpCocl2},
the vertices \(\langle 3125,8\rangle\) and \(\langle 3125,13\rangle\)
of coclass graph \(\mathcal{G}(5,2)\) in Figure
\ref{fig:Typ55Cocl2},
corresponding to the last case of Theorem
\ref{thm:Snd5ClsGrp},
are populated by
\(167\) occurrences \((17.4\%)\) 
of \(959\) complex quadratic fields \(K=\mathbb{Q}(\sqrt{D})\)
with \(-2\,270\,831\le D<0\) and \(\mathrm{Cl}_5(K)\) of type \((5,5)\).
This shows that even the last case alone
occurs with rather high density.
\end{example}

%\newpage

%--------------------------------------------------------------------------------

\noindent
For \(p=3\), we use four TKTs which occurred repeatedly in the literature
\cite{SoTa,HeSm,BrGo}.
These TKTs define infinite sequences,
in fact periodic coclass families (\S\
\ref{s:CoclGrph}),
of possible groups \(\mathrm{G}_3^2(K)\),
and neither Heider and Schmithals
\cite{HeSm}
nor Brink and Gold
\cite{Br,BrGo}
have been aware that the TTT is able to identify a unique member of the sequences,
as we proved in
\cite{Ma1,Ma3}.

\begin{theorem}
\label{thm:SectionE}
Let \(K\) be an arbitrary number field with \(3\)-class group \(\mathrm{Cl}_3(K)\) of type \((3,3)\).
In the following four cases,
the second \(3\)-class group \(\mathrm{G}_3^2(K)\) of \(K\) is either determined uniquely
or up to the sign of the relational exponent \(\gamma\)
by the TKT and the parametrized TTT of \(K\), for each integer \(j\ge 2\).

\begin{enumerate}
\item
\(\varkappa(K)=(1,3,1,3)\), \(\tau(K)=\left((3^j,3^{j+1}),(3,9)^2,(3,3,3)\right)\)
\(\Longrightarrow\)\\
\(\mathrm{G}_3^2(K)\simeq G_0^{2j+2,2j+3}(1,-1,1,1)\).
\item
\(\varkappa(K)=(2,3,1,3)\), \(\tau(K)=\left((3^j,3^{j+1}),(3,9)^2,(3,3,3)\right)\)
\(\Longrightarrow\)\\
\(\mathrm{G}_3^2(K)\simeq G_0^{2j+2,2j+3}(0,-1,\pm 1,1)\).
\item
\(\varkappa(K)=(1,2,3,1)\), \(\tau(K)=\left((3^j,3^{j+1}),(3,9)^3\right)\)
\(\Longrightarrow\)\\
\(\mathrm{G}_3^2(K)\simeq G_0^{2j+2,2j+3}(1,0,-1,1)\).
\item
\(\varkappa(K)=(2,2,3,1)\), \(\tau(K)=\left((3^j,3^{j+1}),(3,9)^3\right)\)
\(\Longrightarrow\)\\
\(\mathrm{G}_3^2(K)\simeq G_0^{2j+2,2j+3}(0,0,\pm 1,1)\).
\end{enumerate}

\noindent
The groups in the first two cases are located
on the coclass tree \(\mathcal{T}(\langle 243,6\rangle)\) in Fig.
\ref{fig:TreeQTyp33Cocl2},
the groups in the last two cases
on the coclass tree \(\mathcal{T}(\langle 243,8\rangle)\) in Fig.
\ref{fig:TreeUTyp33Cocl2}.
\(3\)-groups of order \(3^n\) and index \(m\) of nilpotency
are identified by their parametrized presentations
given in the form \(G_\rho^{m,n}(\alpha,\beta,\gamma,\delta)\) in \S\
\ref{sss:PrmPres2}.
\end{theorem}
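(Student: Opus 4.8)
The plan is to translate the assertion into a purely group-theoretic uniqueness statement and then deduce it from the classification of metabelian \(3\)-groups of coclass \(2\) with abelianization of type \((3,3)\). By Theorem~\ref{thm:TTTandTKT}, the TKT \(\varkappa(K)\) and the TTT \(\tau(K)\) coincide with the group-theoretic invariants \(\varkappa(G)=(\ker(T_{G,H}))_{G^\prime\le H\le G}\) and \(\tau(G)=(H/H^\prime)_{G^\prime\le H\le G}\) of the finite metabelian \(3\)-group \(G=\mathrm{G}_3^2(K)\), whose abelianization \(G/G^\prime\simeq\mathrm{Cl}_3(K)\) is of type \((3,3)\). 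Since \((3,3)\) has exactly four subgroups of index \(3\), all containing \(G^\prime\), the nontrivial entries of \(\varkappa(G)\) and \(\tau(G)\) are indexed by the four maximal subgroups \(H_1,\dots,H_4\). The first step is to show that the prescribed TTT forces \(G\) onto the coclass graph \(\mathcal{G}(3,2)\): the entry \((3^j,3^{j+1})\) with \(j\ge 2\) fixes the nilpotency class of \(G\), and together with the entries \((3,9)^2,(3,3,3)\) or \((3,9)^3\) the selection rules of \S~\ref{s:CoclGrph} exclude coclass \(1\) and coclass \(\ge 3\).

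The second step localizes \(G\) on a single coclass tree of \(\mathcal{G}(3,2)\). The fourth TTT entry separates the two admissible trees: the value \((3,3,3)\) in cases (1)--(2) puts \(G\) on \(\mathcal{T}(\langle 243,6\rangle)\) of Figure~\ref{fig:TreeQTyp33Cocl2}, whereas \((3,9)\) in cases (3)--(4) puts it on \(\mathcal{T}(\langle 243,8\rangle)\) of Figure~\ref{fig:TreeUTyp33Cocl2}. On each of these trees the branches are eventually periodic, and the four prescribed TKTs occur only along one periodic coclass family, namely the groups \(G_0^{m,n}(\alpha,\beta,\gamma,1)\) with \((m,n)=(2j+2,2j+3)\) and fixed \((\alpha,\beta)\). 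I would then recall the parametrized presentations from \S~\ref{sss:PrmPres2} and check that increasing \(j\) merely lengthens the mainline of the presentation, leaving the parameter-bearing relations that govern the transfers essentially unchanged, so the whole family can be handled at once.

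The core computation is to evaluate \(\varkappa\) and \(\tau\) along this family: for a generic member \(G_0^{2j+2,2j+3}(\alpha,\beta,\gamma,1)\), write down the four maximal subgroups \(H_i\supseteq G^\prime\), compute the abelianizations \(H_i/H_i^\prime\) and the transfer kernels \(\ker(T_{G,H_i})\), and read off that each admissible pair \((\varkappa,\tau)\) is realized by exactly one choice of \((\alpha,\beta,\gamma)\). The computation is lengthy but \emph{uniform in \(j\)}: the TKT is independent of \(j\), and the TTT varies only in the single distinguished entry, which grows as \((3^j,3^{j+1})\). Matching against the statement gives \((\alpha,\beta,\gamma)=(1,-1,1)\), \((0,-1,\pm1)\), \((1,0,-1)\), \((0,0,\pm1)\) in cases (1)--(4). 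For the residual ambiguity in cases (2) and (4) I would observe that the automorphism of \(G/G^\prime\) inverting one generator, lifted to \(G\), interchanges \(G_0^{m,n}(\alpha,\beta,1,1)\) with \(G_0^{m,n}(\alpha,\beta,-1,1)\) while fixing both \(\varkappa(G)\) and \(\tau(G)\); thus the sign of \(\gamma\) is invisible to the two invariants, and in cases (1) and (3) the corresponding sign change is absorbed by the remaining parameters so that no ambiguity survives.

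The step I expect to be the main obstacle is guaranteeing that the family is controlled \emph{uniformly in \(j\)}: that no sporadic vertex lying off the chosen coclass family, at some high nilpotency class, shares a prescribed pair \((\varkappa,\tau)\). This rests on the eventual periodicity of the trees \(\mathcal{T}(\langle 243,6\rangle)\) and \(\mathcal{T}(\langle 243,8\rangle)\) in the sense of Eick, Leedham-Green, and Newman, which reduces the verification to finitely many nilpotency classes together with a periodic recursion; the finitely many cases can be checked directly or with MAGMA. A secondary subtlety is to prove the transfer-kernel and transfer-target formulas for the \emph{entire} parametrized family rather than for a handful of small representatives, and to confirm that the TTT already forces \(\delta=1\), so that the TKT is needed only to separate the families inside a fixed tree and, in cases (2) and (4), can distinguish no more than the sign of \(\gamma\).
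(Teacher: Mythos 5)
Your strategy is essentially the one the paper's proof rests on: Theorem \ref{thm:SectionE} is there reduced to Nebelung's complete classification of metabelian \(3\)-groups with abelianization of type \((3,3)\) by the parametrized presentations \(G_\rho^{m,n}(\alpha,\beta,\gamma,\delta)\) of \S\ \ref{sss:PrmPres2}, combined with the tabulated transfer kernels \cite{Ne1,Ma2} and transfer targets \cite{Ma3} of every member of those families; your plan --- fix coclass and tree from \(\tau\), then match the pair \((\varkappa,\tau)\) against the families uniformly in \(j\) --- is that computation carried out rather than cited. Two points need correction. First, coclass \(1\) and coclass \(\ge 3\) are excluded by Theorems \ref{thm:wTTTCocl1} and \ref{thm:wTTTCoclGe2} (the second distinguished entry of \(\tau\) has order \(3^{\mathrm{cc}(G)+1}=3^3\)), not by the selection rules of Theorems \ref{thm:SelRuleCocl1} and \ref{thm:SelRuleCoclGe2}: those are number-theoretic constraints valid only for quadratic base fields, whereas the present theorem concerns arbitrary \(K\). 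Your worry about sporadic vertices of high nilpotency class is also unfounded, since \(\mathcal{G}_0(3,2)\) is finite and the classification by parametrized presentations is exhaustive.

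Second, and more seriously, your explanation of the residual \(\pm\gamma\) ambiguity cannot stand as written. If inversion of a generator of \(G/G^\prime\) lifted to a group isomorphism carrying \(G_0^{2j+2,2j+3}(0,\beta,1,1)\) onto \(G_0^{2j+2,2j+3}(0,\beta,-1,1)\), those two groups would be isomorphic, and cases (2) and (4) would then determine \(\mathrm{G}_3^2(K)\) uniquely --- contradicting the very statement you are proving, which asserts that in these cases two distinct isomorphism classes share the given \(\varkappa\) and \(\tau\) (they appear as batches of two siblings in Figures \ref{fig:TreeQTyp33Cocl2} and \ref{fig:TreeUTyp33Cocl2}). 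The coincidence of invariants for the two sign-variants is not produced by a symmetry of \(G/G^\prime\); it must be read off from the direct transfer computation (equivalently, from Nebelung's tables), which shows that changing the sign of \(\gamma\) changes the isomorphism class but leaves all transfer kernels and targets unchanged when \(\alpha=0\). Your main computation would deliver this anyway, so the conclusion survives, but the automorphism mechanism must be dropped; likewise, the uniqueness in cases (1) and (3) is an output of that same computation, not of parameters being ``absorbed''.
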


\begin{proof}
We proved that the second derived quotient
\(\mathrm{G}_3^2(K)=G/G^{\prime\prime}\)
of the \(3\)-tower group \(G=\mathrm{G}_3^\infty(K)\)
of any algebraic number field \(K\) with \(3\)-class group \(\mathrm{Cl}_3(K)\) of type \((3,3)\),
transfer kernel type 
E.6, \(\varkappa(K)=(1,3,1,3)\),
resp. E.14, \(\varkappa(K)=(2,3,1,3)\)
\cite[Tbl. 6, p. 492]{Ma2},
and parametrized transfer target type \(\tau(K)=\left((3^j,3^{j+1}),(3,9)^2,(3,3,3)\right)\), \(j\ge 2\),
is isomorphic to the unique group \(G_0^{2j+2,2j+3}(1,-1,1,1)\),
resp. to one of the two groups \(G_0^{2j+2,2j+3}(0,-1,\pm 1,1)\),
of the coclass tree \(\mathcal{T}(\langle 243,6\rangle)\)
in Figure
\ref{fig:TreeQTyp33Cocl2}
\cite[Thm. 4.4, Tbl. 8]{Ma3}.\\
Similarly, we proved for transfer kernel type
E.8, \(\varkappa(K)=(1,2,3,1)\),
resp. E.9, \(\varkappa(K)=(2,2,3,1)\),
and parametrized transfer target type \(\tau(K)=\left((3^j,3^{j+1}),(3,9)^3\right)\), \(j\ge 2\),
that \(\mathrm{G}_3^2(K)\)
is isomorphic to the unique group \(G_0^{2j+2,2j+3}(1,0,-1,1)\),
resp. to one of the two groups \(G_0^{2j+2,2j+3}(0,0,\pm 1,1)\),
of the coclass tree \(\mathcal{T}(\langle 243,8\rangle)\)
in Figure
\ref{fig:TreeUTyp33Cocl2}.
\end{proof}

%\newpage

%--------------------------------------------------------------------------------

\begin{example}
\label{exm:SectionE}
By
\cite[Thm. 5.2, p. 492]{Ma1},
the complex quadratic field \(K=\mathbb{Q}(\sqrt{-9748})\) is a number field
having the TKT and TTT of the last case in Theorem
\ref{thm:SectionE},
actually with smallest absolute discriminant.
It was first mentioned by Scholz and Taussky
\cite[p. 25]{SoTa}.
Among the \(93\) complex quadratic fields \(K=\mathbb{Q}(\sqrt{D})\)
with discriminants \(-6\cdot 10^4<D<0\) and \(\mathrm{Cl}_3(K)\) of type \((3,3)\),
there are \(11\) cases \((12\%)\) having the TKT and TTT of the last case in Theorem
\ref{thm:SectionE}.
So even the last case alone occurs quite frequently.
\end{example}

%\newpage

%--------------------------------------------------------------------------------

\noindent
In contrast, we can also prove that
certain metabelian \(p\)-groups are excluded as second \(p\)-class groups
for special base fields.
The following negative result for \(p=3\)
gives an exact justification for a particular instance of our
\textit{weak leaf conjecture}, Cnj.
\ref{cnj:WeakLeafCnj}.

\begin{theorem}
\label{thm:SpecWeakLeaf}
The \(3\)-group \(\langle 243,4\rangle\), resp. \(\langle 243,9\rangle\),
cannot occur as second \(3\)-class group \(\mathrm{G}_3^2(K)\)
for a complex quadratic field \(K=\mathbb{Q}(\sqrt{D})\), \(D<0\),
whose TKT and TTT are given by
\(\varkappa(K)=(4,4,4,3)\) and \(\tau(K)=\left((3,9),(3,3,3)^3\right)\),
resp.
\(\varkappa(K)=(2,1,4,3)\) and \(\tau(K)=\left((3,9)^4\right)\).
\end{theorem}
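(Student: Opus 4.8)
The plan is to play the hypothesis \(\mathrm{G}_3^2(K)\simeq\langle 243,4\rangle\) (resp. \(\langle 243,9\rangle\)) off against the rigidity of the \(3\)-class tower of a complex quadratic field. Put \(\mathfrak{G}=\mathrm{G}_3^\infty(K)\), so that \(\mathrm{G}_3^2(K)=\mathfrak{G}/\mathfrak{G}^{\prime\prime}\). Since \(\mathrm{Cl}_3(K)\) is of type \((3,3)\), the pro-\(3\) group \(\mathfrak{G}\) has generator rank \(2\); since \(K=\mathbb{Q}(\sqrt{D})\) with \(D<0\) has unit rank \(0\) while complex conjugation acts by inversion on \(\mathrm{Cl}_3(K)\), the theorem of Koch and Venkov shows that \(\mathfrak{G}\) is a \emph{Schur \(\sigma\)-group}: its relation rank again equals \(2\), and it carries an automorphism \(\sigma\) with \(\sigma^2=1\) acting as \(x\mapsto x^{-1}\) on \(\mathfrak{G}/\mathfrak{G}^\prime\simeq\mathrm{Cl}_3(K)\). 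By Theorem~\ref{thm:TTTandTKT} the transfers from \(\mathfrak{G}\) factor through its metabelianization, so \(\varkappa\bigl(\mathfrak{G}/\mathfrak{G}^{\prime\prime}\bigr)=\varkappa(K)\) and \(\tau\bigl(\mathfrak{G}/\mathfrak{G}^{\prime\prime}\bigr)=\tau(K)\) are the data prescribed in the statement, namely \((4,4,4,3)\) together with \(((3,9),(3,3,3)^3)\), resp. \((2,1,4,3)\) together with \(((3,9)^4)\); moreover \(\sigma\) descends to a generator-inverting automorphism of order dividing \(2\) on \(\mathrm{G}_3^2(K)\).

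First I would settle the purely group-theoretic input: locate \(\langle 243,4\rangle\) and \(\langle 243,9\rangle\) on the coclass graphs \(\mathcal{G}(3,r)\) of Eick, Leedham-Green and Newman, confirm by evaluating the transfers (as in Theorem~\ref{thm:TTTandTKT}) that these two groups carry the pairs recorded above, and note that both are \emph{terminal vertices} (leaves), while the remaining vertices with these transfer kernel types are non-terminal. The argument then rests on the dichotomy \(\ell_3(K)=2\) versus \(\ell_3(K)\ge 3\). If the tower had length \(2\), then \(\mathfrak{G}=\mathrm{G}_3^2(K)\) would itself have to be a Schur \(\sigma\)-group; but a finite computation shows that neither \(\langle 243,4\rangle\) nor \(\langle 243,9\rangle\) is a Schur \(\sigma\)-group (their relation ranks exceed \(2\)), so this possibility is ruled out. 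If the tower had length \(\ge 3\), then \(\mathfrak{G}\) would be a \emph{non-metabelian} Schur \(\sigma\)-group of generator rank \(2\) whose metabelianization is \(\langle 243,4\rangle\), resp. \(\langle 243,9\rangle\), and here I would invoke the leaf property: a structural consequence of the coclass theorems, together with the balancedness \(r(\mathfrak{G})=2\), is that the metabelianization of a two-generator Schur \(\sigma\)-group is never a terminal vertex unless it is itself balanced, so a leaf that is not balanced admits no proper \(\sigma\)-cover. Since neither value of \(\ell_3(K)\) survives, no \(3\)-tower group with the required metabelianization exists, contradicting the realizability hypothesis; this is the asserted special case of the weak leaf conjecture (Conjecture~\ref{cnj:WeakLeafCnj}).

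The hard part will be the second horn of the dichotomy, i.e.\ the exclusion of a hypothetical non-metabelian \(\sigma\)-cover \(\mathfrak{G}\) with metabelianization \(\langle 243,4\rangle\), resp. \(\langle 243,9\rangle\). The structural route above is the conceptual way to dispatch it, but because it leans on finer properties of coclass trees a fully explicit alternative is preferable for a rigorous write-up: the two-generator covers \(\mathfrak{G}\) of a fixed metabelian quotient of order \(3^5\) that satisfy \(r(\mathfrak{G})=2\) form a finite family reachable by the \(p\)-group generation algorithm, and one verifies with MAGMA that every admissible \(\sigma\)-cover of \(\langle 243,4\rangle\) and of \(\langle 243,9\rangle\) either violates \(r=2\) or yields a transfer target type different from \(((3,9),(3,3,3)^3)\), resp. \(((3,9)^4)\). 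Whichever implementation is chosen, the conclusion is that \(\langle 243,4\rangle\) and \(\langle 243,9\rangle\) cannot arise as \(\mathrm{G}_3^2(K)\) for a complex quadratic field \(K\) carrying the prescribed TKT and TTT.
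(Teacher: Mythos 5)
Your overall frame -- reduce to the dichotomy \(\ell_3(K)=2\) versus \(\ell_3(K)\ge 3\), and kill the first horn by observing that neither \(\langle 243,4\rangle\) nor \(\langle 243,9\rangle\) is a Schur \(\sigma\)-group while the \(3\)-tower group of a complex quadratic field must be one (Koch--Venkov/Shafarevich) -- coincides with half of the paper's argument. The gap is in the second horn. The paper does not need to enumerate or exclude non-metabelian \(\sigma\)-covers at all: it invokes the Heider--Schmithals criterion that any number field whose second \(p\)-class group \(g\) has \(\gamma_4(g)=1\) automatically has \(p\)-tower length exactly \(2\). Since both candidate groups have class \(3\), hence trivial \(\gamma_4\), the hypothesis forces \(\ell_3(K)=2\) outright, which contradicts the conclusion \(\ell_3(K)\ge 3\) from the Schur-multiplier computation. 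That single cited result is the missing ingredient in your write-up, and it is what makes the proof two lines long.

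Your substitute for it does not hold up as written. The asserted \emph{leaf property} (``the metabelianization of a two-generator Schur \(\sigma\)-group is never a terminal vertex unless it is itself balanced'') is not a theorem of coclass theory that you can cite, and you give no argument for it; as stated it is exactly the weak leaf conjecture you are supposed to be proving an instance of. The computational fallback is also underdetermined: you would need to justify why the collection of \(\sigma\)-covers with relation rank \(2\) and prescribed metabelianization is finite and exhausted by a terminating descendant search (a priori an infinite pro-\(3\) Schur \(\sigma\)-group with that metabelianization could exist), and the escape clause ``yields a transfer target type different from \(((3,9),(3,3,3)^3)\)'' is vacuous, since by your own first paragraph the TTT of any group with metabelianization \(g\) equals \(\tau(g)\). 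To repair the proof, either import the Heider--Schmithals result as the paper does, or actually carry out and document the finite \(\sigma\)-descendant computation showing no Schur \(\sigma\)-group has \(\langle 243,4\rangle\) or \(\langle 243,9\rangle\) as its second derived quotient.
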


\begin{proof}
The assumption that one of the groups
\(g\in\lbrace\langle 243,4\rangle,\langle 243,9\rangle\rbrace\)
were the second \(3\)-class group \(g=\mathrm{G}_3^2(K)\) of a complex quadratic field \(K\)
implies two contradictory consequences.
On the one hand,
both groups \(g\) are of class \(\mathrm{cl}(g)=3\),
whence the fourth lower central \(\gamma_4(g)=1\) is trivial.
According to Heider and Schmithals
\cite[p. 20]{HeSm},
any number field \(K\) whose second \(p\)-class group \(g=\mathrm{G}_p^2(K)\)
has trivial \(\gamma_4(g)=1\)
possesses a \(p\)-tower of length \(\ell_p(K)=2\).
On the other hand,
since \(K\) is complex quadratic, its \(3\)-tower group \(G\) must be a Schur \(\sigma\)-group
\cite{Sh},
\cite[p. 58]{KoVe}.
The Schur multiplier \(\mathrm{H}_2(g,\mathbb{Z})\) of both groups \(g\)
is non-trivial of order \(3\),
as can be verified by means of GAP
\cite{GAP}.
Hence they cannot be Schur \(\sigma\)-groups
\cite[p. 6]{BBH}.
Therefore, the \(3\)-tower of \(K\) cannot stop at the second stage, \(G\ne\mathrm{G}_3^2(K)\), and
\(G\) must be a non-metabelian group of derived length at least \(3\),
that is, the \(3\)-tower of \(K\) has length \(\ell_3(K)\ge 3\).
\end{proof}

%\newpage

%--------------------------------------------------------------------------------

\subsection{Length of \(p\)-towers}
\label{ss:TowerLength}

As the following Theorems
\ref{thm:3TowerLength2}--\ref{thm:3TowerLengthAtLeast3}
show,
the length \(\ell_p(K)\) of the \(p\)-tower of \(K\)
can either be determined exactly or at least be estimated by a lower bound,
once the second \(p\)-class group \(\mathrm{G}_p^2(K)\) of \(K\) 
and its properties are known in sufficient detail.

%\newpage

%--------------------------------------------------------------------------------

\noindent
A criterion for \(3\)-towers of exact length \(2\)
was proved in three independent ways by Scholz and Taussky
\cite{SoTa},
by Heider and Schmithals
\cite{HeSm},
and by Brink and Gold
\cite{Br,BrGo}.
With our new methods, we can give a short proof of this criterion.

%\newpage

%--------------------------------------------------------------------------------

\begin{theorem}
\label{thm:3TowerLength2}
Let \(K\) be an arbitrary number field
with \(3\)-class group \(\mathrm{Cl}_3(K)\) of type \((3,3)\).
In the following two cases,
the second \(3\)-class group \(\mathrm{G}_3^2(K)\) of \(K\)
and the TTT of \(K\) are determined uniquely by
the TKT of \(K\).

\begin{enumerate}
\item
\(\varkappa(K)=(2,2,4,1)\)
\(\Longrightarrow\)
\(\mathrm{G}_3^2(K)\simeq\langle 243,5\rangle\),
\(\tau(K)=\left((3,9)^3,(3,3,3)\right)\).
\item
\(\varkappa(K)=(4,2,2,4)\)
\(\Longrightarrow\)
\(\mathrm{G}_3^2(K)\simeq\langle 243,7\rangle\),
\(\tau(K)=\left((3,9)^2,(3,3,3)^2\right)\).
\end{enumerate}

\noindent
In both cases,
the \(3\)-class field tower of \(K\) has exact length \(\ell_3(K)=2\).
\end{theorem}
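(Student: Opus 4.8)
The plan is to proceed in two stages: first identify $\mathrm{G}_3^2(K)$ uniquely from the given TKT, then invoke a triviality criterion for $\gamma_4$ to deduce that the $3$-tower terminates at the second stage. For the identification, I would enumerate the metabelian $3$-groups $G$ of coclass $r$ with $G/G^\prime$ of type $(3,3)$ that can possibly serve as a second $3$-class group, using the classification of such groups on the coclass graph $\mathcal{G}(3,r)$ (the groups of order dividing $243$ with abelianization $(3,3)$, namely $\langle 243,3\rangle$ through $\langle 243,9\rangle$ together with the relevant mainline/branch vertices). For each candidate I would compute the transfer kernel type $\varkappa$ and transfer target type $\tau$ directly from the group presentation, using Theorem~\ref{thm:TTTandTKT} to interpret these arithmetically. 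The point is that the two TKTs listed, $(2,2,4,1)$ (type c.18 in the Scholz--Taussky/Heider--Schmithals labelling) and $(4,2,2,4)$ (type c.21), are realized by a unique such group each — $\langle 243,5\rangle$ and $\langle 243,7\rangle$ respectively — because no larger metabelian group on the corresponding coclass tree has the same $\varkappa$; the capitulation pattern rigidly pins down the isomorphism type. The accompanying TTT is then read off the identified group, giving $\left((3,9)^3,(3,3,3)\right)$ and $\left((3,9)^2,(3,3,3)^2\right)$.

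For the length-$2$ conclusion I would observe that both $\langle 243,5\rangle$ and $\langle 243,7\rangle$ have nilpotency class $3$, so $\gamma_4(\mathrm{G}_3^2(K))=1$. By the criterion of Heider and Schmithals already cited in the proof of Theorem~\ref{thm:SpecWeakLeaf} — that a number field whose second $p$-class group $g=\mathrm{G}_p^2(K)$ has trivial $\gamma_4(g)$ has $p$-tower length exactly $2$ — we conclude $\ell_3(K)=2$. This is clean, but I would double-check that the criterion applies to real as well as complex base fields (the statement of Theorem~\ref{thm:3TowerLength2} allows $K$ to be arbitrary), and that $\ell_3(K)=2$ rather than $\ell_3(K)\le 2$: since $\mathrm{Cl}_3(K)$ is of type $(3,3)$ and the identified groups are genuinely non-abelian, $\mathrm{F}_3^1(K)<\mathrm{F}_3^2(K)$, so the length is at least $2$, hence exactly $2$.

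The main obstacle is the uniqueness claim in the first stage: a priori the relevant coclass tree contains infinitely many metabelian groups $G$ with $G/G^\prime\simeq(3,3)$, and one must show that none of them beyond $\langle 243,5\rangle$, resp. $\langle 243,7\rangle$, shares the prescribed TKT. I would handle this either by a periodicity argument for capitulation types along the coclass tree (the TKT stabilizes along descendants, and the mainline groups in coclass $1$ for type $(3,3)$ all have TKT of a different shape, e.g. a total transfer kernel or a fixed-point-free pattern), or by appealing to the detailed classification of second $3$-class groups with $\mathrm{Cl}_3$ of type $(3,3)$ that underlies the literature references \cite{SoTa,HeSm,Br,BrGo} and my own \cite{Ma1,Ma2,Ma3}. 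The key input making this tractable is that TKTs $(2,2,4,1)$ and $(4,2,2,4)$ have a single fixed point (the $1$ in position $4$, resp. the $4$'s that map identically) and that this, combined with the order $G/G^\prime=(3,3)$ and the structure of the transfer, forces $G$ to have order exactly $243$, eliminating all higher vertices at once.
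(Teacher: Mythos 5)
Your proposal reaches the right conclusions, but it diverges from the paper's proof in the final step and contains a couple of slips worth flagging. For the identification of \(\mathrm{G}_3^2(K)\), both you and the paper ultimately rest on the classification of metabelian \(3\)-groups with abelianization \((3,3)\) by their transfer kernel types: the paper cites Nebelung's Theorem 6.14 to conclude that TKT \((2,2,4,1)\), resp. \((4,2,2,4)\), singles out the terminal sporadic vertex \(\langle 243,5\rangle\), resp. \(\langle 243,7\rangle\), of \(\mathcal{G}(3,2)\). Your proposed shortcut --- that the fixed-point structure of these TKTs \lq\lq forces \(G\) to have order exactly \(243\)\rq\rq\ --- is not actually an argument, and your fixed-point bookkeeping is off: \((2,2,4,1)\) has its unique fixed point in position \(2\), not position \(4\), and \((4,2,2,4)\) has two fixed points (positions \(2\) and \(4\)). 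So you genuinely need the fallback to the classification that you mention. Also, these TKTs are D.10 and D.5 in the paper's nomenclature, not c.18 and c.21; the latter are the mainline types \((0,3,1,3)\) and \((0,2,3,1)\) with a total transfer kernel, belonging to entirely different groups (\(\langle 243,6\rangle\), \(\langle 243,8\rangle\)). Since you work with the explicit tuples, this mislabeling does not damage the logic, but looking up \lq\lq c.18\rq\rq\ in the literature would send you to the wrong group.

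For the tower length, your route is genuinely different from the paper's. You apply the Heider--Schmithals criterion (trivial \(\gamma_4\) of the second \(p\)-class group implies \(\ell_p(K)=2\)), which is legitimate here since \(\langle 243,5\rangle\) and \(\langle 243,7\rangle\) have order \(3^5\) and coclass \(2\), hence class \(3\) and \(\gamma_4=1\); the criterion as quoted in the paper applies to arbitrary number fields, and your observation that nonabelianness forces \(\ell_3(K)\ge 2\) settles exactness. The paper instead shows that \(\langle 243,5\rangle\) and \(\langle 243,7\rangle\) are Schur \(\sigma\)-groups and invokes the Boston--Ellenberg rigidity statement that a \(3\)-tower group whose second derived quotient is a Schur \(\sigma\)-group must coincide with that quotient. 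Your argument is the classical one, and is exactly what the paper itself uses in the proof of Theorem \ref{thm:SpecWeakLeaf}; the paper's is the advertised \lq\lq new method,\rq\rq\ whose interest is that it does not depend on the class of \(\mathrm{G}_3^2(K)\) being \(3\). Both are valid given the cited literature.
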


\begin{proof}
The metabelianization
\(\mathrm{G}_3^2(K)\simeq G/G^{\prime\prime}\)
of the \(3\)-tower group \(G=\mathrm{G}_3^\infty(K)\)
of any algebraic number field \(K\) with \(3\)-class group \(\mathrm{Cl}_3(K)\) of type \((3,3)\)
having transfer kernel type D.10, \(\varkappa(K)=(2,2,4,1)\), resp. D.5, \(\varkappa(K)=(4,2,2,4)\)
\cite[Tbl. 6, p. 492]{Ma2},
is isomorphic to the terminal top vertex
\(\langle 243,5\rangle\), resp. \(\langle 243,7\rangle\),
of the sporadic part \(\mathcal{G}_0(3,2)\) of the coclass graph \(\mathcal{G}(3,2)\)
in Figure
\ref{fig:Typ33Cocl2},
according to Nebelung
\cite[Thm. 6.14, p. 208]{Ne1}.
In
\cite[Thm. 4.2, Tbl. 4]{Ma3}
it is shown that
the corresponding transfer target type is given by
\(\tau(K)=\left((3,9)^3,(3,3,3)\right)\), resp. \(\tau(K)=\left((3,9)^2,(3,3,3)^2\right)\).\\
According to the proof of
\cite[Thm. 4.2, p. 14]{BBH},
\(\langle 243,5\rangle\) and \(\langle 243,7\rangle\) are Schur \(\sigma\)-groups.\\
However,
independently from \(K\) being complex quadratic or not,
when the second derived quotient \(G/G^{\prime\prime}\simeq\mathrm{G}_3^2(K)\) of \(G\)
is a Schur \(\sigma\)-group,
then the \(3\)-tower group \(G\) of \(K\) must be isomorphic to it, \(G\simeq\mathrm{G}_3^2(K)\),
by the argument given in
\cite[Lem. 4.10]{BoEl}.
Consequently, the \(3\)-tower of \(K\) stops at the second stage and has length \(\ell_3(K)=2\).
\end{proof}

%--------------------------------------------------------------------------------

\begin{remark}
We point out that the figure in
\cite[p. 10]{BBH}
is not a coclass graph in our sense
(\S\ \ref{ss:CoclGrph}),
since it contains vertices of four different coclass graphs
\(\mathcal{G}(3,r)\), \(1\le r\le 4\),
partially connected by edges of depth \(2\).
The top level of this figure,
where \(\langle 243,5\rangle\) and \(\langle 243,7\rangle\)
are emphasized by surrounding circles,
coincides with the top vertices of our Figure
\ref{fig:Typ33Cocl2}.
\end{remark}

%\newpage

%--------------------------------------------------------------------------------

\begin{example}
\label{exm:3TowerLength2}
Discriminants \(D\) with smallest absolute values
of complex quadratic fields \(K=\mathbb{Q}(\sqrt{D})\)
having one of the two TKTs in Theorem
\ref{thm:3TowerLength2}
are given by
\(-4027\), \(-12131\),
in the same order.
The first was communicated by Scholz and Taussky
\cite[p. 22]{SoTa},
the second by Heider and Schmithals
\cite[p. 19]{HeSm}.
Corresponding minimal discriminants of real quadratic fields
are \(422573\), \(631769\)
\cite[Tbl. 4, p. 498]{Ma1}.
Among the \(2020\) complex quadratic fields \(K=\mathbb{Q}(\sqrt{D})\)
with discriminants \(-10^6<D<0\) and \(\mathrm{Cl}_3(K)\) of type \((3,3)\),
there are \(936\) cases \((46.3\%)\)
having one of the two pairs of TKT and TTT in Theorem
\ref{thm:3TowerLength2}
\cite[Tbl. 3, p. 497]{Ma1}.
So these types of fields are definitely among the high-champs
with respect to density of population.
\end{example}

%\newpage

%--------------------------------------------------------------------------------

\noindent
In the next Theorem,
the second \(3\)-class group \(\mathrm{G}_3^2(K)\)
is not at all determined by the TKT \(\varkappa(K)\) alone.
Furthermore,
we must restrict ourselves to an estimate of the \(3\)-tower length \(\ell_3(K)\ge 3\).

\begin{theorem}
\label{thm:3TowerLengthAtLeast3}
Let \(K=\mathbb{Q}(\sqrt{D})\), \(D<0\), be a complex quadratic field
with \(3\)-class group \(\mathrm{Cl}_3(K)\) of type \((3,3)\).
In the following two cases,
the second \(3\)-class group \(\mathrm{G}_3^2(K)\) of \(K\)
is determined uniquely by
the TKT and the TTT of \(K\).

\begin{enumerate}
\item
\(\varkappa(K)=(4,4,4,3)\),
\(\tau(K)=\left((3,9),(3,3,3)^3\right)\)
\(\Longrightarrow\)
\(\mathrm{G}_3^2(K)\simeq\langle 729,45\rangle\).
\item
\(\varkappa(K)=(2,1,4,3)\),
\(\tau(K)=\left((3,9)^4\right)\)
\(\Longrightarrow\)
\(\mathrm{G}_3^2(K)\simeq\langle 729,57\rangle\).
\end{enumerate}

\noindent
In both cases,
\(K\) has a \(3\)-class field tower of length \(\ell_3(K)\ge 3\).
\end{theorem}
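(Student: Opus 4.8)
The plan is to treat the two assertions of each case separately: first identify $\mathrm{G}_3^2(K)$ from its TKT and TTT, and then deduce the lower bound $\ell_3(K)\ge 3$ as an essentially immediate consequence of Theorem~\ref{thm:SpecWeakLeaf}.

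For the identification, note that by Theorem~\ref{thm:TTTandTKT}(1) the hypothesis $\mathrm{Cl}_3(K)\simeq(3,3)$ forces $G/G'\simeq(3,3)$ for $G=\mathrm{G}_3^2(K)$, and $G$ is metabelian by construction. Hence $G$ belongs to the class of metabelian $3$-groups with abelianization of type $(3,3)$, whose position on $\mathcal{G}(3,2)$ together with their TKT and TTT is completely pinned down by the classification of Nebelung \cite{Ne1} as made explicit in \cite{Ma2,Ma3}. Scanning the sporadic part $\mathcal{G}_0(3,2)$ and the relevant initial branches of the coclass trees $\mathcal{T}(\langle 243,6\rangle)$ and $\mathcal{T}(\langle 243,8\rangle)$, one checks that the prescribed pair consisting of a TKT and an \emph{exactly} specified (hence non-parametrized) TTT is realized by precisely two metabelian groups in each case: $\langle 243,4\rangle$ or $\langle 729,45\rangle$ when $\varkappa(K)=(4,4,4,3)$ and $\tau(K)=((3,9),(3,3,3)^3)$, and $\langle 243,9\rangle$ or $\langle 729,57\rangle$ when $\varkappa(K)=(2,1,4,3)$ and $\tau(K)=((3,9)^4)$. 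The search is finite because the TTT strictly grows along every descending path in a coclass tree, so the exact TTT given here caps the order of an admissible metabelian group at $3^6$.

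Now, since $K$ is complex quadratic, Theorem~\ref{thm:SpecWeakLeaf} states precisely that $\langle 243,4\rangle$ (resp. $\langle 243,9\rangle$) cannot be $\mathrm{G}_3^2(K)$ when $K$ carries the TKT and TTT above. Thus the only surviving possibility is $\mathrm{G}_3^2(K)\simeq\langle 729,45\rangle$ (resp. $\langle 729,57\rangle$), which is the asserted implication. For the tower length, suppose $\ell_3(K)=2$; then the $3$-tower group $G=\mathrm{G}_3^\infty(K)$ equals $\mathrm{G}_3^2(K)$, hence is metabelian and isomorphic to $\langle 729,45\rangle$ (resp. $\langle 729,57\rangle$). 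But $K$ complex quadratic with $\mathrm{Cl}_3(K)\simeq(3,3)$ forces $G$ to be a Schur $\sigma$-group (Shafarevich \cite{Sh}, Koch--Venkov \cite{KoVe}), in particular a group with $d(G)=r(G)$. A GAP \cite{GAP} computation of the Schur multiplier gives $\mathrm{H}_2(\langle 729,45\rangle,\mathbb{Z})\ne 1$ (resp. $\mathrm{H}_2(\langle 729,57\rangle,\mathbb{Z})\ne 1$); by the universal coefficient relation $r(G)-d(G)=\dim_{\mathbb{F}_3}(\mathrm{H}_2(G,\mathbb{Z})\otimes\mathbb{F}_3)$ this already shows $r(G)>d(G)=2$, so these groups are not Schur groups, let alone Schur $\sigma$-groups, a contradiction (cf. \cite[Lem.~4.10]{BoEl}, \cite{BBH}). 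Hence $G$ must be non-metabelian of derived length at least $3$, so $\ell_3(K)\ge 3$. One cannot do better with these methods: the order $3^6$ makes $\gamma_4$ of $\mathrm{G}_3^2(K)$ nontrivial, so the Heider--Schmithals length-$2$ criterion \cite[p.~20]{HeSm} does not apply and the exact length is not controlled by $\mathrm{G}_3^2(K)$ alone.

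The hard part is the completeness of the identification step: one must be certain that \emph{no} metabelian $3$-group other than the two explicitly named ones carries the prescribed TKT together with the prescribed exact TTT. This rests entirely on the full classification of the pertinent finite portion of $\mathcal{G}(3,2)$ --- the sporadic vertices $\mathcal{G}_0(3,2)$ and the first branches of $\mathcal{T}(\langle 243,6\rangle)$ and $\mathcal{T}(\langle 243,8\rangle)$ --- combined with the monotonicity of the TTT under tree descent, which together turn an a priori infinite search into a finite, verified check. Once this is in place, the elimination of the smaller candidate is a verbatim appeal to Theorem~\ref{thm:SpecWeakLeaf} and the tower bound is the one-line cohomological obstruction above.
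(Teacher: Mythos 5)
Your overall strategy matches the paper's (finite identification on $\mathcal{G}(3,2)$ followed by elimination of $\langle 243,4\rangle$, resp.\ $\langle 243,9\rangle$, via Theorem~\ref{thm:SpecWeakLeaf}, and the Schur-multiplier obstruction for the tower length), and your second half --- the argument that a non-trivial Schur multiplier rules out the Schur $\sigma$-group property forced on $\mathrm{G}_3^\infty(K)$ by $K$ being complex quadratic, hence $\ell_3(K)\ge 3$ --- is correct and is exactly the paper's argument. However, there is a genuine gap in your identification step: your claim that the prescribed TKT together with the prescribed first-layer TTT is realized by \emph{precisely two} metabelian groups in each case is false. The theorem's TTT is the first layer only, and it does not separate $\langle 729,45\rangle$ from its siblings $\langle 729,44\rangle$ and $\langle 729,46\ldots 47\rangle$ in the stem of the isoclinism family $\Phi_{42}$, nor $\langle 729,57\rangle$ from its sibling $\langle 729,56\rangle$ in $\Phi_{43}$; all of these are immediate descendants of $\langle 243,4\rangle$, resp.\ $\langle 243,9\rangle$, on the sporadic part $\mathcal{G}_0(3,2)$ and carry the same TKT H.4, resp.\ G.19, and the same first-layer TTT. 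So after Theorem~\ref{thm:SpecWeakLeaf} removes the order-$3^5$ candidate, you are still left with four, resp.\ two, surviving candidates of order $3^6$, and uniqueness does not follow.

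The missing ingredient is the \emph{Artin criterion} for quadratic base fields (the theorem following Table~\ref{tbl:NonAbCohenLenstra}): the second $p$-class group of a quadratic field must admit an extension by $C_2$, equivalently an automorphism of order $2$ acting as inversion on the abelianization. The siblings $\langle 729,44\rangle$, $\langle 729,46\ldots 47\rangle$, resp.\ $\langle 729,56\rangle$, fail this test, which is how the paper isolates $\langle 729,45\rangle$, resp.\ $\langle 729,57\rangle$, as the unique admissible vertex. Note that this elimination, like the appeal to Theorem~\ref{thm:SpecWeakLeaf}, uses the hypothesis that $K$ is complex quadratic in an essential way --- the paper explicitly remarks that for an arbitrary number field several candidates remain possible --- so your identification cannot be completed from the classification and TTT-monotonicity alone.
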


\begin{proof}
We proved that the second derived quotient
\(\mathrm{G}_3^2(K)=G/G^{\prime\prime}\)
of the \(3\)-tower group \(G=\mathrm{G}_3^\infty(K)\)
of a complex quadratic field \(K\) with \(3\)-class group \(\mathrm{Cl}_3(K)\) of type \((3,3)\),
transfer kernel type H.4, \(\varkappa(K)=(4,4,4,3)\), resp. G.19, \(\varkappa(K)=(2,1,4,3)\)
\cite[Tbl. 6, p. 492]{Ma2},
and transfer target type \(\tau(K)=\left((3,9),(3,3,3)^3\right)\), resp. \(\tau(K)=\left((3,9)^4\right)\)
\cite[Thm. 4.3, Tbl. 6]{Ma3},
is isomorphic to the unique vertex
\(\langle 729,45\rangle\), resp. \(\langle 729,57\rangle\),
of the sporadic part \(\mathcal{G}_0(3,2)\)
of coclass graph \(\mathcal{G}(3,2)\)
in Figure
\ref{fig:Typ33Cocl2}.
For an arbitrary number field \(K\), several other candidates for \(\mathrm{G}_3^2(K)\) are possible.
However, for a complex quadratic field \(K\),
\(\langle 243,45\rangle\), resp. \(\langle 243,57\rangle\), are discouraged by Theorem
\ref{thm:SpecWeakLeaf},
and the siblings \(\langle 729,44\rangle\) and \(\langle 729,46\ldots 47\rangle\), resp. \(\langle 729,56\rangle\),
of \(\langle 729,45\rangle\), resp. \(\langle 729,57\rangle\),
do not admit the mandatory automorphism of order \(2\) acting as inversion on the abelianization.\\
Since \(K\) is complex quadratic, its \(3\)-tower group \(G\) must be a Schur \(\sigma\)-group
\cite{Sh},
\cite[p. 58]{KoVe}.
However,
neither \(\langle 729,45\rangle\) nor \(\langle 729,57\rangle\) is a Schur \(\sigma\)-group
\cite[p. 6]{BBH},
because the Schur multiplier is non-trivial of type \((3,3)\),
as can be verified with the aid of GAP
\cite{GAP}.
Therefore, the \(3\)-tower of \(K\) cannot stop at the second stage, \(G\ne\mathrm{G}_3^2(K)\), and
\(G\) must be a non-metabelian group of derived length at least \(3\),
that is, the \(3\)-tower has length \(\ell_3(K)\ge 3\).
\end{proof}

%\newpage

%--------------------------------------------------------------------------------

\begin{example}
\label{exm:3TowerLengthAtLeast3}
Discriminants \(D\) with smallest absolute values
of complex quadratic fields \(K=\mathbb{Q}(\sqrt{D})\)
having one of the two pairs of TKT and TTT in Theorem
\ref{thm:3TowerLengthAtLeast3}
are given by
\(-3896\), \(-12067\),
in the same order.
They were communicated by Heider and Schmithals
\cite[p. 19]{HeSm}.
Among the \(2020\) complex quadratic fields \(K=\mathbb{Q}(\sqrt{D})\)
with discriminants \(-10^6<D<0\) and \(\mathrm{Cl}_3(K)\) of type \((3,3)\),
there are \(391\) cases \((19.4\%)\)
having one of the two pairs of TKT and TTT in Theorem
\ref{thm:3TowerLengthAtLeast3}
\cite[Tbl. 3, p. 497]{Ma1}.
This shows that fields with \(3\)-towers of at least three stages
occur quite frequently.
\end{example}

%\newpage

%--------------------------------------------------------------------------------

Note that the proofs of the preceding Theorems
\ref{thm:3TowerLength2},
and
\ref{thm:3TowerLengthAtLeast3}
are very brief.
This is the beginning of powerful new methods of research concerning
the maximal unramified pro-\(p\) extensions of number fields
by joining the coclass theory of finite \(p\)-groups and
suitable generalizations of Schur \(\sigma\)-groups
\cite{BBH2}.
We optimistically expect further prolific impact
of these new foundations on the investigation of
\(p\)-towers and \(p\)-principalization,
although Artin called the capitulation problem \lq hopeless\rq.

%\newpage

%--------------------------------------------------------------------------------

\subsection{Overview}
\label{ss:Overview}

%\noindent
In \S\S\
\ref{ss:ScndClgpCocl1}
and
\ref{ss:ScndClgpTyp33CoclGe2},
we analyze number fields \(K\)
with \(p\)-class group \(\mathrm{Cl}_p(K)\) of type \((p,p)\).
Based on
\cite{Ma1},
we prove that the \(p\)-class numbers \(\mathrm{h}_p(L_i)=\lvert\mathrm{Cl}_p(L_i)\rvert\)
of \textit{two distinguished} intermediate fields \(L_i\), \(1\le i\le 2\),
lying strictly between \(K\) and \(\mathrm{F}_p^1(K)\),
and the \(p\)-class number \(\mathrm{h}_p(\mathrm{F}_p^1(K)\)
of the Hilbert \(p\)-class field of \(K\),
that is, the orders of three special members of the TTT \(\tau(K)\),
contain sufficient information for determining
the order \(\lvert G\rvert=3^n\), class \(c=\mathrm{cl}(G)\), coclass \(r=\mathrm{cc}(G)\),
and the so-called \textit{defect of commutativity} \(k=k(G)\)
of the second \(p\)-class group \(G=\mathrm{G}_p^2(K)\) of \(K\).
These invariants are related by the equation \(n=\mathrm{cl}(G)+\mathrm{cc}(G)\)
and restrict \(G\) to the \textit{finite} subset
of groups of equal order \(3^n\) of the coclass graph \(\mathcal{G}(p,r)\).
If the TKT \(\varkappa(K)\) is known additionally,
the position of \(G\) can be restricted further,
either to a branch \(\mathcal{B}\) of a coclass tree \(\mathcal{T}\),
forming a subgraph of \(\mathcal{G}(p,r)\),
or even to a unique isomorphism type of metabelian \(p\)-groups.

%\noindent
Group theoretic foundations concerning coclass graphs
and their mainlines, parametrized presentations,
polarization, and defect
are provided in preliminary sections \S\S\
\ref{s:CoclGrph},
\ref{ss:MtabCocl1},
and
\ref{ss:MtabTyp33CoclGe2}.

%\noindent
In
\cite{Ma3},
it was shown for number fields \(K\) of type \((p,p)\),
that the abelian type invariants of the \(p\)-class groups \(\mathrm{Cl}_p(L_i)\) of
\textit{all} intermediate fields \(K<L_i<\mathrm{F}_p^1(K)\), \(1\le i\le p+1\),
that is, the structures of the \textit{first layer} of the TTT \(\tau(K)\),
usually determine the TKT \(\varkappa(K)\),
at least in the case that \(\mathrm{G}_p^2(K)\)
is one of the most densely populated metabelian \(p\)-groups.

%\noindent
The density of population of a metabelian \(p\)-group \(G\)
by second \(p\)-class groups \(\mathrm{G}_p^2(K)\) of certain base fields \(K\)
can be calculated explicitly from a purely group theoretic probability measure
by non-abelian generalizations
of the Cohen-Lenstra-Martinet asymptotic,
as developed recently by Boston, Bush, Hajir
\cite{BBH},
and also by Bembom
\cite{Bm},
resp. Boy
\cite{By},
under supervision by Mihailescu, resp. Malle.
The heuristic is in good accordance with our extensive computational results
for quadratic base fields in
\cite{Ma1}.
These results have in fact actually been used in
\cite[pp. 5, 126]{Bm}.
Further, they eliminate all incomplete IPADs (index-\(p\) abelianization data),
which coincide with the first layer of our TTTs,
and correct the frequencies given in
\cite[Tbl. 1--2, pp. 17--18]{BBH},
which are uniformly slightly too low.

%\noindent
It is to be expected that similar strategies,
exploiting the interplay between TTT and TKT,
but now extended to the \textit{higher layers} of these invariants,
can be used to identify the isomorphism type of the second \(p\)-class group \(G=\mathrm{G}_p^2(K)\) of
number fields \(K\) with more complicated \(p\)-class group \(\mathrm{Cl}_p(K)\),
for example of type \((p^2,p)\) or \((p,p,p)\).
Extensions in this direction will be presented
in subsequent papers
\cite{Ma4,AZTM}.
An outlook is given in section \S\
\ref{s:DoubleLayer}.

%\newpage

%--------------------------------------------------------------------------------

\section{Visualizing finite \(p\)-groups on coclass graphs}
\label{s:CoclGrph}

\subsection{Periodic patterns}
\label{ss:Periodicity}

An important purpose of this paper is
to emphasize that coclass graphs \(\mathcal{G}(p,r)\)
are particularly well suited for visualizing
periodic properties
\cite{dS,EkLg}
of infinite sequences of finite \(p\)-groups \(G\),
such as parametrized power-commutator presentations
\cite{Bl1,Ne1},
automorphism groups \(\mathrm{Aut}(G)\),
Schur multipliers \(\mathrm{H}_2(G,\mathbb{Z}_p)\)
and other cohomology groups of \(G\),
transfer kernel types \(\varkappa(G)\)
\cite{Ma2},
transfer target types \(\tau(G)\)
\cite{Ma3},
and defect of commutativity \(k(G)\) expressed by the depth \(\mathrm{dp}(G)\)
(Corollaries
\ref{cor:DpthCocl1}
and
\ref{cor:DpthCoclGe2}).
In number theoretic applications,
selection rules for second \(p\)-class groups \(G=\mathrm{G}_p^2(K)\) of special base fields \(K\)
\cite{Ma1}
are additional periodic properties.
Computational results on the density of distribution of second \(p\)-class groups
can also be represented very clearly on coclass graphs.

%\newpage

%--------------------------------------------------------------------------------

\subsection{Coclass graphs}
\label{ss:CoclGrph}

For a given prime \(p\), Leedham-Green and Newman
\cite{LgNm}
have defined the structure of a directed graph \(\mathcal{G}(p)\)
on the set of all isomorphism classes of finite \(p\)-groups.
Two vertices are connected by a directed edge \(H\to G\) 
if \(G\) is isomorphic to the last lower central quotient \(H/\gamma_c(H)\) of \(H\),
where \(c\) denotes the nilpotency class \(\mathrm{cl}(H)\) of \(H\).\\
If the condition \(\lvert H\rvert=p\lvert G\rvert\) is imposed on the edges,
\(\mathcal{G}(p)\) is partitioned into countably many disjoint subgraphs \(\mathcal{G}(p,r)\), \(r\ge 0\),
called \textit{coclass graphs} of \(p\)-groups \(G\) of coclass \(r=\mathrm{cc}(G)=n-\mathrm{cl}(G)\),
where \(\lvert G\rvert=p^n\)
\cite[p. 155, 166]{LgMk}.
A coclass graph \(\mathcal{G}(p,r)\)
is a forest of finitely many coclass trees \(\mathcal{T}_i\),
each with a single infinite mainline having a pro-\(p\) group of coclass \(r\) as its inverse limit,
and additionally contains a set \(\mathcal{G}_0(p,r)\) of finitely many sporadic groups outside of coclass trees,
 \(\mathcal{G}(p,r)=\left(\cup_i\,\mathcal{T}_i\right)\cup\mathcal{G}_0(p,r)\).

%\newpage

%--------------------------------------------------------------------------------

The terminology concerning the structure of coclass graphs \(\mathcal{G}(p,r)\)
with a prime \(p\ge 2\) and an integer \(r\ge 0\)
must be recalled briefly.
We adopt the most recent view of coclass graphs, which is given by
Eick and Leedham-Green
\cite{EkLg},
and by Dietrich, Eick, Feichtenschlager
\cite[p. 46]{DEF}.
 
\begin{itemize}
\item
The \textit{coclass} \(\mathrm{cc}(G)\) of a finite \(p\)-group \(G\)
of order \(\lvert G\rvert=p^n\) and nilpotency class \(\mathrm{cl}(G)\)
is defined by \(n=\mathrm{cl}(G)+\mathrm{cc}(G)\).
\item
By a \textit{vertex} of the coclass graph \(\mathcal{G}(p,r)\) we understand
the isomorphism class of a finite \(p\)-group \(G\) of coclass \(\mathrm{cc}(G)=r\).
\item
The vertex \(H\) is an \textit{immediate descendant} of the vertex \(G\),
if \(G\) is isomorphic to the last lower central quotient \(H/\gamma_c(H)\) of \(H\),
where \(c=\mathrm{cl}(H)\) denotes the nilpotency class of \(H\),
and \(\gamma_c(H)\) is cyclic of order \(p\),
that is, \(\mathrm{cl}(H)=1+\mathrm{cl}(G)\) and \(\lvert H\rvert=p\lvert G\rvert\).
In this case, \(H\) and \(G\) are connected by a \textit{directed edge} \(H\to G\)
of the coclass graph
and \(G\) is called the \textit{parent} \(G=\pi(H)\) of \(H\).
\item
A \textit{capable vertex} has at least one immediate descendant,
whereas a \textit{terminal vertex} has no immediate descendants.
\item
The vertex \(G_m\) is a \textit{descendant} of the vertex \(G_0\),
if there is a \textit{path} \((G_j\to G_{j-1})_{m\ge j\ge 1}\) of directed edges from \(G_m\) to \(G_0\).
In particular, the vertex \(G_0\) is descendant of itself, with empty path.
\item
The \textit{tree} \(\mathcal{T}(G)\) with root \(G\)
consists of all descendants of the vertex \(G\).
\item
A \textit{coclass tree}
is a maximal rooted tree containing exactly one infinite path.
\item
The \textit{mainline} \((M_{j+1}\to M_j)_{j\ge n}\)
of a coclass tree \(\mathcal{T}(M_n)\) with root \(M_n\) of order \(\lvert M_n\rvert=p^n\)
is its unique maximal infinite path.
The projective limit
\(S=\lim\limits_{\longleftarrow}{}_{j\ge n}\,M_j\),
is an infinite pro-\(p\) group,
whose finite quotients by closed subgroups
return the mainline vertices \(M_j\).
\item
For \(i\ge n\), the \textit{branch} \(\mathcal{B}(M_i)\)
of a coclass tree \(\mathcal{T}(M_n)\) with tree root \(M_n\)
and mainline \((M_{j+1}\to M_j)_{j\ge n}\)
is the difference set \(\mathcal{T}(M_i)\setminus\mathcal{T}(M_{i+1})\).
The branch \(\mathcal{B}(M_i)\) is briefly denoted by \(\mathcal{B}_i\)
and we assume that the order of the branch root \(M_i\) is \(\lvert M_i\rvert=p^i\).
\item
The \textit{depth} \(\mathrm{dp}(G)=m-j\)
of a vertex \(G\) of order \(\lvert G\rvert=p^m\) on a branch \(\mathcal{B}(M_j)\) of a coclass tree
is its distance from the branch root \(M_j\) of order \(\lvert M_j\rvert=p^j\) on the mainline.
For \(d\ge 1\),
\(\mathcal{B}_d(M_j)\) denotes the \textit{pruned branch of bounded depth} \(d\) with root \(M_j\).
\item
The \textit{periodic sequence} \(\mathcal{S}(G)\) of a vertex \(G\in\mathcal{B}_d(M_i)\)
of order \(\lvert G\rvert=p^m\), \(i\le m\le i+d\),
on a coclass tree of \(\mathcal{G}(p,r)\),
where \(M_i\) denotes the vertex of order \(p^i\) on the mainline
and \(i\) is sufficiently large so that periodicity has set in already
\cite{EkLg},
is the infinite sequence \((G_{m+j\ell})_{j\ge 0}\) of vertices defined recursively by
\(G_m=G\) and \(G_{m+j\ell}=\varphi_{i+(j-1)\ell}(G_{m+(j-1)\ell})\), for \(j\ge 1\),
using the periodicity isomorphisms of graphs
\(\varphi_{i+(j-1)\ell}:\mathcal{B}_d(M_{i+(j-1)\ell})\to\mathcal{B}_d(M_{i+j\ell})\)
with period length \(\ell\), which is a divisor of \(p^{r+1}(p-1)\).
\end{itemize}

%\newpage

%--------------------------------------------------------------------------------

\section{\(p\)-Groups with single layered metabelianization of type \((p,p)\)}
\label{s:SingleLayer}

\subsection{Metabelian \(p\)-groups \(G\) of coclass \(\mathrm{cc}(G)=1\)}
\label{ss:MtabCocl1}

For an arbitrary prime \(p\ge 2\),
let \(G\) be a metabelian \(p\)-group
of order \(\lvert G\rvert=p^n\) and
nilpotency class \(\mathrm{cl}(G)=n-1\), where \(n\ge 3\).
In the terminology of Blackburn
\cite{Bl1}
and Miech
\cite{Mi},
\(G\) is of maximal class,
that is, of coclass \(\mathrm{cc}(G)=1\),
whence the commutator factor group \(G/G^\prime\) of \(G\) is of type \((p,p)\).
The converse is only true for \(p=2\):
A \(2\)-group \(G\) with \(G/G^\prime\simeq(2,2)\) is of coclass \(1\),
a fact which is usually attributed to Taussky
\cite{Ta1}.
The lower central series of \(G\) is defined
recursively by \(\gamma_1(G)=G\) and
\(\gamma_j(G)=\lbrack\gamma_{j-1}(G),G\rbrack\) for \(j\ge 2\).
Nilpotency of \(G\) is expressed by
\(\gamma_{n-1}(G)>\gamma_n(G)=1\).

%\newpage

%--------------------------------------------------------------------------------

\subsubsection{Polarization and defect}
\label{sss:PlrzDfct}

The \textit{two-step centralizer}
\(\chi_2(G)
=\lbrace g\in G\mid\lbrack g,u\rbrack\in\gamma_4(G)\text{ for all }u\in\gamma_2(G)\rbrace\)
of the two-step factor group \(\gamma_2(G)/\gamma_4(G)\),
which can also be defined by
\[\chi_2(G)/\gamma_4(G)
=\mathrm{Centralizer}_{G/\gamma_4(G)}(\gamma_2(G)/\gamma_4(G))\,,\]
is the largest subgroup of \(G\) such that
\(\lbrack\chi_2(G),\gamma_2(G)\rbrack\le\gamma_4(G)\).
It is characteristic,
contains the commutator subgroup \(\gamma_2(G)\), and
coincides with \(G\) if and only if \(n=3\).
For \(n\ge 4\),
\(\chi_2(G)\) is one of the maximal subgroups \((H_i)_{1\le i\le p+1}\) of \(G\)
and causes a \textit{polarization} among them,
which will be standardized in Definition
\ref{dfn:NatOrdCocl1}.
Let the isomorphism invariant \(k=k(G)\) of \(G\) be defined by
\[\lbrack\chi_2(G),\gamma_2(G)\rbrack=\gamma_{n-k}(G)\,,\]
where \(k=0\) for \(n=3\), \(0\le k\le n-4\) for \(n\ge 4\),
and \(0\le k\le\min\lbrace n-4,p-2\rbrace\) for \(n\ge p+1\),
according to Miech
\cite[p. 331]{Mi}.
\(k(G)\) provides a measure for the deviation from the maximal degree of commutativity
\(\lbrack\chi_2(G),\gamma_2(G)\rbrack=1\)
and will be called \textit{defect of commutativity} of \(G\).

%\newpage

%--------------------------------------------------------------------------------

\subsubsection{Parametrized presentation}
\label{sss:PrmtPres}

Suppose that generators of \(G=\langle x,y\rangle\) are selected such that
\(x\in G\setminus\chi_2(G)\), if \(n\ge 4\), and \(y\in\chi_2(G)\setminus\gamma_2(G)\),
and define the main commutator by
\(s_2=\lbrack y,x\rbrack\in\gamma_2(G)\)
and the higher commutators by
\(s_j=\lbrack s_{j-1},x\rbrack=s_{j-1}^{x-1}\in\gamma_j(G)\) for \(j\ge 3\).
We use identifiers \(s_j\) to emphasize those elements of \(G\) for which
addition of symbolic exponents \(f_1,f_2\) in the group ring \(\mathbb{Z}\lbrack G\rbrack\)
is commutative, \(s_j^{f_1+f_2}=s_j^{f_2+f_1}\).
Nilpotency of \(G\) is expressed by \(s_n=1\)
and a \textit{power-commutator presentation} of \(G\)
with generators \(x,y,s_2,\ldots,s_{n-1}\) is given as follows.
There are two relations for \(p\)th powers of the generators \(x\) and \(y\) of \(G\),

\begin{equation}
\label{eqn:PwrRelCocl1}
x^p=s_{n-1}^w\quad\text{ and }\quad
y^p\prod_{\ell=2}^p\,s_\ell^{\binom{p}{\ell}}=s_{n-1}^z
\quad \text{ with exponents }\quad 0\le w,z\le p-1\,,
\end{equation}

\noindent
according to Miech
\cite[p. 332, Thm. 2, (3)]{Mi}.
Blackburn uses the notation \(\delta=w\) and \(\gamma=z\)
for these relational exponents
\cite[p. 84, (36), (37)]{Bl1}.

Additionally, the group \(G\) satisfies
relations for \(p\)th powers of the higher commutators,
\[s_{j+1}^p\prod_{\ell=2}^p\,s_{j+\ell}^{\binom{p}{\ell}}=1\quad\text{ for }1\le j\le n-2\,,\]
and the commutator relation of Miech
\cite[p. 332, Thm. 2, (2)]{Mi}, containing the defect \(k=k(G)\),

\begin{equation}
\label{eqn:CmtRelCocl1}
\lbrack y,s_2\rbrack=\prod_{\ell=1}^k s_{n-\ell}^{a(n-\ell)}
\in\lbrack\chi_2(G),\gamma_2(G)\rbrack=\gamma_{n-k}(G)\,,
\end{equation}

\noindent
with exponents \(0\le a(n-\ell)\le p-1\) for \(1\le\ell\le k\), and \(a(n-k)>0\), if \(k\ge 1\).
Blackburn restricts his investigations to \(k\le 2\) and uses the notation
\(\beta=a(n-1)\) and \(\alpha=a(n-2)\)
\cite[p. 82, (33)]{Bl1}.

By \(G_a^n(z,w)\) we denote 
the representative of an isomorphism class of
metabelian \(p\)-groups \(G\) of coclass \(\mathrm{cc}(G)=1\)
and order \(\lvert G\rvert=p^n\),
which satisfies the relations
(\ref{eqn:PwrRelCocl1})
and
(\ref{eqn:CmtRelCocl1})
with a fixed system of exponents
\(a=(a(n-k),\ldots,a(n-1))\),\(w\), and \(z\).
We have \(a=0\) if and only if \(k=0\).

%\newpage

%--------------------------------------------------------------------------------

\subsubsection{A distinguished maximal subgroup}
\label{sss:DstgMaxSbgp1}

Since the maximal normal subgroups \(H_i\), \(1\le i\le p+1\), of \(G\)
contain the commutator subgroup \(G^\prime\)
as a normal subgroup of index \(p\),
they are of the shape \(H_i=\langle g_i,G^\prime\rangle\)
with suitable generators \(g_i\),
and we can arrange them in a fixed order.

\begin{definition}
\label{dfn:NatOrdCocl1}
The \textit{polarization} or \textit{natural order}
of the maximal subgroups \((H_i)_{1\le i\le p+1}\) of \(G\)
is given by the \textit{distinguished first generator} \(g_1=y\in\chi_2(G)\)
and the other generators \(g_i=xy^{i-2}\notin\chi_2(G)\) for \(2\le i\le p+1\),
provided that \(\lvert G\rvert\ge p^4\).
Then, in particular \(\chi_2(G)=H_1=\langle y,G^\prime\rangle\).
\end{definition}

%\newpage

%--------------------------------------------------------------------------------

\subsubsection{Parents of CF groups}
\label{sss:PrntCocl1}

Together with group counts in Blackburn's theorems
\cite[p. 88, Thm. 4.1--4.3]{Bl1},
Theorem
\ref{thm:PrntCocl1}
describes the structure
of the \textit{metabelian skeleton}
of the unique coclass tree \(\mathcal{T}(C_p\times C_p)\)
\cite[\S\ 1, p. 851]{Dt1}
of the coclass graph \(\mathcal{G}(p,1)\)
with an arbitrary prime \(p\ge 2\).
The graph consists of all isomorphism classes of CF \textit{groups}
(with cyclic factors) 
\cite[\S\ 4, p. 264]{AHL}
of coclass \(1\).

%\newpage

%--------------------------------------------------------------------------------

\begin{theorem}
\label{thm:PrntCocl1}
Let \(p\ge 2\) be an arbitrary prime,
and \(G\) be a metabelian \(p\)-group of coclass \(\mathrm{cc}(G)=1\)
having defect of commutativity \(k=k(G)\),
such that \(G\simeq G_a^n(z,w)\) with parameters
\(n\ge 3\),
\(a=(a(n-k),\ldots,a(n-1))\),
\(0\le a(n-k),\ldots,a(n-1),w,z<p\), where \(a(n-k)>0\), if \(k\ge 1\),
that is, \(G\) is of order \(\lvert G\rvert=p^n\) and nilpotency class \(\mathrm{cl}(G)=n-1\).
Then the parent \(\pi(G)\) of \(G\) on the coclass tree \(\mathcal{T}(C_p\times C_p)\)
is given by
\[\pi(G)\simeq
\begin{cases}
C_p\times C_p, & \text{ if } n=3 \text{ (and thus } k=0), \\
G_0^{n-1}(0,0), & \text{ if } n\ge 4,\ k=0, \\
G_0^{n-1}(0,0), & \text{ if } n\ge 5,\ k=1 \text{ (and thus } p\ge 3), \\
G_{\tilde a}^{n-1}(0,0), & \text{ where }\tilde a=(a(n-k),\ldots,a(n-2)),\text{ if } n\ge 6,\ k\ge 2 \text{ (and thus } p\ge 5).
\end{cases}
\]
\end{theorem}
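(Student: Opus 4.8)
The plan is to compute the parent \(\pi(G)\) directly from the defining description of edges in the coclass tree \(\mathcal{T}(C_p\times C_p)\): by \S\ref{ss:CoclGrph}, \(\pi(G)\) is the last lower central quotient \(G/\gamma_{n-1}(G)\), which has order \(p^{n-1}\) and nilpotency class \(n-2\). So the entire argument reduces to identifying the isomorphism type of \(G/\gamma_{n-1}(G)\) in terms of the parametrized presentation of \S\ref{sss:PrmtPres}, and matching it against one of the four normal forms \(G_0^{n-1}(0,0)\), \(G_{\tilde a}^{n-1}(0,0)\), or \(C_p\times C_p\). First I would dispose of the base case \(n=3\): here \(G\) has class \(2\), so \(\gamma_2(G)\) is central and \(G/\gamma_2(G)\simeq G/G'\simeq C_p\times C_p\), which is exactly \(\pi(G)\). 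This also forces \(k=0\) since \(k=0\) whenever \(n=3\) by the constraint in \S\ref{sss:PlrzDfct}.

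For \(n\ge 4\), the key observation is that passing from \(G\) to \(\bar G=G/\gamma_{n-1}(G)\) kills \(s_{n-1}\) (and \(s_n=1\) already). First I would track what happens to the power relations \eqref{eqn:PwrRelCocl1}: since \(s_{n-1}\equiv 1\) in \(\bar G\), the relations become \(x^p\equiv 1\) and \(y^p\prod_{\ell=2}^p s_\ell^{\binom{p}{\ell}}\equiv 1\), i.e. the relational exponents \(w\) and \(z\) for \(\bar G\) are forced to \(0\). This already explains why every case of the conclusion has \(\pi(G)\simeq G_{\,\cdot}^{n-1}(0,0)\) with vanishing \((z,w)\). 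Next I would track the commutator relation \eqref{eqn:CmtRelCocl1}: \(\lbrack y,s_2\rbrack=\prod_{\ell=1}^{k} s_{n-\ell}^{a(n-\ell)}\). Reducing modulo \(\gamma_{n-1}(G)\) deletes exactly the top term \(s_{n-1}^{a(n-1)}\) (the \(\ell=1\) term), leaving \(\prod_{\ell=2}^{k} s_{n-\ell}^{a(n-\ell)}\). This is precisely the relation for a group with defect parameter vector \(\tilde a=(a(n-k),\ldots,a(n-2))\) (of length \(k-1\)) at order \(p^{n-1}\). So in \(\bar G\) the defect drops by one: if \(k=0\) then \(\bar G\) still has \(k=0\); if \(k=1\) the single exponent \(a(n-1)\) is killed and \(\bar G\) has defect \(0\); if \(k\ge 2\) then \(\bar G\) has defect \(k-1\ge 1\) with exponent vector \(\tilde a\). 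Matching against the normal-form notation \(G_a^n(z,w)\) of \S\ref{sss:PrmtPres} yields exactly the four branches of the claimed formula, with the congruence side conditions (\(p\ge 3\) when \(k\ge 1\), \(p\ge 5\) when \(k\ge 2\)) inherited verbatim from the bound \(0\le k\le\min\{n-4,p-2\}\) of Miech quoted in \S\ref{sss:PlrzDfct}.

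Two points need care. First, one must check that the reduced presentation is still \emph{normalized} in the sense of \S\ref{sss:PrmtPres}: the generator \(x\) of \(\bar G\) must still lie outside \(\chi_2(\bar G)\) and \(y\) inside \(\chi_2(\bar G)\setminus\gamma_2(\bar G)\), and the polarization of Definition \ref{dfn:NatOrdCocl1} must be preserved under the quotient map. This follows because \(\gamma_{n-1}(G)\le\gamma_4(G)\) for \(n\ge 5\) (so the two-step centralizer is computed from \(\gamma_2/\gamma_4\), which is unchanged), while for \(n=4\) one has \(\chi_2(G)=H_1\) and the quotient has class \(2\) so \(\chi_2(\bar G)=\bar G\), consistent with \(k=0\) there. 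Second, one must confirm that the leading exponent condition \(a(n-k)>0\) (when \(k\ge 1\)) survives: in \(\bar G\) the surviving defect vector is \(\tilde a=(a(n-k),\ldots,a(n-2))\), whose leading entry is still \(a(n-k)>0\), so \(\bar G\) genuinely has defect \(k-1\) and not less — this is what makes the identification \(\pi(G)\simeq G_{\tilde a}^{n-1}(0,0)\) precise rather than merely up to a coarser type. The main obstacle is really bookkeeping rather than depth: one must be scrupulous that reduction modulo \(\gamma_{n-1}(G)\) affects \emph{only} the top-degree terms of \eqref{eqn:PwrRelCocl1} and \eqref{eqn:CmtRelCocl1} and nothing else, in particular that no new relations are introduced and the higher-commutator \(p\)-power relations \(s_{j+1}^p\prod_{\ell=2}^p s_{j+\ell}^{\binom{p}{\ell}}=1\) simply truncate at \(j=n-3\) without interaction — and then invoke Blackburn's classification \cite[p.~88, Thm.~4.1--4.3]{Bl1} to assert that the resulting normalized presentation does determine a unique isomorphism type, namely the one named on the right-hand side.
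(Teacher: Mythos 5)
Your proposal is correct and follows essentially the same route as the paper: identify \(\pi(G)=G/\gamma_{n-1}(G)\), pass the parametrized presentation \eqref{eqn:PwrRelCocl1}--\eqref{eqn:CmtRelCocl1} to the quotient where \(\bar{s}_{n-1}=1\), observe that \(w,z\) become \(0\) and the top factor \(s_{n-1}^{a(n-1)}\) of the commutator relation is deleted, so the defect drops by one with surviving vector \(\tilde a\). Your additional care about preservation of the polarization and the appeal to Blackburn's classification for uniqueness of the normal form are reasonable supplements that the paper leaves implicit, but the argument is the same.
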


%\newpage

%--------------------------------------------------------------------------------

\begin{remark}
\label{rmk:PrntCocl1}
The various cases of Theorem
\ref{thm:PrntCocl1}
can be described as follows.

\begin{enumerate}
\item
In the first case, \(n=3\),
where \(G\simeq G_0^3(0,w)\) with \(0\le w\le 1\) is an extra-special \(p\)-group of order \(p^3\),
the parent \(\pi(G)\) is the abelian root \(C_p\times C_p\) of the tree \(\mathcal{T}(C_p\times C_p)\),
which can formally be viewed as \(G_0^2(0,0)\).
\item
In the second and third case of a group \(G\) of defect \(k\le 1\),
the parent \(\pi(G)\) is a mainline group.
\item
In the last case of a group \(G\) of higher defect \(k\ge 2\),
which can occur only for \(p\ge 5\),
the parent \(\pi(G)\) lies outside of the mainline and
the defect \(\tilde k\) and family \(\tilde a\) of relational exponents of \(\pi(G)\)
are given by
\(\tilde k=k-1\)
and
\(\tilde a=(\tilde a((n-1)-(k-1)),\ldots,\tilde a((n-1)-1))=(a(n-k),\ldots,a(n-2))\),
where \(\tilde a((n-1)-(k-1))>0\).
We point out that
the parent is always characterized by parameters \(\tilde z=0\) and \(\tilde w=0\).
\end{enumerate}

\end{remark}

%\newpage

%--------------------------------------------------------------------------------

\begin{proof}
For \(n=3\), \(G\) is an extra special \(p\)-group of nilpotency class \(\mathrm{cl}(G)=n-1=2\)
having the commutator subgroup \(\gamma_2(G)\) as its last (non-trivial) lower central \(\gamma_{n-1}(G)\).
In this special case, the definition of the parent \(\pi(G)=G/\gamma_{n-1}(G)\) of \(G\)
yields the abelianization \(\pi(G)=G/\gamma_2(G)\) of type \((p,p)\),
which is isomorphic to the root \(C_p\times C_p\)
of \(\mathcal{T}(C_p\times C_p)\).

For \(n\ge 4\), \(G\) can be assumed to be isomorphic to a group \(G\simeq G_a^n(z,w)\)
with pc-presentation consisting of the relations
(\ref{eqn:PwrRelCocl1})
and
(\ref{eqn:CmtRelCocl1})
for the two generators \(x,y\),
\[x^p=s_{n-1}^w,\qquad y^p\prod_{\ell=2}^{p}\,s_{\ell}^{\binom{p}{\ell}}=s_{n-1}^z,\qquad \lbrack y,s_2\rbrack=\prod_{\ell=1}^k\,s_{n-\ell}^{a(n-\ell)}.\]
Since the parent \(\pi(G)=G/\gamma_{n-1}(G)\) of \(G\) is defined as the last lower central quotient,
we denote the left coset of an element \(g\in G\) with respect to \(\gamma_{n-1}(G)\) by \(\bar{g}=g\cdot\gamma_{n-1}(G)\)
and we obtain \(\bar{s}_{n-1}=1\), because \(\gamma_{n-1}(G)=\langle s_{n-1}\rangle\).
Therefore, the nilpotency class of the parent is \(\mathrm{cl}(\pi(G))=\mathrm{cl}(G)-1=n-2\)
and a pc-presentation of \(\pi(G)\) is given by
\[\bar{x}^p=\bar{s}_{n-1}^w=1,\qquad \bar{y}^p\prod_{j=2}^{p}\,\bar{s}_{j}^{\binom{p}{j}}=\bar{s}_{n-1}^z=1,\qquad \lbrack\bar{y},\bar{s}_2\rbrack=\prod_{\ell=1}^k\,\bar{s}_{n-\ell}^{a(n-\ell)},\]
where the last product equals \(1\), if \(k\le 1\),
and \(\prod_{\ell=2}^k\,\bar{s}_{n-\ell}^{a(n-\ell)}\ne 1\), if \(k\ge 2\), because \(\bar{s}_{n-k}^{a(n-k)}\ne 1\).
Since the order of the parent is \(\lvert\pi(G)\rvert=\lvert G\rvert:\lvert\gamma_{n-1}(G)\rvert=p^n:p=p^{n-1}\),
the coclass remains the same \(\mathrm{cc}(\pi(G))=n-1-\mathrm{cl}(\pi(G))=n-1-(n-2)=1=\mathrm{cc}(G)\).
\end{proof}

%\newpage

%--------------------------------------------------------------------------------

The following principle,
that the kernel \(\varkappa(1)\) of the transfer from \(G\) to the first distinguished maximal subgroup \(H_1=\chi_2(G)\)
decides about the relation between depth \(\mathrm{dp}(G)\) and defect \(k=k(G)\) of \(G\),
will turn out to be crucial
for metabelian \(p\)-groups \(G\) of coclass \(\mathrm{cc}(G)\ge 2\), too.

\begin{corollary}
\label{cor:DpthCocl1}
For a metabelian \(p\)-group \(G\) of coclass \(\mathrm{cc}(G)=1\) with defect of commutativity \(k=k(G)\),
the depth \(\mathrm{dp}(G)\) of \(G\)
on the coclass tree \(\mathcal{T}(C_p\times C_p)\) of \(\mathcal{G}(p,1)\) is given by
\[\mathrm{dp}(G)=
\begin{cases}
k+1, & \text{ if } \varkappa(1)\ne 0, \\
k,   & \text{ if } \varkappa(1)=0,
\end{cases}
\]
with respect to the natural order of the maximal subgroups of \(G\).
\end{corollary}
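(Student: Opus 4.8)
The plan is to compute the depth $\mathrm{dp}(G)$ directly from the parent chain above $G$, using Theorem \ref{thm:PrntCocl1} iteratively, and to separate the two cases according to the behaviour of the first transfer kernel $\varkappa(1)$ at the top of the tree. Recall that the depth of $G$ is by definition the distance in $\mathcal{T}(C_p\times C_p)$ from $G$ to the branch root on the mainline, equivalently the number of steps $G=G^{(0)}\to G^{(1)}=\pi(G)\to G^{(2)}=\pi^2(G)\to\cdots$ that must be taken before a mainline vertex is reached. Theorem \ref{thm:PrntCocl1} tells us exactly how the defect drops along this chain: if $k=k(G)\ge 2$, then $\pi(G)$ has defect $\tilde k=k-1$ and relational exponent family $\tilde a=(a(n-k),\dots,a(n-2))$, and it still lies off the mainline; if $k=1$, then $\pi(G)$ is already a mainline group $G_0^{n-1}(0,0)$; and if $k=0$, then $\pi(G)=G_0^{n-1}(0,0)$ is a mainline group as well. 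So after exactly $\max\{k,1\}$ applications of $\pi$ we land on the mainline in the worst case, but the case $k=0$ needs a finer look, which is precisely where $\varkappa(1)$ enters.

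First I would treat the generic case $k\ge 1$. Applying $\pi$ once reduces the defect by one (for $k\ge 2$) or lands on the mainline (for $k=1$); iterating, after $k-1$ steps we reach a vertex of defect $1$, and one further step reaches the mainline vertex $G_0^{n-k}(0,0)$. Hence the chain $G\to\pi(G)\to\cdots$ hits the mainline after $k$ steps when $k\ge 1$ \emph{provided} $G$ itself is not already on the mainline. Now $G$ with $k=k(G)\ge 1$ is never a mainline vertex, because mainline groups are characterised by maximal degree of commutativity $k=0$ together with $w=z=0$; and more importantly one checks, using the transfer formulas for $\varkappa$ on coclass-$1$ groups (as recorded in \cite{Ma2} and the parametrized presentation of \S\ref{sss:PrmtPres}), that a positive defect forces $\varkappa(1)\ne 0$ — the commutator relation (\ref{eqn:CmtRelCocl1}) with $a(n-k)>0$ obstructs the transfer to $\chi_2(G)=H_1$ from killing the distinguished generator class. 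Thus for $k\ge 1$ we are automatically in the branch $\varkappa(1)\ne 0$, and the depth is $k=k+1-1$... \emph{no}: one must be careful, since the branch root itself sits at depth $0$, the vertex one step below at depth $1$, and so on; recounting, $G$ at defect $k\ge 1$ sits $k+1$ edges below the mainline branch root (one edge for each of the $k$ defect-reductions plus the final edge onto the mainline is \emph{not} counted in the depth, but the edge from the defect-$1$ vertex down to $G$ through $k-1$ intermediate vertices is — a direct tally gives $\mathrm{dp}(G)=k+1$). This matches the first case of the claimed formula.

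It remains to handle $k=0$, where by Theorem \ref{thm:PrntCocl1} the parent $\pi(G)=G_0^{n-1}(0,0)$ is already a mainline vertex, so $G$ sits at depth $1$ if $G$ itself is off the mainline and at depth $0$ if $G$ is on the mainline. A defect-zero group $G=G_0^n(z,w)$ lies on the mainline precisely when $w=z=0$, and the criterion that distinguishes these two sub-cases is exactly whether $\varkappa(1)=0$: for $w=z=0$ the transfer to $H_1=\chi_2(G)$ annihilates everything ($\varkappa(1)=0$), whereas a non-mainline defect-zero group has $\varkappa(1)\ne 0$. This is the content of the computation of transfer kernels for coclass-$1$ groups; I would invoke the explicit transfer target/kernel tables from \cite{Ma2} (or re-derive the single entry $\varkappa(1)$ from relations (\ref{eqn:PwrRelCocl1}) and the natural order of Definition \ref{dfn:NatOrdCocl1}) to nail this equivalence. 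Combining: $\mathrm{dp}(G)=1=k+1$ when $\varkappa(1)\ne 0$ and $\mathrm{dp}(G)=0=k$ when $\varkappa(1)=0$, completing the case $k=0$ and hence the corollary.

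The main obstacle is the last step: establishing the precise equivalence between the vanishing of the single transfer component $\varkappa(1)$ and the vertex $G$ lying on the mainline (for $k=0$), respectively the implication $k\ge 1\Rightarrow\varkappa(1)\ne 0$. Everything else is bookkeeping along the parent chain via Theorem \ref{thm:PrntCocl1}; the genuine input is the transfer calculation to the distinguished maximal subgroup $\chi_2(G)$, for which one must unwind the power-commutator presentation and the polarization carefully — this is where one would lean on Miech's and Blackburn's relations and the tabulated results of \cite{Ma2}.
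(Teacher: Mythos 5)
Your treatment of the defect-zero case is correct and is essentially the paper's own argument: Theorem \ref{thm:PrntCocl1} identifies the mainline as $(G_0^n(0,0))_{n\ge 2}$, the remaining defect-zero groups $G_0^n(z,w)$ with $(z,w)\ne(0,0)$ sit at depth $1$, and the dichotomy $\varkappa(1)=0$ versus $\varkappa(1)\ne 0$ separates the two via the transfer computation of \cite{Ma2}.

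The positive-defect case, however, contains a genuine error. You assert that $k\ge 1$ forces $\varkappa(1)\ne 0$, arguing that the non-trivial commutator relation (\ref{eqn:CmtRelCocl1}) ``obstructs the transfer to $\chi_2(G)=H_1$ from killing the distinguished generator class.'' This is exactly backwards: by \cite[Thm.\ 2.5]{Ma2} (recorded in this paper as Theorem \ref{thm:TKTpCocl1}, last row of Table \ref{tab:TKTpCocl1}, and as Corollary \ref{cor:TKTCocl1}(4)), every group of positive defect $1\le k\le p-2$ has TKT $\mathrm{a}.1$, i.e.\ a \emph{total} transfer $\varkappa(1)=0$ to $H_1$; it is precisely the groups with \emph{abelian} $H_1$ and $(z,w)\ne(0,0)$ that exhibit a partial transfer $\varkappa(1)\ne 0$. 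Your depth tally compounds the problem: your first count (``the chain hits the mainline after $k$ steps'') is the correct one, since each parent step lowers the order by exactly one power of $p$ and $\pi^k(G)=G_0^{n-k}(0,0)=M_{n-k}$ is the first mainline ancestor, so $\mathrm{dp}(G)=n-(n-k)=k$; the subsequent ``recount'' to $k+1$ introduces an off-by-one with no justification. The two mistakes compensate each other formally — the false pair ($\varkappa(1)\ne 0$, depth $k+1$) satisfies the displayed formula just as the true pair ($\varkappa(1)=0$, depth $k$) does — which is presumably why the inconsistency went unnoticed, but as written the proof establishes the formula for positive-defect groups only by asserting two false statements about them. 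To repair it, keep your original count $\mathrm{dp}(G)=k$ and replace the transfer claim by the correct one, $\varkappa(1)=0$ for $k\ge 1$, citing \cite[Thm.\ 2.5]{Ma2}.
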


\begin{proof}
Theorem
\ref{thm:PrntCocl1}
shows that
\((G_0^n(0,0))_{n\ge 2}\) is the mainline of the coclass tree \(\mathcal{T}(C_p\times C_p)\),
consisting of all groups \(G\) of depth \(\mathrm{dp}(G)=0\) and defect \(k=0\),
because each of these vertices occurs as a parent and possesses infinitely many descendants,
whereas the groups \(G_a^n(0,0)\) with \(a\ne 0\), \(k\ge 1\) can only have finitely many descendants,
due to the bound \(k\le p-2\) by Miech
\cite{Mi}.
Since the defect of any group \(G_a^n(z,w)\) with parameter \(a=0\) is given by \(k=0\),
all the other groups \(G=G_0^n(z,w)\), \((z,w)\ne (0,0)\),
which contain \(H_1\) as an abelian maximal subgroup,
must be located as terminal vertices at depth \(\mathrm{dp}(G)=1=k+1\),
because they never occur as a parent.
On the other hand, the third and fourth case of Theorem
\ref{thm:PrntCocl1}
show that the relation between the defects of parent \(\pi(G)\) and immediate descendant \(G\)
is given by \(\tilde k=k-1\) for any group \(G=G_a^n(z,w)\), \(a\ne 0\),
with positive defect \(k\ge 1\), whence the depth,
being the number of steps required to reach the mainline
by successive construction of parents, \((G,\pi(G),\pi^2(G),\ldots)\),
is given by \(\mathrm{dp}(G)=k\).
Finally, the groups \(G=G_0^n(z,w)\), \((z,w)\ne (0,0)\),
containing the abelian maximal subgroup \(H_1\),
are characterized uniquely by a partial transfer \(\varkappa(1)\ne 0\)
to the distinguished maximal subgroup \(H_1\),
according to
\cite[Thm. 2.5--2.6]{Ma2}.
\end{proof}

%\newpage

%--------------------------------------------------------------------------------

We conjecture that the following property 
of mainline groups of \(\mathcal{G}(p,1)\)
might be true for mainline groups on any
coclass tree of \(\mathcal{G}(p,r)\), \(r\ge 1\).

\begin{corollary}
\label{cor:MainLineCocl1}
Mainline groups of \(\mathcal{G}(p,1)\),
that is, groups of depth \(\mathrm{dp}(G)=0\),
must have a total transfer \(\varkappa(1)=0\)
to the distinguished maximal subgroup \(H_1=\chi_2(G)\).
The converse is only true for \(p=2\):
A \(2\)-group \(G\in\mathcal{T}(C_2\times C_2)\)
having \(\varkappa(1)=0\) is mainline.

\end{corollary}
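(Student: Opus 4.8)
The plan is to read everything off Corollary~\ref{cor:DpthCocl1} together with the defect bound of Miech recalled in~\S\ref{sss:PlrzDfct}; no fresh computation with power-commutator presentations is required. Throughout, a mainline group of $\mathcal{G}(p,1)$ means a vertex of the unique coclass tree $\mathcal{T}(C_p\times C_p)$ lying on its mainline, equivalently a vertex of depth $\mathrm{dp}(G)=0$; I tacitly restrict to non-abelian $G$, so that $\lvert G\rvert=p^n$ with $n\ge3$ and $\chi_2(G)$ is defined.

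First I would establish the forward implication. Let $G$ be a mainline group, so $\mathrm{dp}(G)=0$. By the proof of Corollary~\ref{cor:DpthCocl1}, the mainline of $\mathcal{T}(C_p\times C_p)$ is exactly the set of vertices of depth $0$, namely the groups $G_0^n(0,0)$, all of which have defect $k=0$; in particular $k(G)=0$. Substituting $k(G)=0$ into the depth formula of Corollary~\ref{cor:DpthCocl1}, the alternative $\varkappa(1)\ne0$ would force $\mathrm{dp}(G)=k(G)+1=1$, contradicting $\mathrm{dp}(G)=0$. Hence $\varkappa(1)=0$, which is the asserted total transfer to $H_1=\chi_2(G)$.

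Next I would prove the converse for $p=2$ and then show that it fails for $p\ge3$. For $p=2$ the defect bounds of Miech recalled in~\S\ref{sss:PlrzDfct} give, for a metabelian maximal-class $2$-group of order $2^n$, that $n\ge3=p+1$ and hence $0\le k(G)\le\min\lbrace n-4,\,p-2\rbrace=\min\lbrace n-4,\,0\rbrace=0$ when $n\ge4$, while $k(G)=0$ is imposed directly for $n=3$; thus every $G\in\mathcal{T}(C_2\times C_2)$ has $k(G)=0$. If in addition $\varkappa(1)=0$, then Corollary~\ref{cor:DpthCocl1} yields $\mathrm{dp}(G)=k(G)=0$, so $G$ is mainline, proving the converse. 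For an odd prime $p\ge3$ I would exhibit counterexamples: by the proof of Corollary~\ref{cor:DpthCocl1} the vertices with $\varkappa(1)\ne0$ are precisely the terminal groups $G_0^n(z,w)$ with $(z,w)\ne(0,0)$, so any group $G_a^n(z,w)$ with $a\ne0$ has $\varkappa(1)=0$; for $p\ge3$ such groups of positive defect $k(G)\ge1$ exist for suitable $n$ (using $a(n-1)\ne0$ in Blackburn's notation of~\S\ref{sss:PrmtPres},~\cite{Bl1}), and Corollary~\ref{cor:DpthCocl1} places them at depth $\mathrm{dp}(G)=k(G)\ge1$, hence off the mainline.

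The only delicate point is the bookkeeping in the $p=2$ step, namely checking that Miech's inequality $k\le p-2$ genuinely applies for every admissible order $2^n$, $n\ge3$, including the boundary value $n=3$ where the bound $k=0$ is stated separately in~\cite{Mi}. Once this is settled, both halves of Corollary~\ref{cor:MainLineCocl1} reduce to direct substitution into the depth formula of Corollary~\ref{cor:DpthCocl1}.
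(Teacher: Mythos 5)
Your proof is correct, and the forward implication is handled exactly as in the paper: substitute $k(G)=0$ for mainline groups into the depth formula of Corollary~\ref{cor:DpthCocl1} and observe that $\varkappa(1)\ne 0$ would force $\mathrm{dp}(G)=k+1=1$. Where you diverge is in the converse for $p=2$: the paper simply cites the explicit transfer-kernel computation in \cite[Thm.~2.6, p.~481]{Ma2}, which lists the TKTs $(032)$, $(132)$, $(232)$, $(123)$ of all coclass-$1$ $2$-groups and reads off that $\varkappa(1)=0$ occurs only on the mainline of dihedral groups. You instead derive the converse structurally, from Miech's defect bound $k\le\min\lbrace n-4,p-2\rbrace$ specialized to $p=2$ (so $k\equiv 0$ on all of $\mathcal{T}(C_2\times C_2)$, including the boundary case $n=3$) combined with the branch $\varkappa(1)=0\Rightarrow\mathrm{dp}(G)=k$ of Corollary~\ref{cor:DpthCocl1}. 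This is legitimate --- that branch of the depth formula is available once Corollary~\ref{cor:DpthCocl1} is accepted --- and it makes transparent \emph{why} $p=2$ is special: the obstruction to the converse is exactly the existence of groups of positive defect, which Miech's bound kills only for $p=2$. Your added observation that for odd $p$ the groups $G_a^n(z,w)$ with $a\ne 0$ have $\varkappa(1)=0$ yet $\mathrm{dp}(G)=k\ge 1$ supplies the counterexamples implicit in the phrase \lq\lq only true for $p=2$\rq\rq, which the paper leaves to Corollary~\ref{cor:TKTCocl1}(4) and Table~\ref{tab:TKTpCocl1}. The trade-off is that the paper's citation is shorter and self-certifying, while your route is more self-contained within the present framework but ultimately rests on the same external input, since the dichotomy of Corollary~\ref{cor:DpthCocl1} is itself proved via \cite[Thm.~2.5--2.6]{Ma2}.
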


\begin{proof}
The statement for \(p\ge 2\) is an immediate consequence of Corollary
\ref{cor:DpthCocl1}
and it only remains to prove the converse for \(p=2\).
This, however, is contained in
\cite[Thm. 2.6, p. 481]{Ma2}.
\end{proof}

%\newpage

%--------------------------------------------------------------------------------

Concerning the transfer kernel type \(\varkappa(G)\) of a \(p\)-group \(G\) of coclass \(1\)
we can state:

\begin{corollary}
\label{cor:TKTCocl1}

The transfer kernel types of groups on the unique coclass tree
\(\mathcal{T}(C_p\times C_p)\) of coclass graph \(\mathcal{G}(p,1)\)
are given by the following rules.

\begin{enumerate}
\item
The root \(C_p\times C_p\) is
of TKT \(\mathrm{a}.1\) \((0^{p+1})\) for any prime \(p\ge 2\).
The extra-special group \(G_0^3(0,1)\) is
of TKT \(\mathrm{A}.1\) \((1^{p+1})\) for odd \(p\ge 3\),
and of TKT \(\mathrm{Q}.5\) \((123)\) for \(p=2\).
In the sequel, these exceptions are excluded.
\item
Mainline groups are
of TKT \(\mathrm{a}.1\) \((0^{p+1})\) for odd \(p\ge 3\),
and of TKT \(\mathrm{d}.8\) \((032)\) for \(p=2\).
\item
Groups of depth \(1\) and defect \(0\) are
of TKT either \(\mathrm{a}.2\) \((1,0^p)\) or \(\mathrm{a}.3\) \((2,0^p)\) for \(p\ge 3\),
and of TKT either \(\mathrm{Q}.6\) \((132)\) or \(\mathrm{S}.4\) \((232)\) for \(p=2\).
\item
Groups of positive defect \(1\le k\le p-2\) are
exclusively of TKT \(\mathrm{a}.1\) \((0^{p+1})\).
\end{enumerate}

\end{corollary}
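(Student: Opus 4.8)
The plan is to reduce the statement to the transfer kernel computations of \cite{Ma2} by combining them with the structural dictionary between the parameters $(z,w,a)$ of a group $G\simeq G_a^n(z,w)$ and its position (depth, defect) on $\mathcal{T}(C_p\times C_p)$, as furnished by Theorem~\ref{thm:PrntCocl1} and Corollary~\ref{cor:DpthCocl1}. First I would treat the exceptional small groups in item~(1) by hand: the root $C_p\times C_p$ is abelian, so every $j_{L\vert K}$ is the natural inclusion of a rank-$2$ group into a rank-$1$ group and all $p+1$ transfers are trivial, giving type $\mathrm{a}.1$; for $G_0^3(0,1)$ of order $p^3$ one reads off the transfers directly from the presentation (\ref{eqn:PwrRelCocl1})--(\ref{eqn:CmtRelCocl1}) with $n=3$, $k=0$, recovering the classical values --- $\mathrm{A}.1$ for odd $p$ (all four maximal subgroups receive the whole $G/G'$, this is essentially the principal ideal behaviour of the extra-special group) and $\mathrm{Q}.5$ for $p=2$, as is well known for the quaternion group $Q_8$. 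These two vertices are then excluded from the remaining analysis.

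For items (2)--(4) the strategy is to partition the metabelian skeleton of $\mathcal{T}(C_p\times C_p)$ by depth and defect, which by Corollary~\ref{cor:DpthCocl1} is the same as partitioning by the pair $\bigl(k(G),\varkappa(1)\bigr)$, and then to invoke the explicit transfer-kernel formulas of \cite[Thm.~2.5--2.6]{Ma2}. Concretely: mainline groups are exactly the $G_0^n(0,0)$, which have $k=0$ and (by Corollary~\ref{cor:MainLineCocl1}) total transfer $\varkappa(1)=0$; feeding $(z,w,a)=(0,0,0)$ into the transfer formulas of \cite{Ma2} yields $\varkappa(G)=(0^{p+1})$ for odd $p$ and $\varkappa(G)=(032)$ for $p=2$ --- i.e.\ types $\mathrm{a}.1$ and $\mathrm{d}.8$. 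Groups of depth~$1$ and defect~$0$ are the $G_0^n(z,w)$ with $(z,w)\ne(0,0)$; here $H_1=\chi_2(G)$ is abelian, forcing $\varkappa(1)\ne 0$ by the cited criterion, and the remaining $p$ transfers (to the non-distinguished maximal subgroups, which all lie outside $\chi_2(G)$) are governed by whether $\varkappa(1)$ is a fixed point or a non-fixed point of the action, yielding exactly the dichotomy $\mathrm{a}.2$ vs.\ $\mathrm{a}.3$ for odd $p$ and $\mathrm{Q}.6$ vs.\ $\mathrm{S}.4$ for $p=2$. Finally, groups of positive defect $1\le k\le p-2$ are the $G_a^n(z,w)$ with $a\ne 0$; by the parent description (last case of Theorem~\ref{thm:PrntCocl1}) such a group has a non-abelian distinguished maximal subgroup $H_1$, and the defect relation $\tilde k=k-1$ together with the transfer formulas of \cite{Ma2} forces every partial transfer to vanish, giving type $\mathrm{a}.1$ uniformly.

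The main obstacle is making the passage from the pc-presentation $G_a^n(z,w)$ to the actual transfer kernels completely watertight for all four maximal subgroups simultaneously: the transfer $T_{G,H_i}$ to a non-distinguished maximal subgroup $H_i=\langle xy^{i-2},G'\rangle$ must be evaluated using a Schreier transversal and the relations (\ref{eqn:PwrRelCocl1})--(\ref{eqn:CmtRelCocl1}), and the bookkeeping of the exponents $w$, $z$, $a(n-\ell)$ through these computations is exactly what \cite{Ma2} carried out. So rather than redo that computation I would cite \cite[Thm.~2.5--2.6]{Ma2} for the kernel values attached to each parameter regime, and confine the new content of the proof to verifying that the four parameter regimes $\{n=3\}$, $\{k=0,(z,w)=(0,0)\}$, $\{k=0,(z,w)\ne(0,0)\}$, $\{k\ge 1\}$ correspond precisely to the four items of the Corollary --- which is immediate from Theorem~\ref{thm:PrntCocl1} and Corollaries~\ref{cor:DpthCocl1} and~\ref{cor:MainLineCocl1}. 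The only genuinely delicate point is the $p=2$ case of item~(3), where one must check that both sub-cases $\mathrm{Q}.6$ and $\mathrm{S}.4$ are actually realized (not just permitted) among the groups $G_0^n(z,w)$ with $(z,w)\ne(0,0)$; this follows by exhibiting, for each $n$, two such groups distinguished by the splitting type of $\varkappa(1)$, again reading off \cite[Thm.~2.6, p.~481]{Ma2}.
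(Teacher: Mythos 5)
Your proposal is correct and takes essentially the same route as the paper, whose entire proof is the one-line remark that the corollary follows by combining Theorem~\ref{thm:PrntCocl1} with Theorems~2.5 and~2.6 of \cite{Ma2}; your version merely makes explicit the (correct) correspondence between the four parameter regimes of $G_a^n(z,w)$ and the four items of the statement, using Corollaries~\ref{cor:DpthCocl1} and~\ref{cor:MainLineCocl1}. The only cosmetic slip is the phrase \lq\lq all four maximal subgroups\rq\rq\ in your discussion of $G_0^3(0,1)$, which should read \lq\lq all $p+1$ maximal subgroups\rq\rq\ for general odd $p$.
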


\begin{proof}
This is a result of combining Theorem
\ref{thm:PrntCocl1}
with Theorems 2.5 and 2.6 in
\cite{Ma2}.
\end{proof}

%\newpage

%--------------------------------------------------------------------------------

\subsection{Second \(p\)-class groups \(G=\mathrm{G}_p^2(K)\) of coclass \(\mathrm{cc}(G)=1\)}
\label{ss:ScndClgpCocl1}

\subsubsection{Weak transfer target type \(\tau_0(G)\) expressed by \(p\)-class numbers}
\label{sss:wTTTCocl1}

The group theoretic information
on the second \(p\)-class group \(G=\mathrm{G}_p^2(K)\),
that is, order, class, coclass, and defect,
is contained in the \(p\)-class numbers
of the distinguished extension \(L_1\)
and of the Hilbert \(p\)-class field \(\mathrm{F}_p^1(K)\).
Additionally,
the principalization \(\kappa(1)\) of \(K\) in the distinguished extension \(L_1\)
determines the connection between defect and depth of \(G\).

\begin{theorem}
\label{thm:wTTTCocl1}

Let \(K\) be an arbitrary number field
with \(p\)-class group \(\mathrm{Cl}_p(K)\) of type \((p,p)\).
Suppose that the second \(p\)-class group
\(G=\mathrm{Gal}(\mathrm{F}_p^2(K)\vert K)\)
is abelian or metabelian of coclass \(\mathrm{cc}(G)=1\)
with defect \(k=k(G)\),
order \(\lvert G\rvert=p^n\),
and class \(\mathrm{cl}(G)=n-1\),
where \(n\ge 2\).
With respect to the natural order
among the maximal subgroups of \(G\),
the weak transfer target type \(\tau_0(G)\) of \(G\),
that is, the family of \(p\)-class numbers of the multiplet \((L_1,\ldots,L_{p+1})\)
of unramified cyclic extension fields of \(K\)
of relative prime degree \(p\ge 2\) is given for the first layer by

\begin{eqnarray*}
\tau_0(G) = (\mathrm{h}_p(L_1),\mathrm{h}_p(L_2),\ldots,\mathrm{h}_p(L_{p+1}) =
\begin{cases}
(\overbrace{p\ldots,p}^{p+1\text{ times}}), & \text{ if }n=2,\\
(p^{\mathrm{cl}(G)-k},\overbrace{p^2,\ldots,p^2}^{p\text{ times}}), & \text{ if }n\ge 3,
\end{cases}
\end{eqnarray*}

\noindent
where defect \(k\) and depth \(\mathrm{dp}(G)\) of \(G\) are related by
\[k=
\begin{cases}
\mathrm{dp}(G)-1, & \text{ if } \varkappa(1)\ne 0, \\
\mathrm{dp}(G),   & \text{ if } \varkappa(1)=0,
\end{cases}
\]

\noindent
and for the single member of the second layer by
\[\mathrm{h}_p(\mathrm{F}_p^1(K)) = p^{\mathrm{cl}(G)-1}.\]

\end{theorem}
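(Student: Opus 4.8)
The plan is to reduce everything to finite $p$-group computations via Theorem~\ref{thm:TTTandTKT}. Let $H_i=\mathrm{Gal}(\mathrm{F}_p^2(K)\vert L_i)$ denote the maximal subgroup of $G$ fixing the $i$th unramified cyclic degree-$p$ extension $L_i$ of $K$. Since $G/G^\prime\simeq\mathrm{Cl}_p(K)$ is of type $(p,p)$, there are exactly $p+1$ such subgroups, each of order $p^{n-1}$ and each containing $G^\prime=\gamma_2(G)$, and the natural order of Definition~\ref{dfn:NatOrdCocl1} singles out $H_1=\chi_2(G)$ from $H_2,\dots,H_{p+1}$ as soon as $n\ge 4$. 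By Theorem~\ref{thm:TTTandTKT}(1) one has $\mathrm{h}_p(L_i)=\lvert\mathrm{Cl}_p(L_i)\rvert=\lvert H_i/H_i^\prime\rvert$ and $\mathrm{h}_p(\mathrm{F}_p^1(K))=\lvert G^\prime/G^{\prime\prime}\rvert=\lvert G^\prime\rvert$, the last equality because $G$ is (abelian or) metabelian. So the theorem amounts to the three assertions $\lvert G^\prime\rvert=p^{n-2}$, $\lvert H_1/H_1^\prime\rvert=p^{n-1-k}$, and $\lvert H_i/H_i^\prime\rvert=p^2$ for $2\le i\le p+1$, together with the defect--depth dictionary.

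The first assertion, and with it the second-layer formula, is immediate: $\lvert G^\prime\rvert=\lvert G\rvert/\lvert G/G^\prime\rvert=p^n/p^2=p^{n-2}=p^{\mathrm{cl}(G)-1}$, valid also for $n=2$; and along the way one records that $\gamma_j(G)$ has order $p^{n-j}$ for $2\le j\le n-1$, so $\gamma_{n-k}(G)$ has order $p^k$. For the maximal subgroups I would work with the power-commutator presentation of \S\ref{sss:PrmtPres}. For any maximal subgroup $H=\langle g\rangle\gamma_2(G)$ the subgroup $\gamma_2(G)$ is abelian and normal, so $v\mapsto\lbrack g,v\rbrack$ is a homomorphism $\gamma_2(G)\to\gamma_2(G)$, and a short commutator manipulation shows that $H^\prime$ is exactly its image $\lbrack g,\gamma_2(G)\rbrack$. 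When $g=y\in\chi_2(G)=H_1$ this image is, by the very definition of the defect $k$, the subgroup $\lbrack\chi_2(G),\gamma_2(G)\rbrack=\gamma_{n-k}(G)$ of order $p^k$, so $\lvert H_1/H_1^\prime\rvert=p^{n-1}/p^k=p^{(n-1)-k}=p^{\mathrm{cl}(G)-k}$, the first entry of $\tau_0(G)$. When $g\notin\chi_2(G)$, one has $\lbrack g,s_2\rbrack\notin\gamma_4(G)$ by definition of $\chi_2(G)$, and more generally $\lbrack g,s_j\rbrack\equiv s_{j+1}^{\pm 1}\pmod{\gamma_{j+2}(G)}$ for $2\le j\le n-2$: the contribution of the $\chi_2(G)$-part of $g$ lies in $\lbrack\chi_2(G),\gamma_j(G)\rbrack$, which by the three-subgroup lemma and metabelianness sits inside $\gamma_{n-k+j-2}(G)\le\gamma_{j+2}(G)$ precisely because the defect satisfies $k\le n-4$. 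Feeding these congruences downward from $j=n-2$ shows $\lbrack g,\gamma_2(G)\rbrack=\gamma_3(G)$, of order $p^{n-3}$, hence $\lvert H_i/H_i^\prime\rvert=p^{n-1}/p^{n-3}=p^2$ for $i\ge 2$. The degenerate cases $n=2$ (the $H_i$ cyclic of order $p$) and $n=3$ ($G$ extra-special, all $H_i$ abelian of order $p^2$, no polarization since $\chi_2(G)=G$) are verified by inspection and match the stated formulas.

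It then remains to establish the relation between the defect $k$ and the depth $\mathrm{dp}(G)$, which is Corollary~\ref{cor:DpthCocl1} read in the reverse direction, once one observes that the partial transfer kernel $\varkappa(1)$ of $G$ to the distinguished maximal subgroup $H_1$ coincides, by Theorem~\ref{thm:TTTandTKT}(2), with the principalization kernel $\ker(j_{L_1\vert K})$ of $K$ in $L_1$; thus $\varkappa(1)\ne 0$ if and only if $K$ capitulates nontrivially in $L_1$, and the two displayed cases follow. The hard part will be the exact identifications $H_1^\prime=\gamma_{n-k}(G)$ and $H_i^\prime=\gamma_3(G)$ for $i\ge 2$: pinning these down on the nose, rather than merely modulo $\gamma_4(G)$, relies on the full structure theory of metabelian $p$-groups of maximal class — in particular on Miech's bound $k\le p-2$ and on the fact that an element outside the two-step centralizer induces a shift by exactly one term on each lower central quotient — which is precisely where Blackburn's and Miech's parametrized presentations do the real work.
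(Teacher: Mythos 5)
Your proposal is correct, and it actually supplies more than the paper does at this point: the paper's own proof of Theorem~\ref{thm:wTTTCocl1} is a one-line deferral to \cite[Thm.\ 3.2]{Ma1} together with Corollary~\ref{cor:DpthCocl1}, so the group-theoretic substance lives in the external reference. What you have written is essentially that underlying computation, reconstructed from the ingredients available here: the translation \(\mathrm{h}_p(L_i)=\lvert H_i/H_i^\prime\rvert\) and \(\mathrm{h}_p(\mathrm{F}_p^1(K))=\lvert G^\prime\rvert\) via Theorem~\ref{thm:TTTandTKT}; the identity \(H^\prime=\lbrack g,\gamma_2(G)\rbrack=\gamma_2(G)^{g-1}\) for a maximal subgroup \(H=\langle g,\gamma_2(G)\rangle\) with abelian \(\gamma_2(G)\) (the same device the paper uses in Lemma~\ref{lmm:StemIcl6}); the evaluation \(H_1^\prime=\lbrack\chi_2(G),\gamma_2(G)\rbrack=\gamma_{n-k}(G)\) of order \(p^k\); the downward induction giving \(H_i^\prime=\gamma_3(G)\) for \(i\ge 2\), whose only nontrivial input is \(\lbrack\chi_2(G),\gamma_j(G)\rbrack\le\gamma_{n-k+j-2}(G)\le\gamma_{j+2}(G)\), which you correctly trace to the three-subgroup lemma, metabelianness, and Miech's bound \(k\le n-4\); and the defect--depth dictionary obtained by reading Corollary~\ref{cor:DpthCocl1} backwards after identifying \(\varkappa(1)\) with \(\ker(j_{L_1\vert K})\) by Theorem~\ref{thm:TTTandTKT}(2). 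The degenerate cases \(n=2,3\) are checked as they should be. The one point where you overstate the difficulty is the ``exact'' identification \(H_1^\prime=\gamma_{n-k}(G)\): since \(\chi_2(G)=\langle y,\gamma_2(G)\rangle\) and \(\gamma_2(G)\) is abelian, \(\lbrack y,\gamma_2(G)\rbrack=\lbrack\chi_2(G),\gamma_2(G)\rbrack\) holds on the nose and the equality is literally the definition of the defect; the genuine appeal to the Blackburn--Miech structure theory is confined to the bound on \(\lbrack\chi_2(G),\gamma_j(G)\rbrack\) needed for the non-distinguished subgroups.
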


\begin{proof}
The statement is a succinct version of
\cite[Thm. 3.2]{Ma1},
expressed by concepts more closely related to the
position of \(G\) on the coclass graph \(\mathcal{G}(p,1)\)
and to the transfer kernel type \(\varkappa(G)\) of \(G\),
using Corollary
\ref{cor:DpthCocl1}.
\end{proof}

\begin{remark}
Whereas \(\mathrm{h}_p(L_2),\ldots,\mathrm{h}_p(L_{p+1})\) only indicate that \(\mathrm{cc}(G)=1\),
the \(p\)-class number \(\mathrm{h}_p(\mathrm{F}_p^1(K))\) of the Hilbert \(p\)-class field of \(K\)
determines the order \(p^n\), \(n=\mathrm{cl}(G)+1\), and class of \(G\),
and the distinguished \(\mathrm{h}_p(L_1)\) gives the defect \(k\) of \(G\).\\
With respect to the mainline \((M_j)_{j\ge 2}\) of the coclass tree \(\mathcal{T}(C_p\times C_p)\),
the order \(\lvert M_i\rvert=3^i\) of the branch root \(M_i\) of \(G\) is given by
\(i=n-\mathrm{dp}(G)=\mathrm{cl}(G)+1-\mathrm{dp}(G)\),
where
\[\mathrm{dp}(G)=
\begin{cases}
k,   & \text{ if } \varkappa(1)=0, \\
k+1, & \text{ if } \varkappa(1)\ne 0.
\end{cases}
\]
\end{remark}

%\newpage

%--------------------------------------------------------------------------------

\subsubsection{The complete coclass graph \(\mathcal{G}(2,1)\)}
\label{sss:Distr2Cocl1}

We start our investigation of special cases by showing that
the distribution of second \(2\)-class groups \(\mathrm{G}_2^2(K)\)
of complex quadratic fields \(K=\mathbb{Q}(\sqrt{D})\), \(D<0\), with \(\mathrm{Cl}_2(K)\simeq(2,2)\)
on \(\mathcal{G}(2,1)\) is not restricted by selection rules.
This distribution will only be given qualitatively, without exact counts.

\begin{theorem}
\label{thm:2Cocl1}
The diagram in Figure
\ref{fig:TKT2Cocl1}
visualizes the complete coclass graph \(\mathcal{G}(2,1)\)
up to order \(2^8=256\).
It is periodic with length \(1\).
The first period consists of branch \(\mathcal{B}_3\),
whereas branch \(\mathcal{B}_2\) is irregular and forms the pre-period.
\end{theorem}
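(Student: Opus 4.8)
\textit{Plan.} First I would reduce $\mathcal{G}(2,1)$ to a single coclass tree. By Taussky's theorem \cite{Ta1} recalled in \S\ \ref{ss:MtabCocl1}, a finite $2$-group $G$ has coclass $\mathrm{cc}(G)=1$ if and only if $G/G^\prime\simeq(2,2)$; since $C_2\times C_2$ is the only group of order $2^2$ with this abelianization and every non-trivial last lower central quotient of a $2$-group of coclass $1$ again has coclass $1$, the coclass graph $\mathcal{G}(2,1)$ coincides with the coclass tree $\mathcal{T}(C_2\times C_2)$ and has empty sporadic part $\mathcal{G}_0(2,1)=\emptyset$. Next I would observe that for $p=2$ Miech's inequality $0\le k\le\min\lbrace n-4,p-2\rbrace$ forces the defect of commutativity to vanish, $k(G)=0$, for \emph{every} vertex $G$ of $\mathcal{G}(2,1)$; hence each $G$ of order $2^n$ with $n\ge 3$ is isomorphic to a parametrized group $G_0^n(z,w)$ with $0\le z,w\le 1$, and the exact number of isomorphism classes of a given order is supplied by Blackburn's theorems \cite[p.~88, Thm.~4.1--4.3]{Bl1}: two groups of order $2^3$ (the root's immediate descendants $D_8$ and $Q_8$), and exactly three groups of order $2^n$ for every $n\ge 4$ (the dihedral, generalized quaternion and semidihedral groups), the four parameter pairs $(z,w)$ collapsing to three isomorphism classes already at $n=4$ and stably thereafter.

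Then I would read off the tree structure from Theorem \ref{thm:PrntCocl1}: applied with $p=2$ and $k=0$, it gives $\pi(G_0^n(z,w))\simeq G_0^{n-1}(0,0)$ for $n\ge 4$ and $\pi(G_0^3(0,w))\simeq C_2\times C_2=G_0^2(0,0)$, so that $(M_n)_{n\ge 2}=(G_0^n(0,0))_{n\ge 2}$ is precisely the unique mainline, and every other vertex is an immediate descendant of a mainline vertex. By Corollary \ref{cor:DpthCocl1} (with $k=0$) these non-mainline vertices have depth $\mathrm{dp}(G)=1$, and, not occurring as parents, they are terminal; by Corollary \ref{cor:MainLineCocl1} a vertex lies on the mainline exactly when $\varkappa(1)=0$. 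Consequently the branch $\mathcal{B}_i=\mathcal{T}(M_i)\setminus\mathcal{T}(M_{i+1})$ consists of the mainline root $M_i=G_0^i(0,0)$ together with its non-mainline immediate descendants, all of order $2^{i+1}$; by the count above $\mathcal{B}_2$ contains the single leaf $\simeq Q_8$, whereas $\mathcal{B}_i$ for $i\ge 3$ contains exactly the two leaves $G_0^{i+1}(z,w)$, $(z,w)\ne(0,0)$, i.e. the generalized quaternion and the semidihedral group of order $2^{i+1}$.

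Finally I would establish periodicity. For $i\ge 3$ the branch $\mathcal{B}_i$ always has the same shape — a mainline root of order $2^i$ carrying two terminal leaves of order $2^{i+1}$ — and the assignment $G_0^i(z,w)\mapsto G_0^{i+1}(z,w)$, which fixes the defect datum $a=0$ and the pair $(z,w)$, furnishes the periodicity isomorphisms $\varphi_i:\mathcal{B}_i\to\mathcal{B}_{i+1}$ of \S\ \ref{ss:CoclGrph}, compatible with the mainline edges $M_{i+1}\to M_i$; by Corollary \ref{cor:TKTCocl1} it also respects the transfer kernel types, which are $\mathrm{d}.8$ $(032)$ on each mainline root and $\mathrm{Q}.6$ $(132)$, resp. $\mathrm{S}.4$ $(232)$, on the two leaves. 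Hence $\mathcal{G}(2,1)$ is periodic of period length $\ell=1$ (consistent with $\ell\mid p^{r+1}(p-1)=4$), with first period the branch $\mathcal{B}_3$; the branch $\mathcal{B}_2$, having an abelian root and only one leaf, falls outside this pattern and is the pre-period. Since the mainline vertices $M_2,\ldots,M_8$ together with the leaves of $\mathcal{B}_2,\ldots,\mathcal{B}_7$ exhaust the vertices of order $\le 2^8$, the diagram in Figure \ref{fig:TKT2Cocl1} is the claimed complete picture up to order $256$.

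The main obstacle is not conceptual but bookkeeping: one must invoke Blackburn's (and Miech's \cite{Mi}) classification carefully enough to be sure that exactly three isomorphism classes occur in each order $2^n$, $n\ge 4$, identify which parameter pairs $(z,w)$ coincide, match the three survivors with $D_{2^n}$, $Q_{2^n}$, $SD_{2^n}$, and then combine this with the transfer computations of \cite[Thm.~2.5--2.6]{Ma2} to assign the types $\mathrm{Q}.6$ and $\mathrm{S}.4$ to the correct leaves — routine, but indispensable for drawing the figure exactly as claimed.
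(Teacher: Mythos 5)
Your proposal follows essentially the same route as the paper: specialize Blackburn's counting theorems to \(p=2\), note that Miech's bound \(k\le p-2\) forces the defect to vanish so that only the groups \(G_0^n(z,w)\) with an abelian maximal subgroup occur, obtain the constant count \(2+(n-2,p-1)=3\) for \(n\ge 4\), identify the mainline with the dihedral groups and the two leaves of each branch with the generalized quaternion and semidihedral groups, and observe that \(\mathcal{B}_2\) (with its single leaf \(Q(8)\)) is the irregular pre-period. Your added detail on the parent map via Theorem \ref{thm:PrntCocl1} and on the explicit periodicity isomorphisms \(G_0^i(z,w)\mapsto G_0^{i+1}(z,w)\) is a welcome elaboration of what the paper leaves implicit.

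One correction is needed in your opening reduction. You assert that \(\mathrm{cc}(G)=1\) holds \emph{if and only if} \(G/G^\prime\simeq(2,2)\), and conclude that \(\mathcal{G}(2,1)=\mathcal{T}(C_2\times C_2)\) with \(\mathcal{G}_0(2,1)=\emptyset\). Taussky's theorem gives only the implication \(G/G^\prime\simeq(2,2)\Rightarrow\mathrm{cc}(G)=1\); the converse fails for the abelian group \(C_4\), which has order \(2^2\), class \(1\), hence coclass \(1\), but cyclic abelianization. Thus \(C_4\) is a vertex of \(\mathcal{G}(2,1)\) lying outside the coclass tree \(\mathcal{T}(C_2\times C_2)\) --- it is the isolated vertex appearing in Figure \ref{fig:TKT2Cocl1}, and the theorem's claim of \emph{completeness} of that figure requires it to be accounted for. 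This does not affect your analysis of the tree structure, the branch contents, or the periodicity of length \(1\) beginning with \(\mathcal{B}_3\), but the sporadic part is \(\lbrace C_4\rbrace\) rather than empty, and the biconditional should be weakened accordingly.
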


\begin{proof}
\(\mathcal{G}(2,1)\) begins with two abelian groups of order \(2^2\),
the isolated cyclic group \(C_4\), having different abelianization,
and Klein's four group \(V_4\),
that is the bicyclic root \(C_2\times C_2\)
of the unique coclass tree \(\mathcal{T}(C_2\times C_2)\).\\
As immediate descendants of the root,
\(\mathcal{G}(2,1)\) contains
the capable mainline group \(D(8)\)
and the terminal group \(Q(8)\),
both of order \(2^3\).

Applying Blackburn's results
\cite{Bl1}
on counts of metabelian \(p\)-groups of maximal class
and order \(p^n\) with \(n\ge 4\),
to the special case \(p=2\),
we only need to consider
metabelian groups containing an abelian maximal subgroup,
characterized by defect \(k=0\).
They consist of the capable mainline group \(D(2^n)=G_0^n(0,0)\),
the terminal group \(Q(2^n)=G_0^n(0,1)\), and
the terminal group \(S(2^n)=G_0^n(1,0)\),
which is expressed by
specialization of
\cite[p. 88, Thm. 4.3]{Bl1}
to \(p=2\).
The count is independent from \(n\), yielding the constant number
\(2+(n-2,p-1)=3\).

\end{proof}

We recall from
\cite{Ma2}
that the transfer kernel types \(\varkappa(G)\) for \(p\)-groups of coclass \(\mathrm{cc}(G)=1\)
are exceptional in the case \(p=2\),
compared to the uniform standard case of odd primes \(p\ge 3\).

\begin{theorem}
\label{thm:TKT2Cocl1}
Table
\ref{tab:TKT2Cocl1}
gives the transfer kernel type \(\varkappa(G)\) of all
non-isolated vertices \(G\), having abelianization \(G/G^\prime\simeq(2,2)\),
on the coclass graph \(\mathcal{G}(2,1)\).
The \(2\)-groups \(G\) are identified by their Blackburn invariants
\(\lvert G\rvert=2^n\)
and \(a,w,z\) as exponents in the relations
(\ref{eqn:PwrRelCocl1})
and
(\ref{eqn:CmtRelCocl1}).
The graph information gives the depth \(\mathrm{dp}(G)\) and the location of each \(2\)-group \(G\)
with respect to the unique coclass tree \(\mathcal{T}(C_2\times C_2)\)
of \(\mathcal{G}(2,1)\).\\
The mainline, consisting of the dihedral \(2\)-groups \(D(2^n)=G_0^n(0,0)\)
including the abelian root \(C_2\times C_2=D(4)=G_0^2(0,0)\), is characterized
by the total transfer \(\varkappa(1)=0\) to the distinguished maximal subgroup \(H_1=\chi_2(G)\).
Total transfers \(\varkappa(i)=0\) are counted by \(\nu(G)\).
\end{theorem}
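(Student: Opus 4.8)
The plan is to verify Table~\ref{tab:TKT2Cocl1} entry by entry, building on the classification of metabelian \(2\)-groups of maximal class already obtained in Theorem~\ref{thm:2Cocl1} and on the transfer computations of \cite{Ma2}, and then to deduce the remaining graph data (depth, location, \(\nu\)) as immediate consequences. First I would invoke Theorem~\ref{thm:2Cocl1} for the completeness of the vertex list: the non-isolated vertices \(G\) of \(\mathcal{G}(2,1)\) with \(G/G^\prime\simeq(2,2)\) are, for each order \(2^n\), the root \(C_2\times C_2=D(4)=G_0^2(0,0)\) when \(n=2\); the dihedral group \(D(8)=G_0^3(0,0)\) and the quaternion group \(Q(8)=G_0^3(0,1)\) when \(n=3\); and \(D(2^n)=G_0^n(0,0)\), \(Q(2^n)=G_0^n(0,1)\), \(S(2^n)=G_0^n(1,0)\) when \(n\ge 4\). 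In particular every such \(G\) has defect of commutativity \(k(G)=0\), since the Miech bound \(k\le\min\lbrace n-4,p-2\rbrace\) \cite{Mi} collapses to \(k=0\) for \(p=2\); hence the only relevant Blackburn invariants are \(a=0\) and \(w,z\in\lbrace 0,1\rbrace\), which is exactly how the groups are labelled in the table.

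For each group in this list I would read off \(\varkappa(G)\) from \cite[Thm.~2.5--2.6]{Ma2}, specialized to \(p=2\), or equivalently from Corollary~\ref{cor:TKTCocl1}: the root \(D(4)\) has type \(\mathrm{a}.1\), \(\varkappa=(000)\); the dihedral mainline group \(D(2^n)\) has type \(\mathrm{d}.8\), \(\varkappa=(032)\), for every \(n\ge 3\); the exceptional extra-special group \(Q(8)=G_0^3(0,1)\) has type \(\mathrm{Q}.5\), \(\varkappa=(123)\); the generalized quaternion group \(Q(2^n)\), \(n\ge 4\), has type \(\mathrm{Q}.6\), \(\varkappa=(132)\); and the semidihedral group \(S(2^n)\), \(n\ge 4\), has type \(\mathrm{S}.4\), \(\varkappa=(232)\) --- always with respect to the natural order of the maximal subgroups \(H_1,H_2,H_3\) fixed by Definition~\ref{dfn:NatOrdCocl1}, and, for \(n=3\), the corresponding convention (there \(\chi_2(G)=G\) is not maximal). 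Alternatively one may carry out the Verlagerung directly, since \(G/G^\prime\) and the three maximal subgroups \(H_i=\langle g_i,G^\prime\rangle\) are presented explicitly by the relations (\ref{eqn:PwrRelCocl1}) and (\ref{eqn:CmtRelCocl1}): each \(T_{G,H_i}\colon G/G^\prime\to H_i/H_i^\prime\) is then evaluated on the two generators by the coset-representative formula, the dihedral case being worked out in detail and the quaternion and semidihedral cases following verbatim, differing only through the power relations \(x^2=s_{n-1}^w\) and \(y^2 s_2=s_{n-1}^z\).

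The graph data follows at once. Since \(k(G)=0\) throughout, Corollary~\ref{cor:DpthCocl1} gives \(\mathrm{dp}(G)=0\) when \(\varkappa(1)=0\) and \(\mathrm{dp}(G)=1\) when \(\varkappa(1)\ne 0\); comparing with the types above, \(D(4)\) and all \(D(2^n)\) have \(\varkappa(1)=0\) and so are the mainline vertices \(M_n\) of \(\mathcal{T}(C_2\times C_2)\), consistent with Corollary~\ref{cor:MainLineCocl1} and with \(\pi(D(2^n))\simeq D(2^{n-1})\) from Theorem~\ref{thm:PrntCocl1}, whereas \(Q(2^n)\) and \(S(2^n)\) have \(\varkappa(1)\ne 0\), hence depth \(1\). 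As Theorem~\ref{thm:PrntCocl1} shows that only the mainline groups \(G_0^n(0,0)\) ever recur as parents, the latter groups are terminal leaves attached to \(M_{n-1}\); thus every vertex of order \(2^n\) lies on the branch \(\mathcal{B}_{n-1}\), which proves periodicity with length \(1\) and exhibits \(\mathcal{B}_2\) as the (irregular) pre-period. Finally \(\nu(G)=\#\lbrace i\mid\varkappa(i)=0\rbrace\) is read off by counting zeros in the transfer kernel types just determined: \(\nu=3\) for the root, \(\nu=1\) for each \(D(2^n)\) with \(n\ge 3\), and \(\nu=0\) for each \(Q(2^n)\) with \(n\ge 3\) and each \(S(2^n)\) with \(n\ge 4\).

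The main obstacle is not the structure theory --- that is already in place --- but pinning down the three transfer kernels with respect to the \emph{fixed} polarization \(H_1,H_2,H_3\). In particular, the jump from type \(\mathrm{Q}.5\) at \(n=3\) to \(\mathrm{Q}.6\) at \(n\ge 4\), and the analogous placement of the single zero inside \(\mathrm{d}.8=(032)\), is an artefact of the degenerate two-step centralizer \(\chi_2(G)=G\) in order \(2^3\) and must be reconciled carefully with the conventions of Definition~\ref{dfn:NatOrdCocl1}; once that matching is made precise, everything else is bookkeeping on top of \cite{Bl1,Mi,Ma2}.
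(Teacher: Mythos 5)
Your proposal is correct and follows essentially the same route as the paper, whose proof simply delegates the kernel computations to \cite[Thm.~2.6, Tbl.~2--3]{Ma2}; you reach the same tables via that reference (equivalently Corollary~\ref{cor:TKTCocl1}) and then fill in the depth, tree position and \(\nu\)-counts using Theorem~\ref{thm:PrntCocl1} and Corollary~\ref{cor:DpthCocl1}, which the paper leaves implicit. Your closing remark about the degenerate case \(\chi_2(G)=G\) at order \(2^3\) correctly identifies the one point where the polarization convention of Definition~\ref{dfn:NatOrdCocl1} needs care.
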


\begin{table}[ht]
\caption{\(\varkappa(G)\), \(\nu(G)\) in dependence on non-isolated \(G\in\mathcal{G}(2,1)\)}
\label{tab:TKT2Cocl1}
\begin{center}
\begin{tabular}{|c|cc|ccc|c|cc|ccc|}
\hline
 \multicolumn{7}{|c|}{\(2\)-group \(G_a^n(z,w)\) of coclass \(1\)}                  & \multicolumn{2}{|c|}{graph information} & \multicolumn{3}{|c|}{transfer kernels} \\
 \multicolumn{7}{|c|}{\downbracefill}                                               & \multicolumn{2}{|c|}{\downbracefill}    & \multicolumn{3}{|c|}{\downbracefill}   \\
 \(G\)             & \(\mathrm{cl}(G)\) &     \(n\) & \(a\) & \(z\) & \(w\) & \(k\) & \(\mathrm{dp}(G)\) & tree position      & TKT & \(\varkappa(G)\) & \(\nu(G)\)          \\
\hline
 \(C_2\times C_2\) &              \(1\) &     \(2\) & \(0\) & \(0\) & \(0\) & \(0\) &              \(0\) & root               & a.1 & \((000)\)     & \(3\)            \\
 \(Q(8)\)          &              \(2\) &     \(3\) & \(0\) & \(0\) & \(1\) & \(0\) &              \(1\) & pre-period         & Q.5 & \((123)\)     & \(0\)            \\
\hline
 \(D(2^n)\)        &          \(\ge 2\) & \(\ge 3\) & \(0\) & \(0\) & \(0\) & \(0\) &              \(0\) & mainline           & d.8 & \((032)\)     & \(1\)            \\
\hline
 \(Q(2^n)\)        &          \(\ge 3\) & \(\ge 4\) & \(0\) & \(0\) & \(1\) & \(0\) &              \(1\) & periodic sequence  & Q.6 & \((132)\)     & \(0\)            \\
 \(S(2^n)\)        &          \(\ge 3\) & \(\ge 4\) & \(0\) & \(1\) & \(0\) & \(0\) &              \(1\) & periodic sequence  & S.4 & \((232)\)     & \(0\)            \\
\hline
\end{tabular}
\end{center}
\end{table}

\begin{proof}
See
\cite[Thm. 2.6, Tbl. 2--3]{Ma2}
for the technique of determining kernels of transfers
and the definition of transfer kernel types as orbits of integer triplets \(\lbrack 0,3\rbrack^3\)
under the action of the symmetric group of degree \(3\).
\end{proof}

%\newpage

%--------------------------------------------------------------------------------

\begin{figure}[ht]
\caption{Population of \(\mathcal{G}(2,1)\) by groups \(G_2^2(K)\) of \(K=\mathbb{Q}(\sqrt{D})\), \(D<0\)}
\label{fig:TKT2Cocl1}

% Coclass graph G(2,1)
\setlength{\unitlength}{1cm}
\begin{picture}(12,16)(-9,-15)

% scale of orders
\put(-8,0.5){\makebox(0,0)[cb]{Order \(2^n\)}}
\put(-8,0){\line(0,-1){12}}
\multiput(-8.1,0)(0,-2){7}{\line(1,0){0.2}}
\put(-8.2,0){\makebox(0,0)[rc]{\(4\)}}
\put(-7.8,0){\makebox(0,0)[lc]{\(2^2\)}}
\put(-8.2,-2){\makebox(0,0)[rc]{\(8\)}}
\put(-7.8,-2){\makebox(0,0)[lc]{\(2^3\)}}
\put(-8.2,-4){\makebox(0,0)[rc]{\(16\)}}
\put(-7.8,-4){\makebox(0,0)[lc]{\(2^4\)}}
\put(-8.2,-6){\makebox(0,0)[rc]{\(32\)}}
\put(-7.8,-6){\makebox(0,0)[lc]{\(2^5\)}}
\put(-8.2,-8){\makebox(0,0)[rc]{\(64\)}}
\put(-7.8,-8){\makebox(0,0)[lc]{\(2^6\)}}
\put(-8.2,-10){\makebox(0,0)[rc]{\(128\)}}
\put(-7.8,-10){\makebox(0,0)[lc]{\(2^7\)}}
\put(-8.2,-12){\makebox(0,0)[rc]{\(256\)}}
\put(-7.8,-12){\makebox(0,0)[lc]{\(2^8\)}}

% abelian vertices
\put(-0.1,-0.1){\framebox(0.2,0.2){}}
\put(-2.1,-0.1){\framebox(0.2,0.2){}}
% metabelian vertices
\multiput(0,-2)(0,-2){5}{\circle*{0.2}}
\multiput(-2,-2)(0,-2){6}{\circle*{0.2}}
\multiput(-4,-4)(0,-2){5}{\circle*{0.2}}
% population circles
\multiput(0,0)(0,-2){6}{\circle{0.6}}
\multiput(-2,-2)(0,-2){6}{\circle{0.6}}
\multiput(-4,-4)(0,-2){5}{\circle{0.6}}

% directed edges
\multiput(0,0)(0,-2){5}{\line(0,-1){2}}
\multiput(0,0)(0,-2){6}{\line(-1,-1){2}}
\multiput(0,-2)(0,-2){5}{\line(-2,-1){4}}

% infinite main line
\put(0,-10){\vector(0,-1){2}}
\put(-0.2,-11.5){\makebox(0,0)[rc]{main}}
\put(-0.2,-12){\makebox(0,0)[rc]{line}}
\put(0.2,-12){\makebox(0,0)[lc]{\(\mathcal{T}(C_2\times C_2)\)}}

% representatives of isomorphism classes
\put(0,0.5){\makebox(0,0)[cb]{\(C_2\times C_2=V_4\)}}
\put(-2,0.5){\makebox(0,0)[cb]{\(C_4\)}}
\put(-3,-2){\makebox(0,0)[cc]{\(Q(8)\)}}
\put(0,-13){\makebox(0,0)[ct]{\(G^n_0(0,0)\)}}
\put(-2,-13){\makebox(0,0)[ct]{\(G^n_0(0,1)\)}}
\put(-4,-13){\makebox(0,0)[ct]{\(G^n_0(1,0)\)}}
\put(0,-13.5){\makebox(0,0)[ct]{\(=D(2^n)\)}}
\put(-2,-13.5){\makebox(0,0)[ct]{\(=Q(2^n)\)}}
\put(-4,-13.5){\makebox(0,0)[ct]{\(=S(2^n)\)}}

% 2 vertices of the stem of isoclinism family Gamma_1
\put(1,0){\makebox(0,0)[lb]{\(\Gamma_1\)}}
\put(-2.1,0){\makebox(0,0)[rb]{\(\langle 1\rangle\)}}
\put(-0.3,0){\makebox(0,0)[rb]{\(\langle 2\rangle\)}}

% 2 vertices of the stem of isoclinism family Gamma_2
\put(1,-1.7){\makebox(0,0)[lb]{\(\Gamma_2\)}}
\put(-2.1,-1.7){\makebox(0,0)[rb]{\(\langle 4\rangle\)}}
\put(-0.1,-1.7){\makebox(0,0)[rb]{\(\langle 3\rangle\)}}

% 3 vertices of the stem of isoclinism family Gamma_3
\put(1,-3.7){\makebox(0,0)[lb]{\(\Gamma_3\)}}
\put(-4.1,-3.7){\makebox(0,0)[rb]{\(\langle 8\rangle\)}}
\put(-2.1,-3.7){\makebox(0,0)[rb]{\(\langle 9\rangle\)}}
\put(-0.1,-3.7){\makebox(0,0)[rb]{\(\langle 7\rangle\)}}

% 3 vertices of the stem of isoclinism family Gamma_8
\put(1,-5.7){\makebox(0,0)[lb]{\(\Gamma_8\)}}
\put(-4.1,-5.7){\makebox(0,0)[rb]{\(\langle 19\rangle\)}}
\put(-2.1,-5.7){\makebox(0,0)[rb]{\(\langle 20\rangle\)}}
\put(-0.1,-5.7){\makebox(0,0)[rb]{\(\langle 18\rangle\)}}

% 3 vertices of the stem of isoclinism family Gamma_
%\put(1,-7.7){\makebox(0,0)[lb]{\(\Gamma_{}\)}}
\put(-4.1,-7.7){\makebox(0,0)[rb]{\(\langle 53\rangle\)}}
\put(-2.1,-7.7){\makebox(0,0)[rb]{\(\langle 54\rangle\)}}
\put(-0.1,-7.7){\makebox(0,0)[rb]{\(\langle 52\rangle\)}}

% 3 vertices of the stem of isoclinism family Gamma_
%\put(1,-9.7){\makebox(0,0)[lb]{\(\Gamma_{}\)}}
\put(-4.1,-9.7){\makebox(0,0)[rb]{\(\langle 162\rangle\)}}
\put(-1.9,-9.7){\makebox(0,0)[rb]{\(\langle 163\rangle\)}}
\put(-0.1,-9.7){\makebox(0,0)[rb]{\(\langle 161\rangle\)}}

% TKTs of coclass families
\put(-6,-14.5){\makebox(0,0)[cc]{\textbf{TKT:}}}
\put(0,-14.5){\makebox(0,0)[cc]{d.8}}
\put(-2,-14.5){\makebox(0,0)[cc]{Q.6}}
\put(-4,-14.5){\makebox(0,0)[cc]{S.4}}
\put(0,-15){\makebox(0,0)[cc]{\((032)\)}}
\put(-2,-15){\makebox(0,0)[cc]{\((132)\)}}
\put(-4,-15){\makebox(0,0)[cc]{\((232)\)}}
\put(-6.7,-15.2){\framebox(7.4,1){}}
\put(-5,-2){\makebox(0,0)[rc]{\textbf{TKT:}}}
\put(-4.5,-2){\makebox(0,0)[cc]{Q.5}}
\put(-4.5,-2.5){\makebox(0,0)[cc]{\((123)\)}}
\put(-6.1,-2.7){\framebox(2.4,1){}}
\put(3,0){\makebox(0,0)[rc]{\textbf{TKT:}}}
\put(3.5,0){\makebox(0,0)[cc]{a.1}}
\put(3.5,-0.5){\makebox(0,0)[cc]{\((000)\)}}
\put(1.9,-0.7){\framebox(2.4,1){}}
% realizations
\put(0.3,-0.5){\makebox(0,0)[lc]{\(D=-84\)}}
\put(0.3,-2.5){\makebox(0,0)[lc]{\(D=-408\)}}
\put(0.3,-4.5){\makebox(0,0)[lc]{\(D=-6\,168\)}}
\put(0.3,-6.5){\makebox(0,0)[lc]{\(D=-29\,208\)}}
\put(0.3,-8.5){\makebox(0,0)[lc]{\(D=-609\,816\)}}
\put(0.3,-10.5){\makebox(0,0)[lc]{\(D=-670\,872\)}}
\put(-1.5,-2.5){\makebox(0,0)[rc]{\(D=-120\)}}
\put(-1.5,-4.5){\makebox(0,0)[rc]{\(D=-312\)}}
\put(-1.5,-6.5){\makebox(0,0)[rc]{\(D=-888\)}}
\put(-1.5,-8.5){\makebox(0,0)[rc]{\(D=-3\,768\)}}
\put(-1.5,-10.5){\makebox(0,0)[rc]{\(D=-8\,952\)}}
\put(-1.5,-12.5){\makebox(0,0)[rc]{\(D=-40\,632\)}}
\put(-3.7,-4.5){\makebox(0,0)[rc]{\(D=-340\)}}
\put(-3.7,-6.5){\makebox(0,0)[rc]{\(D=-2\,260\)}}
\put(-3.7,-8.5){\makebox(0,0)[rc]{\(D=-5\,140\)}}
\put(-3.7,-10.5){\makebox(0,0)[rc]{\(D=-17\,140\)}}
\put(-3.7,-12.5){\makebox(0,0)[rc]{\(D=-165\,460\)}}
\end{picture}

\end{figure}
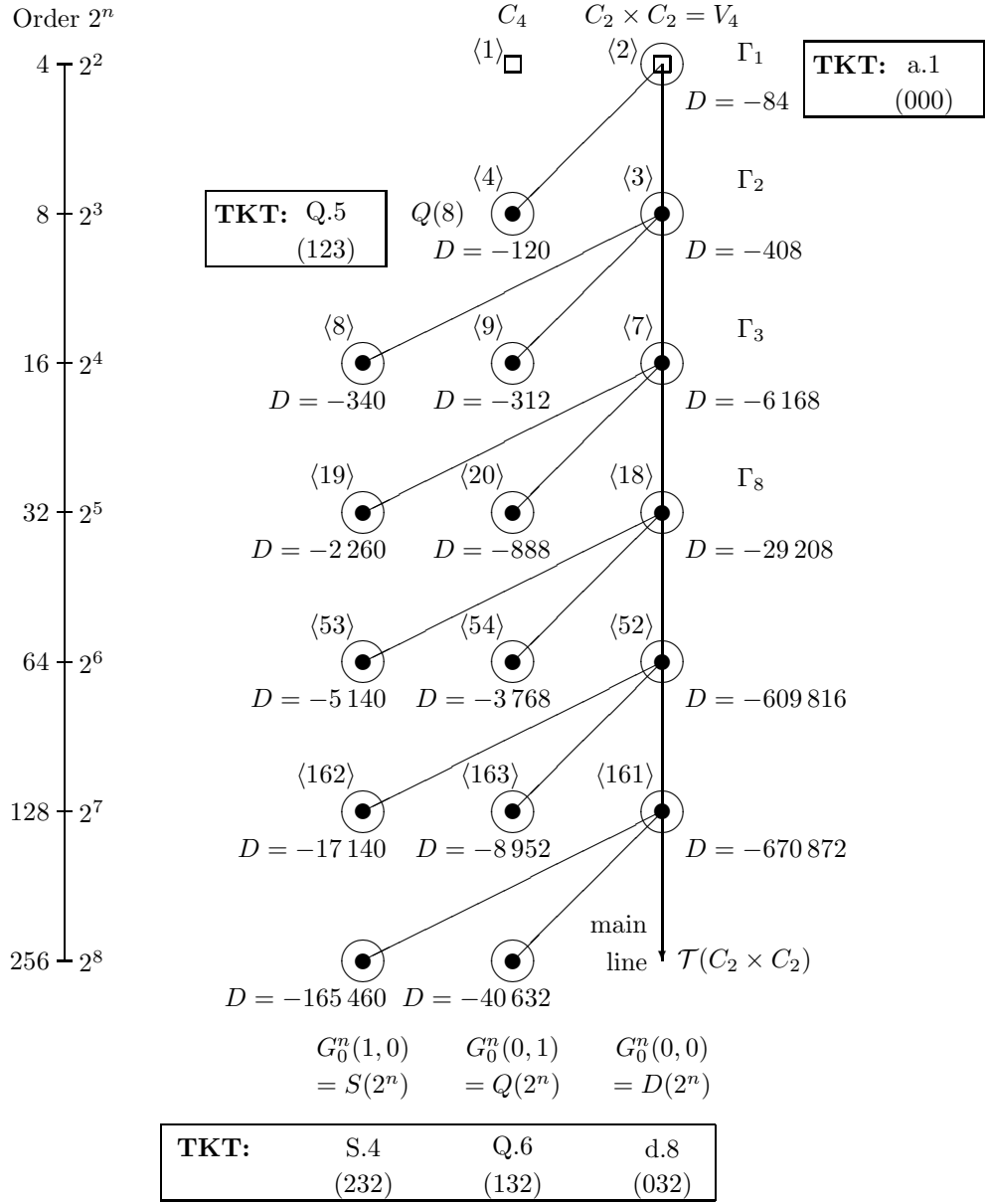

\noindent
The statements of Theorem
\ref{thm:TKT2Cocl1}
can be visualized very conveniently
by the diagram of a finite part of the coclass graph \(\mathcal{G}(2,1)\),
which is shown in Figure
\ref{fig:TKT2Cocl1}.
It contains the isolated vertex \(C_4\),
the root \(C_2\times C_2\) and branches \(\mathcal{B}_j\), \(2\le j\le 7\),
of the coclass tree \(\mathcal{T}(C_2\times C_2)\).
Branch \(\mathcal{B}_2\) consists of two initial exceptions,
the elementary abelian bicyclic \(2\)-group \(C_2\times C_2\) with TKT a.1, \(\varkappa=(000)\),
and the quaternion group \(Q(8)\) with TKT Q.5, \(\varkappa=(123)\).
Periodicity of length \(\ell=1\) sets in with branch  \(\mathcal{B}_3\)
which consists of the starting vertices of three periodic sequences,
\(\mathcal{S}(D(8))\), the mainline of \textit{dihedral} groups,
\(\mathcal{S}(Q(16))\), the sequence of \textit{generalized quaternion} groups, and
\(\mathcal{S}(S(16))\), the sequence of \textit{semi-dihedral} groups.
Transfer kernel types (TKT) in the bottom rectangle concern all vertices
in the periodic sequence located vertically above.
Large contour squares \(\square\) denote abelian groups and
big full discs {\Large \(\bullet\)} denote metabelian groups with defect \(k=0\).
A number in angles gives the identifier of a group in the SmallGroups Library
\cite{BEO}.
The symbols \(\Gamma_s\) denote isoclinism families given by Hall and Senior
\cite{HaSn}.
The population of each vertex is indicated by a surrounding circle
labelled by the discriminant \(D<0\) of a
suitable complex quadratic field \(K=\mathbb{Q}(\sqrt{D})\) of type \((2,2)\).
There are no selection rules for \(p=2\) and
the numerical results suggest the conjecture that
the tree \(\mathcal{T}(V_4)\) is covered entirely by second \(2\)-class groups \(G_2^2(K)\)
of complex quadratic fields \(K=\mathbb{Q}(\sqrt{D})\), \(D<0\).
Ground states are due to Kisilevsky
\cite[p. 277--278]{Ki2}.
All excited states have been determined with the aid of Theorem
\ref{thm:wTTTCocl1}.
See
\cite[\S\ 9]{Ma1}.
Here we refrain from giving the exact distribution up to some bound for \(\lvert D\rvert\)
and we do not claim that the given examples have minimal absolute discriminants.

%\newpage

%--------------------------------------------------------------------------------

\subsubsection{Selection Rule for quadratic base fields}
\label{sss:SelRuleCocl1}

Let \(K=\mathbb{Q}(\sqrt{D})\) be a quadratic number field with discriminant \(D\)
and \(p\)-class group \(\mathrm{Cl}_p(K)\) of type \((p,p)\),
where \(p\ge 3\) denotes an odd prime.
Then the \(p+1\) unramified cyclic extension fields \((L_1,\ldots,L_{p+1})\) of \(K\)
of relative prime degree \(p\) have
dihedral absolute Galois groups \(\mathrm{Gal}(L_i\vert K)\) of degree \(2p\),
according to
\cite[Prop. 4.1]{Ma1}.

\begin{theorem}
\label{thm:SelRuleCocl1}

Let \(G=\mathrm{Gal}(\mathrm{F}_p^2(K)\vert K)\)
be the second \(p\)-class group of \(K\).
If \(G\in\mathcal{G}(p,1)\), then \(K\) must be real quadratic, \(D>0\), and,
with respect to the natural order
of the maximal subgroups of \(G\),
the family of \(p\)-class numbers of the non-Galois subfields \(K_i\) of \(L_i\) is given by

\begin{eqnarray*}
(\mathrm{h}_p(K_1),\mathrm{h}_p(K_2),\ldots,\mathrm{h}_p(K_{p+1})) =
(p^{\frac{\mathrm{cl}(G)-\mathrm{dp}(G)}{2}},\overbrace{p,\ldots,p}^{p\text{ times}}),
\end{eqnarray*}

\noindent
where depth and defect \(k\) of \(G\) are related via the first component of the TKT \(\varkappa(K)\) by

\[\mathrm{dp}(G)=
\begin{cases}
k,   & \text{ if } \varkappa(1)=0, \\
k+1, & \text{ if } \varkappa(1)\ne 0.
\end{cases}
\]

\noindent
Consequently, the order \(\lvert M_i\rvert=p^i\) of the branch root \(M_i\) of \(G\in\mathcal{B}(M_i)\)
on the unique coclass tree of \(\mathcal{G}(p,1)\) with mainline \((M_j)_{j\ge 2}\)
must have \textit{odd} exponent
\[i=n-\mathrm{dp}(G)=\mathrm{cl}(G)+1-\mathrm{dp}(G)\equiv 1\pmod{2}.\]

\end{theorem}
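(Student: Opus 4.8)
The plan is to deduce the statement from the class--number relations in dihedral fields established in \cite{Ma1}, feeding in the group-theoretic data on $G=\mathrm{G}_p^2(K)$ furnished by Theorem~\ref{thm:wTTTCocl1} and Corollary~\ref{cor:DpthCocl1}. I first record the relevant Galois picture. By \cite[Prop.~4.1]{Ma1} each $L_i$ is dihedral of degree $2p$ over $\mathbb{Q}$, so $K$ is its unique quadratic subfield and the $p$ conjugate non-Galois subfields $K_i$ of relative degree $p$ share the same $p$-class number $\mathrm{h}_p(K_i)$. Write $\sigma$ for the nontrivial automorphism of $K\vert\mathbb{Q}$ and fix a lift $\tilde\sigma$ with $\langle\tilde\sigma\rangle=\mathrm{Gal}(L_i\vert K_i)$. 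Because $\lbrack L_i:K_i\rbrack=2$ is prime to $p$, the $\mathbb{Z}_p\lbrack\langle\tilde\sigma\rangle\rbrack$-module $\mathrm{Cl}_p(L_i)$ splits into $\tilde\sigma$-eigenspaces,
\[\mathrm{Cl}_p(L_i)=\mathrm{Cl}_p(L_i)^+\oplus\mathrm{Cl}_p(L_i)^-,\]
and the extension homomorphism $j_{L_i\vert K_i}$ maps $\mathrm{Cl}_p(K_i)$ isomorphically onto the plus part, whereas $j_{L_i\vert K}(\mathrm{Cl}_p(K))$ lands in the minus part since $\sigma$ inverts the class group of the quadratic field $K$. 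Hence $\mathrm{h}_p(K_i)=\lvert\mathrm{Cl}_p(L_i)^+\rvert$ and $\mathrm{h}_p(L_i)=\mathrm{h}_p(K_i)\cdot\lvert\mathrm{Cl}_p(L_i)^-\rvert$.

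Next I would transport the decomposition to the group side. By Theorem~\ref{thm:TTTandTKT}, $\mathrm{Cl}_p(L_i)\simeq H_i/H_i'$ for the maximal subgroup $H_i\le G$ taken in the natural order of Definition~\ref{dfn:NatOrdCocl1}; and since $K\vert\mathbb{Q}$ is Galois, $\sigma$ lifts to an automorphism of $G$, which --- by the Schur--Zassenhaus splitting of $1\to G\to\mathrm{Gal}(\mathrm{F}_p^2(K)\vert\mathbb{Q})\to\mathrm{Gal}(K\vert\mathbb{Q})\to 1$ --- can be chosen of order $2$ and then necessarily inverts $G/G'\simeq\mathrm{Cl}_p(K)$, i.e.\ is a generator--inverter automorphism $\varsigma$. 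Using the parametrized power-commutator presentation of the coclass-$1$ group $G$ from \S\ref{sss:PrmtPres}, one computes the action of $\varsigma$ on each quotient $H_i/H_i'$ and thereby the orders of its $\pm$-eigenspaces. Inserting the orders $\mathrm{h}_p(L_1)=p^{\mathrm{cl}(G)-k}$ and $\mathrm{h}_p(L_i)=p^2$ for $2\le i\le p+1$ from Theorem~\ref{thm:wTTTCocl1}, one obtains $\mathrm{h}_p(K_i)=p$ for $2\le i\le p+1$ and $\mathrm{h}_p(K_1)=p^{(\mathrm{cl}(G)-\mathrm{dp}(G))/2}$, using Corollary~\ref{cor:DpthCocl1} to pass from the defect $k$ and $\varkappa(1)$ to the depth $\mathrm{dp}(G)$. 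This is the asserted family of class numbers, and the displayed relation between depth and defect is precisely Corollary~\ref{cor:DpthCocl1}. Finally, since $\mathrm{h}_p(K_1)$ is an honest power of $p$ with nonnegative integral exponent, $\mathrm{cl}(G)-\mathrm{dp}(G)$ must be even, forcing $i=n-\mathrm{dp}(G)=\mathrm{cl}(G)+1-\mathrm{dp}(G)\equiv 1\pmod{2}$.

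It remains to see that $K$ cannot be imaginary. Here I would argue as in the proofs of Theorems~\ref{thm:SpecWeakLeaf} and \ref{thm:3TowerLengthAtLeast3}: if $D<0$, the $p$-tower group $G_\infty=\mathrm{G}_p^\infty(K)$ must be a Schur $\sigma$-group (\cite{Sh}, \cite[p.~58]{KoVe}). But for an odd prime $p$, a metabelian $p$-group of coclass $1$ with abelianization of type $(p,p)$ has non-trivial Schur multiplier, hence is not a Schur $\sigma$-group; so $G\ne G_\infty$, the $p$-tower is longer than two stages, and combining the balanced relation structure of $G_\infty$ with the classification of coclass-$1$ $p$-groups rules out $G_\infty/G_\infty''$ lying on $\mathcal{G}(p,1)$. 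This contradiction shows $D>0$.

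The main obstacle is twofold. First, the explicit $\varsigma$-module computation in the second step: it must produce exactly the factor $\tfrac{1}{2}$ in the exponent of $\mathrm{h}_p(K_1)$ together with its correct dependence on whether the distinguished transfer kernel $\varkappa(1)$ vanishes, and this is where the fine structure of the power-commutator presentation of a coclass-$1$ group really enters. Second, the exclusion of imaginary base fields, where one must also handle the non-metabelian alternative for the tower group $G_\infty$ rather than just the terminating case.
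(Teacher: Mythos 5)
Your reduction of \(\mathrm{h}_p(K_i)\) to the order of the plus eigenspace \(\mathrm{Cl}_p(L_i)^+\simeq (H_i/H_i^\prime)^+\) under a generator-inverting involution \(\varsigma\) is sound, and for \(i\ge 2\) it does return \(\mathrm{h}_p(K_i)=p\). But this strategy cannot deliver the two assertions carrying the arithmetic content of the theorem. The order \(\lvert (H_1/H_1^\prime)^+\rvert\) is a purely group-theoretic quantity: filtering \(H_1=\chi_2(G)\) by the lower central series, \(\varsigma\) acts as \((-1)^j\) on \(\gamma_j(G)/\gamma_{j+1}(G)\), and since \(p\) is odd the eigenspace orders multiply along any \(\varsigma\)-stable filtration, giving \(\lvert H_1^+\rvert=p^{\lfloor \mathrm{cl}(G)/2\rfloor}\) when \(k=0\) (and an analogous floor for \(k\ge 1\)) --- a well-defined \emph{integral} exponent for \emph{every} vertex of \(\mathcal{T}(C_p\times C_p)\). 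Consequently your step \lq\lq since \(\mathrm{h}_p(K_1)\) is an honest power of \(p\), \(\mathrm{cl}(G)-\mathrm{dp}(G)\) must be even\rq\rq\ proves nothing: the eigenspace count never produces a half-integral exponent, so it can never forbid a group, whereas the selection rule \(i\equiv 1\pmod 2\) forbids half of the tree. What is missing is control of the \emph{minus} eigenspace, i.e.\ the class-number relation for the dihedral field \(L_i\vert\mathbb{Q}\) together with the unit norm index \((E_K:E_K\cap N_{L_i\vert K}L_i^{\times})\). That index is trivial for complex quadratic \(K\) and lies in \(\lbrace 1,p\rbrace\) for real quadratic \(K\); this dichotomy is what (a) excludes \(D<0\), since it forces \(\mathrm{h}_p(L_i)=p\,\mathrm{h}_p(K_i)^2\), an odd power of \(p\), contradicting \(\mathrm{h}_p(L_i)=p^2\) from Theorem \ref{thm:wTTTCocl1}, and (b) yields the parity constraint and its dependence on \(\varkappa(1)\). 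This is exactly the content of \cite[Thm. 4.1]{Ma1}, which the paper's proof simply cites and rewrites via Corollary \ref{cor:DpthCocl1}; no eigenspace computation internal to \(G\) can substitute for it.

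The Schur \(\sigma\)-group route for excluding \(D<0\) is also not viable. The blanket claim that every metabelian coclass-\(1\) \(p\)-group with abelianization \((p,p)\) has non-trivial Schur multiplier is unproved and delicate (already \(G_0^3(0,1)\), the extraspecial group of order \(p^3\) and exponent \(p^2\), has trivial multiplier, so at order \(p^3\) the obstruction, if any, must come from the absence of a GI-automorphism --- a different verification which you do not perform). More decisively, even granting \(G\ne G_\infty\), you would still have to show that no Schur \(\sigma\)-group of derived length \(\ge 3\) has metabelianization of coclass \(1\); your sentence \lq\lq combining the balanced relation structure \(\ldots\) rules out \(G_\infty/G_\infty^{\prime\prime}\) lying on \(\mathcal{G}(p,1)\)\rq\rq\ is precisely the statement to be proved, and no argument is offered. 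The unit-index computation disposes of \(D<0\) in one line and should replace this entire limb of the proof.
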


\begin{remark}
Whereas \(\mathrm{h}_p(K_2),\ldots,\mathrm{h}_p(K_{p+1})\) do not give any information,
the distinguished \(p\)-class number \(\mathrm{h}_p(K_1)\)
enforces the congruence \(\mathrm{cl}(G)-\mathrm{dp}(G)\equiv 0\pmod{2}\).
\end{remark}

\begin{proof}
The statement is a compact version of
\cite[Thm. 4.1]{Ma1},
expressed by the depth \(\mathrm{dp}(G)\),
and thus more closely related to the
position of \(G\) on the coclass graph \(\mathcal{G}(p,1)\)
and to the transfer kernel type \(\varkappa(G)\) of \(G\),
using Corollary
\ref{cor:DpthCocl1}.
\end{proof}

\begin{theorem}
\label{thm:TKTpCocl1}
Table
\ref{tab:TKTpCocl1}
gives the transfer kernel type (TKT) \(\varkappa(G)\) of all
non-isolated metabelian vertices \(G\) on the coclass graph \(\mathcal{G}(p,1)\), for odd \(p\ge 3\).
The \(p\)-groups \(G\) are identified by their Blackburn-Miech invariants
\(\lvert G\rvert=p^n\)
and \(a,w,z\) as exponents in the relations
(\ref{eqn:PwrRelCocl1})
and
(\ref{eqn:CmtRelCocl1}).
The graph information gives the depth \(\mathrm{dp}(G)\) and the location of each \(p\)-group \(G\)
with respect to the unique coclass tree \(\mathcal{T}(C_p\times C_p)\)
of \(\mathcal{G}(p,1)\).\\
The mainline, consisting of the \(p\)-groups \(G_0^n(0,0)\)
including the abelian root \(C_p\times C_p=G_0^2(0,0)\),
and all groups of positive defect \(k\ge 1\)
are characterized
by the total transfer \(\varkappa(1)=0\) to the distinguished maximal subgroup \(H_1=\chi_2(G)\).
\end{theorem}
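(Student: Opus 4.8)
The plan is to assemble Table \ref{tab:TKTpCocl1} by combining three ingredients already at our disposal, exactly in the spirit of the proof of Corollary \ref{cor:TKTCocl1}: the Blackburn--Miech classification of metabelian $p$-groups of coclass $1$ via the parametrized presentations $G_a^n(z,w)$ of \S\ref{sss:PrmtPres}, the parent map of Theorem \ref{thm:PrntCocl1}, and the transfer kernel computations of \cite[Thm.~2.5--2.6]{Ma2}. First I would enumerate, for each order $p^n$ with $n\ge 3$, the isomorphism classes of non-isolated metabelian vertices $G$ on $\mathcal{G}(p,1)$ organized by their defect $k=k(G)$. By Theorem \ref{thm:PrntCocl1}, together with the group counts of \cite[Thm.~4.1--4.3]{Bl1}, these fall into three families: the mainline group $G_0^n(0,0)$, of depth $0$ and defect $0$; the finitely many depth-$1$, defect-$0$ groups $G_0^n(z,w)$ with $(z,w)\ne(0,0)$, each containing an abelian maximal subgroup $H_1=\chi_2(G)$; and, whenever $p\ge 3$ allows $k\ge 1$, i.e.\ for $1\le k\le p-2$, the positive-defect groups $G_a^n(z,w)$ with $a\ne 0$. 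The root $C_p\times C_p=G_0^2(0,0)$ (and, if one lists it, the extra-special exception $G_0^3(0,1)$) is recorded separately.

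Next I would fill in the depth and TKT columns for each family. The depth follows from Corollary \ref{cor:DpthCocl1}: it is $0$ on the mainline, it is $k+1=1$ for the defect-$0$ non-mainline groups — where one necessarily has $\varkappa(1)\ne 0$, since such a group never occurs as a parent — and it is $k$ for the positive-defect groups, where $\varkappa(1)=0$. The TKT is then read off Corollary \ref{cor:TKTCocl1}: mainline groups and the root have TKT $\mathrm{a}.1$, $\varkappa=(0^{p+1})$; the depth-$1$, defect-$0$ groups have TKT $\mathrm{a}.2$, $\varkappa=(1,0^p)$, or $\mathrm{a}.3$, $\varkappa=(2,0^p)$, the two being distinguished by the orbit of the integer triple under the symmetric group of degree $3$ as in \cite{Ma2}; and every group of positive defect $1\le k\le p-2$ has TKT $\mathrm{a}.1$, $\varkappa=(0^{p+1})$. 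In particular the total transfer $\varkappa(1)=0$ to $H_1=\chi_2(G)$ holds for precisely the mainline groups, the positive-defect groups, and the root, and fails with value $1$ or $2$ exactly for the depth-$1$, defect-$0$ groups; this is the asserted characterization (compare Corollary \ref{cor:MainLineCocl1}, which gives the weaker one-sided statement valid for all $p\ge 2$). The multiplicity $\nu(G)$ of vanishing transfer components is then immediate from the explicit $\varkappa(G)$.

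The main obstacle is not the transfer theory, which is imported wholesale from \cite{Ma2}, but the bookkeeping that reconciles Blackburn's isomorphism criteria for the triples $(a,w,z)$ with the normalization fixed by the polarization of Definition \ref{dfn:NatOrdCocl1}: one must ensure that the distinguished first coordinate of $\varkappa(G)$ really is the transfer to $\chi_2(G)=H_1$, and that the enumeration of isomorphism classes at each order $p^n$ is both complete and non-redundant, matching the counts in \cite[Thm.~4.1--4.3]{Bl1}. Once this correspondence between the two parametrizations is pinned down, each row of Table \ref{tab:TKTpCocl1} is a direct substitution into the cited results, and the periodicity of the table in $n$ is inherited from the periodicity of the parent structure in Theorem \ref{thm:PrntCocl1}.
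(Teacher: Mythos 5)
Your proposal is correct and follows essentially the same route as the paper: the paper's entire proof is the citation to \cite[Thm.~2.5, Tab.~1]{Ma2}, and your plan is just the expanded version of that, combining the enumeration from Theorem \ref{thm:PrntCocl1} and the counts of \cite{Bl1} with the depth/defect dichotomy of Corollary \ref{cor:DpthCocl1} and the transfer kernels of Corollary \ref{cor:TKTCocl1}, all of which ultimately rest on the same external computation in \cite{Ma2}. The only quibble is your passing reference to orbits of integer \emph{triples} under the symmetric group of degree \(3\), which is the \(p=2\) normalization; for odd \(p\) the TKT is an orbit of a \((p+1)\)-tuple, though this does not affect the a.2 versus a.3 distinction you draw.
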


\renewcommand{\arraystretch}{1.2}
\begin{table}[ht]
\caption{\(\varkappa(G)\), \(\nu(G)\) in dependence on non-isolated \(G\in\mathcal{G}(p,1)\) for \(p\ge 3\)}
\label{tab:TKTpCocl1}
\begin{center}
\begin{tabular}{|c|cc|cccc|cc|ccc|}
\hline
 \multicolumn{7}{|c|}{\(p\)-Group \(G_a^n(z,w)\) of Coclass \(1\)}                             & \multicolumn{2}{|c|}{graph information} &  \multicolumn{3}{|c|}{transfer kernels}                         \\
 \multicolumn{7}{|c|}{\downbracefill}                                                          & \multicolumn{2}{|c|}{\downbracefill}    &  \multicolumn{3}{|c|}{\downbracefill}                           \\
 \(G\)             &\(\mathrm{cl}(G)\) &     \(n\) & \(a\)     & \(z\)     & \(w\) & \(k\)     & \(\mathrm{dp}(G)\) & tree position      &  TKT & \(\varkappa(G)\)                                  & \(\nu(G)\) \\
\hline
 \(C_p\times C_p\) &             \(1\) &     \(2\) &           &           &       & \(0\)     &              \(0\) & root               &  a.1 & \((\overbrace{0\ldots 0}^{p+1\text{ times}})\) & \(p+1\) \\
 \(G_0^3(0,1)\)    &             \(2\) &     \(3\) & \(0\)     & \(0\)     & \(1\) & \(0\)     &              \(1\) & pre-period         &  A.1 & \((\overbrace{1\ldots 1}^{p+1\text{ times}})\) & \(0\)   \\
\hline
 \(G_0^n(0,0)\)    &         \(\ge 2\) & \(\ge 3\) & \(0\)     & \(0\)     & \(0\) & \(0\)     &              \(0\) & mainline           &  a.1 & \((\overbrace{0\ldots 0}^{p+1\text{ times}})\) & \(p+1\) \\
\hline
 \(G_0^n(0,1)\)    &         \(\ge 3\) & \(\ge 4\) & \(0\)     & \(0\)     & \(1\) & \(0\)     &              \(1\) & periodic sequences &  a.2 & \((1\overbrace{0\ldots 0}^{p\text{ times}})\)  & \(p\)   \\
 \(G_0^n(z,0)\)    &         \(\ge 3\) & \(\ge 4\) & \(0\)     & \(\ne 0\) & \(0\) & \(0\)     &              \(1\) & periodic sequences &  a.3 & \((2\overbrace{0\ldots 0}^{p\text{ times}})\)  & \(p\)   \\
 \(G_a^n(z,w)\)    &         \(\ge 4\) & \(\ge 5\) & \(\ne 0\) & \( \)     & \( \) & \(\ge 1\) &          \(\ge 1\) & periodic sequences &  a.1 & \((\overbrace{0\ldots 0}^{p+1\text{ times}})\) & \(p+1\) \\
\hline
\end{tabular}
\end{center}
\end{table}

\begin{proof}
See
\cite[Thm. 2.5, Tab. 1]{Ma2}.
\end{proof}

%\newpage

%--------------------------------------------------------------------------------

\subsubsection{The complete coclass graph \(\mathcal{G}(3,1)\)}
\label{sss:3Cocl1}

This section and the following sections \S\S\
\ref{sss:TKTFromCoarseTTTCocl1}--\ref{sss:7Cocl1}
will show, that
second \(p\)-class groups \(\mathrm{G}_p^2(K)\)
of real quadratic fields \(K=\mathbb{Q}(\sqrt{D})\), \(D>0\), with \(\mathrm{Cl}_p(K)\simeq(p,p)\)
are only distributed on odd branches of the metabelian skeleton
of \(\mathcal{G}(p,1)\), for an odd prime \(p\ge 3\),
in contrast to the complete population of the coclass graph \(\mathcal{G}(2,1)\).
The effect is due to the number theoretic selection rule in Theorem
\ref{thm:SelRuleCocl1}.
The quantitative distribution for \(p\in\lbrace 3,5,7\rbrace\)
reveals a dominant population of ground states
and decreasing frequency of hits of excited states.

\begin{theorem}
\label{thm:3Cocl1}
The diagram in Figure
\ref{fig:Distr3Cocl1}
visualizes the complete coclass graph \(\mathcal{G}(3,1)\)
up to order \(3^8=6\,561\).
It is periodic with length \(2\).
The period consists of
branches \(\mathcal{B}_j\) with \(4\le j\le 5\),
whereas branches \(\mathcal{B}_j\) with \(2\le j\le 3\)
are irregular and form the pre-period.
\end{theorem}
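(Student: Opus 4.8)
The statement is a description of the finite top of the coclass graph $\mathcal{G}(3,1)$ together with a periodicity assertion, so the plan is to combine a complete enumeration of the coclass-$1$ $3$-groups of order at most $3^8$ with the structural results already established for $\mathcal{G}(p,1)$. First I would record that, by Blackburn's theorem, every $3$-group of maximal class is metabelian, so the unique coclass tree $\mathcal{T}(C_3\times C_3)$ coincides with its metabelian skeleton and $\mathcal{G}(3,1)$ is this tree together with the isolated vertex $C_{3^2}\cong C_9$, which is disconnected because its abelianization is cyclic rather than of type $(3,3)$ --- exactly the situation of Theorem~\ref{thm:2Cocl1} for $p=2$. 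Next, for $4\le n\le 8$ I would list the isomorphism types of order $3^n$ by specializing Blackburn's and Miech's counting theorems \cite{Bl1,Mi} to $p=3$: the groups with an abelian maximal subgroup (defect $k=0$), which fall into $2+\gcd(n-2,2)$ classes, and the groups of positive defect, which by the Miech bound $k\le p-2=1$ all have $k=1$ and occur only for $n\ge 5$; the order-$3^3$ vertices are just the two extra-special groups $G_0^3(0,0)$ and $G_0^3(0,1)$. The parent map of Theorem~\ref{thm:PrntCocl1} then arranges these types into the tree: $(G_0^n(0,0))_{n\ge 2}$ is the mainline, and every non-mainline vertex of order $3^{n+1}$ is an immediate descendant of $M_n=G_0^n(0,0)$.

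The decisive simplification for $p=3$ is that every non-mainline vertex lies at depth exactly $1$. By Corollary~\ref{cor:DpthCocl1}, $\mathrm{dp}(G)=k$ if $\varkappa(1)=0$ and $\mathrm{dp}(G)=k+1$ if $\varkappa(1)\ne 0$; a defect-$0$ non-mainline group $G_0^n(z,w)$ with $(z,w)\ne(0,0)$ has an abelian maximal subgroup $H_1$, hence $\varkappa(1)\ne 0$ and depth $1$, whereas a defect-$1$ group is of TKT $\mathrm{a}.1$ with $\varkappa(1)=0$ by Corollary~\ref{cor:TKTCocl1}(4) and therefore also has depth $k=1$. Consequently each branch $\mathcal{B}_j=\mathcal{T}(M_j)\setminus\mathcal{T}(M_{j+1})$ is flat: it is the mainline vertex $M_j=G_0^j(0,0)$ of order $3^j$ together with its terminal, non-mainline immediate descendants of order $3^{j+1}$. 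Counting these descendants by the specialized Blackburn counts shows that the number of vertices of $\mathcal{B}_j$, and likewise their defects, TKTs and transfer target types, depends only on the parity of $j$ once $j$ is large enough; this is the source of the period length $2$, which must divide $p^{r+1}(p-1)=3^2\cdot 2=18$ by the periodicity statement in \S~\ref{ss:CoclGrph}.

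It remains to separate the pre-period from the period. Branch $\mathcal{B}_2$ is exceptional because its root is the abelian group $C_3\times C_3$ and its only non-mainline vertex $G_0^3(0,1)$ is the extra-special group of order $3^3$ with the anomalous TKT $\mathrm{A}.1$ of Corollary~\ref{cor:TKTCocl1}(1); branch $\mathcal{B}_3$ is still exceptional because a group of order $3^4$ cannot yet have positive defect ($k\le\min\{4-4,1\}=0$), so the defect-$1$ vertices present in every later branch are missing. From $\mathcal{B}_4$ on the enumeration stabilizes, and one checks on the explicit data through $\mathcal{B}_7$ --- a vertex of $\mathcal{B}_7$ has order at most $3^8$ --- that $\mathcal{B}_4\cong\mathcal{B}_6$ and $\mathcal{B}_5\cong\mathcal{B}_7$ as decorated graphs while $\mathcal{B}_4\not\cong\mathcal{B}_5$. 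Combining this with the periodicity theorem of du Sautoy and of Eick--Leedham-Green \cite{dS,EkLg} (which guarantees that a period $\ell\mid 18$ has been reached at a bounded level) forces $\ell=2$ with pre-period $\mathcal{B}_2,\mathcal{B}_3$; alternatively the graph isomorphism $\mathcal{B}_n\to\mathcal{B}_{n+2}$ can be exhibited directly from the parametrized presentations of \S~\ref{sss:PrmtPres} by matching $G_a^{n}(z,w)$ with $G_a^{n+2}(z,w)$ for $n\ge 4$.

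The principal obstacle, as in any such classification, is the bookkeeping of isomorphism types: deciding precisely which parametrized presentations $G_a^n(z,w)$ describe the same group --- this is what produces the $\gcd(n-2,p-1)$ dependence and hence the parity alternation of the branches --- and then verifying that the branch isomorphism $\mathcal{B}_n\cong\mathcal{B}_{n+2}$ respects all the labels displayed in Figure~\ref{fig:Distr3Cocl1} (depth, TKT, transfer target type, and the SmallGroups identifiers). Once the Blackburn--Miech counts are in hand and Corollaries~\ref{cor:DpthCocl1} and~\ref{cor:TKTCocl1} are applied, the remainder is a finite and essentially routine verification.
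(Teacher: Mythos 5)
Your proposal is correct and follows essentially the same route as the paper: both rest on specializing Blackburn's counting theorems to \(p=3\) (yielding the count \(2+\gcd(n-2,2)\) for the defect-\(0\) groups and the constant count of defect-\(1\) groups for \(n\ge 5\)), combined with Theorem~\ref{thm:PrntCocl1} and Corollary~\ref{cor:DpthCocl1} to place every non-mainline vertex at depth \(1\). You are in fact somewhat more complete than the paper's own proof, which gives only the counts and leaves the flatness of the branches, the onset of periodicity at \(\mathcal{B}_4\), and the identification of the pre-period \(\mathcal{B}_2,\mathcal{B}_3\) implicit.
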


%\newpage

%--------------------------------------------------------------------------------

\begin{figure}[ht]
\caption{Root \(C_3\times C_3\) and branches \(\mathcal{B}_j\), \(2\le j\le 7\), of coclass graph \(\mathcal{G}(3,1)\)}
\label{fig:Distr3Cocl1}

% Coclass Graph G(3,1)
\setlength{\unitlength}{1cm}
\begin{picture}(16,15)(-8,-14)

% scale of orders
\put(-8,0.5){\makebox(0,0)[cb]{Order \(3^n\)}}
\put(-8,0){\line(0,-1){12}}
\multiput(-8.1,0)(0,-2){7}{\line(1,0){0.2}}
\put(-8.2,0){\makebox(0,0)[rc]{\(9\)}}
\put(-7.8,0){\makebox(0,0)[lc]{\(3^2\)}}
\put(-8.2,-2){\makebox(0,0)[rc]{\(27\)}}
\put(-7.8,-2){\makebox(0,0)[lc]{\(3^3\)}}
\put(-8.2,-4){\makebox(0,0)[rc]{\(81\)}}
\put(-7.8,-4){\makebox(0,0)[lc]{\(3^4\)}}
\put(-8.2,-6){\makebox(0,0)[rc]{\(243\)}}
\put(-7.8,-6){\makebox(0,0)[lc]{\(3^5\)}}
\put(-8.2,-8){\makebox(0,0)[rc]{\(729\)}}
\put(-7.8,-8){\makebox(0,0)[lc]{\(3^6\)}}
\put(-8.2,-10){\makebox(0,0)[rc]{\(2\,187\)}}
\put(-7.8,-10){\makebox(0,0)[lc]{\(3^7\)}}
\put(-8.2,-12){\makebox(0,0)[rc]{\(6\,561\)}}
\put(-7.8,-12){\makebox(0,0)[lc]{\(3^8\)}}

% abelian vertices
\put(-0.1,-0.1){\framebox(0.2,0.2){}}
\put(-2.1,-0.1){\framebox(0.2,0.2){}}
% metabelian vertices with defect k=0
\multiput(0,-2)(0,-2){5}{\circle*{0.2}}
\multiput(-2,-2)(0,-2){6}{\circle*{0.2}}
\multiput(-4,-4)(0,-2){5}{\circle*{0.2}}
\multiput(-6,-4)(0,-4){3}{\circle*{0.2}}
% metabelian vertices with defect k=1
\multiput(2,-6)(0,-2){4}{\circle*{0.1}}
\multiput(4,-6)(0,-2){4}{\circle*{0.1}}
\multiput(6,-6)(0,-2){4}{\circle*{0.1}}

% directed edges
\multiput(0,0)(0,-2){5}{\line(0,-1){2}}
\multiput(0,0)(0,-2){6}{\line(-1,-1){2}}
\multiput(0,-2)(0,-2){5}{\line(-2,-1){4}}
\multiput(0,-2)(0,-4){3}{\line(-3,-1){6}}
\multiput(0,-4)(0,-2){4}{\line(1,-1){2}}
\multiput(0,-4)(0,-2){4}{\line(2,-1){4}}
\multiput(0,-4)(0,-2){4}{\line(3,-1){6}}

% infinite main line
\put(0,-10){\vector(0,-1){2}}
\put(0.2,-11.5){\makebox(0,0)[lc]{main}}
\put(0.2,-12){\makebox(0,0)[lc]{line}}
\put(-0.2,-12.5){\makebox(0,0)[rc]{\(\mathcal{T}(C_3\times C_3)\)}}

% isomorphism classes of sporadic vertices
\put(0.2,0){\makebox(0,0)[lt]{\(C_3\times C_3\)}}
\put(-2.2,0){\makebox(0,0)[rt]{\(C_9\)}}
\put(0.2,-2){\makebox(0,0)[lt]{\(G^3_0(0,0)\)}}
\put(-2.2,-2){\makebox(0,0)[rt]{\(G^3_0(0,1)\)}}
\put(-4,-4.5){\makebox(0,0)[cc]{\(\mathrm{Syl}_3A_9\)}}

% 2 vertices of the stem of isoclinism family Phi1
\put(-3,0){\makebox(0,0)[cc]{\(\Phi_1\)}}
\put(-2.1,0.1){\makebox(0,0)[rb]{\(\langle 1\rangle\)}}
\put(-0.1,0.1){\makebox(0,0)[rb]{\(\langle 2\rangle\)}}

% 2 vertices of the stem of isoclinism family Phi2
\put(2,-2){\makebox(0,0)[cc]{\(\Phi_2\)}}
\put(-2.1,-1.9){\makebox(0,0)[rb]{\(\langle 4\rangle\)}}
\put(-0.1,-1.9){\makebox(0,0)[rb]{\(\langle 3\rangle\)}}

% 4 vertices of the stem of isoclinism family Phi3
\put(2,-4){\makebox(0,0)[cc]{\(\Phi_3\)}}
\put(-6.1,-3.9){\makebox(0,0)[rb]{\(\langle 8\rangle\)}}
\put(-4.1,-3.9){\makebox(0,0)[rb]{\(\langle 7\rangle\)}}
\put(-2.1,-3.9){\makebox(0,0)[rb]{\(\langle 10\rangle\)}}
\put(-0.1,-3.9){\makebox(0,0)[rb]{\(\langle 9\rangle\)}}

% 3 vertices of the stem of isoclinism family Phi9
\put(-4,-6.5){\makebox(0,0)[cc]{\(\Phi_9\)}}
\put(-4.1,-5.9){\makebox(0,0)[rb]{\(\langle 25\rangle\)}}
\put(-2.1,-5.9){\makebox(0,0)[rb]{\(\langle 27\rangle\)}}
\put(-0.1,-5.9){\makebox(0,0)[rb]{\(\langle 26\rangle\)}}

% 3 vertices of the stem of isoclinism family Phi10
\put(4,-6.5){\makebox(0,0)[cc]{\(\Phi_{10}\)}}
\put(2.1,-5.9){\makebox(0,0)[lb]{\(\langle 28\rangle\)}}
\put(4.1,-5.9){\makebox(0,0)[lb]{\(\langle 30\rangle\)}}
\put(6.1,-5.9){\makebox(0,0)[lb]{\(\langle 29\rangle\)}}

% 4 vertices of the stem of isoclinism family Phi35
\put(-4,-8.5){\makebox(0,0)[cc]{\(\Phi_{35}\)}}
\put(-6.1,-7.9){\makebox(0,0)[rb]{\(\langle 98\rangle\)}}
\put(-4.1,-7.9){\makebox(0,0)[rb]{\(\langle 97\rangle\)}}
\put(-2.1,-7.9){\makebox(0,0)[rb]{\(\langle 96\rangle\)}}
\put(-0.1,-7.9){\makebox(0,0)[rb]{\(\langle 95\rangle\)}}

% 3 vertices of the stem of isoclinism family Phi36
\put(4,-8.5){\makebox(0,0)[cc]{\(\Phi_{36}\)}}
\put(2.1,-7.9){\makebox(0,0)[lb]{\(\langle 100\rangle\)}}
\put(4.1,-7.9){\makebox(0,0)[lb]{\(\langle 99\rangle\)}}
\put(6.1,-7.9){\makebox(0,0)[lb]{\(\langle 101\rangle\)}}

\put(-0.1,-9.9){\makebox(0,0)[rb]{\(\langle 386\rangle\)}}

% isomorphism classes of coclass families
\put(0,-13){\makebox(0,0)[cc]{\(G^n_0(0,0)\)}}
\put(-2,-13){\makebox(0,0)[cc]{\(G^n_0(0,1)\)}}
\put(-4,-13){\makebox(0,0)[cc]{\(G^n_0(1,0)\)}}
\put(-6,-13){\makebox(0,0)[cc]{\(G^n_0(-1,0)\)}}
\put(2,-13){\makebox(0,0)[cc]{\(G^n_1(0,-1)\)}}
\put(4,-13){\makebox(0,0)[cc]{\(G^n_1(0,0)\)}}
\put(6,-13){\makebox(0,0)[cc]{\(G^n_1(0,1)\)}}

% TKT of sporadic vertices
\put(2.5,0){\makebox(0,0)[cc]{\textbf{TKT:}}}
\put(3.5,0){\makebox(0,0)[cc]{a.1}}
\put(3.5,-0.5){\makebox(0,0)[cc]{\((0000)\)}}
\put(1.8,-0.7){\framebox(2.9,1){}}
\put(-6,-2){\makebox(0,0)[cc]{\textbf{TKT:}}}
\put(-5,-2){\makebox(0,0)[cc]{A.1}}
\put(-5,-2.5){\makebox(0,0)[cc]{\((1111)\)}}
\put(-6.7,-2.7){\framebox(2.9,1){}}

% TKT of coclass families
\put(-8,-14){\makebox(0,0)[cc]{\textbf{TKT:}}}
\put(0,-14){\makebox(0,0)[cc]{a.1}}
\put(-2,-14){\makebox(0,0)[cc]{a.2}}
\put(-4,-14){\makebox(0,0)[cc]{a.3}}
\put(-6,-14){\makebox(0,0)[cc]{a.3}}
\put(2,-14){\makebox(0,0)[cc]{a.1}}
\put(4,-14){\makebox(0,0)[cc]{a.1}}
\put(6,-14){\makebox(0,0)[cc]{a.1}}
\put(0,-14.5){\makebox(0,0)[cc]{\((0000)\)}}
\put(-2,-14.5){\makebox(0,0)[cc]{\((1000)\)}}
\put(-4,-14.5){\makebox(0,0)[cc]{\((2000)\)}}
\put(-6,-14.5){\makebox(0,0)[cc]{\((2000)\)}}
\put(2,-14.5){\makebox(0,0)[cc]{\((0000)\)}}
\put(4,-14.5){\makebox(0,0)[cc]{\((0000)\)}}
\put(6,-14.5){\makebox(0,0)[cc]{\((0000)\)}}
\put(-8.7,-14.7){\framebox(15.4,1){}}

% distribution of 2nd class groups
\put(-4,-4){\oval(1.5,2)}
\put(-4,-4){\oval(5.6,1.5)}
\put(-4,-8){\oval(5.6,1.5)}
\multiput(4,-8)(0,-4){2}{\oval(5.9,1.5)}
\put(-6,-5.1){\makebox(0,0)[cc]{\underbar{\textbf{2083}}}}
\put(-4,-5.2){\makebox(0,0)[cc]{\underbar{\textbf{697}}}}
\put(-6,-9.1){\makebox(0,0)[cc]{\underbar{\textbf{72}}}}
\put(7.4,-8.3){\makebox(0,0)[cc]{\underbar{\textbf{147}}}}
\put(7.4,-12.3){\makebox(0,0)[cc]{\underbar{\textbf{1}}}}

\end{picture}

\end{figure}
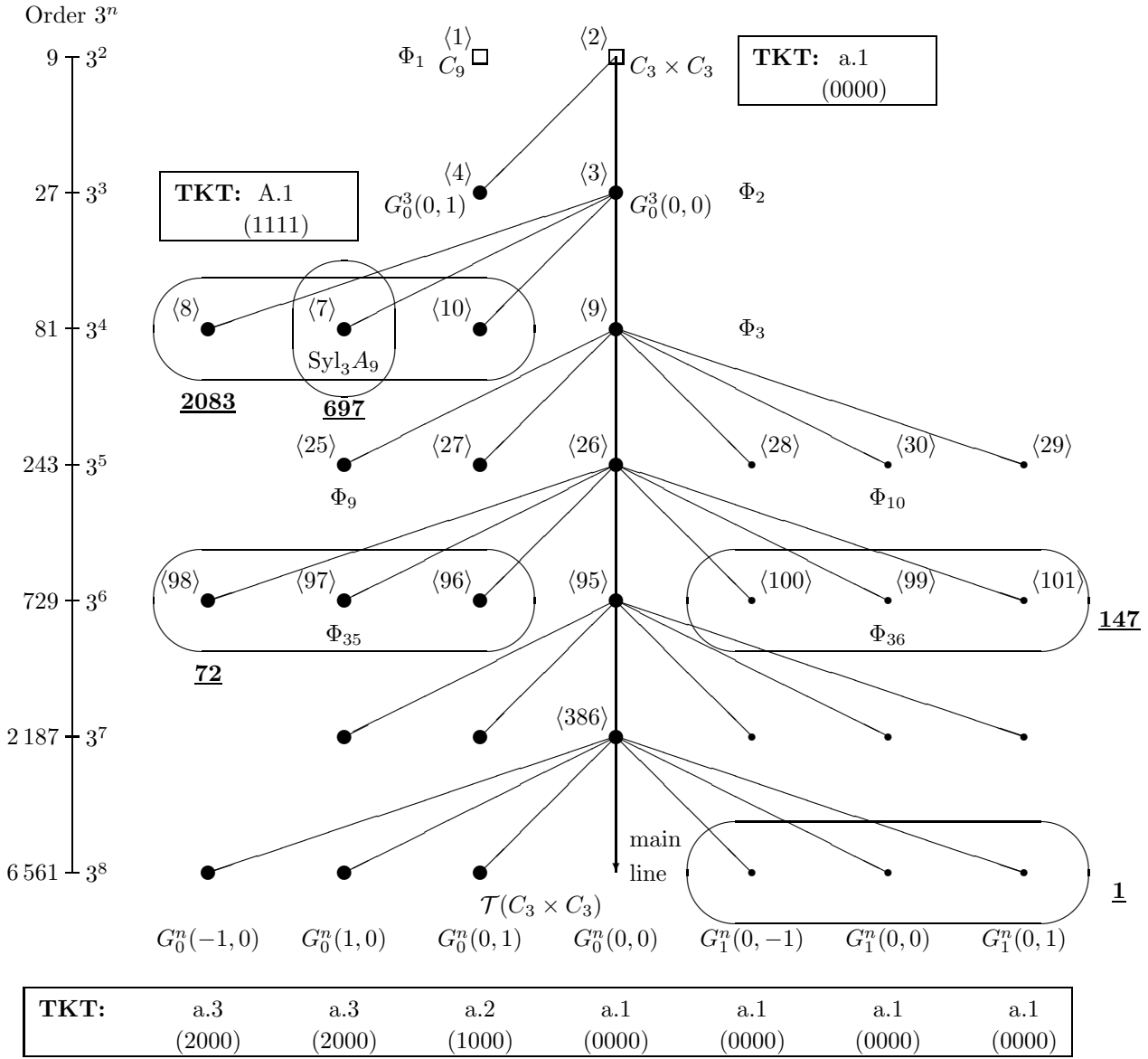

\begin{proof}
The top of \(\mathcal{G}(3,1)\) consists of two abelian groups of order \(3^2\),
the isolated cyclic group \(C_9\) and the bicyclic root \(C_3\times C_3\)
of the unique coclass tree \(\mathcal{T}(C_3\times C_3)\).\\
Immediate descendants of the root
are the two well-known extra special groups,
the capable mainline group \(G_0^3(0,0)\) of exponent \(3\)
and the terminal group \(G_0^3(0,1)\) of exponent \(9\),
both of order \(3^3\).

Blackburn's results
\cite{Bl1}
on counting metabelian \(p\)-groups of maximal class
and order \(p^n\) with \(n\ge 4\)
can now be applied to the special case \(p=3\),
which is entirely metabelian \cite[p. 26, Thm. 6]{Bl2}.\\
We start with
metabelian groups containing an abelian maximal subgroup,
which are characterized by defect \(k=0\).
They consist of the capable mainline group \(G_0^n(0,0)\),
the terminal group \(G_0^n(0,1)\) and
\((n-2,p-1)\) terminal groups of the form \(G_0^n(z,0)\).
Specialization of
\cite[p. 88, Thm. 4.3]{Bl1}
to \(p=3\) in dependence on \(n\) yields their number
\[
2+(n-2,p-1)=
\begin{cases}
2+1=3 & \text{ for } 5\le n\equiv 1\pmod{2}, \\
2+2=4 & \text{ for } 4\le n\equiv 0\pmod{2}.
\end{cases}
\]
Further,
the number of metabelian groups with defect \(k=1\),
which are terminal and of the form \(G_1^n(z,w)\) with \(a=(a(n-1))=(1)\),
is given, independently from \(n\ge 5\), by \(3\)
\cite[p. 88, Thm. 4.2]{Bl1}.
\end{proof}

\noindent
Vertices \(G\) of coclass graph \(\mathcal{G}(3,1)\) in Figure
\ref{fig:Distr3Cocl1}
are classified according to their defect \(k(G)\)
by using different symbols:

\begin{enumerate}
\item
large contour squares \(\square\) denote abelian groups,
\item
big full discs {\Large \(\bullet\)} denote metabelian groups with abelian maximal subgroup and \(k=0\),
\item
small full discs {\footnotesize \(\bullet\)} denote metabelian groups with defect \(k=1\).
\end{enumerate}

\noindent
The actual distribution of the \(2576\) second \(3\)-class groups \(G_3^2(K)\)
of real quadratic number fields \(K=\mathbb{Q}(\sqrt{D})\) of type \((3,3)\)
with discriminant \(0<D<10^7\) is represented by
underlined boldface counters of hits of vertices surrounded by the adjacent oval.
See
\cite[\S\ 6, Tab. 2]{Ma1}
and
\cite[\S\ 6, Tab. 11]{Ma3}.
The results verify the selection rule, Theorem
\ref{thm:SelRuleCocl1},
for groups \(G_3^2(\mathbb{Q}(\sqrt{D}))\), \(D>0\), 
and underpin the \textit{weak leaf conjecture}
\ref{cnj:WeakLeafCnj}
that
mainline vertices are forbidden for second \(3\)-class groups of quadratic fields.
A remarkably different behavior is revealed by certain biquadratic fields in Figure
\ref{fig:WimanBlackburn1}
of section \S\
\ref{ss:NewRsltESR}.

\begin{conjecture}
\label{cnj:WeakLeafCnj}
A vertex \(G\) on the metabelian skeleton \(\mathcal{M}(p,r)\)
of a coclass graph \(\mathcal{G}(p,r)\), with an odd prime \(p\ge 3\) and \(r\ge 1\),
cannot be realized
as second \(p\)-class group \(G_p^2(K)\) of a quadratic field \(K=\mathbb{Q}(\sqrt{D})\),
if it possesses a metabelian immediate descendant \(H\)
having the same transfer kernel type \(\varkappa(H)=\varkappa(G)\)
and a higher defect of commutativity \(k(H)>k(G)\).
\end{conjecture}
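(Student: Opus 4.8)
The plan is to argue by contradiction, imitating as far as possible the mechanism of Theorem \ref{thm:SpecWeakLeaf}. So suppose that \(G\) lies on \(\mathcal{M}(p,r)\), possesses a metabelian immediate descendant \(H\) with \(\varkappa(H)=\varkappa(G)\) and \(k(H)>k(G)\), and nevertheless \(G\simeq\mathrm{G}_p^2(K)=\mathcal{G}/\mathcal{G}^{\prime\prime}\) for some quadratic field \(K=\mathbb{Q}(\sqrt{D})\), where \(\mathcal{G}=\mathrm{G}_p^\infty(K)\) denotes the \(p\)-tower group. First I would record the two structural facts about \(\mathcal{G}\): it carries an automorphism of order dividing \(2\), induced by \(\mathrm{Gal}(K\vert\mathbb{Q})\), acting by inversion on \(\mathcal{G}/\mathcal{G}^\prime\simeq\mathrm{Cl}_p(K)\), and its relation rank satisfies \(r(\mathcal{G})=d(\mathcal{G})\) when \(D<0\), so that \(\mathcal{G}\) is a Schur \(\sigma\)-group \cite{Sh}, \cite[p.~58]{KoVe}, and \(r(\mathcal{G})\le d(\mathcal{G})+1\) when \(D>0\). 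The argument then splits according to whether the \(p\)-tower of \(K\) is metabelian.

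Consider first the metabelian case \(\mathcal{G}=G\), where \(r(G)=r(\mathcal{G})\). The decisive step is to prove that the hypothesis on \(H\) forces the Schur multiplier \(\mathrm{H}_2(G,\mathbb{Z}_p)\) to be non-trivial. The heuristic is that the fatter descendant \(H\) is obtained from \(G\) by a deformation of the commutator relation (\ref{eqn:CmtRelCocl1}) that re-activates the term \(s_{n-k(G)}^{a(n-k(G))}\); dually, the relation that has to be imposed in order to present \(G\) rather than the deeper vertex \(H\) is not a consequence of the relations of \(H\) and therefore represents a non-zero homology class — precisely the phenomenon producing a Schur multiplier of order \(p\) for \(\langle 243,4\rangle\) and \(\langle 243,9\rangle\) in the proof of Theorem \ref{thm:SpecWeakLeaf}. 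That this is forced rather than merely generic should follow from the periodicity of cohomology along branches of coclass trees recalled in \S\ref{ss:Periodicity}, applied to the finite segment joining \(G\) to \(H\). Granting \(\mathrm{H}_2(G,\mathbb{Z}_p)\ne 1\), the case \(D<0\) is complete, since \(r(G)=d(G)\) is impossible for a group with non-trivial Schur \(p\)-multiplier. For \(D>0\) a cyclic multiplier does not yet violate \(r(G)\le d(G)+1\), and one needs either the sharper bound \(\mathrm{rank}_{\mathbb{F}_p}\mathrm{H}_2(G,\mathbb{Z}_p)\ge 2\) or the additional leverage of the selection rule of Theorem \ref{thm:SelRuleCocl1}, whose \(p\)-class numbers of the non-Galois subfields \(K_i\) tie the depth of \(G\) to arithmetic data and should be shown incompatible with the depth imposed by a TKT-preserving fatter descendant.

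The genuinely hard case is a non-metabelian tower, \(\mathcal{G}\ne G\) and \(\ell_p(K)\ge 3\). Now \(r(G)\) is no longer controlled by \(r(\mathcal{G})\): the five-term homology sequence of \(1\to\mathcal{G}^{\prime\prime}\to\mathcal{G}\to G\to 1\) inserts the usually large term \(\mathcal{G}^{\prime\prime}/[\mathcal{G},\mathcal{G}^{\prime\prime}]\) between \(\mathrm{H}_2(\mathcal{G})\) and \(\mathrm{H}_2(G)\), so relation-rank bounds on \(\mathcal{G}\) say nothing directly about \(G\). Nor is the shortcut of Theorem \ref{thm:SpecWeakLeaf} available: for a vertex \(G\) of nilpotency class \(\ge 4\) one has \(\gamma_4(G)\ne 1\), so Heider and Schmithals \cite{HeSm} no longer force \(\ell_p(K)=2\). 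The plan here is to pass to metabelian \(\sigma\)-covers — any relation-rank-bounded \(\sigma\)-group \(\mathcal{G}\) with \(\mathcal{G}/\mathcal{G}^{\prime\prime}\simeq G\) surjects onto the metabelianization of a \(\sigma\)-cover of \(G\) — and to show, working directly with the parametrized power-commutator presentations of \S\ref{sss:PrmPres2}, that every metabelian \(\sigma\)-quotient of a \(\sigma\)-group of relation rank at most \(d+1\) is already the fatter descendant \(H\) or one of its own descendants, and never \(G\) itself, because no \(\sigma\)-equivariant relation imposed beyond the metabelian level can restore the vanishing of the distinguishing term \(s_{n-k(G)}^{a(n-k(G))}\) in (\ref{eqn:CmtRelCocl1}). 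I expect this classification of the metabelian \(\sigma\)-quotients of \(\sigma\)-groups of relation rank \(\le d+1\) to be the crux; it is essentially a reformulation of the assertion itself, which is presumably why it is recorded as Conjecture \ref{cnj:WeakLeafCnj} rather than as a theorem.
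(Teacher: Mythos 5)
The statement you are addressing is recorded in the paper as a \emph{conjecture}, not a theorem: the paper offers no proof, only a proved special instance (Theorem \ref{thm:SpecWeakLeaf}, for the two groups \(\langle 243,4\rangle\) and \(\langle 243,9\rangle\) and complex quadratic base fields) together with computational evidence. Your proposal does not close the gap, and the difficulties are concrete. First, the step you treat as decisive in the metabelian case --- that the existence of the descendant \(H\) forces \(\mathrm{H}_2(G,\mathbb{Z}_p)\ne 1\) --- is both easier and weaker than you suggest: \emph{any} capable vertex \(G\) sits in a stem extension \(1\to\gamma_c(H)\to H\to G\to 1\) with \(\gamma_c(H)\le Z(H)\cap H^\prime\) of order \(p\), so \(\lvert\mathrm{H}_2(G,\mathbb{Z}_p)\rvert\ge p\) regardless of whether \(H\) preserves the TKT or raises the defect. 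The specific hypothesis of the conjecture is therefore not used where you deploy the multiplier, and a non-trivial multiplier by itself proves far too little: it excludes only \(\mathcal{G}=G\), i.e.\ a metabelian tower, not the realization \(G\simeq\mathrm{G}_p^2(K)\) atop a longer tower. Theorem \ref{thm:3TowerLengthAtLeast3} exhibits exactly this phenomenon --- \(\langle 729,45\rangle\) and \(\langle 729,57\rangle\) have non-trivial Schur multipliers, are not Schur \(\sigma\)-groups, and \emph{are} realized as second \(3\)-class groups of complex quadratic fields, with \(\ell_3(K)\ge 3\). So your claim that ``the case \(D<0\) is complete'' in the metabelian branch disposes only of the trivial sub-case \(\mathcal{G}=G\); the entire content of the conjecture lives in the non-metabelian branch.

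In that branch you candidly concede that the proposed classification of metabelian \(\sigma\)-quotients of relation-rank-bounded \(\sigma\)-groups ``is essentially a reformulation of the assertion itself,'' which is an accurate self-assessment: no argument is given for why a \(\sigma\)-group \(\mathcal{G}\) with \(r(\mathcal{G})\le d(\mathcal{G})+1\) cannot have \(\mathcal{G}/\mathcal{G}^{\prime\prime}\simeq G\) rather than \(H\), and the five-term sequence obstruction you correctly identify (the term \(\mathcal{G}^{\prime\prime}/\lbrack\mathcal{G},\mathcal{G}^{\prime\prime}\rbrack\)) is precisely what blocks transferring relation-rank information from \(\mathcal{G}\) to \(G\). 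For real quadratic fields the situation is worse still, since the Schur \(\sigma\)-group constraint is unavailable altogether and your two proposed substitutes (a rank-\(\ge 2\) multiplier, or an incompatibility with the selection rule of Theorem \ref{thm:SelRuleCocl1}) are not carried out; note that the selection rule constrains only the parity of the branch root order and is manifestly consistent with depth-\(1\) vertices of the relevant TKTs, which is why the paper must appeal to numerical data rather than to it. In short: your proposal correctly isolates the only known mechanism (that of Theorem \ref{thm:SpecWeakLeaf}) and correctly diagnoses why it does not generalize, but it does not constitute a proof, and the statement should continue to be regarded as open.
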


%\newpage

%--------------------------------------------------------------------------------

\subsubsection{Separating TKTs on \(\mathcal{G}(p,1)\) via first TTT}
\label{sss:TKTFromCoarseTTTCocl1}

For increasing odd primes \(p\ge 5\),
the structure \(\tau(1)\) of the \(p\)-class group \(\mathrm{Cl}_p(L_1)\)
of the distinguished first unramified extension \(L_1\) of degree \(p\) of an arbitrary base field \(K\)
with second \(p\)-class group \(G=\mathrm{G}_p^2(K)\) of coclass \(\mathrm{cc}(G)=1\)
admits the separation of more and more excited states
of the TKTs \(\mathrm{a}.2\), with fixed point \(\varkappa(1)=1\),
and \(\mathrm{a}.3\), without fixed point \(\varkappa(1)\in\lbrace 2,\ldots,p+1\rbrace\).
Further, the order of the exceptional \(p\)-group \(G\simeq\mathrm{Syl}_p A_{p^2}\)
becomes increasingly larger.

\begin{theorem}
\label{thm:TKTFromCoarseTTTCocl1}
Let \(p\ge 3\) be an odd prime
and \(G\in\mathcal{G}(p,1)\) a \(p\)-group
of order \(\lvert G\rvert=p^n\), \(n\ge 4\), depth \(\mathrm{dp}(G)=1\), and defect \(k(G)=0\).

\begin{enumerate}
\item
The exceptional case of TKT \(\mathrm{a}.3^\ast\),
having an elementary abelian first TTT \(\tau(1)\)
of elevated \(p\)-rank \(\mathrm{r}_p=p\),
occurs if and only if \(n=p+1\) and \(G\simeq G_0^{p+1}(1,0)=\mathrm{Syl}_p A_{p^2}\)
\item
The regular cases of the TKTs \(\mathrm{a}.2\) and \(\mathrm{a}.3\),
having a first TTT \(\tau(1)\) of usual \(p\)-rank \(\mathrm{r}_p\le p-1\),
can be separated by the structure \(\tau(1)\) of the distinguished first maximal subgroup \(H_1=\chi_2(G)\)
if and only if \(n\le p\). In this case,
\begin{enumerate}
\item
\(G\) is of TKT \(\mathrm{a}.2\) if and only if
\(\tau(1)=(\overbrace{p\ldots,p}^{n-1\text{ times}})\)
is elementary abelian of rank \(\mathrm{r}_p=n-1\le p-1\),
\item
\(G\) is of TKT \(\mathrm{a}.3\) if and only if
\(\tau(1)=(p^2,\overbrace{p\ldots,p}^{n-3\text{ times}})\)
is of rank \(\mathrm{r}_p=n-2\le p-2\), neither nearly homocyclic nor elementary abelian.
\end{enumerate}
\end{enumerate}

\end{theorem}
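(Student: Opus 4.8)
The plan is to reduce the entire statement to the determination of the abelian type invariants of the distinguished maximal subgroup \(H_1=\chi_2(G)\). By Theorem~\ref{thm:TTTandTKT}(1) the first component of the TTT is \(\tau(1)=\mathrm{Cl}_p(L_1)\simeq H_1/H_1^\prime\), and the hypothesis \(k(G)=0\) gives \([\chi_2(G),\gamma_2(G)]=\gamma_n(G)=1\); since \(\gamma_2(G)\) is abelian and centralised by \(y\), the subgroup \(H_1=\langle y\rangle\gamma_2(G)\) is itself abelian of order \(p^{n-1}\), so \(\tau(1)\) is literally its type. As \(\mathrm{dp}(G)=1\) and \(k(G)=0\), Corollary~\ref{cor:DpthCocl1} with Table~\ref{tab:TKTpCocl1} restricts \(G\) to the groups \(G_0^n(0,1)\) (TKT \(\mathrm{a}.2\)) and \(G_0^n(z,0)\), \(z\neq 0\) (TKT \(\mathrm{a}.3\), including the Sylow group at order \(p^{p+1}\)). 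Note that \(x\notin\chi_2(G)\), so the exponent \(w\) never enters and \(\tau(1)\) depends only on \(n\) and \(z\).

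The technical heart is the \(p\)-power structure of \(\gamma_2(G)=\langle s_2,\dots,s_{n-1}\rangle\) and of \(y\). From the higher power relations \(s_{j+1}^p\prod_{\ell=2}^{p}s_{j+\ell}^{\binom{p}{\ell}}=1\), using \(p\mid\binom{p}{\ell}\) for \(1\le\ell\le p-1\) and \(\binom{p}{p}=1\), I would induct up the lower central series to obtain \(s_j^p\equiv s_{j+p-1}^{\,u}\pmod{\gamma_{j+p}(G)}\) with a unit \(u\) (for odd \(p\)); hence \(\gamma_2(G)=\langle s_2,\dots,s_p\rangle\), it is elementary abelian of rank \(n-2\) exactly when \(n\le p+1\) (where \(\gamma_{p+1}(G)=1\)), and nearly homocyclic of rank \(p-1\), not elementary abelian, when \(n\ge p+2\). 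The same bookkeeping applied to \(y^p\prod_{\ell=2}^{p}s_\ell^{\binom{p}{\ell}}=s_{n-1}^z\) yields \(y^p=s_{n-1}^z\,s_p^{-1}\,g\) with \(g\in\gamma_{p+1}(G)\); the factor \(s_{n-1}^z\) has weight \(n-1\), so it perturbs the leading term \(s_p^{-1}\) of \(y^p\) only when \(n-1\le p\).

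The case distinction on \(n\) then finishes everything. For \(n\le p\): here \(s_p=1\) and \(\gamma_{p+1}(G)=1\), so \(y^p=s_{n-1}^z\); if \(z=0\) (the group \(G_0^n(0,1)\), TKT \(\mathrm{a}.2\)) then \(\chi_2(G)\) is elementary abelian of rank \(n-1\le p-1\), while if \(z\neq 0\) (a group \(G_0^n(z,0)\), TKT \(\mathrm{a}.3\)) then \(y\) has order \(p^2\) with \(s_{n-1}\in\langle y\rangle\), whence \(\chi_2(G)\cong C_{p^2}\times C_p^{\,n-3}\) of rank \(n-2\le p-2\); these two types differ, which is part (2) in this range. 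For \(n=p+1\): now \(\gamma_{p+1}(G)=1\) but \(s_p\neq 1\) and \(n-1=p\), so \(y^p=s_p^{\,z-1}\), trivial precisely for \(z=1\); then \(\chi_2(G)\) is elementary abelian of the elevated rank \(p\), the group is \(G_0^{p+1}(1,0)\) (carrying \(w=0\), being the unique non-mainline defect-\(0\) group with \(z=1\) at this order), and since this group is the wreath product \(C_p\wr C_p=\mathrm{Syl}_pS_{p^2}=\mathrm{Syl}_pA_{p^2}\) with base group exactly this elementary abelian maximal subgroup --- checked directly from the presentation --- the ``if'' part of (1) follows; for the ``only if'' part, an elementary abelian \(\tau(1)\) forces \(\gamma_2(G)\) elementary abelian, hence \(n\le p+1\) and \(y^p=1\), and the subcase \(n\le p\) then gives \(z=0\) of rank \(n-1<p\), not elevated, so only \(n=p+1\), \(z=1\) remains. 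When instead \(n=p+1\) and \(z\neq 1\), one gets \(\chi_2(G)\cong C_{p^2}\times C_p^{\,p-2}\) for \(G_0^{p+1}(0,1)\) (\(\mathrm{a}.2\)) and for every \(G_0^{p+1}(z,0)\), \(z\neq 1\) (\(\mathrm{a}.3\)) alike, so \(\tau(1)\) already fails to separate \(\mathrm{a}.2\) from \(\mathrm{a}.3\) at \(n=p+1\). Finally, for \(n\ge p+2\) the factor \(s_{n-1}^z\) has weight exceeding \(p\), so \(y^p\equiv s_p^{-1}\pmod{\gamma_{p+1}(G)}\) independently of \(z\) and \(w\); absorbing the remaining correction by an admissible change of generators, \(\chi_2(G)\) is nearly homocyclic of rank \(p-1\) and of the same type for all these groups (in agreement with Blackburn's structure theory), so \(\tau(1)\) does not distinguish the two TKTs --- this is the ``only if'' direction of part (2).

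The step I expect to be the main obstacle is the second paragraph: extracting the precise \(p\)-th power laws \(s_j^p\equiv s_{j+p-1}^{\,u}\pmod{\gamma_{j+p}}\) and \(y^p\equiv s_p^{-1}\pmod{\gamma_{p+1}}\) from the binomial relations --- the Blackburn--Miech \(p\)-power structure of metabelian \(p\)-groups of maximal class --- while keeping careful track of weights in the lower central series. Once this is available, reading off the abelian type of \(\chi_2(G)\) in each range of \(n\), and hence the separation behaviour of \(\tau(1)\), is the routine bookkeeping indicated above.
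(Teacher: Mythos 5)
Your argument is correct, and it reaches the conclusion by a genuinely different route from the paper. The paper's own proof is essentially a two-step citation: it observes that depth \(1\) and defect \(k(G)=0\) force the distinguished maximal subgroup \(A=H_1=\chi_2(G)\) to be abelian, and then invokes \cite[Thm.~7, p.~11]{HeSm} together with the classification \(G\simeq G_0^n(0,1)\) versus \(G\simeq G_0^n(z,0)\) to read off the correspondence between TKT and the structure of \(A\); the explicit abelian type invariants displayed in Table \ref{tbl:TKTFromCoarseTTTCocl1} are not derived in the proof itself. You instead compute \(\tau(1)\simeq H_1/H_1^\prime=H_1\) directly from the Blackburn--Miech presentation (\ref{eqn:PwrRelCocl1}): the congruences \(s_i^p\equiv s_{i+p-1}^{-1}\pmod{\gamma_{i+p}(G)}\) and \(y^p\equiv s_{n-1}^{z}s_p^{-1}\pmod{\gamma_{p+1}(G)}\) are exactly right (using \(p\mid\binom{p}{\ell}\) for \(2\le\ell\le p-1\) and \(\binom{p}{p}=1\)), and the trichotomy \(n\le p\), \(n=p+1\), \(n\ge p+2\) then yields elementary abelian of rank \(n-1\) for \(z=0\), type \(C_{p^2}\times C_p^{\,n-3}\) for \(z\ne 0\) when \(n\le p\), the collision \(C_{p^2}\times C_p^{\,p-2}\) of a.2 with a.3 at \(n=p+1\) together with the unique elementary abelian exception \(y^p=s_p^{z-1}=1\Leftrightarrow z=1\), and \(z\)-independence of \(\tau(1)\) for \(n\ge p+2\) because the perturbation \(s_{n-1}^{z}\equiv (s_{n-p}^{-z})^p\) can be absorbed into \(y\) precisely when \(n-p\ge 2\). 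This reproduces every entry of Table \ref{tbl:TKTFromCoarseTTTCocl1} and both directions of the ``if and only if \(n\le p\)'' claim. What your approach buys is self-containedness and an actual derivation of the type invariants; what it costs is that the two technical points you flag (the \(p\)-th power laws along the lower central series, and the generator change showing \(H_1\) is nearly homocyclic of a \(z\)-independent type for \(n\ge p+2\)) are only sketched and would need to be written out — though both are standard consequences of Blackburn's structure theory \cite{Bl1}, which is in effect what the paper's citation supplies.
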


\begin{proof}
All groups \(G\in\mathcal{G}(p,1)\) of order \(\lvert G\rvert\ge p^n\), \(n\ge 4\),
depth \(\mathrm{dp}(G)=1\), and defect \(k(G)=0\)
are metabelian and contain the abelian distinguished maximal subgroup \(A=H_1=\chi_2(G)\),
having \(A^\prime=1\).
Thus, all statements are a consequence of
\cite[Thm. 7, p. 11]{HeSm}, where
\(G\) is of TKT \(\mathrm{a}.2\) if and only if \(G\simeq G_0^n(0,1)\), and
\(G\) is of TKT \(\mathrm{a}.3\) if and only if \(G\simeq G_0^n(z,0)\), \(z\notin\lbrace 0,1\rbrace\).
\end{proof}

Table
\ref{tbl:TKTFromCoarseTTTCocl1}
displays the possibilities for the first TTT \(\tau(1)\)
in dependence on the ground state (GS) and excited states (ES) of TKTs,
as stated in Theorem
\ref{thm:TKTFromCoarseTTTCocl1}
for the smallest odd primes \(p\in\lbrace 3,5,7\rbrace\).
Here, we assume a quadratic base field \(K=\mathbb{Q}(\sqrt{D})\),
taking into account the selection rule, Theorem
\ref{thm:SelRuleCocl1},
for odd branches.

\renewcommand{\arraystretch}{1.0}

\begin{table}[ht]
\caption{Separating TKT \(\mathrm{a}.2\), \(\mathrm{a}.3\) and \(\mathrm{a}.3*\) on \(\mathcal{G}(p,1)\)}
\label{tbl:TKTFromCoarseTTTCocl1}
\begin{center}
\begin{tabular}{|c|c|c|c|c|c|}
\hline
       &       &                                & \multicolumn{3}{|c|}{First TTT, \(\tau(1)\), for TKT} \\
 \(p\) & state & branch of                      & \multicolumn{3}{|c|}{\downbracefill} \\
       &       & \(\mathcal{T}(C_p\times C_p)\) & \(\mathrm{a}.2\) & \(\mathrm{a}.3\) & \(\mathrm{a}.3^\ast\) \\
\hline
 \(3\) & GS    & \(\mathcal{B}_3\)              & \((3^2,3)\)         & \((3^2,3)\)         & \((3,3,3)\) \\
       & ES 1  & \(\mathcal{B}_5\)              & \((3^3,3^2)\)       & \((3^3,3^2)\)       & --- \\
       & ES 2  & \(\mathcal{B}_7\)              & \((3^4,3^3)\)       & \((3^4,3^3)\)       & --- \\
\hline
 \(5\) & GS    & \(\mathcal{B}_3\)              & \((5,5,5)\)         & \((5^2,5)\)         & --- \\
       & ES 1  & \(\mathcal{B}_5\)              & \((5^2,5,5,5)\)     & \((5^2,5,5,5)\)     & \((5,5,5,5,5)\) \\
       & ES 2  & \(\mathcal{B}_7\)              & \((5^2,5^2,5^2,5)\) & \((5^2,5^2,5^2,5)\) & --- \\
\hline
 \(7\) & GS    & \(\mathcal{B}_3\)              & \((7,7,7)\)         & \((7^2,7)\)         & --- \\
       & ES 1  & \(\mathcal{B}_5\)              & \((7,7,7,7,7)\)     & \((7^2,7,7,7)\)     & --- \\
       & ES 2  & \(\mathcal{B}_7\)              & \((7^2,7,7,7,7,7)\) & \((7^2,7,7,7,7,7)\) & \((7,7,7,7,7,7,7)\) \\
\hline
\end{tabular}
\end{center}
\end{table}

%\newpage

%--------------------------------------------------------------------------------

\subsubsection{Metabelian \(5\)-groups \(G\) of coclass \(\mathrm{cc}(G)=1\)}
\label{sss:5Cocl1}

\begin{theorem}
\label{thm:5Cocl1}
The diagram in Figure
\ref{fig:Distr5Cocl1}
visualizes the metabelian skeleton \(\mathcal{M}(5,1)\)
of coclass graph \(\mathcal{G}(5,1)\)
up to order \(5^{11}=48\,828\,125\).
This subgraph of \(\mathcal{G}(5,1)\)
is periodic with length \(4\).
The period consists of
the branches \(\mathcal{B}_j\) with \(5\le j\le 8\),
whereas the branches \(\mathcal{B}_j\) with \(2\le j\le 4\)
are irregular and form the pre-period.
\end{theorem}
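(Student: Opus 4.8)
The plan is to follow the same route as for Theorems \ref{thm:2Cocl1} and \ref{thm:3Cocl1}, exploiting that for \(p=5\) the defect of commutativity ranges over \(0\le k\le p-2=3\), so that the branches of \(\mathcal{T}(C_5\times C_5)\) now carry vertices of depth up to \(3\). First I would settle the top of \(\mathcal{M}(5,1)\) by hand: the abelian root \(C_5\times C_5\) (the isolated group \(C_{25}\), having a different abelianization, lies outside the skeleton), and its two immediate descendants, the extra-special groups \(G_0^3(0,0)\) of exponent \(5\) (capable, on the mainline) and \(G_0^3(0,1)\) of exponent \(25\) (terminal), both of order \(5^3\). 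This already disposes of branch \(\mathcal{B}_2\), which consists only of the root and the single terminal leaf \(G_0^3(0,1)\).

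Next, for \(n\ge 4\) I would invoke Blackburn's enumeration of metabelian \(p\)-groups of maximal class of order \(p^n\) \cite[p.~88, Thm.~4.1--4.3]{Bl1}, specialized to \(p=5\) and organized by defect \(k=k(G)\in\{0,1,2,3\}\) (the bound \(k\le\min\{n-4,p-2\}\) being Miech's \cite[p.~331]{Mi}). For defect \(k=0\) the count is \(2+\gcd(n-2,p-1)=2+\gcd(n-2,4)\), namely the capable mainline group \(G_0^n(0,0)\), the terminal group \(G_0^n(0,1)\) of TKT \(\mathrm{a}.2\), and \(\gcd(n-2,4)\) terminal groups \(G_0^n(z,0)\) of TKT \(\mathrm{a}.3\); for \(1\le k\le 3\) the counts become constant once \(n\) is large enough. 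The edges are then read off from Theorem \ref{thm:PrntCocl1}: a defect-\(0\) group and a defect-\(1\) group both have the mainline vertex \(G_0^{n-1}(0,0)\) as parent, whereas a defect-\(k\) group with \(k\ge 2\) has the defect-\((k-1)\) vertex \(G_{\tilde a}^{n-1}(0,0)\) as parent. Feeding this into the depth formula of Corollary \ref{cor:DpthCocl1}, and recalling that a positive-defect group has \(\varkappa(1)=0\), hence \(\mathrm{dp}(G)=k\), while a defect-\(0\) group \(G_0^n(z,w)\) with \((z,w)\ne(0,0)\) is terminal at depth \(1\), pins down the shape of each branch \(\mathcal{B}_i\): a mainline root \(M_i\) at depth \(0\); for \(d=2,3\) a layer of depth-\(d\) vertices of defect \(d\); and at depth \(1\) the defect-\(1\) groups of order \(5^{i+1}\) together with the extra defect-\(0\) leaves \(G_0^{i+1}(0,1)\) and \(G_0^{i+1}(z,0)\).

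Periodicity of length \(\ell=4\) then follows because every ingredient --- the count \(2+\gcd(n-2,4)\), the eventually constant counts for \(k\in\{1,2,3\}\), and the parent relations --- depends on \(n\) only through its residue modulo \(4\); indeed \(\gcd(n-2,4)\) runs through the cycle \(1,2,1,4\). The length cannot be reduced: \(\gcd(n-2,4)\) is non-constant, so \(\ell\ne 1\), and it is not constant on either residue class modulo \(2\) (always \(1\) for odd \(n\), but alternating \(2,4,2,4\) for even \(n\)), so \(\ell\ne 2\). Consequently \(\mathcal{B}_i\) and \(\mathcal{B}_{i+4}\) agree as TKT-labelled graphs once \(i\) is large enough for all four defect values to occur in the stable regime, and to promote this to periodicity of the fully decorated structure (parametrized presentations, automorphism groups, Schur multipliers, \dots) I would invoke the periodicity isomorphisms \(\varphi_i\colon\mathcal{B}_d(M_i)\to\mathcal{B}_d(M_{i+4})\) of Eick and Leedham-Green \cite{EkLg}, which are available beyond their explicit threshold for \(\mathcal{G}(p,1)\).

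Finally I would check, branch by branch, that \(\mathcal{B}_2,\mathcal{B}_3,\mathcal{B}_4\) form the pre-period and that periodicity already governs \(\mathcal{B}_5,\dots,\mathcal{B}_8\), which together cover orders \(5^5\) through \(5^{11}\), exactly the range of the figure. Branches \(\mathcal{B}_2\) and \(\mathcal{B}_3\) are truncated: in \(\mathcal{B}_3\) the depth-\(1\) layer sits at order \(5^4\), where the constraint \(k\le n-4=0\) forbids positive defect, so all its non-mainline vertices are terminal defect-\(0\) groups and there are no defect-\(1,2,3\) layers at all. Branch \(\mathcal{B}_4\) is not truncated, but its positive-defect layers occur at the boundary orders \(5^5,5^6,5^7\) --- at or below the orders where Blackburn's generic counts and the threshold \(n\ge p+1=6\) in Miech's bound first take full effect --- so its vertex counts, and hence its shape, still differ from those of \(\mathcal{B}_8\). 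The hard part will be precisely this demarcation of the pre-period: showing that \(\mathcal{B}_4\) genuinely fails periodicity, so that the pre-period is not shorter, while \(\mathcal{B}_5\) already realizes the generic pattern. This amounts to tracking Blackburn's defect-\(k\) counts, together with the parent map of Theorem \ref{thm:PrntCocl1}, through the small orders \(5^5,5^6,5^7\), where the constraints \(k\le n-4\), \(k\le p-2\), and \(n\ge p+1\) interact.
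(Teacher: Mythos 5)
Your overall strategy --- settle the top of the graph by hand, enumerate the metabelian groups of order \(5^n\) by defect \(k\) via Blackburn, read off the edges from Theorem \ref{thm:PrntCocl1} and the depths from Corollary \ref{cor:DpthCocl1}, and deduce period \(4\) from the mod-\(4\) behaviour of \(\gcd(n-2,4)\) --- is exactly the route the paper takes. But there is a concrete gap at defect \(k=3\). You cite Blackburn's Theorems 4.1--4.3 as covering \(k\in\{0,1,2,3\}\); in fact Blackburn restricts his classification to \(k\le 2\), so those theorems give no count whatsoever for the defect-\(3\) groups, which already appear on branch \(\mathcal{B}_4\) (at order \(5^7\), depth \(3\)) and on every branch thereafter. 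The paper has to supply this count from elsewhere: Miech's Theorems 6--7 in \cite{Mi}, together with the branch data of Dietrich--Eick--Feichtenschlager \cite{DEF} and unpublished details communicated by Dietrich, in order to separate the metabelian defect-\(3\) vertices from the non-metabelian ones and to determine which of the two capable defect-\(2\) parents \(G_{(1,\pm 1)}^{n-1}(0,0)\) carries \(25\) versus \(15\) (resp.\ \(13\)) terminal metabelian descendants. Without some such input your branch diagrams are simply incomplete at depth \(3\), and the claimed isomorphism \(\mathcal{B}_j\simeq\mathcal{B}_{j+4}\) cannot be verified.

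A secondary inaccuracy: you assert that for \(1\le k\le 3\) the counts ``become constant once \(n\) is large enough.'' They do not. Specializing Blackburn's formulas to \(p=5\) gives \(1+(2n-6,4)+(n-2,4)\in\{6,5,7\}\) for \(k=1\) and \(2p+(2n-7,4)+(n-2,4)\in\{12,13,15\}\) for \(k=2\) (with the exceptional value \(10\) at \(n=6\)), each depending on \(n\bmod 4\). This does not damage your periodicity conclusion --- mod-\(4\) dependence is all you need --- but it shows the counts were not actually computed, and it is precisely these formulas, with their differing thresholds of validity (e.g.\ \(n\ge 8\) versus \(n\ge 4\) in the residue class \(0\bmod 4\), and the interaction of the bounds \(k\le n-4\) and \(k\le p-2\) at small \(n\)), that demarcate the pre-period \(\mathcal{B}_2,\mathcal{B}_3,\mathcal{B}_4\) from the first full period \(\mathcal{B}_5,\ldots,\mathcal{B}_8\) --- the step you yourself identify as ``the hard part'' and leave open.
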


%\newpage

%--------------------------------------------------------------------------------

\begin{figure}[ht]
\caption{All metabelian \(5\)-groups of order up to \(5^{11}\) on \(\mathcal{G}(5,1)\)}
\label{fig:Distr5Cocl1}

% Coclass graph G(5,1)
\setlength{\unitlength}{0.8cm}
\begin{picture}(18,21)(-8,-20)

% scale of orders
\put(-8,0.5){\makebox(0,0)[cb]{Order \(5^n\)}}
\put(-8,0){\line(0,-1){18}}
\multiput(-8.1,0)(0,-2){10}{\line(1,0){0.2}}
\put(-8.2,0){\makebox(0,0)[rc]{\(25\)}}
\put(-7.8,0){\makebox(0,0)[lc]{\(5^2\)}}
\put(-8.2,-2){\makebox(0,0)[rc]{\(125\)}}
\put(-7.8,-2){\makebox(0,0)[lc]{\(5^3\)}}
\put(-8.2,-4){\makebox(0,0)[rc]{\(625\)}}
\put(-7.8,-4){\makebox(0,0)[lc]{\(5^4\)}}
\put(-8.2,-6){\makebox(0,0)[rc]{\(3\,125\)}}
\put(-7.8,-6){\makebox(0,0)[lc]{\(5^5\)}}
\put(-8.2,-8){\makebox(0,0)[rc]{\(15\,625\)}}
\put(-7.8,-8){\makebox(0,0)[lc]{\(5^6\)}}
\put(-8.2,-10){\makebox(0,0)[rc]{\(78\,125\)}}
\put(-7.8,-10){\makebox(0,0)[lc]{\(5^7\)}}
\put(-8.2,-12){\makebox(0,0)[rc]{\(390\,625\)}}
\put(-7.8,-12){\makebox(0,0)[lc]{\(5^8\)}}
\put(-8.2,-14){\makebox(0,0)[rc]{\(1\,953\,125\)}}
\put(-7.8,-14){\makebox(0,0)[lc]{\(5^9\)}}
\put(-8.2,-16){\makebox(0,0)[rc]{\(9\,765\,625\)}}
\put(-7.8,-16){\makebox(0,0)[lc]{\(5^{10}\)}}
\put(-8.2,-18){\makebox(0,0)[rc]{\(48\,828\,125\)}}
\put(-7.8,-18){\makebox(0,0)[lc]{\(5^{11}\)}}

% abelian vertices
\put(-0.1,-0.1){\framebox(0.2,0.2){}}
\put(-2.1,-0.1){\framebox(0.2,0.2){}}
% metabelian vertices with defect k=0
\multiput(0,-2)(0,-2){8}{\circle*{0.25}}
\multiput(-2,-2)(0,-2){9}{\circle*{0.25}}
\multiput(-4,-4)(0,-2){8}{\circle*{0.25}}
% metabelian vertices of depth 1
\multiput(2,-6)(0,-2){7}{\circle{0.2}}
\multiput(4,-6)(0,-2){7}{\circle{0.2}}
% metabelian vertices of depth 2
\multiput(5,-8)(0,-2){6}{\circle*{0.15}}
\multiput(7,-8)(0,-2){6}{\circle*{0.15}}
\multiput(6,-10)(0,-2){5}{\circle*{0.15}}
% metabelian vertices of depth 3
\multiput(9,-10)(0,-2){5}{\circle{0.1}}
\multiput(8,-12)(0,-2){4}{\circle{0.1}}

% directed edges from main line vertices
\multiput(0,0)(0,-2){8}{\line(0,-1){2}}
\multiput(0,0)(0,-2){9}{\line(-1,-1){2}}
\multiput(0,-2)(0,-2){8}{\line(-2,-1){4}}
\multiput(0,-4)(0,-2){7}{\line(1,-1){2}}
\multiput(0,-4)(0,-2){7}{\line(2,-1){4}}
% directed edges from deeper vertices
\multiput(4,-6)(0,-2){6}{\line(1,-2){1}}
\multiput(4,-6)(0,-2){6}{\line(3,-2){3}}
\multiput(4,-8)(0,-2){5}{\line(1,-1){2}}
\multiput(7,-8)(0,-2){5}{\line(1,-1){2}}
\multiput(6,-10)(0,-2){4}{\line(1,-1){2}}

% infinite main line
\put(0,-16){\vector(0,-1){2}}
\put(0.2,-17.5){\makebox(0,0)[lc]{main}}
\put(0.2,-18){\makebox(0,0)[lc]{line}}
\put(-0.2,-18.5){\makebox(0,0)[rc]{\(\mathcal{T}(C_5\times C_5)\)}}

% multiplicity counters of batches
\multiput(-4.3,-4)(0,-8){2}{\makebox(0,0)[rc]{\(2*\)}}
\multiput(-4.3,-8)(0,-8){2}{\makebox(0,0)[rc]{\(4*\)}}
\multiput(2.2,-6)(0,-4){4}{\makebox(0,0)[lc]{\(*5\)}}
\multiput(2.2,-8)(0,-8){2}{\makebox(0,0)[lc]{\(*6\)}}
\put(2.2,-12){\makebox(0,0)[lc]{\(*4\)}}
\put(5.1,-8){\makebox(0,0)[lc]{\(*9\)}}
\multiput(5.1,-10)(0,-4){3}{\makebox(0,0)[lc]{\(*10\)}}
\put(5.1,-12){\makebox(0,0)[lc]{\(*11\)}}
\put(5.1,-16){\makebox(0,0)[lc]{\(*13\)}}
\put(9.1,-10){\makebox(0,0)[lc]{\(*38\)}}
\multiput(8.1,-12)(0,-2){4}{\makebox(0,0)[lc]{\(*25\)}}
\multiput(9.1,-12)(0,-4){2}{\makebox(0,0)[lc]{\(*15\)}}
\multiput(9.1,-14)(0,-4){2}{\makebox(0,0)[lc]{\(*13\)}}

% periodic branches
\put(-6,-7){\makebox(0,0)[cc]{\(\mathcal{B}_5\)}}
\put(-6,-9){\makebox(0,0)[cc]{\(\mathcal{B}_6\)}}
\put(-6,-11){\makebox(0,0)[cc]{\(\mathcal{B}_7\)}}
\put(-6,-13){\makebox(0,0)[cc]{\(\mathcal{B}_8\)}}

% 2 vertices of the stem of isoclinism family Phi1
%\put(-3,0){\makebox(0,0)[cc]{\(\Phi_1\)}}
\put(-2.1,0.1){\makebox(0,0)[rb]{\(\langle 1\rangle\)}}
\put(-0.1,0.1){\makebox(0,0)[rb]{\(\langle 2\rangle\)}}

% identifiers of the GAP 4 package SmallGroups
\put(-0.2,-1.3){\makebox(0,0)[rc]{\(\langle 3\rangle\)}}
\put(-2,-1.3){\makebox(0,0)[cc]{\(\langle 4\rangle\)}}
\put(-0.2,-3.3){\makebox(0,0)[rc]{\(\langle 7\rangle\)}}
\put(-2,-3.3){\makebox(0,0)[cc]{\(\langle 8\rangle\)}}
\put(-4,-3.3){\makebox(0,0)[cc]{\(\langle 9..10\rangle\)}}
\put(-0.2,-5.3){\makebox(0,0)[rc]{\(\langle 30\rangle\)}}
\put(-2,-5.3){\makebox(0,0)[cc]{\(\langle 31\rangle\)}}
\put(-4,-5.3){\makebox(0,0)[cc]{\(\langle 32\rangle\)}}
\put(2,-5.3){\makebox(0,0)[cc]{\(\langle 34\rangle\)}}
\put(4,-5.3){\makebox(0,0)[cc]{\(\langle 33\rangle\)}}
\put(-0.2,-7.3){\makebox(0,0)[rc]{\(\langle 630\rangle\)}}
\put(-2,-7.3){\makebox(0,0)[cc]{\(\langle 635\rangle\)}}
\put(-4,-7.3){\makebox(0,0)[cc]{\(\langle 631..634\rangle\)}}
\put(2,-7.3){\makebox(0,0)[cc]{\(\langle 637..642\rangle\)}}
\put(4,-7.3){\makebox(0,0)[cc]{\(\langle 636\rangle\)}}
\put(5,-7.3){\makebox(0,0)[cc]{\(\langle 652\rangle\)}}
\put(7,-7.3){\makebox(0,0)[cc]{\(\langle 651\rangle\)}}

% representatives of isomorphism classes
\put(0.5,0.5){\makebox(0,0)[cb]{\(C_5\times C_5\)}}
\put(-1.5,0.5){\makebox(0,0)[cb]{\(C_{25}\)}}
\put(-3,-2){\makebox(0,0)[cc]{\(G^3_0(0,1)\)}}
\put(0,-19){\makebox(0,0)[ct]{\(G^n_0(0,0)\)}}
\put(-2,-19){\makebox(0,0)[ct]{\(G^n_0(0,1)\)}}
\put(-4,-19){\makebox(0,0)[ct]{\(G^n_0(z,0)\)}}
\put(3,-19){\makebox(0,0)[ct]{\(G^n_1(z,w)\)}}
\put(6,-19){\makebox(0,0)[ct]{\(G^n_a(z,w)\)}}
\put(8.5,-19){\makebox(0,0)[ct]{\(G^n_a(z,w)\)}}

% TKTs of coclass families
\put(-6,-20){\makebox(0,0)[cc]{\textbf{TKT:}}}
\put(0,-20){\makebox(0,0)[cc]{a.1}}
\put(-2,-20){\makebox(0,0)[cc]{a.2}}
\put(-4,-20){\makebox(0,0)[cc]{a.3}}
\put(3,-20){\makebox(0,0)[cc]{a.1}}
\put(6,-20){\makebox(0,0)[cc]{a.1}}
\put(8.5,-20){\makebox(0,0)[cc]{a.1}}
\put(0,-20.5){\makebox(0,0)[cc]{\((000000)\)}}
\put(-2,-20.5){\makebox(0,0)[cc]{\((100000)\)}}
\put(-4,-20.5){\makebox(0,0)[cc]{\((200000)\)}}
\put(3,-20.5){\makebox(0,0)[cc]{\((000000)\)}}
\put(6,-20.5){\makebox(0,0)[cc]{\((000000)\)}}
\put(8.5,-20.5){\makebox(0,0)[cc]{\((000000)\)}}
\put(-7,-20.7){\framebox(16.5,1){}}
\put(-6,-2){\makebox(0,0)[cc]{\textbf{TKT:}}}
\put(-5.2,-2){\makebox(0,0)[lc]{A.1}}
\put(-5.4,-2.5){\makebox(0,0)[cc]{\((111111)\)}}
\put(-7,-2.7){\framebox(3,1){}}
\put(2,0){\makebox(0,0)[cc]{\textbf{TKT:}}}
\put(3,0){\makebox(0,0)[lc]{a.1}}
\put(2.8,-0.5){\makebox(0,0)[cc]{\((000000)\)}}
\put(1,-0.7){\framebox(3,1){}}

% selection rule for 2nd 5-class groups
\multiput(0,-2)(0,-4){4}{\oval(1.5,1)}
\put(-4.2,-4){\oval(1.5,1)}
\put(-2,-4){\oval(1.5,1)}
\multiput(-3.1,-8)(0,-4){3}{\oval(3.5,1)}
\multiput(3,-8)(0,-4){3}{\oval(2.6,1)}
\multiput(6,-10)(0,-4){3}{\oval(2.6,1)}
\multiput(8.8,-12)(0,-4){2}{\oval(2.2,1)}

% distribution of 2nd 5-class groups
\put(-4.7,-4.7){\makebox(0,0)[cc]{\underbar{\textbf{292}}}}
\put(-2.7,-4.7){\makebox(0,0)[cc]{\underbar{\textbf{55}}}}
\put(-4.7,-8.7){\makebox(0,0)[cc]{\underbar{\textbf{3}}}}
\put(2.7,-8.7){\makebox(0,0)[cc]{\underbar{\textbf{13}}}}

\end{picture}

\end{figure}
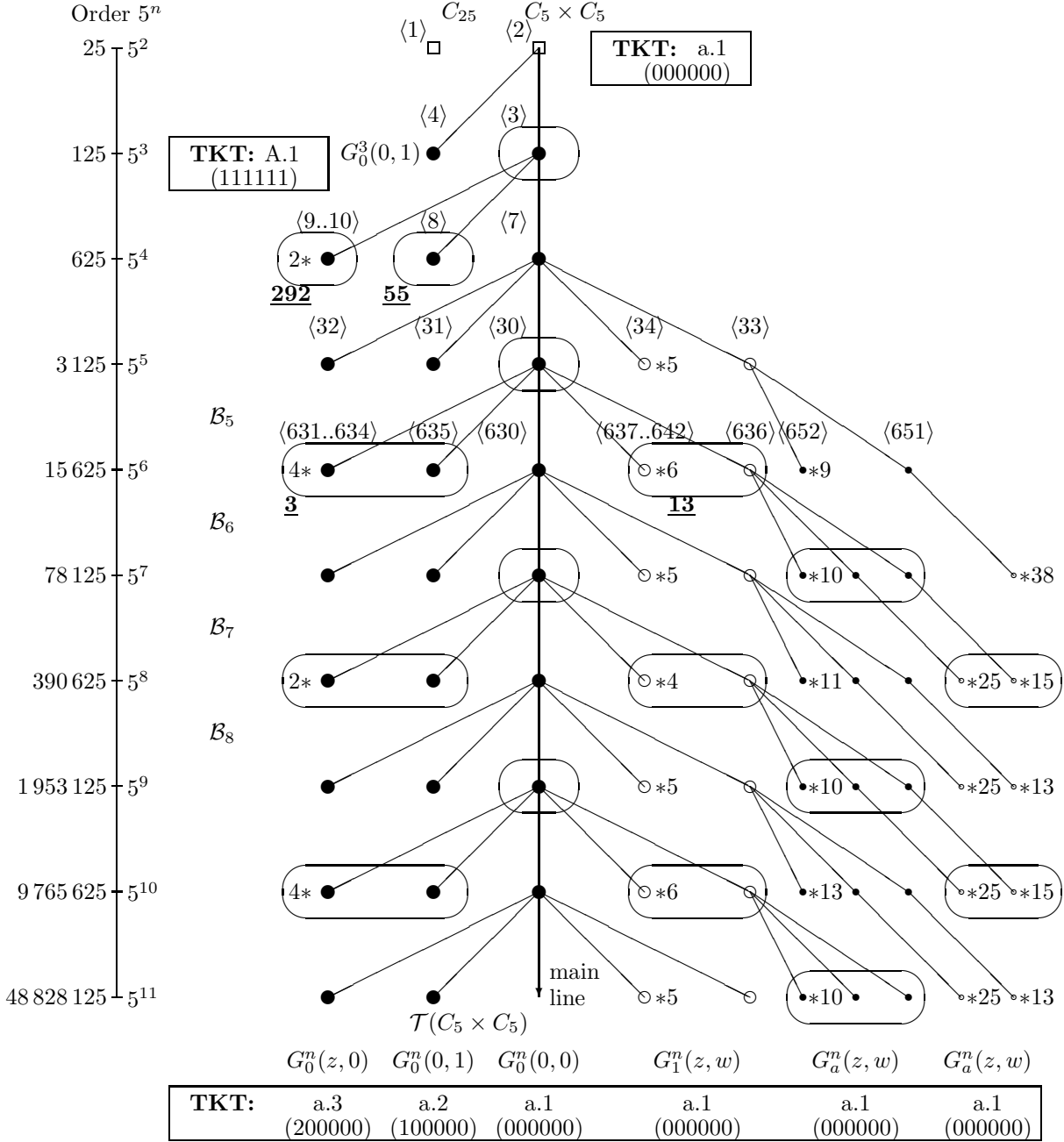

\noindent
Vertices of coclass graph \(\mathcal{G}(5,1)\) in Figure
\ref{fig:Distr5Cocl1}
are classified according to their defect \(k\)
by using different symbols:

\begin{enumerate}
\item
large contour squares \(\square\) represent abelian groups,
\item
big full discs {\Large \(\bullet\)} represent metabelian groups with defect \(k=0\),
\item
big contour circles {\Large \(\circ\)} represent metabelian groups with \(k=1\),
\item
small full discs {\footnotesize \(\bullet\)} represent metabelian groups with \(k=2\),
\item
small contour circles {\scriptsize \(\circ\)} represent metabelian groups with \(k=3\).
\end{enumerate}

\noindent
The symbol \(n\ast\) adjacent to a vertex denotes the multiplicity of a batch
of \(n\) immediate descendants sharing a common parent.
The selection rule, Theorem
\ref{thm:SelRuleCocl1},
for second \(5\)-class groups \(G_5^2(K)\)
of real quadratic number fields \(K=\mathbb{Q}(\sqrt{D})\), \(D>0\), 
is indicated by ovals surrounding admissible vertices.

\noindent
The actual distribution of the \(377\) second \(5\)-class groups \(G_5^2(\mathbb{Q}(\sqrt{D}))\)
with discriminant \(0<D\le 26\,695\,193\),
discussed in section
\ref{sss:StatScnd5ClgpCocl1},
is represented by
underlined boldface counters of hits of vertices in the adjacent oval.
The \(13\) cases of TKT \(\mathrm{a}.1\), \(\varkappa=(000000)\),
underpin the weak leaf conjecture
\ref{cnj:WeakLeafCnj}.

\begin{proof}
\(\mathcal{G}(5,1)\) starts with two abelian groups of order \(5^2\),
the isolated cyclic group \(C_{25}\) and the bicyclic root \(C_5\times C_5\)
of the unique coclass tree \(\mathcal{T}(C_5\times C_5)\).\\
As immediate descendants of the root,
\(\mathcal{G}(5,1)\) contains the two well-known extra special groups,
the capable mainline group \(G_0^3(0,0)\) of exponent \(5\)
and the terminal group \(G_0^3(0,1)\) of exponent \(25\),
both of order \(5^3\).

Now we use Blackburn's results
\cite{Bl1}
on counting metabelian \(p\)-groups of maximal class
and order \(p^n\) with \(n\ge 4\),
for the special case \(p=5\).\\
First, we consider
the metabelian groups containing an abelian maximal subgroup,
which are characterized by the defect \(k=0\).
They consist of the capable mainline group \(G_0^n(0,0)\),
the terminal group \(G_0^n(0,1)\) and
\((n-2,p-1)\) terminal groups of the form \(G_0^n(z,0)\).
Specialization of
\cite[p. 88, Thm. 4.3]{Bl1}
for \(p=5\) in dependence on \(n\) yields their number
\[
2+(n-2,p-1)=
\begin{cases}
2+1=3 & \text{ for } 5\le n\equiv 1\pmod{2}, \\
2+2=4 & \text{ for } 4\le n\equiv 0\pmod{4}, \\
2+4=6 & \text{ for } 6\le n\equiv 2\pmod{4}. \\
\end{cases}
\]
Next,
the number of metabelian groups with defect \(k=1\),
which contain exactly one capable group \(G_1^n(0,0)\) with \(a=(a(n-1))=(1)\) for every \(n\ge 5\),
is given by
\cite[p. 88, Thm. 4.2]{Bl1}:
\[
1+(2n-6,p-1)+(n-2,p-1)=
\begin{cases}
1+4+1=6 & \text{ for } 5\le n\equiv 1\pmod{2}, \\
1+2+2=5 & \text{ for } 8\le n\equiv 0\pmod{4}, \\
1+2+4=7 & \text{ for } 6\le n\equiv 2\pmod{4}. \\
\end{cases}
\]
Finally,
the number of metabelian groups with defect \(k=2\),
containing exactly two capable groups \(G_a^n(0,0)\) with \(a=(1,\pm 1)\) for every \(n\ge 7\)
\cite[\S\ 3, ramification level]{LgMk3},
but only one capable group \(G_{(1,-1)}^n(0,0)\) for \(n=6\),
is given by
\cite[p. 88, Thm. 4.1]{Bl1}:

\begin{eqnarray*}
p+(2n-7,p-1)+(n-2,p-1)  &=& 5+1+4=10 \text{ for } n=6,\\
2p+(2n-7,p-1)+(n-2,p-1) &=&
\begin{cases}
10+1+1=12 & \text{ for } 7\le n\equiv 1\pmod{2}, \\
10+1+2=13 & \text{ for } 8\le n\equiv 0\pmod{4}, \\
10+1+4=15 & \text{ for } 10\le n\equiv 2\pmod{4}. \\
\end{cases}
\end{eqnarray*}

\noindent
Since Blackburn restricts his investigations to defects \(k\le 2\),
we need a supplementary count of metabelian groups with defect \(k=3\).
The results for the head of the
\(4\) virtually periodic branches \(\mathcal{B}_j\) with \(14\le j\le 17\)
given by Dietrich, Eick, Feichtenschlager
\cite[Fig. 7--10, p. 57--60]{DEF}
and by Dietrich
\cite[Fig. 4--5, p. 1086]{Dt2}
are accumulated counts of metabelian and non-metabelian groups,
whereas the collars and tails entirely consist of non-metabelian groups.
According to private communications by H. Dietrich,
one of the two capable groups \(G_{(1,\pm 1)}^{n-1}(0,0)\) at depth two
has always \(25\) metabelian descendants, which are all terminal,
independently from \(n\ge 8\),
and the other
has \(15\), resp. \(13\), metabelian descendants, which are all terminal,
for even \(n\ge 8\), resp. odd \(n\ge 9\).
The count of metabelian groups with defect \(k=3\) is also given by Miech
\cite[Thm. 6--7, p. 336--337]{Mi}.
\end{proof}

%\newpage

%--------------------------------------------------------------------------------

\subsubsection{Distribution of \(\mathrm{G}_5^2(K)\) on \(\mathcal{G}(5,1)\)}
\label{sss:StatScnd5ClgpCocl1}

In Table
\ref{tbl:RealQuad5x5},
we list the \(8\) variants of second \(5\)-class groups \(\mathrm{G}_5^2(K)\)
for the \(377\) real quadratic fields \(K=\mathbb{Q}(\sqrt{D})\)
of type \((5,5)\) with discriminant \(0<D\le 26\,695\,193\),
mainly on the coclass graph \(\mathcal{G}(5,1)\),
but modestly also on \(\mathcal{G}(5,2)\).
\(\tau(0)\) denotes the \(5\)-class group of \(\mathrm{F}_5^1(K)\).
Schur \(\sigma\)-groups are starred.

\renewcommand{\arraystretch}{1.1}

\begin{table}[ht]
\caption{\(8\) variants of \(G=\mathrm{G}_5^2(K)\) for \(377\) \(K=\mathbb{Q}(\sqrt{D})\), \(0<D\le 26\,695\,193\)}
\label{tbl:RealQuad5x5}
\begin{center}
\begin{tabular}{|r||c||c|c||c|c||c|c|}
\hline
           \(D\) & \(\tau(K);\tau(0)\)               & Type & \(\varkappa(K)\) & \(G\)               & \(\mathrm{cc}(G)\) &   \(\#\) &   \(\%\) \\
\hline
    \(244\,641\) & \((5,5^2),(5,5)^5;(5,5)\)         &  a.3 & \((2,0^5)\)      & \(\langle 625,9\vert 10\rangle\) & \(1\) &  \(292\) & \(77.5\) \\
 \(1\,167\,541\) & \((5,5,5),(5,5)^5;(5,5)\)         &  a.2 & \((1,0^5)\)      & \(\langle 625,8\rangle\)         & \(1\) &   \(55\) & \(14.6\) \\
\hline
 \(1\,129\,841\) & \((5,5,5,5),(5,5)^5;(5,5,5,5)\)   &  a.1 & \((0^6)\)        & \(\langle 15625,637\ldots 642\rangle\)& \(1\) &   \(13\) &  \(3.4\) \\
 \(3\,812\,377\) & \((5,5,5,5^2),(5,5)^5;(5,5,5,5)\) &a.2,3\(\uparrow\)&\((?,0^5)\)&\(\langle 15625,631\ldots 635\rangle\)&\(1\)&\(3\)&  \(  \) \\
\hline
 \(4\,954\,652\) & \((5,5^2)^6;(5,5,5)\)             &      & \((B^6)\)        & \(\langle 3125,9\vert 10\vert 12\rangle\)& \(2\) &    \(7\) &   \(1.9\) \\
\(10\,486\,805\) & \((5,5,5)^2,(5,5^2)^4;(5,5,5)\)   &      & \((A^2,B^4)\)    & \(\langle 3125,7\vert 11\rangle\)& \(2\) &    \(2\) &   \(  \) \\
\(18\,070\,649\) & \((5,5,5),(5,5^2)^5;(5,5,5)\)     &      & \((A,B^5)\)      & \(\langle 3125,8\vert 13\rangle*\)& \(2\) &    \(1\) &   \(  \) \\
\hline
 \(7\,306\,081\) & \((5,5,5,5),(5,5,5),(5,5^2)^4\)   &      & \((0,A,B^4)\)    & \(\langle 3125,4\rangle\)        & \(2\) &    \(4\) &    \(1\) \\
\hline
\end{tabular}
\end{center}
\end{table}

%\newpage

%--------------------------------------------------------------------------------

\noindent
There occur
\(13\) cases \((3.4\%)\) of TKT \(\mathrm{a}.1\), \(\varkappa=(000000)\),
starting with \(D=1\,129\,841\),
\(3\) cases of the first excited state of TKT \(\mathrm{a}.2\uparrow\), \(\varkappa=(100000)\), or \(\mathrm{a}.3\uparrow\), \(\varkappa=(200000)\), for \(D\in\lbrace 3\,812\,377,19\,621\,905,21\,281\,673\rbrace\),
and \(55\) cases \((14.6\%)\) of the ground state of TKT \(\mathrm{a}.2\), \(\varkappa=(100000)\),
starting with \(D=1\,167\,541\).
The remaining \(292\) cases \((77.5\%)\) of the ground state of TKT \(\mathrm{a}.3\), \(\varkappa=(200000)\),
starting with \(D=244\,641\), are clearly dominating.
The TKTs were identified by means of Theorem
\ref{thm:TKTFromCoarseTTTCocl1},
taking into account the selection rule for quadratic base fields as given in Table
\ref{tbl:TKTFromCoarseTTTCocl1}.
The distribution of the corresponding second \(5\)-class groups \(\mathrm{G}_5^2(K)\)
on the coclass graph \(\mathcal{G}(5,1)\), resp. \(\mathcal{G}(5,2)\), is shown in Figure
\ref{fig:Distr5Cocl1},
resp.
\ref{fig:Typ55Cocl2}.
See also section \S\
\ref{sss:StatScnd5ClgpCocl2}.

%\newpage

%--------------------------------------------------------------------------------

\subsubsection{Metabelian \(7\)-groups \(G\) of coclass \(\mathrm{cc}(G)=1\)}
\label{sss:7Cocl1}

%\newpage

%--------------------------------------------------------------------------------

\begin{figure}[ht]
\caption{All \(7\)-groups of order up to \(7^{4}\) on \(\mathcal{G}(7,1)\)}
\label{fig:Distr7Cocl1}

% Coclass graph G(7,1)
\setlength{\unitlength}{0.8cm}
\begin{picture}(18,7.5)(-8,-6.5)

% scale of orders
\put(-8,0.5){\makebox(0,0)[cb]{Order \(7^n\)}}
\put(-8,0){\line(0,-1){4}}
\multiput(-8.1,0)(0,-2){3}{\line(1,0){0.2}}
\put(-8.2,0){\makebox(0,0)[rc]{\(49\)}}
\put(-7.8,0){\makebox(0,0)[lc]{\(7^2\)}}
\put(-8.2,-2){\makebox(0,0)[rc]{\(343\)}}
\put(-7.8,-2){\makebox(0,0)[lc]{\(7^3\)}}
\put(-8.2,-4){\makebox(0,0)[rc]{\(2401\)}}
\put(-7.8,-4){\makebox(0,0)[lc]{\(7^4\)}}

% abelian vertices
\put(-0.1,-0.1){\framebox(0.2,0.2){}}
\put(-2.1,-0.1){\framebox(0.2,0.2){}}
% metabelian vertices with defect k=0
\put(0,-2){\circle*{0.25}}
\multiput(-2,-2)(0,-2){2}{\circle*{0.25}}
\put(-4,-4){\circle*{0.25}}

% directed edges from main line vertices
\multiput(0,0)(0,-2){2}{\line(0,-1){2}}
\multiput(0,0)(0,-2){2}{\line(-1,-1){2}}
\put(0,-2){\line(-2,-1){4}}

% infinite main line
\put(0,-2){\vector(0,-1){2}}
\put(0.2,-3.5){\makebox(0,0)[lc]{main}}
\put(0.2,-4){\makebox(0,0)[lc]{line}}
\put(1.8,-4.5){\makebox(0,0)[rc]{\(\mathcal{T}(C_7\times C_7)\)}}

% multiplicity counters of batches
\put(-4.3,-4){\makebox(0,0)[rc]{\(2*\)}}

% 2 vertices of the stem of isoclinism family Phi1
%\put(-3,0){\makebox(0,0)[cc]{\(\Phi_1\)}}
\put(-2.1,0.1){\makebox(0,0)[rb]{\(\langle 1\rangle\)}}
\put(-0.1,0.1){\makebox(0,0)[rb]{\(\langle 2\rangle\)}}

% identifiers of the GAP 4 package SmallGroups
\put(-0.2,-1.3){\makebox(0,0)[rc]{\(\langle 3\rangle\)}}
\put(-2,-1.3){\makebox(0,0)[cc]{\(\langle 4\rangle\)}}
%\put(-0.2,-3.3){\makebox(0,0)[rc]{\(\langle 7\rangle\)}}
\put(-2,-3.3){\makebox(0,0)[cc]{\(\langle 8\rangle\)}}
\put(-4,-3.3){\makebox(0,0)[cc]{\(\langle 9..10\rangle\)}}

% representatives of isomorphism classes
\put(0.5,0.5){\makebox(0,0)[cb]{\(C_7\times C_7\)}}
\put(-1.5,0.5){\makebox(0,0)[cb]{\(C_{49}\)}}
\put(-3,-2){\makebox(0,0)[cc]{\(G^3_0(0,1)\)}}
\put(0,-5){\makebox(0,0)[ct]{\(G^n_0(0,0)\)}}
\put(-2,-5){\makebox(0,0)[ct]{\(G^n_0(0,1)\)}}
\put(-4,-5){\makebox(0,0)[ct]{\(G^n_0(z,0)\)}}
%\put(3,-5){\makebox(0,0)[ct]{\(G^n_1(z,w)\)}}
%\put(6,-5){\makebox(0,0)[ct]{\(G^n_a(z,w)\)}}
\put(6.5,-5){\makebox(0,0)[ct]{\(G^n_a(z,w)\)}}

% TKTs of coclass families
\put(-6,-6){\makebox(0,0)[cc]{\textbf{TKT:}}}
\put(0,-6){\makebox(0,0)[cc]{a.1}}
\put(-2,-6){\makebox(0,0)[cc]{a.2}}
\put(-4,-6){\makebox(0,0)[cc]{a.3}}
%\put(3,-6){\makebox(0,0)[cc]{a.1}}
%\put(6,-6){\makebox(0,0)[cc]{a.1}}
\put(6.5,-6){\makebox(0,0)[cc]{a.1}}
\put(0,-6.5){\makebox(0,0)[cc]{\((0^8)\)}}
\put(-2,-6.5){\makebox(0,0)[cc]{\((1,0^7)\)}}
\put(-4,-6.5){\makebox(0,0)[cc]{\((2,0^7)\)}}
%\put(3,-6.5){\makebox(0,0)[cc]{\((0^8)\)}}
%\put(6,-6.5){\makebox(0,0)[cc]{\((0^8)\)}}
\put(6.5,-6.5){\makebox(0,0)[cc]{\((0^8)\)}}
\put(-7,-6.7){\framebox(14.5,1){}}
\put(-6,-2){\makebox(0,0)[cc]{\textbf{TKT:}}}
\put(-5.2,-2){\makebox(0,0)[lc]{A.1}}
\put(-5.4,-2.5){\makebox(0,0)[cc]{\((1^8)\)}}
\put(-7,-2.7){\framebox(3,1){}}
\put(2,0){\makebox(0,0)[cc]{\textbf{TKT:}}}
\put(3,0){\makebox(0,0)[lc]{a.1}}
\put(2.8,-0.5){\makebox(0,0)[cc]{\((0^8)\)}}
\put(1,-0.7){\framebox(3,1){}}

% selection rule for 2nd 7-class groups
\put(0,-2){\oval(1.5,1)}
\put(-4.2,-4){\oval(1.5,1)}
\put(-2,-4){\oval(1.5,1)}

% distribution of 2nd 7-class groups
\put(-4.7,-4.7){\makebox(0,0)[cc]{\underbar{\textbf{13}}}}
\put(-2.7,-4.7){\makebox(0,0)[cc]{\underbar{\textbf{3}}}}

\end{picture}

\end{figure}

\noindent
Figure
\ref{fig:Distr7Cocl1}
visualizes the lowest range of the distribution of
second \(7\)-class groups \(\mathrm{G}_7^2(K)\)
for the \(17\) real quadratic fields \(K=\mathbb{Q}(\sqrt{D})\)
of type \((7,7)\) with discriminant \(0<D<10^7\)
on the coclass graph \(\mathcal{G}(7,1)\).
With the aid of MAGMA
\cite{MAGMA},
we found \(13\) cases, \(76\%\), of TKT
\(\mathrm{a}.3\), \(\varkappa=(20000000)\),
starting with \(D=1\,633\,285\),
and \(3\) occurrences, \(18\%\), of TKT
\(\mathrm{a}.2\), \(\varkappa=(10000000)\), 
for \(D\in\lbrace 2\,713\,121,6\,872\,024,9\,659\,661\rbrace\).
These two TKTs can be separated by means of Theorem
\ref{thm:TKTFromCoarseTTTCocl1}.
There were no cases of excited states,
but for the single discriminant \(D=6\,986\,985\),
\(\mathrm{G}_7^2(K)\) is a top vertex of \(\mathcal{G}(7,2)\)
without total \(7\)-principalization
and of Taussky's coarse transfer kernel type \(\kappa=(BBBBBBBB)\)
\cite{Ta2}.
Table
\ref{tbl:RealQuad7x7}
shows the corresponding TTT 
\(\tau(K)=(\mathrm{Cl}_7(L_i))_{1\le i\le 8}\)
using power notation for repetitions
and including \(\tau(0)=\mathrm{Cl}_7(\mathrm{F}_7^1(K))\),
separated by a semicolon.
\(7\)-groups \(G^n_a(z,w)\) of positive defect \(k\ge 1\) appear in higher branches
and are invisible in Figure
\ref{fig:Distr7Cocl1}.

\renewcommand{\arraystretch}{1.1}

\begin{table}[ht]
\caption{\(3\) variants of \(G=\mathrm{G}_7^2(K)\) for \(17\) \(K=\mathbb{Q}(\sqrt{D})\), \(0<D<10^7\)}
\label{tbl:RealQuad7x7}
\begin{center}
\begin{tabular}{|r||c||c|c||c|c||c|c|}
\hline
           \(D\) & \(\tau(K);\tau(0)\)       & Type & \(\varkappa(K)\) & \(G\)          & \(\mathrm{cc}(G)\) & \(\#\) & \(\%\) \\
\hline
 \(1\,633\,285\) & \((7,7^2),(7,7)^7;(7,7)\) &  a.3 & \((2,0^7)\) &\(\langle 2401,9\vert 10\rangle\)& \(1\) & \(13\) & \(76\) \\
 \(2\,713\,121\) & \((7,7,7),(7,7)^7;(7,7)\) &  a.2 & \((1,0^7)\)      & \(\langle 2401,8\rangle\)   & \(1\) &  \(3\) & \(18\) \\
\hline
 \(6\,986\,985\) & \((7,7^2)^8;(7,7,7)\)     &      & \((B^8)\)        & \(\langle 16807,10\vert 14\ldots 16\rangle\) & \(2\) &  \(1\) &  \(6\) \\
\hline
\end{tabular}
\end{center}
\end{table}

\noindent
In the next section, we proceed to \(p\)-groups \(G\) of coclass \(\mathrm{cc}(G)\ge 2\).

%\newpage

%--------------------------------------------------------------------------------

\subsection{Metabelian \(3\)-groups \(G\) of coclass \(\mathrm{cc}(G)\ge 2\) with \(G/G^\prime\simeq (3,3)\)}
\label{ss:MtabTyp33CoclGe2}

\subsubsection{Non-CF groups}
\label{sss:NonCF}

In contrast to CF groups of coclass \(1\),
metabelian \(3\)-groups \(G\) of coclass \(\mathrm{cc}(G)\ge 2\)
with abelianization \(G/G^\prime\) of type \((3,3)\)
must have at least one bicyclic factor \(\gamma_3(G)/\gamma_4(G)\)
\cite{Ne1},
and are therefore called \textit{non-CF groups}.
They are characterized by an isomorphism \textit{invariant} \(e=e(G)\), defined by
\(e+1=\min\lbrace 3\le j\le m\mid 1\le\lvert\gamma_j(G)/\gamma_{j+1}(G)\rvert\le 3\rbrace\).
This invariant \(2\le e\le m-1\) indicates
the first cyclic factor \(\gamma_{e+1}(G)/\gamma_{e+2}(G)\) of the lower central series of \(G\),
except \(\gamma_2(G)/\gamma_3(G)\), which is always cyclic.
We can calculate \(e\)
from order \(\lvert G\rvert=3^n\)
and nilpotency class \(\mathrm{cl}(G)=m-1\), resp. index \(m\) of nilpotency, of \(G\)
by the formula \(e=n-m+2\).
Since the coclass of \(G\) is given by \(\mathrm{cc}(G)=n-\mathrm{cl}(G)=n-m+1\),
we have the relation \(e=\mathrm{cc}(G)+1\).
CF groups are characterized by \(e=2\)
and non-CF groups by \(e\ge 3\).

%\newpage

%--------------------------------------------------------------------------------

\subsubsection{Bipolarization and defect}
\label{sss:BiplrzDfct}

For a group \(G\) of coclass \(\mathrm{cc}(G)\ge 2\)
we need a generalization of the group \(\chi_2(G)\).
Denoting by \(m\) the index of nilpotency of \(G\),
we let \(\chi_j(G)\) with \(2\le j\le m-1\)
be the centralizers
of two-step factor groups \(\gamma_j(G)/\gamma_{j+2}(G)\)
of the lower central series, that is,
the biggest subgroups of \(G\) with the property
\(\lbrack\chi_j(G),\gamma_j(G)\rbrack\le\gamma_{j+2}(G)\).
They form an ascending chain of characteristic subgroups of \(G\),
\(\gamma_2(G)\le\chi_2(G)\le\ldots\le\chi_{m-2}(G)<\chi_{m-1}(G)=G\),
which contain the commutator subgroup \(\gamma_2(G)\),
and \(\chi_j(G)\) coincides with \(G\) if and only if \(j\ge m-1\).
We characterize the smallest \textit{two-step centralizer}
different from the commutator subgroup
by an isomorphism \textit{invariant}
\(s=s(G)=\min\lbrace 2\le j\le m-1\mid\chi_j(G)>\gamma_2(G)\rbrace\).
Again, CF groups are characterized by \(s=2\)
and non-CF groups by \(s\ge 3\).

Now we can generalize the \textit{defect of commutativity} \(k=k(G)\)
to any metabelian \(3\)-group \(G\) with \(G/G^\prime\) of type \((3,3)\)
by defining \(0\le k\le 1\) such that
\(\lbrack\chi_s(G),\gamma_e(G)\rbrack=\gamma_{m-k}(G)\).

The following assumptions 
for a metabelian \(3\)-group \(G\) of coclass \(\mathrm{cc}(G)\ge 2\)
with abelianization \(G/\gamma_2(G)\) of type \((3,3)\)
can always be satisfied, according to Nebelung
\cite[Thm. 3.1.11, p. 57, and Thm. 3.4.5, p. 94]{Ne1}.

Let \(G\) be a metabelian \(3\)-group of coclass \(\mathrm{cc}(G)\ge 2\)
with abelianisation \(G/\gamma_2(G)\) of type \((3,3)\).
Assume that \(G\) has order \(\lvert G\rvert=3^n\),
class \(\mathrm{cl}(G)=m-1\), and invariant \(e=n-m+2\ge 3\),
where \(4\le m<n\le 2m-3\).
Let generators of \(G=\langle x,y\rangle\) be selected such that
the bicyclic factor \(\gamma_3(G)/\gamma_4(G)\)
is generated by their third powers,
\(\gamma_3(G)=\langle y^3,x^3,\gamma_4(G)\rangle\),
and that
\(x\in G\setminus\chi_s(G)\), if \(s<m-1\),
and \(y\in\chi_s(G)\setminus\gamma_2(G)\).
This causes a \textit{bipolarization}
among the four maximal subgroups \(H_1,\ldots,H_4\) of \(G\),
which will be standardized in Definition
\ref{dfn:NatOrdCoclGe2}.

%\newpage

%--------------------------------------------------------------------------------

\subsubsection{Parametrized presentation}
\label{sss:PrmPres2}

Let the main commutator of \(G\) be declared by
\(s_2=t_2=\lbrack y,x\rbrack\in\gamma_2(G)\)
and higher commutators recursively by
\(s_j=\lbrack s_{j-1},x\rbrack\), \(t_j=\lbrack t_{j-1},y\rbrack\in\gamma_j(G)\)
for \(j\ge 3\).
Starting with the powers \(\sigma_3=y^3\), \(\tau_3=x^3\in\gamma_3(G)\),
which generate \(\gamma_3(G)\) modulo \(\gamma_4(G)\),
let
\(\sigma_j=\lbrack\sigma_{j-1},x\rbrack\), \(\tau_j=\lbrack\tau_{j-1},y\rbrack\in\gamma_j(G)\)
for \(j\ge 4\).
Nilpotency of \(G\) is expressed by
\(\sigma_{m-1}=1\) and \(\tau_{e+2}=1\).
According to Nebelung
\cite{Ne1},
the group \(G\) satisfies the following relations
with certain exponents \(-1\le\alpha,\beta,\gamma,\delta,\rho\le 1\) as parameters.

\begin{equation}
\label{eqn:PwrCmtPres}
s_2^3=\sigma_4\sigma_{m-1}^{-\rho\beta}\tau_4^{-1},\quad
\ s_3\sigma_3\sigma_4=\sigma_{m-2}^{\rho\beta}\sigma_{m-1}^\gamma\tau_e^\delta,\quad
\ t_3^{-1}\tau_3\tau_4=\sigma_{m-2}^{\rho\delta}\sigma_{m-1}^\alpha\tau_e^\beta,\quad
\ \tau_{e+1}=\sigma_{m-1}^{-\rho}.
\end{equation}

\noindent
By \(G_\rho^{m,n}(\alpha,\beta,\gamma,\delta)\) we denote 
the representative of an isomorphism class of
metabelian \(3\)-groups \(G\), having \(G/G^\prime\) of type \((3,3)\),
of coclass \(\mathrm{cc}(G)=n-m+1\ge 2\),
class \(\mathrm{cl}(G)=m-1\), and order \(\lvert G\rvert=3^n\),
which satisfies the relations
(\ref{eqn:PwrCmtPres})
with a fixed system of exponents
\((\alpha,\beta,\gamma,\delta,\rho)\).
We have \(\rho=0\) if and only if \(k=0\).

%\newpage

%--------------------------------------------------------------------------------

\subsubsection{Two distinguished maximal subgroups}
\label{sss:DstgMaxSbgp2}

The maximal normal subgroups \(H_1,\ldots,H_4\) of \(G\)
contain the commutator subgroup \(G^\prime\)
as a normal subgroup of index \(3\)
and are thus of the shape \(H_i=\langle g_i,G^\prime\rangle\)
with suitable generators \(g_i\).
We want to arrange them in a fixed order.

\begin{definition}
\label{dfn:NatOrdCoclGe2}
The \textit{bipolarization} or \textit{natural order}
of the maximal subgroups \((H_i)_{1\le i\le 4}\) of \(G\)
is given by the \textit{distinguished first generator} \(g_1=y\in\chi_s(G)\),
the \textit{distinguished second generator} \(g_2=x\notin\chi_s(G)\),
both satisfying \(y^3,x^3\in\gamma_3(G)\setminus\gamma_4(G)\),
and the other generators \(g_i=xy^{i-2}\notin\chi_s(G)\) for \(3\le i\le 4\),
provided that \(s<m-1\).
Then, in particular, \(\chi_s(G)=H_1=\langle y,G^\prime\rangle\).
\end{definition}

%\newpage

%--------------------------------------------------------------------------------

\subsubsection{Parents of core and interface groups}
\label{sss:PrntCoclGe2}

\begin{definition}
\label{dfn:CoreAndIntf}
For an arbitrary prime \(p\),
let \(G\) be a finite \(p\)-group of nilpotency class \(c=\mathrm{cl}(G)\).
We call \(G\) a \textit{core group}, resp. an \textit{interface group},
if its last lower central \(\gamma_c(G)\)
is of order \(p^d\) with \(d=1\), resp. \(d\ge 2\).
\end{definition}

\noindent
If \(G\) is of order \(p^n\),
the last lower central quotient \(Q=G/\gamma_c(G)\) of \(G\)
is of order \(\lvert Q\rvert=\lvert G\rvert/p^d=p^{n-d}\) and of class \(\mathrm{cl}(Q)=\mathrm{cl}(G)-1\).
Therefore, the coclass of \(Q\) is given by
\[{cc}(Q)=n-d-\mathrm{cl}(Q)=n-d-\mathrm{cl}(G)+1=\mathrm{cc}(G)-(d-1).\]
Consequently,
the last lower central quotient \(Q\) of a core group \(G\)
is of the same coclass as \(G\), whereas
the last lower central quotient \(Q\) of an interface group \(G\)
is of lower coclass than \(G\).
Obviously, a CF group must necessarily be a core group.

Now we apply these new concepts to the case \(p=3\)
and investigate the parent \(\pi(G)\) of a metabelian \(3\)-group \(G\)
with \(G/G^\prime\) of type \((3,3)\).
Since the invariant \(e=e(G)=\mathrm{cc}(G)+1\) indicates
the first cyclic quotient \(\gamma_{e+1}(G)/\gamma_{e+2}(G)\),
\(G\) is an interface group if and only if \(e=\mathrm{cl}(G)=m-1\),
where \(m\) denotes the index of nilpotency of \(G\).
This maximal possible value of \(e\) enforces a special relation
between order \(\lvert G\rvert=3^n\) and class \(\mathrm{cl}(G)=m-1\) of \(G\),
\[n=e+m-2=m-1+m-2=2m-3.\]

%\newpage

%--------------------------------------------------------------------------------

Together with group counts in Nebelung's theorem
\cite[p. 178, Thm. 5.1.16]{Ne1},
the following two theorems describe the structure
of the \textit{metabelian skeleton} of those subgraphs
of the coclass graphs \(\mathcal{G}(3,r)\), \(r\ge 2\),
which are formed by isomorphism classes of metabelian \(3\)-groups \(G\)
having abelianization \(G/G^\prime\simeq (3,3)\).
This restriction concerns both,
the coclass trees \(\mathcal{T}\)
and the sporadic part \(\mathcal{G}_0(3,r)\)
of each coclass graph \(\mathcal{G}(3,r)\).
We distinguish \textit{core} groups and \textit{interface} groups
and begin with the former.

\begin{theorem}
\label{thm:CoclGe2Core}
Let \(G\) be a metabelian \(3\)-group of coclass \(r=\mathrm{cc}(G)\ge 2\)
with \(G/G^\prime\simeq (3,3)\),
such that \(G\simeq G_\rho^{m,n}(\alpha,\beta,\gamma,\delta)\in\mathcal{G}(3,r)\)
with parameters \(-1\le\alpha,\beta,\gamma,\delta,\rho\le 1\),
that is, \(G\) is of
order \(\lvert G\rvert=3^n\), class \(\mathrm{cl}(G)=m-1\), \(4\le m<n\le 2m-3\),
coclass \(2\le r=n-m+1\le m-2\), and invariant \(3\le e=n-m+2\le m-1\).
Assume additionally that \(G\) is a core group
with cyclic last lower central \(\gamma_{m-1}\) of order \(3\),
thus having \(5\le m<n\le 2m-4\) and \(e\le m-2\).
Then the parent \(\pi(G)\) of \(G\) is generally given by
\(\pi(G)\simeq G_0^{m-1,n-1}(\rho\delta,\beta,\rho\beta,\delta)\in\mathcal{G}(3,r)\),
and in particular,
\[\pi(G)\simeq
\begin{cases}
G_0^{m-1,n-1}(0,\beta,0,\delta), \text{ if } \rho=0, \\
G_0^{m-1,n-1}(\rho\delta,\beta,\rho\beta,\delta), \text{ if } \rho=\pm 1,\ (\beta,\delta)\ne (0,0), \\
G_0^{m-1,n-1}(0,0,0,0), \text{ if } \rho=\pm 1,\ (\beta,\delta)=(0,0).
\end{cases}
\]
\end{theorem}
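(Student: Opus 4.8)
The plan is to follow the pattern of Theorem~\ref{thm:PrntCocl1}. Since $G$ is a core group, its last lower central $\gamma_{m-1}(G)$ is cyclic of order $3$, generated by $\sigma_{m-1}$, so the parent $\pi(G)=G/\gamma_{m-1}(G)$ is obtained by killing this single element. First I would record the bookkeeping: $\lvert\pi(G)\rvert=3^{n}/3=3^{n-1}$, the nilpotency class drops by one to $\mathrm{cl}(\pi(G))=m-2$, and, because $\gamma_{m-1}(G)$ has order $3$, the coclass is unchanged, $\mathrm{cc}(\pi(G))=n-1-(m-2)=n-m+1=\mathrm{cc}(G)=r$, which is exactly the formula $\mathrm{cc}(Q)=\mathrm{cc}(G)-(d-1)$ of \S\,\ref{sss:PrntCoclGe2} in the case $d=1$. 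Hence $\pi(G)$ is a metabelian $3$-group in $\mathcal{G}(3,r)$ of index of nilpotency $m-1$, order $3^{n-1}$, and unchanged invariant $e=(n-1)-(m-1)+2=n-m+2$, so by Nebelung's structure theory it admits a normal-form presentation $G_{\tilde\rho}^{m-1,n-1}(\tilde\alpha,\tilde\beta,\tilde\gamma,\tilde\delta)$; the task is to pin down the five parameters.

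The delicate step is to verify that the images $\bar{x},\bar{y}$ of the distinguished generators of $G$ still satisfy the bipolarization hypotheses of \S\S\,\ref{sss:BiplrzDfct},\,\ref{sss:DstgMaxSbgp2} for $\pi(G)$, i.e.\ that $\bar{y}^{3},\bar{x}^{3}$ generate $\gamma_3(\pi(G))$ modulo $\gamma_4(\pi(G))$, that $\bar{x}\notin\chi_{s}(\pi(G))$ and $\bar{y}\in\chi_{s}(\pi(G))\setminus\gamma_2(\pi(G))$. Because $\gamma_{m-1}(G)$ is central and contained in $\gamma_4(G)$, all the characteristic subgroups $\gamma_j$ and $\chi_j$ of $\pi(G)$ are the images of the corresponding subgroups of $G$, so this compatibility is a direct consequence of Nebelung's structure theorem and legitimises reading the parameters of $\pi(G)$ off the reduced presentation. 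Part of this step --- and the analogue of the assertion in Remark~\ref{rmk:PrntCocl1} that a parent is always characterised by vanishing ``$w,z$''-parameters --- is to note that $\bar\sigma_{m-2}$, being a generator of the last (hence nontrivial, whether bicyclic or cyclic) term $\gamma_{m-2}(\pi(G))$, still has order exactly $3$ in $\pi(G)$; this is what will force $\tilde\rho=0$. I expect this order and compatibility check to be the main obstacle of the whole proof.

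Granting this, I reduce the defining relations (\ref{eqn:PwrCmtPres}) of $G$ modulo $\sigma_{m-1}$, writing $\bar{g}=g\,\gamma_{m-1}(G)$. Every factor $\sigma_{m-1}^{\ast}$ disappears, so the relations become
\[
\bar{s}_2^{3}=\bar\sigma_4\bar\tau_4^{-1},\quad
\bar{s}_3\bar\sigma_3\bar\sigma_4=\bar\sigma_{m-2}^{\rho\beta}\bar\tau_e^{\delta},\quad
\bar{t}_3^{-1}\bar\tau_3\bar\tau_4=\bar\sigma_{m-2}^{\rho\delta}\bar\tau_e^{\beta},\quad
\bar\tau_{e+1}=1.
\]
Comparing these with the normal form (\ref{eqn:PwrCmtPres}) written for $G_{\tilde\rho}^{m-1,n-1}(\tilde\alpha,\tilde\beta,\tilde\gamma,\tilde\delta)$, whose last relation reads $\bar\tau_{e+1}=\bar\sigma_{m-2}^{-\tilde\rho}$, the vanishing of its right-hand side together with $\mathrm{ord}(\bar\sigma_{m-2})=3$ forces $\tilde\rho=0$. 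With $\tilde\rho=0$ the first relation matches automatically, and the two middle relations of the normal form collapse to $\bar{s}_3\bar\sigma_3\bar\sigma_4=\bar\sigma_{m-2}^{\tilde\gamma}\bar\tau_e^{\tilde\delta}$ and $\bar{t}_3^{-1}\bar\tau_3\bar\tau_4=\bar\sigma_{m-2}^{\tilde\alpha}\bar\tau_e^{\tilde\beta}$; equating exponents yields $\tilde\alpha=\rho\delta$, $\tilde\beta=\beta$, $\tilde\gamma=\rho\beta$, $\tilde\delta=\delta$. This is precisely $\pi(G)\simeq G_0^{m-1,n-1}(\rho\delta,\beta,\rho\beta,\delta)$, the general formula claimed.

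Finally, the three displayed special cases are immediate substitutions into this tuple: for $\rho=0$ it reads $(0,\beta,0,\delta)$; for $\rho=\pm1$ with $(\beta,\delta)\ne(0,0)$ it is the generic $(\rho\delta,\beta,\rho\beta,\delta)$; and for $\rho=\pm1$ with $(\beta,\delta)=(0,0)$ every entry vanishes, giving $(0,0,0,0)$. I expect the numerical constraints of \S\S\,\ref{sss:NonCF},\,\ref{sss:PrmPres2} (namely $5\le m<n\le 2m-4$ and $e\le m-2$) to be needed only to guarantee that the commutators $\sigma_4,\sigma_{m-2},\tau_4,\tau_e$ occurring in the relations are genuinely distinct, nontrivial elements of $\pi(G)$, so that the term-by-term comparison above is not spoiled by an accidental degeneracy; in particular the borderline case $e=m-2$, where $\pi(G)$ becomes an interface group rather than a core group, is covered without change since the same normal form applies.
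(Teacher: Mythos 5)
Your proposal is correct and follows essentially the same route as the paper's own proof: reduce the defining relations \eqref{eqn:PwrCmtPres} modulo the cyclic last lower central \(\gamma_{m-1}(G)=\langle\sigma_{m-1}\rangle\), observe that \(\bar\tau_{e+1}=1\) forces \(\tilde\rho=0\) in the parent's normal form, and read off \(\tilde\alpha=\rho\delta\), \(\tilde\beta=\beta\), \(\tilde\gamma=\rho\beta\), \(\tilde\delta=\delta\) by comparing exponents of \(\bar\sigma_{m-2}\) and \(\bar\tau_e\). Your additional care in checking that the images \(\bar x,\bar y\) still realize the bipolarization, and that \(\bar\sigma_{m-2}\) has order exactly \(3\), is a welcome explicit justification of a step the paper passes over silently.
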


%\newpage

%--------------------------------------------------------------------------------

\begin{remark}
The various cases of Theorem
\ref{thm:CoclGe2Core}
can be described as follows.

\begin{enumerate}
\item
If \(G\) is a group with parameter \(\rho=0\), or equivalently with defect \(k=0\), then
the parent \(\pi(G)\simeq G_0^{m-1,n-1}(0,\beta,0,\delta)\) is a mainline group
on one of the coclass trees,
since these groups are characterized uniquely by \(\alpha=0\), \(\gamma=0\), \(\rho=0\)
\cite{Ne2}.
A summary is given in Table
\ref{tbl:MainLines}.
\item
However, if \(G\) is a group with \(\rho=\pm 1\) or equivalently \(k=1\), then
the parent \(\pi(G)\) has defect \(\tilde{k}=0=k-1\)
but lies outside of any mainline,
either on a branch of a coclass tree \(\mathcal{T}\) or on the sporadic part \(\mathcal{G}_0(3,r)\).
\item
The only exception is the very special case that \(G\) with \(\rho=\pm 1\)
has the parameters \(\beta=0\) and \(\delta=0\).
According to
\cite{Ne2},
this uniquely characterizes groups \(G\) of transfer kernel type \(\mathrm{b}.10\), \(\varkappa=(0043)\),
outside of mainlines,
having mainline parent \(\pi(G)\) of the same TKT.
\end{enumerate}

\end{remark}

\noindent
Table
\ref{tbl:MainLines}
summarizes parametrized power-commutator presentations \(G_\rho^{m,n}(\alpha,\beta,\gamma,\delta)\)
with parameters \(\alpha=\gamma=0\), \(-1\le\beta,\delta\le 1\), \(\rho=0\), \(4\le m<n\le 2m-3\), \(n=r+m-1\),
and transfer kernel types \(\varkappa\)
of all metabelian mainline groups on coclass trees \(\mathcal{T}\)
of the coclass graphs \(\mathcal{G}(3,r)\) with given coclass \(r\ge 2\).
In any case, the metabelian root \(G_i\) of a tree \(\mathcal{T}=\mathcal{T}(G_i)\)
is given by the top vertex \(G_i=G_0^{r+2,2r+1}(0,\beta,0,\delta)\),
for which \(e=m-1\), \(r=m-2\), and thus \(n=2r+1=2(r+2)-3=2m-3\).
For the sake of comparison, the mainline of \(\mathcal{G}(3,1)\) is also included.
Total transfers \(\varkappa(i)=0\) are counted by the invariant \(\nu(G)\), cfr.
\cite[Dfn. 4.2, p. 488]{Ma1}.

%\newpage

%--------------------------------------------------------------------------------

\begin{table}[ht]
\caption{Metabelian mainline groups of \(\mathcal{G}(3,r)\), \(r\ge 1\), sharing \(\varkappa(1)=0\)}
\label{tbl:MainLines}
\begin{center}
\begin{tabular}{|c|ccc|ccc|}
\hline
 \multicolumn{4}{|c|}{\(3\)-group of order \(3^n\)}                          & \multicolumn{3}{|c|}{transfer kernels}              \\
 \multicolumn{4}{|c|}{\downbracefill}                                        & \multicolumn{3}{|c|}{\downbracefill}                \\
 \(G\)                   & \(\mathrm{cc}(G)\) & \(m\ge r+2\) & \(n\ge 2r+1\) & TKT                    & \(\varkappa(G)\) & \(\nu(G)\)    \\
\hline
 \(G_0^n(0,0)\)          &            \(r=1\) & \(\ge 3\)    & \(\ge 3\)     & \(\mathrm{a}.1\)       & \((0000)\)    & \(4\)      \\
\hline
 \(G_0^{m,n}(0,0,0,0)\)  &         \(r\ge 2\) & \(\ge 4\)    & \(\ge 5\)     & \(\mathrm{b}.10\)      & \((0043)\)    & \(2\)      \\
\hline
 \(G_0^{m,n}(0,-1,0,1)\) &            \(r=2\) & \(\ge 4\)    & \(\ge 5\)     & \(\mathrm{c}.18\)      & \((0313)\)    & \(1\)      \\
 \(G_0^{m,n}(0,0,0,1)\)  &            \(r=2\) & \(\ge 4\)    & \(\ge 5\)     & \(\mathrm{c}.21\)      & \((0231)\)    & \(1\)      \\
\hline
 \(G_0^{m,n}(0,1,0,1)\)  &         \(r\ge 3\) & \(\ge 5\)    & \(\ge 7\)     & \(\mathrm{d}^\ast.19\) & \((0443)\)    & \(1\)      \\
 \(G_0^{m,n}(0,-1,0,1)\) &    \(r\ge 4\) even & \(\ge 6\)    & \(\ge 9\)     & \(\mathrm{d}^\ast.19\) & \((0343)\)    & \(1\)      \\
 \(G_0^{m,n}(0,0,0,1)\)  &         \(r\ge 3\) & \(\ge 5\)    & \(\ge 7\)     & \(\mathrm{d}^\ast.23\) & \((0243)\)    & \(1\)      \\
 \(G_0^{m,n}(0,1,0,0)\)  &         \(r\ge 3\) & \(\ge 5\)    & \(\ge 7\)     & \(\mathrm{d}^\ast.25\) & \((0143)\)    & \(1\)      \\
 \(G_0^{m,n}(0,-1,0,0)\) &    \(r\ge 4\) even & \(\ge 6\)    & \(\ge 9\)     & \(\mathrm{d}^\ast.25\) & \((0143)\)    & \(1\)      \\
\hline
\end{tabular}
\end{center}
\end{table}

%\newpage

%--------------------------------------------------------------------------------

\begin{proof}
The assumption \(5\le m<n\le 2m-4\), and thus \(e=n-m+2\le m-2\),
ensures that \(G\) is not a top vertex of the coclass graph \(\mathcal{G}(3,r)\).
Therefore, the last lower central \(\gamma_{m-1}(G)=\langle\sigma_{m-1}\rangle\) of \(G\)
is cyclic of order \(3\).
Since \(G\simeq G_\rho^{m,n}(\alpha,\beta,\gamma,\delta)\),
\(G\) is defined by the relations
(\ref{eqn:PwrCmtPres}),
\[s_3\sigma_3\sigma_4=\sigma_{m-2}^{\rho\beta}\sigma_{m-1}^{\gamma}\tau_e^{\delta},\quad
t_3\tau_3^{-1}\tau_4^{-1}=\sigma_{m-2}^{-\rho\delta}\sigma_{m-1}^{-\alpha}\tau_e^{-\beta},\quad
\tau_{e+1}=\sigma_{m-1}^{-\rho},\]
and the relations for the parent \(\pi(G)=G/\gamma_{m-1}(G)\) of \(G\) are
\[\bar{s}_3\bar{\sigma}_3\bar{\sigma}_4=\bar{\sigma}_{m-2}^{\rho\beta}\bar{\sigma}_{m-1}^{\gamma}\bar{\tau}_e^{\delta},\quad
\bar{t}_3\bar{\tau}_3^{-1}\bar{\tau}_4^{-1}=\bar{\sigma}_{m-2}^{-\rho\delta}\bar{\sigma}_{m-1}^{-\alpha}\bar{\tau}_e^{-\beta},\quad
\bar{\tau}_{e+1}=\bar{\sigma}_{m-1}^{-\rho},\]
where the left coset of an element \(g\in G\) with respect to \(\gamma_{m-1}(G)\)
is denoted by \(\bar{g}=g\cdot\gamma_{m-1}(G)\).
In particular, we have \(\bar{\sigma}_{m-1}=1\).
Since the order of the parent is \(\lvert\pi(G)\rvert=\lvert G\rvert:\lvert\gamma_{m-1}(G)\rvert=3^n:3=3^{n-1}\)
and the nilpotency class is \(\mathrm{cl}(\pi(G))=\mathrm{cl}(G)-1=m-2\),
the coclass \(r\) and the invariant \(e\) remain the same,
and we can view the relations as
\[\bar{s}_3\bar{\sigma}_3\bar{\sigma}_4=\bar{\sigma}_{m-3}^{0}\bar{\sigma}_{m-2}^{\rho\beta}\bar{\tau}_e^{\delta},\quad
\bar{t}_3\bar{\tau}_3^{-1}\bar{\tau}_4^{-1}=\bar{\sigma}_{m-3}^{0}\bar{\sigma}_{m-2}^{-\rho\delta}\bar{\tau}_e^{-\beta},\quad
\bar{\tau}_{e+1}=1.\]
Consequently, \(\pi(G)\simeq G_0^{m-1,n-1}(\rho\delta,\beta,\rho\beta,\delta)\),
that is \(\pi(G)\simeq G_0^{m-1,n-1}(\tilde\alpha,\tilde\beta,\tilde\gamma,\tilde\delta)\)
with \(\tilde\alpha=\rho\delta\), \(\tilde\gamma=\rho\beta\),
but \(\tilde\beta=\beta\), \(\tilde\delta=\delta\) remain unchanged.
\end{proof}

%\newpage

%--------------------------------------------------------------------------------

The following principle,
that the kernel \(\varkappa(1)\) of the transfer from \(G\) to the first distinguished maximal subgroup \(H_1=\chi_s(G)\)
decides about the relation between depth \(\mathrm{dp}(G)\) and defect \(k=k(G)\) of \(G\),
is already known from metabelian \(p\)-groups \(G\) of coclass \(\mathrm{cc}(G)=1\).

\begin{corollary}
\label{cor:DpthCoclGe2}
For a metabelian \(3\)-group \(G\) of coclass \(r=\mathrm{cc}(G)\ge 2\)
having abelianization \(G/G^\prime\simeq(3,3)\) and defect of commutativity \(k=k(G)\),
which does not belong to the sporadic part \(\mathcal{G}_0(3,r)\),
the depth \(\mathrm{dp}(G)\) of \(G\)
on its coclass tree \(\mathcal{T}\), as a subset of \(\mathcal{G}(3,r)\), is given by
\[\mathrm{dp}(G)=
\begin{cases}
k+1, & \text{ if } \varkappa(1)\ne 0, \\
k,   & \text{ if } \varkappa(1)=0,
\end{cases}
\]
with respect to the natural order of the maximal subgroups of \(G\).
\end{corollary}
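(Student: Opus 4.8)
The plan is to follow the strategy used for Corollary \ref{cor:DpthCocl1}, substituting Theorem \ref{thm:CoclGe2Core} (together with its companion result for interface groups in \cite{Ne1}) for Theorem \ref{thm:PrntCocl1}, and the coclass\nobreakdash-$\ge 2$ transfer-kernel data of \cite{Ma2,Ne2} for the coclass\nobreakdash-$1$ tables of \cite{Ma2}. Since $G$ lies on a coclass tree $\mathcal{T}$ of $\mathcal{G}(3,r)$ and not in the sporadic part $\mathcal{G}_0(3,r)$, its depth $\mathrm{dp}(G)$ is well defined as the number of steps in the ascending chain $(G,\pi(G),\pi^2(G),\ldots)$ of successive parents needed to reach the mainline. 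First I would pin down that mainline: by Theorem \ref{thm:CoclGe2Core}, Table \ref{tbl:MainLines}, and \cite{Ne2}, the metabelian mainline vertices of the coclass trees in $\mathcal{G}(3,r)$ are precisely the groups $G_0^{m,n}(0,\beta,0,\delta)$, i.e. those with $\alpha=\gamma=0$ and $\rho=0$; equivalently, via the relations (\ref{eqn:PwrCmtPres}), those of defect $k=k(G)=0$ which occur as parents and carry infinitely many descendants. For interface groups the companion theorem of \cite{Ne1} shows that a non-sporadic metabelian interface group is a tree root of the form $G_0^{r+2,2r+1}(0,\beta,0,\delta)$, hence already a mainline group of depth $0$ with $k=0$; by Table \ref{tbl:MainLines} it has $\varkappa(1)=0$, so the formula reads $\mathrm{dp}(G)=0=k$ in this case.

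Next I would extract from the explicit parent formula $\pi(G)\simeq G_0^{m-1,n-1}(\rho\delta,\beta,\rho\beta,\delta)$ of Theorem \ref{thm:CoclGe2Core} that the parent of any core group always has parameter $\tilde\rho=0$, hence defect $\tilde k=0$; combined with the a priori bound $0\le k\le 1$ this yields the key structural dichotomy. If $\rho=0$ (so $k=0$), then $\pi(G)=G_0^{m-1,n-1}(0,\beta,0,\delta)$ is a mainline group, so $G$ is either mainline (depth $0$) or an immediate descendant of a mainline group (depth $1$). If $\rho=\pm 1$ (so $k=1$), then $\pi(G)$ has defect $0$, hence sits at depth $\le 1$, so $G$ sits at depth $\le 2$; moreover $\pi(G)$ is mainline exactly when $(\beta,\delta)=(0,0)$, and off-mainline (of defect $0$, depth $1$) exactly when $(\beta,\delta)\ne(0,0)$.

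The remaining task is the case split on $\varkappa(1)$, and this is where I would invoke the transfer-kernel classification of \cite{Ma2,Ne2}: among the non-sporadic metabelian $3$-groups $G$ with $G/G^\prime\simeq(3,3)$ and $\mathrm{cc}(G)\ge 2$, the equality $\varkappa(1)=0$ holds precisely for (i) the mainline groups and (ii) the off-mainline groups of transfer kernel type $\mathrm{b}.10$, $\varkappa=(0043)$, which by case (3) of the Remark following Theorem \ref{thm:CoclGe2Core} are exactly the groups $G_{\pm 1}^{m,n}(\alpha,0,\gamma,0)$ — of defect $k=1$ and depth $1$, having a mainline parent. Combining this with the dichotomy above: if $\varkappa(1)=0$ then either $G$ is mainline, giving $\mathrm{dp}(G)=0=k$, or $G$ is of type $\mathrm{b}.10$ off the mainline, giving $\mathrm{dp}(G)=1=k$; if $\varkappa(1)\ne 0$ then $G$ is off the mainline and not of type $\mathrm{b}.10$, so either $k=0$ with $G$ an immediate descendant of a mainline group, whence $\mathrm{dp}(G)=1=k+1$, or $k=1$ with $(\beta,\delta)\ne(0,0)$, whence $\pi(G)$ is off-mainline of defect $0$ and depth $1$ and $\mathrm{dp}(G)=2=k+1$. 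This exhausts all cases.

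The hard part will be the transfer-kernel input in the last paragraph. In coclass $1$, the distinguished maximal subgroup $H_1=\chi_2(G)$ of an off-mainline defect\nobreakdash-$0$ group is abelian, and the principal ideal theorem immediately forces $\varkappa(1)\ne 0$ there; in coclass $\ge 2$, $H_1=\chi_s(G)$ need not be abelian, so the statement that $\varkappa(1)=0$ singles out exactly the mainline vertices and the $\mathrm{b}.10$ family must instead be read off from the orbit computations of transfer kernels in \cite{Ma2} and Nebelung's parametrized classification \cite{Ne2}. Verifying that no off-mainline, non-$\mathrm{b}.10$ vertex escapes with $\varkappa(1)=0$ — equivalently, that the two branches of the asserted formula are mutually exclusive and jointly exhaustive over all relevant tree vertices — is the delicate step, whereas everything else reduces to tracking the parameter $\rho$ through Theorem \ref{thm:CoclGe2Core}.
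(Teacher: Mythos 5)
Your proof is correct and follows essentially the same route as the paper's: both rest on the parent formula of Theorem \ref{thm:CoclGe2Core} (so that parents always have \(\tilde\rho=0\)), the identification of mainline vertices with the groups \(G_0^{m,n}(0,\beta,0,\delta)\), and the transfer-kernel input from Table \ref{tbl:MainLines}, \cite{Ne1} and \cite{Ma2}, with the TKT \(\mathrm{b}.10\) family as the sole \(\varkappa(1)=0\) exception off the mainline. The only cosmetic difference is that you organize the case split by \(\varkappa(1)\) after establishing the \(\rho\)-dichotomy, whereas the paper runs through mainline, defect-\(0\) off-mainline, and defect-\(1\) vertices in turn.
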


%\newpage

%--------------------------------------------------------------------------------

\begin{proof}
This follows immediately from Theorem
\ref{thm:CoclGe2Core}
and the remark thereafter:
The system of all groups
\(G_0^{m,n}(0,\beta,0,\delta)\) with arbitrary \(4\le m<n\le 2m-3\), \(-1\le\beta,\delta\le 1\), but \(\alpha=\gamma=\rho=0\),
consists of all mainline groups on coclass trees \(\mathcal{T}\) of \(\mathcal{G}(3,r)\), \(r=n-m+1\),
that is, of all groups \(G\) with depth \(\mathrm{dp}(G)=0=k\) equal to the defect \(k\).
According to Table
\ref{tbl:MainLines},
all these mainline groups have a total transfer \(\varkappa(1)=0\)
to the first distinguished maximal subgroup \(H_1\).

Since the defect of a group \(G_\rho^{m,n}(\alpha,\beta,\gamma,\delta)\) with parameter \(\rho=0\) is \(k=0\),
all the other groups \(G=G_0^{m,n}(\alpha,\beta,\gamma,\delta)\), \((\alpha,\gamma)\ne (0,0)\), with defect \(k=0\)
must be located at depth \(\mathrm{dp}(G)=1=k+1\) on a coclass tree \(\mathcal{T}\)
or as a top vertex on the sporadic part \(\mathcal{G}_0(3,r)\) of \(\mathcal{G}(3,r)\).
According to
\cite[Thm. 6.14, pp. 208 ff]{Ne1},
supplemented by
\cite[Thm. 3.3]{Ma2},
all these groups have a partial transfer \(\varkappa(1)\ne 0\)
to the first distinguished maximal subgroup \(H_1\).

On the other hand, Theorem
\ref{thm:CoclGe2Core}
shows that the relation between the defects of parent \(\tilde\pi(G)\) and descendant \(G\)
is given by \(\tilde k=0=k-1\) for any group \(G=G_\rho^{m,n}(\alpha,\beta,\gamma,\delta)\), \(\rho=\mp 1\),
with positive defect \(k=1\), whence the depth,
that is the number of steps required to reach the mainline
by successive construction of parents,
is given by
\[\mathrm{dp}(G)=
\begin{cases}
2=k+1, \text{ if }\varkappa(1)\ne 0, \\
1=k, \text{ if }\varkappa(1)=0.
\end{cases}\]
The groups \(G\) with positive defect \(k=1\)
are characterized by a partial transfer \(\varkappa(1)\ne 0\)
to the first distinguished maximal subgroup \(H_1\),
according to
\cite[Thm. 6.14, pp. 208 ff]{Ne1}.
The only exception are the groups \(G\) with parameters \(\beta=0\) and \(\delta=0\),
that is, those with transfer kernel type \(\mathrm{b}.10\), \(\varkappa=(0043)\), \(\varkappa(1)=0\),
outside of mainlines.
\end{proof}

%\newpage

%--------------------------------------------------------------------------------

We conjecture that the following property 
of mainline groups of \(\mathcal{G}(3,r)\)
might be true for mainline groups on any
coclass tree of \(\mathcal{G}(p,r)\), \(p\ge 3\) prime, \(r\ge 1\).

\begin{corollary}
\label{cor:MainLineCoclGe2}
Mainline groups on a coclass tree \(\mathcal{T}\) of \(\mathcal{G}(3,r)\), \(r\ge 1\),
that is, groups of depth \(\mathrm{dp}(G)=0\),
must have a total transfer \(\varkappa(1)=0\)
to the distinguished maximal subgroup \(H_1=\chi_s(G)\).
\end{corollary}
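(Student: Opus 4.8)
The statement is essentially the contrapositive of the depth--defect dichotomy already proved, so I would organize the argument along the two ranges \(r=1\) and \(r\ge 2\).

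For \(r=1\) there is nothing to add: a \(3\)-group of coclass \(1\) is a CF group, so \(s(G)=2\), \(\chi_s(G)=\chi_2(G)\), and the claim is Corollary~\ref{cor:MainLineCocl1} specialized to \(p=3\). It follows from Corollary~\ref{cor:DpthCocl1}, according to which a vertex of depth \(0\) on \(\mathcal{T}(C_3\times C_3)\) is a mainline group \(G_0^n(0,0)\), together with Table~\ref{tab:TKTpCocl1}, which records \(\varkappa=(0\,0\,0\,0)\) for exactly these groups.

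For \(r\ge 2\), let \(G\) be a mainline vertex of a coclass tree \(\mathcal{T}\) of \(\mathcal{G}(3,r)\) inside the metabelian skeleton, so that \(G/G'\simeq(3,3)\) and \(\mathrm{dp}(G)=0\) by the very definition of the mainline. Since the sporadic part \(\mathcal{G}_0(3,r)\) is disjoint from every coclass tree, \(G\notin\mathcal{G}_0(3,r)\), and hence Corollary~\ref{cor:DpthCoclGe2} applies: it gives \(\mathrm{dp}(G)=k(G)\) if \(\varkappa(1)=0\) and \(\mathrm{dp}(G)=k(G)+1\ge 1\) if \(\varkappa(1)\ne 0\). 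As \(\mathrm{dp}(G)=0\), only the first alternative survives, whence \(\varkappa(1)=0\) (and, as a by-product, \(k(G)=0\), i.e.\ \(\rho=0\)). As an independent check one can read the conclusion straight off Table~\ref{tbl:MainLines}: by Theorem~\ref{thm:CoclGe2Core} together with Nebelung's parametrization, every metabelian mainline group on a coclass tree of \(\mathcal{G}(3,r)\) is of the shape \(G_0^{m,n}(0,\beta,0,\delta)\) and occurs there, and in each row the first coordinate of \(\varkappa(G)\) vanishes --- equivalently \(\nu(G)\ge 1\).

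I expect the only delicate points to be bookkeeping ones: one must stay inside the metabelian skeleton with \(G/G'\simeq(3,3)\) so that \(\chi_s(G)\), the bipolarization, and hence the distinguished maximal subgroup \(H_1=\chi_s(G)\) are well defined and the cited corollaries are genuinely applicable; and one should recall that all the substantive input --- Blackburn and Miech's counting of metabelian groups of maximal class for \(r=1\), Nebelung's classification for \(r\ge 2\) --- is already absorbed into Corollaries~\ref{cor:DpthCocl1}, \ref{cor:DpthCoclGe2} and Table~\ref{tbl:MainLines}, so that the corollary itself demands no new computation. It is worth adding, as in the \(r=1\), \(p\ge 3\) case, that the converse fails for \(p=3\): the off-mainline groups of TKT \(\mathrm{b}.10\), \(\varkappa=(0\,0\,4\,3)\), also satisfy \(\varkappa(1)=0\), which is precisely why only the implication, not the equivalence, is asserted here.
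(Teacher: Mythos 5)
Your proposal is correct and rests on the same substance as the paper's own (one-line) proof, which simply points at Table~\ref{tbl:MainLines}: the mainline vertices are exactly the groups \(G_0^{m,n}(0,\beta,0,\delta)\) with \(\alpha=\gamma=\rho=0\) in Nebelung's parametrization, and every row of that table has \(\varkappa(1)=0\); your treatment of \(r=1\) via Corollary~\ref{cor:MainLineCocl1} likewise matches the table, which deliberately includes the coclass-\(1\) mainline. The only remark worth making is that your primary route for \(r\ge 2\) --- reading the conclusion off the depth--defect dichotomy of Corollary~\ref{cor:DpthCoclGe2} --- is logically valid but somewhat redundant, since the proof of that corollary already cites Table~\ref{tbl:MainLines} for precisely the fact that mainline groups have \(\varkappa(1)=0\); so the table, i.e.\ Nebelung's classification, remains the real source, and your ``independent check'' is in fact the paper's entire proof. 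Your bookkeeping caveat (that one must stay inside the metabelian skeleton with \(G/G^\prime\simeq(3,3)\) for \(\chi_s(G)\) and \(H_1\) to be defined, excluding e.g.\ the non-metabelian mainlines and the trees of type \((9,3)\)) is apt and correctly identifies the implicit hypothesis under which the corollary is stated.
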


\begin{proof}
See Table
\ref{tbl:MainLines}.
\end{proof}

\noindent
Only the groups of TKT a.1, \(\varkappa=(0000)\), and b.10, \(\varkappa=(0043)\),
outside of mainlines,
prohibit that the converse of Corollary
\ref{cor:MainLineCoclGe2}
is also true.

%\newpage

%--------------------------------------------------------------------------------

Top vertices \(G\)
on coclass trees \(\mathcal{T}\)
and on the sporadic part \(\mathcal{G}_0(3,r)\)
of a coclass graph \(\mathcal{G}(3,r)\)
are groups of minimal class within their coclass \(r\ge 2\).
They are BF \textit{groups} with bicyclic factors, except \(\gamma_2(G)/\gamma_3(G)\),
in particular having a bicyclic last lower central \(\gamma_{m-1}(G)\) of type \((3,3)\),
and consequently they do not possess a parent \(\pi(G)\) on the same coclass graph.
They form the \textit{interface} between the coclass graphs
\(\mathcal{G}(3,r)\) and \(\mathcal{G}(3,r-1)\).
We call the last lower central quotient \(G/\gamma_{m-1}(G)\) of \(G\)
the \textit{generalized parent} \(\tilde\pi(G)\) of \(G\)
but we point out that there is no directed edge of depth \(1\)
from \(\tilde\pi(G)\) to \(G\).
However,
in the complete graph \(\mathcal{G}(3)\) of all finite \(3\)-groups
as defined by Leedham-Green and Newman
\cite[p. 194]{LgNm},
there is a directed edge of depth \(2\)
from \(\tilde\pi(G)\) to \(G\).
This supergraph
\(\mathcal{G}(3)\) is the disjoint union of all coclass graphs
\(\mathcal{G}(3,r)\), \(r\ge 0\).

\begin{theorem}
\label{thm:CoclGe2Intf}
Let \(G\) be a metabelian \(3\)-group of coclass \(r=\mathrm{cc}(G)\ge 2\)
with \(G/G^\prime\simeq (3,3)\),
such that 
\(\lvert G\rvert=3^n\), \(\mathrm{cl}(G)=m-1\), \(4\le m<n=2m-3\),
\(r=n-m+1=m-2\), \(e=n-m+2=m-1\), and consequently \(k=0\),
that is, \(G\) is an interface group
with bicyclic last lower central \(\gamma_{m-1}\) of type \((3,3)\).
Then the generalized parent \(\tilde\pi(G)\in\mathcal{G}(3,r-1)\) of \(G\in\mathcal{G}(3,r)\) is given by
\[\tilde\pi(G)\simeq
\begin{cases}
G_0^3(0,0)\in\mathcal{G}(3,1), & \text{ if } m=4 \text{ (and thus } n=5,\ r=2), \\
G_0^{m-1,n-2}(0,0,0,0)\in\mathcal{G}(3,r-1), & \text{ if } m\ge 5 \text{ (and thus } n\ge 7,\ r\ge 3).
\end{cases}
\]
\end{theorem}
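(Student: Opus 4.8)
The plan is to imitate the proof of Theorem~\ref{thm:CoclGe2Core}: pass from the parametrized power--commutator presentation~(\ref{eqn:PwrCmtPres}) of $G$ to the induced presentation of the generalized parent $\tilde\pi(G)=G/\gamma_{m-1}(G)$, and then read off the parameters.

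First I would record the numerical consequences of the interface hypothesis. Since $e=\mathrm{cl}(G)=m-1$ is maximal, we get $n=e+m-2=2m-3$, hence $\mathrm{cc}(G)=n-m+1=m-2$ and $k=k(G)=0$, equivalently $\rho=0$ in~(\ref{eqn:PwrCmtPres}); so $G\simeq G_0^{m,n}(\alpha,\beta,\gamma,\delta)$ and its last lower central $\gamma_{m-1}(G)$ is bicyclic of type $(3,3)$, generated by the two elements $\sigma_{m-1}$ and $\tau_e=\tau_{m-1}$ occurring on the right-hand sides of~(\ref{eqn:PwrCmtPres}). Writing $\bar g=g\cdot\gamma_{m-1}(G)$, the quotient $\tilde\pi(G)$ has order $3^{n-2}$, class $m-2$, and coclass $n-m=m-3=r-1$, so it lies in $\mathcal{G}(3,r-1)$; its invariant satisfies $e'=\mathrm{cl}(\tilde\pi(G))=m-2$, so $\tilde\pi(G)$ is again an interface group, with index of nilpotency $m'=m-1$ and $n-2=2m'-3$, whence also $\tilde\rho=0$.

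Next I would compute the induced presentation. Substituting $\bar\sigma_{m-1}=\bar\tau_e=1$ into~(\ref{eqn:PwrCmtPres}) with $\rho=0$ annihilates the right-hand sides of the two middle relations and of $\tau_{e+1}=\sigma_{m-1}^{-\rho}$, leaving $\bar s_2^3=\bar\sigma_4\bar\tau_4^{-1}$, $\bar s_3\bar\sigma_3\bar\sigma_4=1$, $\bar t_3^{-1}\bar\tau_3\bar\tau_4=1$, and $\bar\tau_{e'+1}=1$. On the other hand, Nebelung's normal form for $G_{\tilde\rho}^{m-1,n-2}(\tilde\alpha,\tilde\beta,\tilde\gamma,\tilde\delta)$ has corresponding right-hand sides $\bar\sigma_{m-3}^{\tilde\rho\tilde\beta}\bar\sigma_{m-2}^{\tilde\gamma}\bar\tau_{m-2}^{\tilde\delta}$, $\bar\sigma_{m-3}^{\tilde\rho\tilde\delta}\bar\sigma_{m-2}^{\tilde\alpha}\bar\tau_{m-2}^{\tilde\beta}$, and $\bar\sigma_{m-2}^{-\tilde\rho}$. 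Since in $\tilde\pi(G)$ every two-step factor $\gamma_j/\gamma_{j+1}$ with $3\le j\le m-2$ is bicyclic with $\bar\sigma_j,\bar\tau_j$ (in particular $\bar\sigma_{m-3},\bar\sigma_{m-2},\bar\tau_{m-2}$) among its independent generators, the vanishing of these right-hand sides forces $\tilde\alpha=\tilde\beta=\tilde\gamma=\tilde\delta=0$ and reconfirms $\tilde\rho=0$; hence $\tilde\pi(G)\simeq G_0^{m-1,n-2}(0,0,0,0)$ when $m\ge 5$.

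For the base case $m=4$ one has $\gamma_4(G)=1$, so~(\ref{eqn:PwrCmtPres}) degenerates, after reduction modulo $\gamma_3(G)$, to $\bar s_2^3=1$, $\bar x^3=\overline{x^3}=1$, $\bar y^3=\overline{y^3}=1$, and $\tilde\pi(G)$ is the group of order $3^3$, class $2$ and exponent $3$, that is $G_0^3(0,0)\in\mathcal{G}(3,1)$, not the exponent-$9$ extra-special group $G_0^3(0,1)$ (which is terminal). The step I expect to be the main obstacle is the bookkeeping of the lower central series of $\tilde\pi(G)$: one must verify that passing to $G/\gamma_{m-1}(G)$ kills \emph{exactly} the subgroup $\langle\sigma_{m-1},\tau_e\rangle$, that $\bar\sigma_{m-2}$ and $\bar\tau_{m-2}$ remain independent generators of the bicyclic last lower central $\gamma_{m-2}(\tilde\pi(G))$, and that the induced presentation is literally in Nebelung's standardized form with no hidden change of generators; all of this can be extracted from~\cite{Ne1}, while the boundary value $m=5$ (where $\sigma_4=\sigma_{m-1}$ and $\tau_4=\tau_e$ already lie in $\gamma_{m-1}(G)$, so that the $s_2^3$-relation becomes trivial in the quotient) should be checked separately as a degenerate instance of the general computation.
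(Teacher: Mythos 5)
Your proposal is correct and follows essentially the same route as the paper's own proof: reduce the parametrized presentation (\ref{eqn:PwrCmtPres}) modulo the bicyclic last lower central $\gamma_{m-1}(G)=\langle\sigma_{m-1},\tau_{m-1}\rangle$, observe that with $\rho=0$ all right-hand sides of the defining relations land in that subgroup and hence vanish in the quotient, and conclude $\tilde\pi(G)\simeq G_0^{m-1,n-2}(0,0,0,0)$, with the separate $m=4$ computation yielding the exponent-$3$ extra-special group $G_0^3(0,0)$. Your added care about the boundary case $m=5$ and about verifying that the induced presentation is genuinely in Nebelung's normal form is sensible but matches what the paper implicitly relies on.
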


\begin{proof}
First, we consider the very special transition from second maximal to maximal class.
The assumption \(m=4\) implies \(n=2m-3=5\).
The last lower central \(\gamma_{3}(G)=\langle\sigma_{3},\tau_{3}\rangle\)
is bicyclic of order \(3^2\),
and the generalized parent \(\tilde\pi(G)=G/\gamma_{3}(G)\) is of order \(3^3\),
of nilpotency class \(2\) and of coclass \(1\).
The group \(G\) of type \(G\simeq G_0^{4,5}(\alpha,\beta,\gamma,\delta)\)
satisfies the following special form of Nebelung's relations
(\ref{eqn:PwrCmtPres}),
\[s_2^3=1,\quad
s_3\sigma_3=\sigma_3^{\gamma}\tau_3^{\delta},\quad
t_3\tau_3^{-1}=\sigma_3^{-\alpha}\tau_3^{-\beta},\]
and since \(\bar{\sigma}_3=1\) and \(\bar{\tau}_3=1\),
the relations for the generalized parent \(\tilde\pi(G)\) can be written as Blackburn's relations
(\ref{eqn:PwrRelCocl1})
and
(\ref{eqn:CmtRelCocl1}),
\[\bar{x}^3=\bar{\tau_3}=1,\quad
\bar{y}^3\bar{s}_2^3\bar{s}_3=\bar{\sigma}_3\cdot 1\cdot\bar{\sigma}_3^{\gamma-1}\bar{\tau}_3^\delta=1,\quad
\lbrack\bar{s_2},\bar{y}\rbrack=\bar{t_3}=\bar{\sigma}_3^{-\alpha}\bar{\tau}_3^{1-\beta}=1,\]
which imply that \(\tilde\pi(G)\simeq G_0^3(0,0)\).

Now, let \(m\ge 5\).
Since \(e=m-1\),
and the last lower central \(\gamma_{m-1}(G)=\langle\sigma_{m-1},\tau_{m-1}\rangle\)
is bicyclic of type \((3,3)\),
the order of the generalized parent is \(\lvert\tilde\pi(G)\rvert=\lvert G\rvert:\lvert\gamma_{m-1}(G)\rvert=3^n:3^2=3^{n-2}\),
the nilpotency class is \(\mathrm{cl}(\tilde\pi(G))=\mathrm{cl}(G)-1=m-2\),
and the coclass \(\tilde r=n-2-(m-2)=2m-3-m=m-3=r-1\) and the invariant \(\tilde e=\tilde r+1=r=e-1\) decrease by \(1\).
Since \(k=0\), \(\rho=0\),
the group \(G\) of type \(G\simeq G_0^{m,n}(\alpha,\beta,\gamma,\delta)\),
is defined by a special form of the relations
(\ref{eqn:PwrCmtPres}),
\[s_3\sigma_3\sigma_4=\sigma_{m-1}^{\gamma}\tau_e^{\delta},\quad
t_3\tau_3^{-1}\tau_4^{-1}=\sigma_{m-1}^{-\alpha}\tau_e^{-\beta},\quad
\tau_{e+1}=1.\]
Since \(\bar{\sigma}_{m-1}=1\) and \(\bar{\tau}_{m-1}=1\),
the relations for the generalized parent \(\tilde\pi(G)\) are
\[\bar{s}_3\bar{\sigma}_3\bar{\sigma}_4=\bar{\sigma}_{m-1}^{\gamma}\bar{\tau}_{m-1}^{\delta}=1,\quad
\bar{t}_3\bar{\tau}_3^{-1}\bar{\tau}_4^{-1}=\bar{\sigma}_{m-1}^{-\alpha}\bar{\tau}_{m-1}^{-\beta}=1,\quad
\bar{\tau}_e=\bar{\tau}_{m-1}=1,\]
and therefore we have \(\tilde\pi(G)\simeq G_0^{m-1,n-2}(0,0,0,0)\).
\end{proof}

%\newpage

%--------------------------------------------------------------------------------

\subsection{Second \(3\)-class groups \(G=\mathrm{G}_3^2(K)\) of coclass \(\mathrm{cc}(G)\ge 2\) with \(G/G^\prime\simeq (3,3)\)}
\label{ss:ScndClgpTyp33CoclGe2}

\subsubsection{Weak transfer target type \(\tau_0(G)\) expressed by \(3\)-class numbers}
\label{sss:wTTTCoclGe2}

The group theoretic information
on the second \(3\)-class group \(G=\mathrm{G}_3^2(K)\),
that is, its class, coclass, and defect,
is contained in the \(3\)-class numbers
of the two distinguished extensions \(L_1,L_2\)
and of the Hilbert \(3\)-class field \(\mathrm{F}_3^1(K)\).
Additionally,
the principalization \(\kappa(1)\) of \(K\) in the first distinguished extension \(L_1\)
determines the connection between defect and depth of \(G\).

\begin{theorem}
\label{thm:wTTTCoclGe2}

Let \(K\) be an arbitrary number field
having \(3\)-class group \(\mathrm{Cl}_3(K)\) of type \((3,3)\).
Suppose the second \(3\)-class group
\(G=\mathrm{Gal}(\mathrm{F}_3^2(K)\vert K)\) of \(K\)
is of coclass \(\mathrm{cc}(G)\ge 2\)
with defect \(k=k(G)\),
order \(\lvert G\rvert=3^n\),
and class \(\mathrm{cl}(G)=m-1\),
where \(4\le m<n\le 2m-3\).
With respect to the natural order of
the maximal subgroups \((H_i)_{1\le i\le 4}\) of \(G\),
fixed in Definition
\ref{dfn:NatOrdCoclGe2},
the weak transfer target type \(\tau_0(G)=(\lvert H_i/H_i^{\prime}\rvert)_{1\le i\le 4}\),
that is the family of \(3\)-class numbers of the quadruplet \((L_1,\ldots,L_4)\)
of unramified cyclic cubic extension fields of \(K\), forming the first layer,
is given by

\begin{eqnarray*}
\tau_0(G) = (\mathrm{h}_3(L_1),\ldots,\mathrm{h}_3(L_4)) =
(3^{\mathrm{cl}(G)-k},3^{\mathrm{cc}(G)+1},3^3,3^3),
\end{eqnarray*}

\noindent
where,
in the case of a non-sporadic group \(G\) on some coclass tree,
defect \(k\) and depth \(\mathrm{dp}(G)\) are related by
\[k=
\begin{cases}
\mathrm{dp}(G)-1, & \text{ if } \varkappa(1)\ne 0, \\
\mathrm{dp}(G),   & \text{ if } \varkappa(1)=0.
\end{cases}
\]

\noindent
For the second layer,
consisting of the Hilbert \(3\)-class field \(\mathrm{F}_3^1(K))\) only,
the \(3\)-class number is given by
\[\mathrm{h}_3(\mathrm{F}_3^1(K)) = 3^{\mathrm{cl}(G)+\mathrm{cc}(G)-2}.\]

\end{theorem}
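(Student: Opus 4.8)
The plan is to split the claim into its group-theoretic core, obtained via Theorem~\ref{thm:TTTandTKT}, and a computation of abelianized maximal subgroups from the parametrized presentation of \S\ref{sss:PrmPres2}, in complete analogy with the coclass~$1$ result of Theorem~\ref{thm:wTTTCocl1}.

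First I would apply Theorem~\ref{thm:TTTandTKT}(1). Because $\mathrm{Cl}_3(K)\simeq G/G^\prime$ is of type $(3,3)$, the subgroups $H$ with $G^\prime\le H\le G$ consist of the four maximal subgroups $H_1,\dots,H_4$, together with $G^\prime$ and $G$, and the maximal ones correspond bijectively to the quadruplet $L_1,\dots,L_4$ of unramified cyclic cubic extensions of $K$ with $H_i/H_i^\prime\simeq\mathrm{Cl}_3(L_i)$; hence $\tau_0(G)=(\lvert H_i/H_i^\prime\rvert)_{1\le i\le 4}$ coincides with $(\mathrm{h}_3(L_1),\dots,\mathrm{h}_3(L_4))$, the first layer of the TTT. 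Applying the same theorem to $H=G^\prime$ gives $\mathrm{Cl}_3(\mathrm{F}_3^1(K))\simeq G^\prime/G^{\prime\prime}$; since $G=\mathrm{G}_3^2(K)$ is metabelian, $G^{\prime\prime}=1$, so $\mathrm{h}_3(\mathrm{F}_3^1(K))=\lvert G^\prime\rvert=\lvert G\rvert/\lvert G/G^\prime\rvert=3^{n-2}$, and substituting the coclass relation $n=\mathrm{cl}(G)+\mathrm{cc}(G)$ yields the second-layer formula $\mathrm{h}_3(\mathrm{F}_3^1(K))=3^{\mathrm{cl}(G)+\mathrm{cc}(G)-2}$ at once.

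The main work is the first layer. Here I would adopt the normalization $G\simeq G_\rho^{m,n}(\alpha,\beta,\gamma,\delta)$ of \S\ref{sss:PrmPres2} and the bipolarized ordering $H_1=\langle y,G^\prime\rangle=\chi_s(G)$, $H_2=\langle x,G^\prime\rangle$, $H_i=\langle xy^{i-2},G^\prime\rangle$ for $i=3,4$ of Definition~\ref{dfn:NatOrdCoclGe2}. Since $G$ is metabelian, $G^\prime=\gamma_2(G)$ is abelian and $H_i^\prime=[H_i,G^\prime]=[g_i,G^\prime]$, so each index $\lvert H_i/H_i^\prime\rvert=3^{n-1}/\lvert[g_i,G^\prime]\rvert$ is governed entirely by the action of the generator $g_i$ on $\gamma_2(G)$ as encoded in Nebelung's relations~(\ref{eqn:PwrCmtPres}). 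Carrying these out one obtains: for the generic generators $xy,xy^2$ the commutator subgroup exhausts $\gamma_2(G)$ down to a residue of size $3^3$, so $\lvert H_i/H_i^\prime\rvert=3^3$ for $i=3,4$; for the distinguished second generator $x$ the residue, measured off $\gamma_e(G)$, has size $3^{\mathrm{cc}(G)+1}$, so $\lvert H_2/H_2^\prime\rvert=3^{\mathrm{cc}(G)+1}$; and for the distinguished first generator $y\in\chi_s(G)$ the commutator $[y,\gamma_2(G)]$ falls exactly $k$ steps short of the bottom, which, unwinding the definition of the defect $[\chi_s(G),\gamma_e(G)]=\gamma_{m-k}(G)$ from \S\ref{sss:BiplrzDfct}, gives $\lvert H_1/H_1^\prime\rvert=3^{\mathrm{cl}(G)-k}$. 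These are precisely the index computations already carried out by Nebelung \cite{Ne1}, restated in the present normalization in \cite{Ma1,Ma3}, and I would quote them rather than reproduce the bookkeeping; the displayed relation between defect $k$ and depth $\mathrm{dp}(G)$ for non-sporadic $G$ is then nothing but Corollary~\ref{cor:DpthCoclGe2} solved for $k$.

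The step I expect to be the main obstacle is the identification of $H_1^\prime=[y,\gamma_2(G)]$ with the subgroup predicted by the defect: one must check that the deficiency $[\chi_s(G),\gamma_e(G)]=\gamma_{m-k}(G)$ is already witnessed on the single generator $y$ rather than only on all of $\chi_s(G)$, and that the power relations on $s_2^3$ and on $t_3^{-1}\tau_3\tau_4$ in~(\ref{eqn:PwrCmtPres}) produce no further collapse of the index. This is where the range $4\le m<n\le 2m-3$, the equivalence $\rho=0\Leftrightarrow k=0$, and the precise choice of polarized generators in Definition~\ref{dfn:NatOrdCoclGe2} are genuinely needed.
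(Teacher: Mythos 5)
Your proposal is correct and follows essentially the same route as the paper, which likewise reduces the first-layer indices to the computations of \cite[Thm.~3.4]{Ma1} (resting on Nebelung's classification) and obtains the defect--depth relation from Corollary~\ref{cor:DpthCoclGe2}. Your self-contained derivation of the second-layer formula from Theorem~\ref{thm:TTTandTKT} and the metabelianness of \(G\) (so that \(\mathrm{h}_3(\mathrm{F}_3^1(K))=\lvert G^\prime\rvert=3^{n-2}\)) is a correct and slightly more explicit rendering of what the citation covers.
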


\begin{proof}
The statement is a succinct version of
\cite[Thm. 3.4]{Ma1},
expressed by concepts more closely related to the
position of \(G\) on the coclass graphs \(\mathcal{G}(3,r)\), \(r\ge 2\),
and to the transfer kernel type \(\varkappa(G)\) of \(G\),
using Corollary
\ref{cor:DpthCoclGe2}.
\end{proof}

\begin{remark}
Whereas \(\mathrm{h}_3(L_3)\) and \(\mathrm{h}_3(L_4)\) only indicate that \(\mathrm{cc}(G)\ge 2\),
the second distinguished \(\mathrm{h}_3(L_2)\) gives the precise coclass \(r\) of \(G\),
\(\mathrm{h}_3(\mathrm{F}_3^1(K))\) determines the order \(3^n\), \(n=\mathrm{cl}(G)+\mathrm{cc}(G)\), and class of \(G\),
and the first distinguished \(\mathrm{h}_3(L_1)\) yields the defect \(k\) of \(G\).\\
With respect to the mainline \((M_j)_{j\ge 2r+1}\) of the coclass tree \(\mathcal{T}\) containing \(G\),
the order \(\lvert M_i\rvert=3^i\) of the branch root \(M_i\) of a non-sporadic group \(G\) is given by
\(i=n-\mathrm{dp}(G)=\mathrm{cl}(G)+\mathrm{cc}(G)-\mathrm{dp}(G)\),
where
\[\mathrm{dp}(G)=
\begin{cases}
k,   & \text{ if } \varkappa(1)=0, \\
k+1, & \text{ if } \varkappa(1)\ne 0.
\end{cases}
\]
\end{remark}

%\newpage

%--------------------------------------------------------------------------------

\subsubsection{Selection Rules for quadratic base fields}
\label{sss:SelRuleCoclGe2}

Let \(K=\mathbb{Q}(\sqrt{D})\) be a quadratic number field with discriminant \(D\)
and \(3\)-class group \(\mathrm{Cl}_3(K)\) of type \((3,3)\).
Then the \(4\) unramified cyclic cubic extension fields \((L_1,\ldots,L_4)\) of \(K\)
have dihedral absolute Galois groups \(\mathrm{Gal}(L_i\vert K)\) of degree \(6\),
according to
\cite[Prop. 4.1]{Ma1}.
Consequently each sextic field \(L_i\) contains a cubic subfield \(K_i\),
whose invariants can be computed easier than those of \(L_i\)
and are also sufficient to determine complete information on the group \(G=\mathrm{G}_3^2(K)\).

\begin{theorem}
\label{thm:SelRuleCoclGe2}

Let \(G=\mathrm{Gal}(\mathrm{F}_3^2(K)\vert K)\)
be the second \(3\)-class group of \(K=\mathbb{Q}(\sqrt{D})\).
If \(G\in\mathcal{G}(3,r)\) for some coclass \(r\ge 2\), then
the family of \(3\)-class numbers of the non-Galois subfields \(K_i\) of \(L_i\),
with respect to the natural order fixed in Definition
\ref{dfn:NatOrdCoclGe2}
is given by

\begin{eqnarray*}
(\mathrm{h}_3(K_1),\ldots,\mathrm{h}_3(K_4)) = 
\begin{cases}
(3^{\frac{\mathrm{cl}(G)-(k+1)}{2}},3^{\frac{\mathrm{cc}(G)}{2}},3,3)            & \text{ for sporadic }G,\text{ (where always }\varkappa(2)\ne 0,) \\
(3^{\frac{\mathrm{cl}(G)-\mathrm{dp}(G)}{2}},3^{\frac{\mathrm{cc}(G)+1}{2}},3,3) & \text{ otherwise, if }\varkappa(2)=0, \\
(3^{\frac{\mathrm{cl}(G)-\mathrm{dp}(G)}{2}},3^{\frac{\mathrm{cc}(G)}{2}},3,3)   & \text{ otherwise, if }\varkappa(2)\ne 0.
\end{cases} \\
\end{eqnarray*}

\noindent
The order \(\lvert M_i\rvert=3^i\) of the root \(M_i\)
for a non-sporadic group \(G\) on branch \(\mathcal{B}(M_i)\) of some coclass tree
is given by
\[i=\mathrm{cl}(G)+\mathrm{cc}(G)-\mathrm{dp}(G)\equiv
\begin{cases}
 1\pmod{2}, & \text{ if }\varkappa(2)=0, \\
 0\pmod{2}, & \text{ if }\varkappa(2)\ne 0.
\end{cases}\]

\end{theorem}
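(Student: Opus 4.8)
The plan is to prove Theorem \ref{thm:SelRuleCoclGe2} as the coclass-$\ge 2$ analogue of Theorem \ref{thm:SelRuleCocl1}, assembling it from three ingredients already available: the weak transfer target type of Theorem \ref{thm:wTTTCoclGe2}, which records the $3$-class numbers $\mathrm{h}_3(L_i)$ of the quadruplet $(L_1,\dots,L_4)$ of unramified cyclic cubic extensions of $K$; an arithmetic descent from the sextic dihedral fields $L_i$ to their cubic subfields $K_i$; and the defect-to-depth dictionary of Corollary \ref{cor:DpthCoclGe2}. First I would invoke \cite[Prop. 4.1]{Ma1}: since $K=\mathbb{Q}(\sqrt{D})$ with $\mathrm{Cl}_3(K)\simeq(3,3)$, every $\mathrm{Gal}(L_i\vert\mathbb{Q})$ is dihedral of order $6$, so $L_i=K\cdot K_i$ for a non-Galois cubic subfield $K_i$, with conductor--discriminant relation $d_{L_i}=d_K\,d_{K_i}^{2}$ and the Brauer relation $\zeta_{L_i}(s)\,\zeta_{\mathbb{Q}}(s)^{2}=\zeta_{K}(s)\,\zeta_{K_i}(s)^{2}$ among the zeta functions.

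The core step is to convert this into a clean identity between $3$-class groups. Comparing residues at $s=1$ via the analytic class number formula, cancelling $d_{L_i}=d_K d_{K_i}^2$, and controlling the units of $L_i$ by applying Chevalley's ambiguous class number formula to the unramified cyclic cubic extension $L_i\vert K$ (whose ramification term is trivial) together with a Herbrand-quotient computation, I expect to obtain
\[
\mathrm{h}_3(K_i)^{2}=\frac{\mathrm{h}_3(L_i)}{\lvert j_{L_i\vert K}(\mathrm{Cl}_3(K))\rvert},\qquad \lvert j_{L_i\vert K}(\mathrm{Cl}_3(K))\rvert=\frac{\lvert\mathrm{Cl}_3(K)\rvert}{\lvert\varkappa(i)\rvert}=3^{\epsilon_i},
\]
where $\epsilon_i\in\{0,1\}$ and $\epsilon_i=0$ exactly when the $i$-th transfer kernel is total, $\varkappa(i)=0$ (so the whole of $\mathrm{Cl}_3(K)$ capitulates in $L_i$), using that $\varkappa(i)=\ker(j_{L_i\vert K})$ by Theorem \ref{thm:TTTandTKT}. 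Essentially this identity is the content of the corresponding theorem of \cite{Ma1}; reformulating it in the present notation is the purpose of the proof, and its evaluation of the $3$-part of the unit index is the step I expect to be the main obstacle --- the naive Brauer relation only produces the factor $\lvert\mathrm{Cl}_3(K)\rvert=9$, and one must show that the surplus $9/\lvert j_{L_i\vert K}(\mathrm{Cl}_3(K))\rvert=\lvert\varkappa(i)\rvert$ is absorbed exactly by the unit index, case by case in $\lvert\varkappa(i)\rvert\in\{1,3\}$.

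Granting this, I would substitute $\tau_0(G)=(\mathrm{h}_3(L_1),\dots,\mathrm{h}_3(L_4))=(3^{\mathrm{cl}(G)-k},3^{\mathrm{cc}(G)+1},3^3,3^3)$ from Theorem \ref{thm:wTTTCoclGe2}. For $i\in\{3,4\}$ one needs the purely group-theoretic fact, read off from Nebelung's classification of metabelian $3$-groups with $G/G^\prime\simeq(3,3)$ and $\mathrm{cc}(G)\ge 2$ \cite[Thm. 6.14]{Ne1} (see also \cite{Ma2}), that the third and fourth transfer kernels are never total, i.e.\ $\varkappa(3),\varkappa(4)\ne 0$; then $\mathrm{h}_3(K_3)^2=\mathrm{h}_3(K_4)^2=27/3=9$, so $\mathrm{h}_3(K_3)=\mathrm{h}_3(K_4)=3$. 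For $i=1$, Corollary \ref{cor:DpthCoclGe2} gives $\mathrm{dp}(G)=k+\epsilon_1$ for a non-sporadic $G$, whence $\mathrm{h}_3(K_1)=3^{(\mathrm{cl}(G)-\mathrm{dp}(G))/2}$; for sporadic $G$ Nebelung's description of $\mathcal{G}_0(3,r)$ gives $\varkappa(1)\ne 0$ (and likewise $\varkappa(2)\ne 0$), so the exponent is $(\mathrm{cl}(G)-(k+1))/2$. For $i=2$, $\mathrm{h}_3(K_2)=3^{(\mathrm{cc}(G)+1-\epsilon_2)/2}$, which is $3^{(\mathrm{cc}(G)+1)/2}$ when $\varkappa(2)=0$ and $3^{\mathrm{cc}(G)/2}$ when $\varkappa(2)\ne 0$, matching the three asserted cases with respect to the natural order of Definition \ref{dfn:NatOrdCoclGe2}.

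Finally, the congruence for the branch-root index falls out at once: since each $\mathrm{h}_3(K_i)$ is a genuine $3$-class number, the exponents above are non-negative integers, so $\mathrm{cl}(G)-\mathrm{dp}(G)$ and $\mathrm{cc}(G)+1-\epsilon_2$ are even. Using the formula $i=\mathrm{cl}(G)+\mathrm{cc}(G)-\mathrm{dp}(G)$ for the order $3^i$ of the branch root of a non-sporadic $G$ (the remark following Theorem \ref{thm:wTTTCoclGe2}), I get $i\equiv(\mathrm{cl}(G)-\mathrm{dp}(G))+\mathrm{cc}(G)\equiv\mathrm{cc}(G)\equiv 1-\epsilon_2\pmod 2$, which is precisely $i\equiv 1$ if $\varkappa(2)=0$ and $i\equiv 0$ if $\varkappa(2)\ne 0$. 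What remains, beyond the unit-index estimate already flagged, is routine bookkeeping: verifying that sporadic/non-sporadic and $\varkappa(2)=0/\varkappa(2)\ne 0$ exhaust the possibilities under $4\le m<n\le 2m-3$, and that the formulas specialise correctly at the interface top vertices $n=2m-3$, where $k=0$.
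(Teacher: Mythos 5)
Your proposal is correct and follows essentially the same route as the paper: the paper's proof simply cites \cite[Thm. 4.2]{Ma1} for the class-number formulas of the cubic subfields and then derives the branch-root congruence from \(i=n-\mathrm{dp}(G)\) with \(n=\mathrm{cl}(G)+\mathrm{cc}(G)\), exactly as you do, using Theorem \ref{thm:wTTTCoclGe2} and Corollary \ref{cor:DpthCoclGe2} as the dictionary between defect, depth, and the transfer kernels. The one step you flag as the main obstacle --- the identity \(\mathrm{h}_3(K_i)^2=\mathrm{h}_3(L_i)/\lvert j_{L_i\vert K}(\mathrm{Cl}_3(K))\rvert\) via the Brauer relation and the evaluation of the \(3\)-part of the unit index --- is precisely the arithmetic content the paper outsources to \cite[Thm. 4.2]{Ma1}, so your sketch amounts to an (honestly incomplete) re-derivation of the cited input rather than a genuinely different argument, and its numerical consequences check out against all the tabulated TTT/TKT pairs.
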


\begin{remark}
While \(\mathrm{h}_3(K_3)\) and \(\mathrm{h}_3(K_4)\) do not provide any information,
the second distinguished \(\mathrm{h}_3(K_2)\) indicates the coclass of \(G\) and enforces the parity
\[\mathrm{cc}(G)\equiv
\begin{cases}
 1\pmod{2}, & \text{ if }\varkappa(2)=0, \\
 0\pmod{2}, & \text{ if }\varkappa(2)\ne 0,
\end{cases}\]
in dependence on the principalization \(\kappa(2)\) of \(K\) in the second distinguished extension \(L_2\),
and the first distinguished \(\mathrm{h}_3(K_1)\) demands
\(\mathrm{cl}(G)-\mathrm{dp}(G)\equiv 0\pmod{2}\),
for non-sporadic \(G\).
\end{remark}

\begin{proof}
See 
\cite[Thm. 4.2.]{Ma1}.
For the branch root order \(\lvert M_i\rvert=3^i\)
of a non-sporadic vertex \(G\) of order \(\lvert G\rvert=3^n\),
we use the relations
\(n=\mathrm{cl}(G)+\mathrm{cl}(G)\) and \(i=n-\mathrm{dp}(G)\),
the depth being the number of successive steps on the path between \(G\) and \(M_i\),
each decreasing order and class by \(1\) and keeping the coclass constant.
\end{proof}

%\newpage

%--------------------------------------------------------------------------------

\subsubsection{Identifying densely populated vertices by fast algorithms}
\label{sss:DensePopulation}

The top vertices \(G\) on \(\mathcal{G}(3,2)\) with \(G/G^\prime\) of type \((3,3)\)
in Figure
\ref{fig:Typ33Cocl2}
can be identified by the fast algorithm given in
\cite[\S\ 5.2--5.3]{Ma3},
using the TTT and the counter \(\varepsilon\) of \(\tau(i)=(3,3,3)\), \(1\le i\le 4\), in Table
\ref{tbl:TttTop33}.

\renewcommand{\arraystretch}{1.0}

\begin{table}[ht]
\caption{TTT and \(\varepsilon\) of the top vertices of type \((3,3)\) on \(\mathcal{G}(3,2)\)}
\label{tbl:TttTop33}
\begin{center}
\begin{tabular}{|ll|lc|r|rrrr|c|}
\hline
 Id of                     & isoclinism    &                   &               & Hilbert       & \multicolumn{4}{|c|}{TTT \(\tau(G)\)}                     &                 \\
 \(3\)-group \(G\)         & family        & TKT               & \(\varkappa\) & TTT           & \multicolumn{4}{|c|}{\downbracefill}                      &                 \\
                           &               &                   &               & \(\tau(0)\)   & \(\tau(1)\)  & \(\tau(2)\)  & \(\tau(3)\)  & \(\tau(4)\)  & \(\varepsilon\) \\
\hline
 \(\langle 243,5\rangle\)  & \(\Phi_6\)    & \(\mathrm{D}.10\) & \((2241)\)    & \((3,3,3)\)   &   \((9,3)\) &   \((9,3)\) & \((3,3,3)\) &   \((9,3)\) &           \(1\) \\
 \(\langle 243,7\rangle\)  & \(\Phi_6\)    & \(\mathrm{D}.5\)  & \((4224)\)    & \((3,3,3)\)   & \((3,3,3)\) &   \((9,3)\) & \((3,3,3)\) &   \((9,3)\) &           \(2\) \\
 \(\langle 243,9\rangle\)  & \(\Phi_6\)    & \(\mathrm{G}.19\) & \((2143)\)    & \((3,3,3)\)   &   \((9,3)\) &   \((9,3)\) &   \((9,3)\) &   \((9,3)\) &           \(0\) \\
 \(\langle 729,57\rangle\) & \(\Phi_{43}\) & \(\mathrm{G}.19\) & \((2143)\)    & \((3,3,3,3)\) &   \((9,3)\) &   \((9,3)\) &   \((9,3)\) &   \((9,3)\) &           \(0\) \\
 \(\langle 243,4\rangle\)  & \(\Phi_6\)    & \(\mathrm{H}.4\)  & \((4443)\)    & \((3,3,3)\)   & \((3,3,3)\) & \((3,3,3)\) &   \((9,3)\) & \((3,3,3)\) &           \(3\) \\
 \(\langle 729,45\rangle\) & \(\Phi_{42}\) & \(\mathrm{H}.4\)  & \((4443)\)    & \((9,3,3)\)   & \((3,3,3)\) & \((3,3,3)\) &   \((9,3)\) & \((3,3,3)\) &           \(3\) \\
\hline
 \(\langle 243,3\rangle\)  & \(\Phi_6\)    & \(\mathrm{b}.10\) & \((0043)\)    & \((3,3,3)\)   &   \((9,3)\) &   \((9,3)\) & \((3,3,3)\) & \((3,3,3)\) &           \(2\) \\
 \(\langle 243,6\rangle\)  & \(\Phi_6\)    & \(\mathrm{c}.18\) & \((0313)\)    & \((3,3,3)\)   &   \((9,3)\) &   \((9,3)\) & \((3,3,3)\) &   \((9,3)\) &           \(1\) \\
 \(\langle 729,49\rangle\) & \(\Phi_{23}\) & \(\mathrm{c}.18\) & \((0313)\)    & \((9,3,3)\)   &   \((9,9)\) &   \((9,3)\) & \((3,3,3)\) &   \((9,3)\) &           \(1\) \\
 \(\langle 243,8\rangle\)  & \(\Phi_6\)    & \(\mathrm{c}.21\) & \((0231)\)    & \((3,3,3)\)   &   \((9,3)\) &   \((9,3)\) &   \((9,3)\) &   \((9,3)\) &           \(0\) \\
 \(\langle 729,54\rangle\) & \(\Phi_{23}\) & \(\mathrm{c}.21\) & \((0231)\)    & \((9,3,3)\)   &   \((9,9)\) &   \((9,3)\) &   \((9,3)\) &   \((9,3)\) &           \(0\) \\
\hline
\end{tabular}
\end{center}
\end{table}

For quadratic fields \(K=\mathbb{Q}(\sqrt{D})\), the following metabelian \(3\)-groups \(G\in\Phi_6\)
cannot be realized as second \(3\)-class groups \(\mathrm{G}_3^2(K)\).

\begin{itemize}
\item
\(\langle 243,9\rangle\), \(\langle 243,4\rangle\),
since they are not Schur \(\sigma\)-groups
(Thm.
\ref{thm:SpecWeakLeaf}).
\item
\(\langle 243,6\rangle\), \(\langle 243,8\rangle\),
due to the selection rule for branches in Theorem
\ref{thm:SelRuleCoclGe2}.
\item
\(\langle 243,3\rangle\),
according to the remark after Theorem
\ref{thm:SelRuleCoclGe2},
since \(\varkappa(2)=0\) enforces odd coclass.
\end{itemize}

%\newpage

%--------------------------------------------------------------------------------

\subsubsection{Top vertices of type \((3,3)\) on \(\mathcal{G}(3,2)\)}
\label{sss:Typ33Cocl2}

Figure
\ref{fig:Typ33Cocl2}
shows the interface between the coclass graphs
\(\mathcal{G}(3,1)\) and \(\mathcal{G}(3,2)\).
The extra special group \(G_0^3(0,0)\) of order \(27\) and exponent \(3\),
which is the second member of the unique mainline of \(\mathcal{G}(3,1)\),
is the generalized parent \(\tilde\pi(G)\) of all top vertices \(G\) of \(\mathcal{G}(3,2)\).
We point out that the connecting edges of depth \(2\)
neither belong to \(\mathcal{G}(3,1)\) nor to \(\mathcal{G}(3,2)\).
The metabelian skeleton of this graph is also shown in
\cite[p. 189 ff]{Ne1}
and the complete graph, including the non-metabelian leaves, was first drawn in
\cite[Tbl. 1--2, pp. 265--266]{AHL}
and
\cite[Fig. 4.6--4.7, p. 74]{As1}.
Among the non-CF groups \(G\) with abelianization \(G/G^\prime\) of type \((3,3)\)
at the top of coclass graph \(\mathcal{G}(3,2)\),
which form the stem of isoclinism family \(\Phi_6\),
we have, from the left to the right:

\begin{itemize}
\item
two terminal vertices \(\langle 243,5\rangle\), \(\langle 243,7\rangle\),
the only Schur \(\sigma\)-groups of order \(3^5\)
\cite[Thm. 4.2, p. 14]{BBH},
\item
two roots \(\langle 243,9\rangle\) and \(\langle 243,4\rangle\)
of finite trees of depth \(2\),
whose metabelian descendants belong to the
stem of the isoclinism families \(\Phi_{43}\) and \(\Phi_{42}\),
\item
a root \(\langle 243,3\rangle\) of an infinite tree,
which is not a coclass tree and is shown in Figure
\ref{fig:TreeBTyp33Cocl2},
\item
and two roots \(\langle 243,6\rangle\), \(\langle 243,8\rangle\)
of coclass trees, shown in detail in Figures
\ref{fig:TreeQTyp33Cocl2}
and
\ref{fig:TreeUTyp33Cocl2}.
\end{itemize}

The sporadic part \(\mathcal{G}_0(3,2)\) of \(\mathcal{G}(3,2)\) consists of
the terminal vertices \(\langle 243,5\rangle\) and \(\langle 243,7\rangle\),
the finite trees \(\mathcal{T}(\langle 243,9\rangle)\) and \(\mathcal{T}(\langle 243,4)\rangle\),
and a certain finite subset of the difference
\(\mathcal{T}(\langle 243,3\rangle)\setminus\mathcal{T}(\langle 729,40\rangle)\).

\begin{figure}[ht]
\caption{Sporadic groups and roots of coclass trees on the coclass graph \(\mathcal{G}(3,2)\)}
\label{fig:Typ33Cocl2}

% Abelianisation of Type (3,3) on Coclass Graph G(3,2)
\setlength{\unitlength}{1cm}
\begin{picture}(16,14)(-1,-11)

% scale of orders
\put(0,2.5){\makebox(0,0)[cb]{Order \(3^n\)}}
\put(0,2){\line(0,-1){10}}
\multiput(-0.1,2)(0,-2){6}{\line(1,0){0.2}}
\put(-0.2,2){\makebox(0,0)[rc]{\(9\)}}
\put(0.2,2){\makebox(0,0)[lc]{\(3^2\)}}
\put(-0.2,0){\makebox(0,0)[rc]{\(27\)}}
\put(0.2,0){\makebox(0,0)[lc]{\(3^3\)}}
\put(-0.2,-2){\makebox(0,0)[rc]{\(81\)}}
\put(0.2,-2){\makebox(0,0)[lc]{\(3^4\)}}
\put(-0.2,-4){\makebox(0,0)[rc]{\(243\)}}
\put(0.2,-4){\makebox(0,0)[lc]{\(3^5\)}}
\put(-0.2,-6){\makebox(0,0)[rc]{\(729\)}}
\put(0.2,-6){\makebox(0,0)[lc]{\(3^6\)}}
\put(-0.2,-8){\makebox(0,0)[rc]{\(2\,187\)}}
\put(0.2,-8){\makebox(0,0)[lc]{\(3^7\)}}
%\put(-0.2,-10){\makebox(0,0)[rc]{\(6\,561\)}}
%\put(0.2,-10){\makebox(0,0)[lc]{\(3^8\)}}

% grandparent and parent
\put(2.2,2.2){\makebox(0,0)[lc]{\(C_3\times C_3\)}}
\put(1.9,1.9){\framebox(0.2,0.2){}}
\put(2,2){\line(0,-1){2}}
\put(2,0){\circle{0.2}}
\put(2.2,0.2){\makebox(0,0)[lc]{\(G^3_0(0,0)\)}}

% 7 directed edges from common parent
\put(2,0){\line(1,-4){1}}
\put(2,0){\line(1,-2){2}}
\put(2,0){\line(1,-1){4}}
\put(2,0){\line(3,-2){6}}
\put(2,0){\line(2,-1){8}}
\put(2,0){\line(5,-2){10}}
\put(2,0){\line(3,-1){12}}
\put(7,-1){\makebox(0,0)[lc]{Edges of depth \(2\) forming the interface}}
\put(8,-1.5){\makebox(0,0)[lc]{between \(\mathcal{G}(3,1)\) and \(\mathcal{G}(3,2)\)}}

% 7 vertices of the stem of isoclinism family Phi6
\put(2,-3.6){\makebox(0,0)[cc]{\(\Phi_6\)}}
\multiput(3,-4)(1,0){2}{\circle*{0.2}}
\put(3.1,-3.9){\makebox(0,0)[lb]{\(\langle 5\rangle\)}}
\put(4.1,-3.9){\makebox(0,0)[lb]{\(\langle 7\rangle\)}}
\multiput(6,-4)(2,0){5}{\circle*{0.2}}
\put(6.1,-3.9){\makebox(0,0)[lb]{\(\langle 9\rangle\)}}
\put(8.1,-3.9){\makebox(0,0)[lb]{\(\langle 4\rangle\)}}
\put(10.1,-3.9){\makebox(0,0)[lb]{\(\langle 3\rangle\)}}
\put(12.1,-3.9){\makebox(0,0)[lb]{\(\langle 6\rangle\)}}
\put(14.1,-3.9){\makebox(0,0)[lb]{\(\langle 8\rangle\)}}

% distribution of 2nd 3-class groups
\put(3.2,-4){\oval(0.8,1)}
\put(2.2,-4.5){\makebox(0,0)[cc]{\underbar{\textbf{667}}/\underbar{\textbf{93}}}}
\put(4.2,-4){\oval(0.8,1)}
\put(5.2,-4.5){\makebox(0,0)[cc]{\underbar{\textbf{269}}/\underbar{\textbf{47}}}}

% TKTs of sporadic groups
\put(2,-5){\makebox(0,0)[cc]{\textbf{TKT:}}}
\put(3,-5){\makebox(0,0)[cc]{D.10}}
\put(3,-5.5){\makebox(0,0)[cc]{\((2241)\)}}
\put(4,-5){\makebox(0,0)[cc]{D.5}}
\put(4,-5.5){\makebox(0,0)[cc]{\((4224)\)}}
\put(1.5,-5.75){\framebox(3,1){}}

% descendants of [243,9] in Phi43
\put(6.5,-6.5){\makebox(0,0)[cc]{\(\Phi_{43}\)}}
\put(6,-4){\line(0,-1){2}}
\put(5.9,-5.9){\makebox(0,0)[rc]{\(\langle 57\rangle\)}}
\put(6,-4){\line(1,-4){0.5}}
\multiput(6,-6)(0.5,0){2}{\circle*{0.1}}
\multiput(6,-6)(0.5,0){2}{\line(0,-1){2}}
\multiput(5.95,-8.05)(0.5,0){2}{\framebox(0.1,0.1){}}

% descendants of [243,4] in Phi 42
\put(9,-6.5){\makebox(0,0)[cc]{\(\Phi_{42}\)}}
\put(8,-4){\line(0,-1){2}}
\put(7.9,-5.9){\makebox(0,0)[rc]{\(\langle 45\rangle\)}}
\put(8,-4){\line(1,-4){0.5}}
\put(8,-4){\line(1,-2){1}}
\put(8,-4){\line(3,-4){1.5}}
\multiput(8,-6)(0.5,0){4}{\circle*{0.1}}
\multiput(8,-6)(0.5,0){2}{\line(0,-1){2}}
\multiput(7.95,-8.05)(0.5,0){2}{\framebox(0.1,0.1){}}
\multiput(7.9,-7.9)(0.5,0){2}{\makebox(0,0)[rc]{\(4*\)}}

% distribution of 2nd 3-class groups
\put(5.7,-6){\oval(1,1)}
\put(5.4,-5.3){\makebox(0,0)[cc]{\underbar{\textbf{94}}/\underbar{\textbf{11}}}}
\put(7.7,-6){\oval(1,1)}
\put(7.3,-5.3){\makebox(0,0)[cc]{\underbar{\textbf{297}}/\underbar{\textbf{27}}}}

% TKTs
\put(5,-8.5){\makebox(0,0)[cc]{\textbf{TKT:}}}
\put(6.25,-8.5){\makebox(0,0)[cc]{G.19}}
\put(6.25,-9){\makebox(0,0)[cc]{\((2143)\)}}
\put(8.75,-8.5){\makebox(0,0)[cc]{H.4}}
\put(8.75,-9){\makebox(0,0)[cc]{\((4443)\)}}
\put(4.5,-9.25){\framebox(5,1){}}

% 3 coclass trees

% mainline and descendants of [243,3] in Phi40, Phi41
\put(11.25,-6.5){\makebox(0,0)[cc]{\(\Phi_{40},\Phi_{41}\)}}
\multiput(10,-4)(0,-2){2}{\line(0,-1){2}}
\multiput(10,-6)(0,-2){2}{\circle*{0.2}}
\put(10.1,-5.9){\makebox(0,0)[lc]{\(\langle 40\rangle\)}}
\put(10,-8){\vector(0,-1){2}}
\put(10,-10.2){\makebox(0,0)[ct]{\(\mathcal{T}(\langle 729,40\rangle)\)}}
\put(10,-4){\line(3,-4){1.5}}
\put(11.5,-6){\circle*{0.1}}
\put(11.3,-5.9){\makebox(0,0)[rc]{\(6*\)}}

% mainline and descendants of [243,6]
\multiput(12,-4)(0,-2){2}{\line(0,-1){2}}
\multiput(12,-6)(0,-2){2}{\circle*{0.2}}
\put(12.1,-5.9){\makebox(0,0)[lc]{\(\langle 49\rangle\)}}
\put(12,-8){\vector(0,-1){2}}
\put(12,-10.2){\makebox(0,0)[ct]{\(\mathcal{T}(\langle 243,6\rangle)\)}}

\put(13.1,-6.5){\makebox(0,0)[cc]{\(\Phi_{23}\)}}
\put(12,-9){\makebox(0,0)[cc]{\(3\) mainlines}}

% mainline and descendants of [243,8]
\multiput(14,-4)(0,-2){2}{\line(0,-1){2}}
\multiput(14,-6)(0,-2){2}{\circle*{0.2}}
\put(14.1,-5.9){\makebox(0,0)[lc]{\(\langle 54\rangle\)}}
\put(14,-8){\vector(0,-1){2}}
\put(14,-10.2){\makebox(0,0)[ct]{\(\mathcal{T}(\langle 243,8\rangle)\)}}

% TKTs of coclass trees
\put(9,-11){\makebox(0,0)[cc]{\textbf{TKT:}}}
\put(10,-11){\makebox(0,0)[cc]{b.10}}
\put(10,-11.5){\makebox(0,0)[cc]{\((0043)\)}}
\put(12,-11){\makebox(0,0)[cc]{c.18}}
\put(12,-11.5){\makebox(0,0)[cc]{\((0313)\)}}
\put(14,-11){\makebox(0,0)[cc]{c.21}}
\put(14,-11.5){\makebox(0,0)[cc]{\((0231)\)}}
\put(8.5,-11.75){\framebox(6,1){}}

% distribution of 2nd 3-class groups
\put(12.3,-6){\oval(1,1)}
\put(12.5,-5.3){\makebox(0,0)[cc]{\underbar{\textbf{0}}/\underbar{\textbf{29}}}}
\put(14.3,-6){\oval(1,1)}
\put(14.5,-5.3){\makebox(0,0)[cc]{\underbar{\textbf{0}}/\underbar{\textbf{25}}}}

\end{picture}

\end{figure}
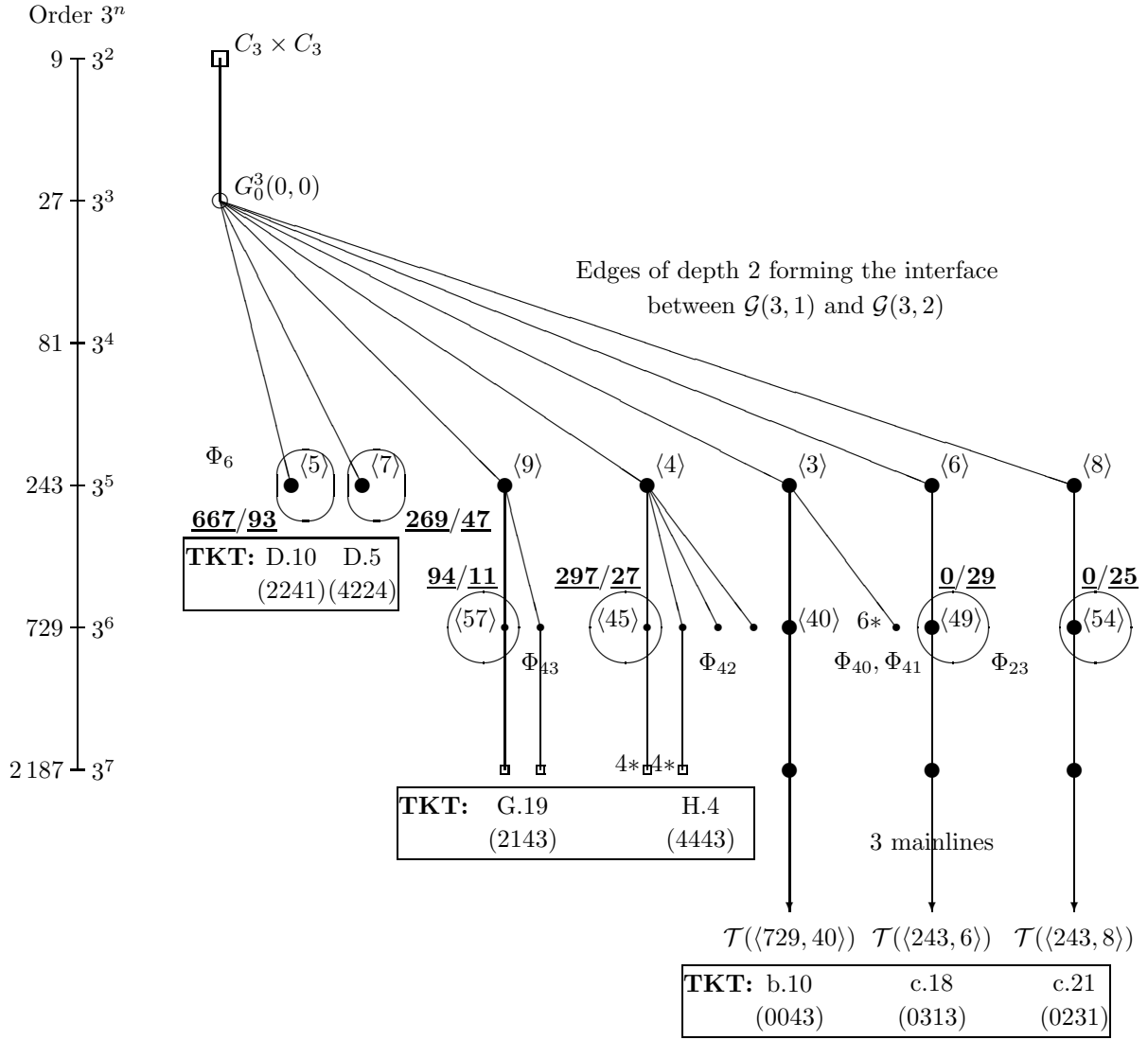

\noindent
The vertices of the coclass graph \(\mathcal{G}(3,2)\) in Figure
\ref{fig:Typ33Cocl2}
are classified by using different symbols:

\begin{enumerate}
\item
a large contour square \(\square\) represents an abelian group,
\item
a big contour circle {\Large \(\circ\)} represents a metabelian group containing an abelian maximal subgroup,
all other metabelian groups do not possess abelian subgroups,
\item
big full discs {\Large \(\bullet\)} represent metabelian groups with bicyclic centre of type \((3,3)\) and defect \(k=0\),
\item
small full discs {\tiny \(\bullet\)} represent metabelian groups with cyclic centre of order \(3\) and defect \(k=1\),
\item
small contour squares {\tiny \(\square\)} represent non-metabelian groups.
\end{enumerate}

\noindent
Groups of particular importance are labelled by a number in angles.
This is the identifier in the SmallGroups library
\cite{BEO}
of GAP
\cite{GAP}
and
\cite{MAGMA},
where we omit the order, which is given on the left hand scale.

\noindent
The actual distribution of the
\(2020\), resp. \(2576\), second \(3\)-class groups \(G_3^2(K)\)
of complex, resp. real, quadratic number fields \(K=\mathbb{Q}(\sqrt{D})\) of type \((3,3)\)
with discriminant \(-10^6<D<10^7\) is represented by
underlined boldface counters (in the format complex/real)
of the hits of vertices surrounded by the adjacent oval.

It is illuminating to compare these frequencies,
which we have computed in
\cite[\S\ 6, Tbl. 3--5]{Ma1}
and
\cite[\S\ 6, Tbl. 13--15,17]{Ma3}
with the non-abelian generalization of the asymptotic Cohen-Lenstra-Martinet probability,
which is given for complex quadratic fields by
\cite[p. 18, Tbl. 2]{BBH}
with respect to all \(3190\) discriminants of \(3\)-rank \(2\)
instead to the \(2020\) discriminants of type \((3,3)\)
in the range \(-10^6<D<0\).
In three cases of Table
\ref{tbl:NonAbCohenLenstra},
the actual percentage exceeds the conjectural asymptotic probability.
In the first case the excess is significant.
A possible interpretation is that the population of vertices of higher order
will become more probable in ranges of considerably bigger absolute values of discriminants
so that the percentage of hits of the low order vertices in the table will decrease.

\renewcommand{\arraystretch}{1.0}

\begin{table}[ht]
\caption{Asymptotic probability for densely populated vertices of \(\mathcal{G}(3,2)\)}
\label{tbl:NonAbCohenLenstra}
\begin{center}
\begin{tabular}{|c|cc|c|}
\hline
 Id of \(G\)               & frequency & percentage & probability \\
\hline
 \(\langle 243,5\rangle\)  & \(667\)   &  \(20.91\) & \(17.56\)   \\
 \(\langle 243,7\rangle\)  & \(269\)   &   \(8.43\) &  \(8.78\)   \\
 \(\langle 729,57\rangle\) &  \(94\)   &   \(2.95\) &  \(2.19\)   \\
 \(\langle 729,45\rangle\) & \(297\)   &   \(9.31\) &  \(8.78\)   \\
\hline
\end{tabular}
\end{center}
\end{table}

Identification of the vertex
\(\langle 729,57\rangle\),
resp.
\(\langle 729,45\rangle\),
among two, resp. four, closely related vertices
in isoclinism family \(\Phi_{43}\), resp. \(\Phi_{42}\),
was possible by means of the following \textit{Artin criterion}
for second \(p\)-class groups of quadratic base fields,
which can be verified by testing for
a suitable automorphism of order \(2\).

\begin{theorem}
Let \(p\ge 3\) be an odd prime.
The second \(p\)-class group \(G=\mathrm{G}_p^2(K)\)
of a quadratic field \(K=\mathbb{Q}(\sqrt{D})\)
admits an extension by the cyclic group \(C_2\),
\(1\to G\to H\to C_2\to 1\),
such that \(H^\prime\simeq G\).
\end{theorem}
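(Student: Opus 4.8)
The plan is to produce $H$ explicitly as a Galois group over $\mathbb{Q}$ and then read off its commutator subgroup from the known action of $\mathrm{Gal}(K\vert\mathbb{Q})$ on the $p$-class group. First I would observe that $K\vert\mathbb{Q}$ is Galois, so every automorphism of $\overline{\mathbb{Q}}\vert\mathbb{Q}$ stabilizes $K$, hence stabilizes the characteristically defined fields $\mathrm{F}_p^1(K)$ and $\mathrm{F}_p^2(K)=\mathrm{F}_p^1(\mathrm{F}_p^1(K))$; consequently $\mathrm{F}_p^2(K)\vert\mathbb{Q}$ is a Galois extension. Setting $H=\mathrm{Gal}(\mathrm{F}_p^2(K)\vert\mathbb{Q})$ and restricting automorphisms to $\mathrm{F}_p^2(K)$ and to $K$, I obtain the short exact sequence $1\to G\to H\to\mathrm{Gal}(K\vert\mathbb{Q})\to 1$ with $\mathrm{Gal}(K\vert\mathbb{Q})\simeq C_2$, where $G=\mathrm{Gal}(\mathrm{F}_p^2(K)\vert K)$ is literally the second $p$-class group $\mathrm{G}_p^2(K)$. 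This is the extension claimed, and it remains only to identify $H'$.

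Since $H/G\simeq C_2$ is abelian we have $H'\le G$, and since $G\le H$ we have $G'=[G,G]\le[H,H]=H'$; thus $G'\le H'\le G$, and it suffices to show that the image of $H'$ in the abelianization $G/G'$ is everything. By Theorem~\ref{thm:TTTandTKT}(1) the Artin reciprocity map gives $G/G'\simeq\mathrm{Cl}_p(K)$, and — this is the step requiring care — this isomorphism intertwines the conjugation action of $H$ on $G/G'$ with the natural Galois action of $\mathrm{Gal}(K\vert\mathbb{Q})$ on $\mathrm{Cl}_p(K)$. Granting that, I would invoke the standard fact that for a quadratic field the nontrivial automorphism $\sigma$ acts on $\mathrm{Cl}_p(K)$ (indeed on all of $\mathrm{Cl}(K)$, for $p$ odd) by inversion, which follows from $[\mathfrak{a}]^{1+\sigma}=[N_{K/\mathbb{Q}}(\mathfrak{a})\mathcal{O}_K]=1$, a rational ideal generating a principal ideal of $\mathcal{O}_K$. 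Choosing any $\tilde\sigma\in H$ mapping to $\sigma$ and any $g\in G$, the commutator then satisfies $[\tilde\sigma,g]\equiv\tilde\sigma(g)\,g^{-1}\equiv g^{-2}\pmod{G'}$. Because $p$ is odd, $c\mapsto c^{-2}$ is an automorphism of the finite $p$-group $G/G'$, so the commutators $[\tilde\sigma,g]$ — which lie in $H'$ — generate $G/G'$. Hence $H'G'=G$, and combined with $G'\le H'$ this yields $H'=G$, so $H'\simeq G$, as required. If $\mathrm{Cl}_p(K)=1$ the statement is trivial, since then $G=1$, $H\simeq C_2$, and $H'=1=G$.

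I expect the only genuine obstacle to be the precise justification of the $\mathrm{Gal}(K\vert\mathbb{Q})$-equivariance of the Artin isomorphism $G/G'\simeq\mathrm{Cl}_p(K)$, i.e., that conjugation in $H$ by a lift of $\sigma$ corresponds to the Galois action on ideal classes; this is standard functoriality of the reciprocity map over the base $\mathbb{Q}$, but it should be spelled out carefully so that the class-group computation transfers to a statement about commutators in $H$. Everything else is either routine field theory (the Galois descent making $\mathrm{F}_p^2(K)\vert\mathbb{Q}$ normal) or the one-line group-theoretic deduction $H'=G$ once the inversion action on $G/G'$ is in hand. This also clarifies the ``Artin criterion'' used earlier: a metabelian $p$-group that admits no automorphism of order dividing $2$ inducing inversion on its abelianization cannot occur as $\mathrm{G}_p^2(K)$ for a quadratic $K$.
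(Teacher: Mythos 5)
Your proof is correct, and it is essentially the canonical argument: realize $H$ as $\mathrm{Gal}(\mathrm{F}_p^2(K)\vert\mathbb{Q})$, use the $\mathrm{Gal}(K\vert\mathbb{Q})$-equivariance of the Artin isomorphism together with the fact that $\mathfrak{a}\cdot\sigma(\mathfrak{a})=(N\mathfrak{a})$ forces inversion on $\mathrm{Cl}_p(K)$, and conclude $H'G'=G$ hence $H'=G$ since $x\mapsto x^{-2}$ is bijective on a finite abelian group of odd order. The paper gives no proof of its own — it only cites Artin's 1928 letter to Hasse — so your write-up supplies exactly the argument being referenced, and your closing remark correctly explains why the paper's sibling-elimination step (requiring an order-$2$ automorphism inverting the abelianization) is a necessary condition for realizability over quadratic base fields.
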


\begin{proof}
See the letter of E. Artin to H. Hasse from November 19, 1928
\cite{FRL}.
\end{proof}

%\newpage

%--------------------------------------------------------------------------------

\subsubsection{Coclass trees of type \((3,3)\) and \((9,3)\) on \(\mathcal{G}(3,2)\)}
\label{sss:Trees33Cocl2}

\begin{definition}
\label{d:ForbTrees}
Let \(p\ge 3\) be an odd prime.
A rooted subtree \(\mathcal{T}\) of a coclass graph \(\mathcal{G}(p,r)\), \(r\ge 1\),
is called \textit{forbidden},
if none of its vertices \(G\in\mathcal{T}\) can be realized as
the second \(p\)-class group \(\mathrm{G}_p^2(K)\) of a quadratic field \(K=\mathbb{Q}(\sqrt{D})\).
Otherwise \(\mathcal{T}\) is called \textit{admissible}.
\end{definition}

\begin{theorem}
\label{thm:ForbTrees}
Let \(\mathcal{T}\) be a coclass tree of a coclass graph \(\mathcal{G}(3,r)\), \(r\ge 1\),
such that all its mainline groups \(M\) are metabelian with abelianization of type \((3,3)\).

\begin{enumerate}
\item
The unique tree of \(\mathcal{G}(3,1)\)
and the trees of \(\mathcal{G}(3,2)\),
whose mainline groups \(M\)
are of transfer kernel type
either \(\mathrm{c}.18\), \(\varkappa=(0313)\),
or \(\mathrm{c}.21\), \(\varkappa=(0231)\),
are admissible.
\item
If all mainline groups \(M\)
are of transfer kernel type \(\mathrm{b}.10\), \(\varkappa=(0043)\),
with distinguished second member \(\varkappa(2)=0\),
then
\(\mathcal{T}\) is forbidden if and only if the coclass \(r\ge 2\) is even.
\item
If all mainline groups \(M\)
are of transfer kernel type
either \(\mathrm{d}^\ast.23\), \(\varkappa=(0243)\),
or \(\mathrm{d}^\ast.19\), \(\varkappa=(0443)\),
or \(\mathrm{d}^\ast.25\), \(\varkappa=(0143)\),
with distinguished second member \(\varkappa(2)\ne 0\),
then
\(\mathcal{T}\) is forbidden if and only if the coclass \(r\ge 3\) is odd.
\end{enumerate}

\end{theorem}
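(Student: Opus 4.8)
The plan is to reduce the whole statement to the parity constraint of Theorem~\ref{thm:SelRuleCoclGe2}. The input I would first isolate is the remark following that theorem: if a metabelian $3$-group $G$ with $G/G^\prime\simeq(3,3)$ and $\mathrm{cc}(G)\ge 2$ occurs as $G=\mathrm{G}_3^2(K)$ for a quadratic field $K=\mathbb{Q}(\sqrt{D})$, then $\mathrm{cc}(G)$ is odd when $\varkappa(2)=0$ and even when $\varkappa(2)\ne 0$. Since every vertex of a coclass tree $\mathcal{T}\subseteq\mathcal{G}(3,r)$ has coclass exactly $r$, realizability on $\mathcal{T}$ is governed by the parity of $r$ alone --- provided one knows that whether $\varkappa(2)$ vanishes is the same for every vertex of $\mathcal{T}$, not merely for its mainline. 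This constancy I would extract from the periodicity of transfer kernel types along coclass trees (\cite{EkLg}, \cite{Ma2}) together with Nebelung's classification \cite[Thm.~6.14]{Ne1}: a tree whose mainline has TKT $\mathrm{b}.10$, $\varkappa=(0043)$, has $\varkappa(2)=0$ throughout, whereas a tree whose mainline has TKT $\mathrm{d}^\ast.19$, $\mathrm{d}^\ast.23$, or $\mathrm{d}^\ast.25$ has $\varkappa(2)\ne 0$ throughout (cf. Table~\ref{tbl:MainLines}).

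Granting this, the assertion that $\mathcal{T}$ is forbidden in cases (2) and (3) is one line each. In case (2), if $\mathcal{T}\subseteq\mathcal{G}(3,r)$ has mainline type $\mathrm{b}.10$ and $r\ge 2$ is even, then any $G\in\mathcal{T}$ has $\mathrm{cc}(G)=r$ even while $\varkappa(2)=0$ forces odd coclass; hence no vertex of $\mathcal{T}$ can be a second $3$-class group of a quadratic field, so $\mathcal{T}$ is forbidden. In case (3) the same contradiction occurs with the parities interchanged: $\varkappa(2)\ne 0$ forces even coclass, which excludes every vertex as soon as $r\ge 3$ is odd.

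For the admissibility assertions I would, in each remaining case, exhibit at least one quadratic field whose second $3$-class group lies on the tree. For the unique tree of $\mathcal{G}(3,1)$ I would invoke Theorem~\ref{thm:SelRuleCocl1}, which only confines the second $3$-class groups of quadratic fields to the odd branches of that tree, together with the computed distribution in Figure~\ref{fig:Distr3Cocl1}, where those odd branches are visibly populated by real quadratic fields. For the trees of $\mathcal{G}(3,2)$ with mainline type $\mathrm{c}.18$ or $\mathrm{c}.21$ the parity condition of Theorem~\ref{thm:SelRuleCoclGe2} is met (here $r=2$ and $\varkappa(2)\ne 0$), and Theorem~\ref{thm:SectionE} already exhibits explicit metabelian $3$-groups lying on $\mathcal{T}(\langle 243,6\rangle)$ and on $\mathcal{T}(\langle 243,8\rangle)$, realized by fields such as $\mathbb{Q}(\sqrt{-9748})$ (Example~\ref{exm:SectionE}); so both trees are admissible. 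For the odd-coclass $\mathrm{b}.10$ trees and the even-coclass $\mathrm{d}^\ast$ trees, where the parity obstruction vanishes, a single realization per coclass suffices, and these come from the same computational machinery as in \cite[\S~6]{Ma1}.

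I expect the main obstacle to be the verification that whether $\varkappa(2)$ vanishes is a tree-invariant rather than only a mainline-invariant: Table~\ref{tbl:MainLines} records only the mainline TKTs, while the branch vertices carry TKTs of other families, and one must check that none of these relocates, or removes, the fixed point at position~$2$ --- exactly the information encoded in \cite{Ma2} and, underneath it, in Nebelung's presentation-theoretic analysis \cite{Ne1}. A secondary difficulty is that the admissibility direction is an existence statement whose verification for trees of large coclass is computational rather than structural, so the cleanly rigorous core of the theorem is the implication that such $\mathcal{T}$ is forbidden in (2) and (3).
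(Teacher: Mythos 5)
Your proposal matches the paper's own proof: the forbidden cases are derived exactly as you do, as an immediate consequence of the parity constraint on \(\mathrm{cc}(G)\) imposed by the second distinguished member \(\varkappa(2)\) in Theorem~\ref{thm:SelRuleCoclGe2}, with the tree-invariance of the vanishing of \(\varkappa(2)\) and the admissibility half both delegated to Nebelung's diagrams and \cite[Thm.~6.14]{Ne1} together with the computed populations. If anything, you are more explicit than the paper about the two points it leaves implicit --- that \(\varkappa(2)=0\) is constant on the whole tree rather than only on the mainline, and that admissibility is an existence claim settled by the numerical realizations.
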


Referring to
\cite[p. 189 ff]{Ne1}
we point out the following details.

\begin{enumerate}
\item
There is a periodic pattern of period length \(2\)
of rooted subtrees with metabelian mainlines of type \((3,3)\)
among the coclass graphs \(\mathcal{G}(p,r)\), setting in with \(r=3\).
The roots of the trees with fixed coclass \(r\)
are of order \(3^{2r+1}\).
The metabelian skeletons of the trees
with common transfer kernel type and coclass of the same parity
are isomorphic as graphs.
The same is true for the metabelian skeletons of sporadic groups
with coclass of the same parity.
\item
For odd coclass  \(r\ge 3\),
there are \(4\) trees with mainlines of transfer kernel types
\(\mathrm{b}.10\), \(\varkappa=(0043)\), \(\mathrm{d}^\ast.23\), \(\varkappa=(0243)\),
\(\mathrm{d}^\ast.19\), \(\varkappa=(0443)\), and \(\mathrm{d}^\ast.25\), \(\varkappa=(0143)\).
\item
For even coclass  \(r\ge 4\),
there are \(6\) trees with mainlines of transfer kernel types
\(\mathrm{b}.10\), \(\varkappa=(0043)\), and \(\mathrm{d}^\ast.23\), \(\varkappa=(0243)\),
each occurring only once,
and on the other hand \(\mathrm{d}^\ast.19\), \(\varkappa=(0443)\), and \(\mathrm{d}^\ast.25\), \(\varkappa=(0143)\),
each occurring in two instances, isomorphic as graphs.
\end{enumerate}

\begin{proof}
Theorem
\ref{thm:ForbTrees}
is an immediate consequence of
Theorem
\ref{thm:SelRuleCoclGe2},
due to the second distinguished member \(\varkappa(2)\)
of the TKT.
See the diagrams on the pages without numbers, following
\cite[p. 189]{Ne1}.
These diagrams were constructed by means of the
lists of representatives \(G_\rho^{m,n}(\alpha,\beta,\gamma,\delta)\) for isomorphism classes,
given in the appendix
\cite{Ne2}
of Nebelung's thesis.
The connection with the transfer kernel types \(\varkappa\)
is established in
\cite[Thm. 6.14, pp. 208 ff]{Ne1}.
\end{proof}

\renewcommand{\arraystretch}{1.0}

\begin{table}[ht]
\caption{Infinite trees of types \((3,3)\), \((9,3)\) with metabelian mainline on \(\mathcal{G}(3,2)\)}
\label{tbl:TreesBQUAG}
\begin{center}
\begin{tabular}{|cc|cc|cc|c|c|}
\hline
 SmallGroups Id of root \(G\) & \(G/G^\prime\) & TKT resp. pTKT    &\(\varkappa\) & \(\varepsilon\) & \(\eta\in\)            & \((f,g,h)\) & population \\
\hline
 \(\langle 729,40\rangle\)    & \((3,3)\)      & \(\mathrm{b}.10\) & \((0043)\)   & \(2\)           & \(\lbrace 1,2\rbrace\) & \((0,1,0)\) & forbidden  \\
 \(\langle 243,6\rangle\)     & \((3,3)\)      & \(\mathrm{c}.18\) & \((0313)\)   & \(1\)           & \(\lbrace 0,1\rbrace\) & \((0,1,2)\) & admissible \\
 \(\langle 243,8\rangle\)     & \((3,3)\)      & \(\mathrm{c}.21\) & \((0231)\)   & \(0\)           & \(\lbrace 2,3\rbrace\) & \((1,1,2)\) & admissible \\
\hline
 \(\langle 243,17\rangle\)    & \((9,3)\)      & \(\mathrm{a}.1\)  & \((000;0)\)  & \(2\)           & \(\lbrace 3,4\rbrace\) & \((0,0,1)\) & admissible \\
 \(\langle 243,15\rangle\)    & \((9,3)\)      & \(\mathrm{a}.1\)  & \((000;0)\)  & \(1\)           & \(\lbrace 3,4\rbrace\) & \((1,0,0)\) & admissible \\
\hline
\end{tabular}
\end{center}
\end{table}

Aside from the single forbidden coclass tree
with metabelian mainline of transfer kernel type \(\mathrm{b}.10\), \(\varkappa=(0043)\),
there exist \(4\) admissible coclass trees with metabelian mainline on \(\mathcal{G}(3,2)\)
which are populated quite densely by second \(3\)-class groups \(\mathrm{G}_3^2(K)\)
of quadratic fields \(K\) with \(3\)-class group of type \((3,3)\), resp. \((9,3)\).
They can be characterized by the number of members of the TTT with \(3\)-rank bigger than \(2\),
\(\varepsilon=\#\lbrace 1\le i\le 4\mid\mathrm{r}_3(H_i/H_i^\prime)\ge 3\rbrace\),
or by the number of members of the TKT, resp. punctured TKT (pTKT), having Taussky's type A,
\(\eta=\#\lbrace 1\le i\le 4\mid\kappa(i)=\mathrm{A}\rbrace\),
as shown in Table
\ref{tbl:TreesBQUAG}.
Groups along their mainlines arise as quotients of
infinite pro-\(3\)-groups of coclass \(2\) having a non-trivial centre,
whose pro-\(3\) presentations are defined
by suitable triplets \((f,g,h)\) of relational exponents in
\cite[Thm. 4.1]{ELNO}.

%\newpage

%--------------------------------------------------------------------------------

\begin{figure}[ht]
\caption{Distribution of \(G_3^2(K)\) on the coclass \(2\) tree \(\mathcal{T}(\langle 243,6\rangle)\)}
\label{fig:TreeQTyp33Cocl2}

% Coclass tree T(Q_0)
\setlength{\unitlength}{1cm}
\begin{picture}(14,14.5)(-6,-13.5)

% scale of orders
\put(-5,0.5){\makebox(0,0)[cb]{Order \(3^n\)}}
\put(-5,0){\line(0,-1){12}}
\multiput(-5.1,0)(0,-2){7}{\line(1,0){0.2}}
\put(-5.2,0){\makebox(0,0)[rc]{\(243\)}}
\put(-4.8,0){\makebox(0,0)[lc]{\(3^5\)}}
\put(-5.2,-2){\makebox(0,0)[rc]{\(729\)}}
\put(-4.8,-2){\makebox(0,0)[lc]{\(3^6\)}}
\put(-5.2,-4){\makebox(0,0)[rc]{\(2\,187\)}}
\put(-4.8,-4){\makebox(0,0)[lc]{\(3^7\)}}
\put(-5.2,-6){\makebox(0,0)[rc]{\(6\,561\)}}
\put(-4.8,-6){\makebox(0,0)[lc]{\(3^8\)}}
\put(-5.2,-8){\makebox(0,0)[rc]{\(19\,683\)}}
\put(-4.8,-8){\makebox(0,0)[lc]{\(3^9\)}}
\put(-5.2,-10){\makebox(0,0)[rc]{\(59\,049\)}}
\put(-4.8,-10){\makebox(0,0)[lc]{\(3^{10}\)}}
\put(-5.2,-12){\makebox(0,0)[rc]{\(177\,147\)}}
\put(-4.8,-12){\makebox(0,0)[lc]{\(3^{11}\)}}

% vertices and directed edges of main line
\multiput(0,0)(0,-2){6}{\circle*{0.2}}
\multiput(0,0)(0,-2){5}{\line(0,-1){2}}
% vertices of depth 1
\multiput(-1,-2)(0,-2){6}{\circle*{0.2}}
\multiput(-2,-2)(0,-2){6}{\circle*{0.2}}
%\multiput(-3,-2)(0,-2){6}{\circle*{0.2}}
\multiput(1.95,-4.05)(0,-2){5}{\framebox(0.1,0.1){}}
\multiput(3,-2)(0,-2){6}{\circle*{0.2}}
%\multiput(4,-4)(0,-4){3}{\circle*{0.2}}
% directed edges to vertices of depth 1
\multiput(0,0)(0,-2){6}{\line(-1,-2){1}}
\multiput(0,0)(0,-2){6}{\line(-1,-1){2}}
%\multiput(0,0)(0,-2){6}{\line(-3,-2){3}}
\multiput(0,-2)(0,-2){5}{\line(1,-1){2}}
\multiput(0,0)(0,-2){6}{\line(3,-2){3}}
%\multiput(0,-2)(0,-4){3}{\line(2,-1){4}}
% vertices of depth 2
\multiput(-3.05,-4.05)(-1,0){2}{\framebox(0.1,0.1){}}
\multiput(3.95,-4.05)(0,-2){5}{\framebox(0.1,0.1){}}
\multiput(5,-4)(0,-2){5}{\circle*{0.1}}
\multiput(6,-4)(0,-2){5}{\circle*{0.1}}
% directed edges to vertices of depth 2
\multiput(-1,-2)(-1,0){2}{\line(-1,-1){2}}
\multiput(3,-2)(0,-2){5}{\line(1,-2){1}}
\multiput(3,-2)(0,-2){5}{\line(1,-1){2}}
\multiput(3,-2)(0,-2){5}{\line(3,-2){3}}
% vertices of depth 3
\multiput(6.95,-6.05)(0,-2){4}{\framebox(0.1,0.1){}}
% directed edges to vertices of depth 3
\multiput(6,-4)(0,-2){4}{\line(1,-2){1}}

% branches
\put(2,-1){\makebox(0,0)[lc]{\(\mathcal{B}_5\)}}
\put(2,-3){\makebox(0,0)[lc]{\(\mathcal{B}_6\)}}
\put(2,-5){\makebox(0,0)[lc]{\(\mathcal{B}_7\)}}
\put(2,-7){\makebox(0,0)[lc]{\(\mathcal{B}_8\)}}

% isomorphism classes and multiplicity counters
\put(-0.1,0.7){\makebox(0,0)[rc]{\(\langle 6\rangle\)}}
\put(-2.1,-1.8){\makebox(0,0)[rc]{\(\langle 50\rangle\)}}
\put(-1.1,-1.8){\makebox(0,0)[rc]{\(\langle 51\rangle\)}}
\put(0.1,-1.3){\makebox(0,0)[lc]{\(\langle 49\rangle\)}}
\put(3.1,-1.8){\makebox(0,0)[lc]{\(\langle 48\rangle\)}}
\put(-4.1,-3.6){\makebox(0,0)[cc]{\(\langle 292\rangle\)}}
\put(-3.1,-3.6){\makebox(0,0)[cc]{\(\langle 293\rangle\)}}
\put(-2.1,-2.6){\makebox(0,0)[cc]{\(\langle 289\rangle\)}}
\put(-2.1,-3.1){\makebox(0,0)[cc]{\(\langle 290\rangle\)}}
\put(-1.1,-3.1){\makebox(0,0)[cc]{\(\langle 288\rangle\)}}
\put(0.1,-3.5){\makebox(0,0)[lc]{\(\langle 285\rangle\)}}
\put(3,-3){\makebox(0,0)[cc]{\(\langle 286\rangle\)}}
\put(3,-3.5){\makebox(0,0)[cc]{\(\langle 287\rangle\)}}
\put(2.1,-3.8){\makebox(0,0)[lc]{\(*2\)}}
\multiput(-2.1,-3.8)(0,-4){3}{\makebox(0,0)[rc]{\(2*\)}}
\multiput(3.1,-3.8)(0,-4){3}{\makebox(0,0)[lc]{\(*2\)}}
\put(4.1,-3.8){\makebox(0,0)[lc]{\(*2\)}}
\multiput(5.1,-5.8)(0,-4){2}{\makebox(0,0)[lc]{\(*2\)}}
\multiput(6.1,-5.8)(0,-4){2}{\makebox(0,0)[lc]{\(*2\)}}
\multiput(5.1,-3.8)(0,-4){3}{\makebox(0,0)[lc]{\(*3\)}}
\multiput(6.1,-3.8)(0,-4){3}{\makebox(0,0)[lc]{\(*3\)}}
\multiput(7.1,-5.8)(0,-2){4}{\makebox(0,0)[lc]{\(*3\)}}
\put(-3,-13){\makebox(0,0)[cc]{\textbf{TKT:}}}
\put(-2,-13){\makebox(0,0)[cc]{E.14}}
\put(-1,-13){\makebox(0,0)[cc]{E.6}}
\put(0,-13){\makebox(0,0)[cc]{c.18}}
\put(3,-13){\makebox(0,0)[cc]{H.4}}
\put(5,-13){\makebox(0,0)[cc]{H.4}}
\put(6,-13){\makebox(0,0)[cc]{H.4}}
\put(-2,-13.5){\makebox(0,0)[cc]{\((2313)\)}}
\put(-1,-13.5){\makebox(0,0)[cc]{\((1313)\)}}
\put(0,-13.5){\makebox(0,0)[cc]{\((0313)\)}}
\put(3,-13.5){\makebox(0,0)[cc]{\((3313)\)}}
\put(5,-13.5){\makebox(0,0)[cc]{\((3313)\)}}
\put(6,-13.5){\makebox(0,0)[cc]{\((3313)\)}}
\put(-3.8,-13.7){\framebox(10.6,1){}}

% infinite main line
\put(0,-10){\vector(0,-1){2}}
\put(0.2,-11.5){\makebox(0,0)[lc]{main}}
\put(0.2,-12){\makebox(0,0)[lc]{line}}
\put(1.8,-12.5){\makebox(0,0)[rc]{\(\mathcal{T}(\langle 243,6\rangle)\)}}

% distribution of 2nd class groups
\multiput(-1.5,-4)(0,-4){2}{\oval(2,1)}
\put(-1.5,-4.8){\makebox(0,0)[cc]{\underbar{\textbf{186}}/\underbar{\textbf{7}}}}
\put(-1.5,-8.8){\makebox(0,0)[cc]{\underbar{\textbf{15}}/\underbar{\textbf{0}}}}
\multiput(5.5,-6)(0,-4){2}{\oval(2,1)}
\put(5.5,-6.8){\makebox(0,0)[cc]{\underbar{\textbf{63}}/\underbar{\textbf{3}}}}
\put(5.5,-10.8){\makebox(0,0)[cc]{\underbar{\textbf{6}}/\underbar{\textbf{0}}}}
\put(0.2,-2){\oval(1.5,1)}
\put(1.1,-2.4){\makebox(0,0)[lc]{\underbar{\textbf{0}}/\underbar{\textbf{29}}}}

\end{picture}

\end{figure}

\begin{theorem}
\label{t:TreeQ}
The structure of the complete coclass tree \(\mathcal{T}(\langle 243,6\rangle)\)
as part of the coclass graph \(\mathcal{G}(3,2)\),
restricted to \(3\)-groups \(G\) with abelianization \(G/G^\prime\simeq (3,3)\),
is globally characterized by the tree invariant \(\varepsilon(G)=1\)
and given up to order \(3^{11}=177\,147\) by Figure
\ref{fig:TreeQTyp33Cocl2}.
The branches are of depth \(3\) and periodic of length \(2\).
The pre-period consists of \(\mathcal{B}_5,\mathcal{B}_6\),
the primitive period of \(\mathcal{B}_7,\mathcal{B}_8\)

\end{theorem}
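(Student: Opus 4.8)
The plan is to combine Nebelung's classification of the metabelian $3$-groups of coclass $\geq 2$ with abelianization of type $(3,3)$ (together with his lists of parametrized presentations $G_\rho^{m,n}(\alpha,\beta,\gamma,\delta)$ in \cite{Ne2}) with the parent recursion of Theorem~\ref{thm:CoclGe2Core} and the periodicity theorem of Eick and Leedham-Green~\cite{EkLg}. First I would pin down the mainline: by Table~\ref{tbl:MainLines} the metabelian mainline groups of coclass $r=2$ of transfer kernel type $\mathrm{c}.18$, $\varkappa=(0313)$, are precisely $M_j\simeq G_0^{j-1,j}(0,-1,0,1)$ for $j\geq 5$ (so that $e=m-1$ and $r=n-m+1=2$), with root $M_5=\langle 243,6\rangle=G_0^{4,5}(0,-1,0,1)$; alternatively these arise as the finite quotients of the pro-$3$ group of coclass $2$ attached to the relational triplet $(f,g,h)=(0,1,2)$ in the sense of Table~\ref{tbl:TreesBQUAG}. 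Applying the parent formula $\pi(G_\rho^{m,n}(\alpha,\beta,\gamma,\delta))\simeq G_0^{m-1,n-1}(\rho\delta,\beta,\rho\beta,\delta)$ to $M_{j+1}$ returns $M_j$, confirming that $(M_j)_{j\geq 5}$ is an infinite path and hence the mainline of a coclass tree; Theorem~\ref{thm:CoclGe2Intf} identifies its generalized parent $G_0^3(0,0)$, so $\mathcal{T}(\langle 243,6\rangle)$ is exactly the coclass tree rooted at $\langle 243,6\rangle$ inside $\mathcal{G}(3,2)$.

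Next I would enumerate, for each branch root $M_i$, the descendants of $M_i$ lying off the mainline. Since every metabelian $3$-group with $(3,3)$-abelianization has defect $k\in\{0,1\}$, Corollary~\ref{cor:DpthCoclGe2} confines its metabelian descendants to depth $\leq 2$; Nebelung's representatives $G_\rho^{m,n}(\alpha,\beta,\gamma,\delta)$ together with the transfer computations of~\cite{Ma2,Ma3} then give the finite set of metabelian vertices at depths $1$ and $2$ on each branch and their transfer target types. From these one reads off directly that every vertex $G$ on the tree satisfies $\varepsilon(G)=\#\{1\leq i\leq 4\mid\mathrm{r}_3(H_i/H_i^\prime)\geq 3\}=1$, which is the asserted global invariant and simultaneously separates $\mathcal{T}(\langle 243,6\rangle)$ from its siblings $\mathcal{T}(\langle 729,40\rangle)$ ($\varepsilon=2$) and $\mathcal{T}(\langle 243,8\rangle)$ ($\varepsilon=0$) of Table~\ref{tbl:TreesBQUAG}. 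The non-metabelian vertices at depth $3$ — precisely what makes the branches of depth $3$ rather than $2$ — lie beyond Nebelung's reach; for these I would invoke the $p$-group generation algorithm, as already used for the surrounding pictures in~\cite{AHL,As1}, to compute the complete branches $\mathcal{B}_5,\dots,\mathcal{B}_8$ up to order $3^{11}$ and record that each has depth exactly $3$.

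Finally I would establish the periodicity. By the Eick--Leedham-Green theorem~\cite{EkLg} the branches of a coclass tree are ultimately isomorphic as graphs under shifting the order by a fixed period $\ell$ dividing $p^{r+1}(p-1)=3^3\cdot 2$; the explicit data for $\mathcal{B}_5,\mathcal{B}_6,\mathcal{B}_7,\mathcal{B}_8$ then show that $\mathcal{B}_7$ and $\mathcal{B}_8$ are already isomorphic to all later odd, resp.\ even, branches, forcing $\ell=2$, with $\mathcal{B}_5,\mathcal{B}_6$ the (irregular) pre-period and $\mathcal{B}_7,\mathcal{B}_8$ the primitive period.

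The main obstacle is the treatment of the depth-$3$ non-metabelian leaves: Nebelung's thesis parametrizes only the metabelian skeleton, so neither these vertices nor their multiplicities follow from a closed formula, and verifying that periodicity genuinely sets in at $\mathcal{B}_7$ — rather than at some later branch — requires a controlled $p$-group-generation computation anchored by the Eick--Leedham-Green bound on the period. Everything else — the mainline identification, the depth-$\leq 2$ metabelian descendants, and the invariant $\varepsilon=1$ — reduces to bookkeeping with Nebelung's presentations and the transfer formulas of~\cite{Ma2,Ma3}.
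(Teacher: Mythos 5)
Your proposal is correct and follows essentially the route the paper itself takes implicitly: Theorem \ref{t:TreeQ} is stated without a proof environment, the metabelian skeleton being delegated to Nebelung \cite{Ne1} and the complete tree, including the non-metabelian leaves that realize the depth \(3\), to Ascione \cite{As1} and to descendant computations with the SmallGroups library and ANUPQ --- and your reconstruction from Table \ref{tbl:MainLines}, the parent recursion of Theorem \ref{thm:CoclGe2Core}, and the depth bound of Corollary \ref{cor:DpthCoclGe2} supplies exactly the bookkeeping the paper omits for the metabelian part, while correctly isolating the invariant \(\varepsilon=1\) that separates this tree from \(\mathcal{T}(\langle 729,40\rangle)\) and \(\mathcal{T}(\langle 243,8\rangle)\). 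Two small caveats: the relation \(e=m-1\) holds only for the root \(M_5=G_0^{4,5}(0,-1,0,1)\), since every mainline group of this tree has \(e=3\) while \(m\) grows; and the Eick--Leedham-Green divisor bound constrains the period length, not the length of the pre-period, so certifying that periodicity already sets in at \(\mathcal{B}_7\) rather than later genuinely requires the explicit computation of further branches or an appeal to the full coclass-\(2\) classification of \cite{ELNO}, a point you rightly flag as the residual obstacle.
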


\noindent
In Figure
\ref{fig:TreeQTyp33Cocl2},
we have
\(G_3^2(\mathbb{Q}(\sqrt{D}))\in\mathcal{T}(\langle 243,6\rangle)\)
for \(270\) \((13.4\%)\)
of the \(2020\) discriminants \(-10^6<D<0\)
and for \(39\) \((1.5\%)\)
of the \(2576\) discriminants \(0<D<10^7\),
investigated in
\cite[\S\ 6]{Ma1},
\cite[\S\ 6]{Ma3}.\\
Since the TKT \(\mathrm{c}.18\), \(\varkappa=(0313)\),
of the mainline is \textit{total} with \(\varkappa(1)=0\),
there only occur \(G_3^2(K)\) of \textit{real} quadratic fields
\(K=\mathbb{Q}(\sqrt{D})\), \(D>0\),
on the mainline.\\
Due to the \textit{Selection Rule} in Theorem
\ref{thm:SelRuleCoclGe2},
the \(G_3^2(K)\) are distributed on \textit{even branches} only,
since the second distinguished transfer kernel \(\varkappa(2)\ne 0\).\\
Underpinning the weak leaf conjecture,
there is no actual hit of the vertices at depth \(1\)
with TKT \(\mathrm{H}.4\), \(\varkappa=(3313)\).

%\newpage

%--------------------------------------------------------------------------------

\begin{figure}[ht]
\caption{Distribution of \(G_3^2(K)\) on the coclass \(2\) tree \(\mathcal{T}(\langle 243,8\rangle)\)}
\label{fig:TreeUTyp33Cocl2}

% Coclass tree T(U_0)
\setlength{\unitlength}{1cm}
\begin{picture}(14,14.5)(-6,-13.5)

% scale of orders
\put(-5,0.5){\makebox(0,0)[cb]{Order \(3^n\)}}
\put(-5,0){\line(0,-1){12}}
\multiput(-5.1,0)(0,-2){7}{\line(1,0){0.2}}
\put(-5.2,0){\makebox(0,0)[rc]{\(243\)}}
\put(-4.8,0){\makebox(0,0)[lc]{\(3^5\)}}
\put(-5.2,-2){\makebox(0,0)[rc]{\(729\)}}
\put(-4.8,-2){\makebox(0,0)[lc]{\(3^6\)}}
\put(-5.2,-4){\makebox(0,0)[rc]{\(2\,187\)}}
\put(-4.8,-4){\makebox(0,0)[lc]{\(3^7\)}}
\put(-5.2,-6){\makebox(0,0)[rc]{\(6\,561\)}}
\put(-4.8,-6){\makebox(0,0)[lc]{\(3^8\)}}
\put(-5.2,-8){\makebox(0,0)[rc]{\(19\,683\)}}
\put(-4.8,-8){\makebox(0,0)[lc]{\(3^9\)}}
\put(-5.2,-10){\makebox(0,0)[rc]{\(59\,049\)}}
\put(-4.8,-10){\makebox(0,0)[lc]{\(3^{10}\)}}
\put(-5.2,-12){\makebox(0,0)[rc]{\(177\,147\)}}
\put(-4.8,-12){\makebox(0,0)[lc]{\(3^{11}\)}}

% vertices and directed edges of main line
\multiput(0,0)(0,-2){6}{\circle*{0.2}}
\multiput(0,0)(0,-2){5}{\line(0,-1){2}}
% vertices of depth 1
\multiput(-1,-2)(0,-2){6}{\circle*{0.2}}
\multiput(-2,-2)(0,-2){6}{\circle*{0.2}}
%\multiput(-3,-2)(0,-2){6}{\circle*{0.2}}
\multiput(1.95,-4.05)(0,-2){5}{\framebox(0.1,0.1){}}
\multiput(3,-2)(0,-2){6}{\circle*{0.2}}
%\multiput(4,-4)(0,-4){3}{\circle*{0.2}}
% directed edges to vertices of depth 1
\multiput(0,0)(0,-2){6}{\line(-1,-2){1}}
\multiput(0,0)(0,-2){6}{\line(-1,-1){2}}
%\multiput(0,0)(0,-2){6}{\line(-3,-2){3}}
\multiput(0,-2)(0,-2){5}{\line(1,-1){2}}
\multiput(0,0)(0,-2){6}{\line(3,-2){3}}
%\multiput(0,-2)(0,-4){3}{\line(2,-1){4}}
% vertices of depth 2
\multiput(-3.05,-4.05)(-1,0){2}{\framebox(0.1,0.1){}}
\multiput(3.95,-6.05)(0,-2){4}{\framebox(0.1,0.1){}}
\multiput(5,-6)(0,-2){4}{\circle*{0.1}}
\multiput(6,-4)(0,-2){5}{\circle*{0.1}}
% directed edges to vertices of depth 2
\multiput(-1,-2)(-1,0){2}{\line(-1,-1){2}}
\multiput(3,-4)(0,-2){4}{\line(1,-2){1}}
\multiput(3,-4)(0,-2){4}{\line(1,-1){2}}
\multiput(3,-2)(0,-2){5}{\line(3,-2){3}}
% vertices of depth 3
\multiput(6.95,-6.05)(0,-2){4}{\framebox(0.1,0.1){}}
% directed edges to vertices of depth 3
\multiput(6,-4)(0,-2){4}{\line(1,-2){1}}

% branches
\put(2,-1){\makebox(0,0)[lc]{\(\mathcal{B}_5\)}}
\put(2,-3){\makebox(0,0)[lc]{\(\mathcal{B}_6\)}}
\put(2,-5){\makebox(0,0)[lc]{\(\mathcal{B}_7\)}}
\put(2,-7){\makebox(0,0)[lc]{\(\mathcal{B}_8\)}}

% isomorphism classes and multiplicity counters
\put(-0.1,0.7){\makebox(0,0)[rc]{\(\langle 8\rangle\)}}
\put(-2.1,-1.8){\makebox(0,0)[rc]{\(\langle 53\rangle\)}}
\put(-1.1,-1.8){\makebox(0,0)[rc]{\(\langle 55\rangle\)}}
\put(0.1,-1.3){\makebox(0,0)[lc]{\(\langle 54\rangle\)}}
\put(3.1,-1.8){\makebox(0,0)[lc]{\(\langle 52\rangle\)}}
\put(-4.1,-3.6){\makebox(0,0)[cc]{\(\langle 300\rangle\)}}
\put(-3.1,-3.6){\makebox(0,0)[cc]{\(\langle 309\rangle\)}}
\put(-2.1,-2.6){\makebox(0,0)[cc]{\(\langle 302\rangle\)}}
\put(-2.1,-3.1){\makebox(0,0)[cc]{\(\langle 306\rangle\)}}
\put(-1.1,-3.1){\makebox(0,0)[cc]{\(\langle 304\rangle\)}}
\put(0.1,-3.5){\makebox(0,0)[lc]{\(\langle 303\rangle\)}}
\put(3,-3){\makebox(0,0)[cc]{\(\langle 301\rangle\)}}
\put(3,-3.5){\makebox(0,0)[cc]{\(\langle 305\rangle\)}}
\put(2.1,-3.8){\makebox(0,0)[lc]{\(*2\)}}
\multiput(-2.1,-3.8)(0,-4){3}{\makebox(0,0)[rc]{\(2*\)}}
\multiput(3.1,-3.8)(0,-4){3}{\makebox(0,0)[lc]{\(*2\)}}
\put(6.1,-3.8){\makebox(0,0)[lc]{\(*6\)}}
\multiput(5.1,-5.8)(0,-4){2}{\makebox(0,0)[lc]{\(*2\)}}
\multiput(6.1,-5.8)(0,-4){2}{\makebox(0,0)[lc]{\(*2\)}}
\multiput(5.1,-7.8)(0,-4){2}{\makebox(0,0)[lc]{\(*3\)}}
\multiput(6.1,-7.8)(0,-4){2}{\makebox(0,0)[lc]{\(*3\)}}
\multiput(7.1,-7.8)(0,-2){3}{\makebox(0,0)[lc]{\(*2\)}}
\put(-3,-13){\makebox(0,0)[cc]{\textbf{TKT:}}}
\put(-2,-13){\makebox(0,0)[cc]{E.9}}
\put(-1,-13){\makebox(0,0)[cc]{E.8}}
\put(0,-13){\makebox(0,0)[cc]{c.21}}
\put(3,-13){\makebox(0,0)[cc]{G.16}}
\put(5,-13){\makebox(0,0)[cc]{G.16}}
\put(6,-13){\makebox(0,0)[cc]{G.16}}
\put(-2,-13.5){\makebox(0,0)[cc]{\((2231)\)}}
\put(-1,-13.5){\makebox(0,0)[cc]{\((1231)\)}}
\put(0,-13.5){\makebox(0,0)[cc]{\((0231)\)}}
\put(3,-13.5){\makebox(0,0)[cc]{\((4231)\)}}
\put(5,-13.5){\makebox(0,0)[cc]{\((4231)\)}}
\put(6,-13.5){\makebox(0,0)[cc]{\((4231)\)}}
\put(-3.8,-13.7){\framebox(10.6,1){}}

% infinite main line
\put(0,-10){\vector(0,-1){2}}
\put(0.2,-11.5){\makebox(0,0)[lc]{main}}
\put(0.2,-12){\makebox(0,0)[lc]{line}}
\put(1.8,-12.5){\makebox(0,0)[rc]{\(\mathcal{T}(\langle 243,8\rangle)\)}}

% distribution of 2nd class groups
\multiput(-1.5,-4)(0,-4){2}{\oval(2,1)}
\put(-1.5,-4.8){\makebox(0,0)[cc]{\underbar{\textbf{197}}/\underbar{\textbf{14}}}}
\put(-1.5,-8.8){\makebox(0,0)[cc]{\underbar{\textbf{13}}/\underbar{\textbf{0}}}}
\multiput(5.5,-6)(0,-4){2}{\oval(2,1)}
\put(5.5,-6.8){\makebox(0,0)[cc]{\underbar{\textbf{79}}/\underbar{\textbf{2}}}}
\put(5.5,-10.8){\makebox(0,0)[cc]{\underbar{\textbf{2}}/\underbar{\textbf{0}}}}
\multiput(0.2,-2)(0,-4){2}{\oval(1.5,1)}
\put(1.1,-2.4){\makebox(0,0)[lc]{\underbar{\textbf{0}}/\underbar{\textbf{25}}}}
\put(1.1,-6.4){\makebox(0,0)[lc]{\underbar{\textbf{0}}/\underbar{\textbf{2}}}}

\end{picture}

\end{figure}

\begin{theorem}
\label{t:TreeU}
The structure of the complete coclass tree \(\mathcal{T}(\langle 243,8\rangle)\)
as part of the coclass graph \(\mathcal{G}(3,2)\),
restricted to \(3\)-groups \(G\) with abelianization \(G/G^\prime\simeq (3,3)\),
is globally characterized by \(\varepsilon(G)=0\)
and given up to order \(3^{11}=177\,147\) by Figure
\ref{fig:TreeUTyp33Cocl2}.
The branches are of depth \(3\) and periodic of length \(2\).
The pre-period consists of \(\mathcal{B}_5,\mathcal{B}_6\),
the primitive period of \(\mathcal{B}_7,\mathcal{B}_8\)
\end{theorem}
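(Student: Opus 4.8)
The plan is to argue in complete parallel to the sibling tree $\mathcal{T}(\langle 243,6\rangle)$ of Theorem~\ref{t:TreeQ}, with the defining triplet of relational exponents changed from $(f,g,h)=(0,1,2)$ to $(f,g,h)=(1,1,2)$ as recorded in Table~\ref{tbl:TreesBQUAG}. First I would pin down the mainline. By Table~\ref{tbl:MainLines} the root is $\langle 243,8\rangle\simeq G_0^{4,5}(0,0,0,1)$, of coclass $r=2$ and transfer kernel type $\mathrm{c}.21$, $\varkappa=(0231)$; Theorem~\ref{thm:CoclGe2Core} shows that its mainline consists of the groups $G_0^{m,m+1}(0,0,0,1)$ for $m\ge 4$, each being the parent of the next (since $\rho=0$ forces $\pi(G)\simeq G_0^{m-1,n-1}(0,\beta,0,\delta)$), while Theorem~\ref{thm:CoclGe2Intf} identifies the generalized parent $\tilde\pi(\langle 243,8\rangle)\simeq G_0^3(0,0)$, the extra-special group of order $27$ and exponent $3$ sitting on $\mathcal{G}(3,1)$. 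The metabelian vertices of the tree are then exhausted layer by layer from Nebelung's list of isomorphism-class representatives $G_\rho^{m,n}(\alpha,\beta,\gamma,\delta)$ in \cite{Ne2} together with the group counts in \cite[Thm.~5.1.16]{Ne1}, and their transfer kernel types (c.21 on the mainline, E.8 and E.9 at depth~$2$, G.16 at depth~$1$) are read off from \cite[Thm.~6.14, pp.~208~ff]{Ne1}; the non-metabelian vertices with abelianization $(3,3)$, the small square symbols in Figure~\ref{fig:TreeUTyp33Cocl2}, are obtained by the $p$-group generation algorithm.

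To obtain the global characterisation by $\varepsilon(G)=0$ I would use that $\varepsilon$ is constant along a coclass tree --- this is part of the periodic-pattern statement following Theorem~\ref{thm:ForbTrees}, that metabelian skeletons of trees with common mainline transfer kernel type and coclass of the same parity are isomorphic as graphs --- and that by Table~\ref{tbl:TttTop33} the root $\langle 243,8\rangle$ has full TTT $\tau=\left((9,3)^4\right)$, so that none of its four maximal subgroups has abelianization of $3$-rank $\ge 3$, whence $\varepsilon(\langle 243,8\rangle)=0$ and therefore $\varepsilon(G)=0$ for every vertex $G$ of the tree. Conversely, among the three coclass trees of $\mathcal{G}(3,2)$ whose metabelian mainline groups have abelianization of type $(3,3)$ --- namely $\mathcal{T}(\langle 729,40\rangle)$ with $\varepsilon=2$, $\mathcal{T}(\langle 243,6\rangle)$ with $\varepsilon=1$, and $\mathcal{T}(\langle 243,8\rangle)$ with $\varepsilon=0$ (Table~\ref{tbl:TreesBQUAG}) --- the value $\varepsilon=0$ isolates $\mathcal{T}(\langle 243,8\rangle)$.

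For the branch structure I would invoke the virtual-periodicity theorems of du~Sautoy \cite{dS} and Eick--Leedham-Green \cite{EkLg}: the branches of a coclass tree in $\mathcal{G}(3,2)$ are eventually periodic with period length dividing $3^{r+1}(3-1)=54$, and the mainline of $\mathcal{T}(\langle 243,8\rangle)$ consists of the finite quotients of the pro-$3$ group of coclass $2$ with non-trivial centre attached to the triplet $(1,1,2)$ via \cite[Thm.~4.1]{ELNO}. Combining this presentation with Nebelung's explicit branch descriptions \cite[pp.~189~ff]{Ne1} gives that the branches have depth $3$ and that the graph isomorphisms $\varphi_i\colon\mathcal{B}_d(M_i)\to\mathcal{B}_d(M_{i+2})$ hold for $i\ge 7$, so that $\mathcal{B}_7,\mathcal{B}_8$ form the primitive period while the root branches $\mathcal{B}_5,\mathcal{B}_6$ are an irregular pre-period; the SmallGroups identifiers and descendant multiplicities in Figure~\ref{fig:TreeUTyp33Cocl2} are then confirmed by a direct computation of $\mathcal{G}(3,2)$ up to order $3^{11}$ in GAP or MAGMA, which displays two full periods beyond the pre-period.

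The main obstacle is making this last step rigorous: the general periodicity theorems only yield \emph{eventual} periodicity with an a~priori unbounded pre-period, so establishing that periodicity sets in exactly at $\mathcal{B}_7$, that the depth is uniformly $3$, and that the branches are literally those of Figure~\ref{fig:TreeUTyp33Cocl2} requires either the full combinatorial bookkeeping with Nebelung's parametrised representatives --- tracking how $(\alpha,\beta,\gamma,\delta,\rho)$ transform under iterated application of the parent map of Theorem~\ref{thm:CoclGe2Core} --- or the explicit finite computation above, since two complete periods plus the pre-period already determine the tree. A secondary subtlety is reading off the transfer kernel types of the depth-$1$ and depth-$2$ vertices consistently with the natural order of Definition~\ref{dfn:NatOrdCoclGe2}, for which one combines \cite[Thm.~6.14]{Ne1} with the orbit description of transfer kernel types in \cite{Ma2}.
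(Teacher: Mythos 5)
The paper supplies no proof of this theorem at all: it states the result and defers to Nebelung's classification \cite[p.~189~ff]{Ne1}, \cite{Ne2} and Ascione's explicit trees \cite[p.~76, Fig.~4.8 and p.~123, Fig.~6.1]{As1}, so your sketch --- mainline via Table~\ref{tbl:MainLines} and Theorem~\ref{thm:CoclGe2Core}, branch vertices from Nebelung's representatives and counts, $\varepsilon=0$ from the root's TTT in Table~\ref{tbl:TttTop33}, and the honest admission that exact onset of periodicity at $\mathcal{B}_7$ needs either the full parameter bookkeeping or a finite computation covering two periods --- reconstructs precisely the argument the paper leaves implicit, and correctly locates the only real work. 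One small slip: in Figure~\ref{fig:TreeUTyp33Cocl2} the capable vertices of types E.8 and E.9 sit at depth $1$ (their terminal descendants at depth $2$), while G.16 occurs at depths $1$ and $2$; you have the depths of E.8/E.9 and G.16 interchanged, but this does not affect the structure of the argument.
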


\noindent
In Figure
\ref{fig:TreeUTyp33Cocl2},
we have
\(G_3^2(\mathbb{Q}(\sqrt{D}))\in\mathcal{T}(\langle 243,8\rangle)\)
for \(291\) \((14.4\%)\)
of the \(2020\) discriminants \(-10^6<D<0\)
and for \(43\) \((1.7\%)\)
of the \(2576\) discriminants \(0<D<10^7\),
investigated in
\cite[\S\ 6]{Ma1},
\cite[\S\ 6]{Ma3}.\\
Since the TKT \(\mathrm{c}.21\), \(\varkappa=(0231)\),
of the mainline is \textit{total} with \(\varkappa(1)=0\),
there only occur \(G_3^2(K)\) of \textit{real} quadratic fields
\(K=\mathbb{Q}(\sqrt{D})\), \(D>0\), on the mainline.\\
Due to the \textit{Selection Rule} in Theorem
\ref{thm:SelRuleCoclGe2},
the \(G_3^2(K)\) are distributed on \textit{even branches} only,
since the second distinguished transfer kernel \(\varkappa(2)\ne 0\).\\
Underpinning the weak leaf conjecture,
there is no actual hit of the vertices at depth \(1\)
with TKT \(\mathrm{G}.16\), \(\varkappa=(4231)\).

The vertices of the coclass trees in both Figures
\ref{fig:TreeQTyp33Cocl2}
and
\ref{fig:TreeUTyp33Cocl2}
are classified by using different symbols:

\begin{enumerate}
\item
big full discs {\Large \(\bullet\)} represent metabelian groups with bicyclic centre of type \((3,3)\) and defect \(k=0\),
\item
small full discs {\footnotesize \(\bullet\)} represent metabelian groups with cyclic centre of order \(3\) and defect \(k=1\),
\item
small contour squares {\scriptsize \(\square\)} represent non-metabelian groups.
\end{enumerate}

\noindent
A number adjacent to a vertex denotes the multiplicity of a batch
of immediate descendants sharing a common parent.
The groups of particular importance are labelled by a number in angles,
which is the identifier in the SmallGroups library
\cite{BEO}
of GAP
\cite{GAP}
and MAGMA
\cite{MAGMA}.
The metabelian skeletons were drawn in
\cite[p. 189 ff]{Ne1},
the complete trees were given in
\cite[p. 76, Fig. 4.8 and p. 123, Fig. 6.1]{As1}.

\noindent
The actual distribution of the
\(2020\), resp. \(2576\), second \(3\)-class groups \(G_3^2(K)\)
of complex, resp. real, quadratic number fields \(K=\mathbb{Q}(\sqrt{D})\) of type \((3,3)\)
with discriminant \(-10^6<D<10^7\) is represented by
underlined boldface counters (in the format complex/real)
of the hits of vertices surrounded by the adjacent oval.
See
\cite[\S\ 6, tbl. 3--5]{Ma1}
and
\cite[\S\ 6, tbl. 15--18]{Ma3}.\\
The realization of mainline vertices
with TKT \(\mathrm{c}.18\) and \(\mathrm{c}.21\)
as \(G_3^2(K)\)
is no violation of the weak leaf conjecture
\ref{cnj:WeakLeafCnj},
since these vertices do not possess metabelian immediate descendants
of the same TKT.

When we had completed our extensive investigation of
second \(3\)-class groups \(\mathrm{G}_3^2(K)\)
of all \(4596\) quadratic fields \(K=\mathbb{Q}(\sqrt{D})\) of type \((3,3)\)
in the range \(-10^6<D<10^7\),
we wondered whether the distribution of second \(3\)-class groups \(\mathrm{G}_3^2(K)\)
for other sequences of base fields \(K\) of type \((3,3)\) shows similarities or differences.

Since fields of degree \(4\) are still within the reach of numerical computations,
we are able to present the results for bicyclic biquadratic fields
of Gauss-Dirichlet-Hilbert type \(K=\mathbb{Q}\left(\sqrt{\strut -1},\sqrt{\strut d}\right)\)
\cite{Hi}
in section \S\
\ref{ss:NewRsltGDH}.
These fields reveal strong similarities to quadratic fields.
In section \S\
\ref{ss:NewRsltESR},
however, we show that bicyclic biquadratic fields
of Eisenstein-Scholz-Reichardt type \(K=\mathbb{Q}\left(\sqrt{\strut -3},\sqrt{\strut d}\right)\)
\cite{So,Re}
exhibit a completely different behavior.

%\newpage

%--------------------------------------------------------------------------------

\renewcommand{\arraystretch}{1.1}

\begin{table}[ht]
\caption{\(11\) inherited variants of \(G=\mathrm{G}_3^2(K)\) for \(K=\mathbb{Q}\left(\sqrt{\strut -1},\sqrt{\strut d}\right)\)}
\label{tbl:LiftedVariants}
\begin{center}
\begin{tabular}{|r|r||c||c|c||c|c|c|}
\hline
  \(d\)   &\(d^\prime\)& \(\tau(G)=(\mathrm{Cl}_3(L_i))_{1\le i\le 4}\) & Type  & \(\varkappa(G)\) & \(G\)    & \(\mathrm{cc}(G)\) & \(\Phi\) \\
\hline
 \(3896\) &  \(-3896\) & \((3,3,3),(3,3,3),(9,3),(3,3,3)\) & H.4                & \((4443)\) & \(\langle 729,45\rangle\)   & \(2\) & \(\Phi_{42}\) \\
 \(5069\) & \(-20276\) & \((3,3,3),(9,3),(3,3,3),(9,3)\)   & D.5                & \((4224)\) & \(\langle 243,7\rangle\)    & \(2\) & \(\Phi_{6}\) \\
\(10173\) & \(-40692\) & \((3,3,3),(9,3),(9,3),(9,3)\)     & D.10               & \((2241)\) & \(\langle 243,5\rangle\)    & \(2\) & \(\Phi_{6}\) \\
\(12481\) & \(-49924\) & \((9,3),(9,3),(9,3),(9,3)\)       & G.19               & \((2143)\) & \(\langle 729,57\rangle\)   & \(2\) & \(\Phi_{43}\) \\
\hline
 \(2437\) &  \(-9748\) & \((27,9),(9,3),(9,3),(9,3)\)      & E.9                & \((2231)\) & \(G_0^{6,7}(0,0,\pm 1,1)\)  & \(2\) & \\
 \(5417\) & \(-21668\) & \((27,9),(9,3),(3,3,3),(9,3)\)    & H.4\(\uparrow\)    & \((3313)\) & \(G_{\pm 1}^{7,8}(0,-1,\pm 1,1)\) & \(2\) & \\
 \(6221\) & \(-24884\) & \((27,9),(9,3),(9,3),(9,3)\)      & G.16               & \((4231)\) & \(G_1^{7,8}(0,0,\pm 1,1)\)        & \(2\) & \\
\(12837\) & \(-51348\) & \((27,9),(9,3),(3,3,3),(9,3)\)    & E.14               & \((2313)\) & \(G_0^{6,7}(0,-1,\pm 1,1)\) & \(2\) & \\
\(15544\) & \(-15544\) & \((27,9),(9,3),(3,3,3),(9,3)\)    & E.6                & \((1313)\) & \(G_0^{6,7}(1,-1,1,1)\)     & \(2\) & \\
\hline
 \(6789\) & \(-27156\) & \((27,9),(27,9),(3,3,3),(3,3,3)\) & F.11               & \((1143)\) & \(G_0^{6,9}(0,0,\pm 1,1)\)     & \(4\) & \\
 \(7977\) & \(-31908\) & \((27,9),(27,9),(3,3,3),(3,3,3)\) & F.12               & \((1343)\) & \(G_0^{6,9}(\mp 1,0,\pm 1,1)\) & \(4\) & \\
\hline
\end{tabular}
\end{center}
\end{table}

\renewcommand{\arraystretch}{1.1}

\begin{table}[ht]
\caption{\(11\) genuine variants of \(G=\mathrm{G}_3^2(K)\) for \(K=\mathbb{Q}\left(\sqrt{\strut -1},\sqrt{\strut d}\right)\)}
\label{tbl:GenuineVariants}
\begin{center}
\begin{tabular}{|r||c||c|c||c|c|c|}
\hline
  \(d\)   & \(\tau(G)=(\mathrm{Cl}_3(L_i))_{1\le i\le 4}\) & Type  & \(\varkappa(G)\) & \(G\)    & \(\mathrm{cc}(G)\) & \(\Phi\) \\
\hline
  \(473\) & \((9,3),(3,3),(3,3),(3,3)\)       & a.3                & \((2000)\) & \(\langle 81,8\rangle\)     & \(1\) & \(\Phi_3\) \\
 \(1937\) & \((3,3,3),(3,3),(3,3),(3,3)\)     & a.3*               & \((2000)\) & \(\langle 81,7\rangle\)     & \(1\) & \(\Phi_3\) \\
 \(2993\) & \((27,9),(3,3),(3,3),(3,3)\)      & a.3\(\uparrow\)    & \((2000)\) & \(\langle 729,97\vert 98\rangle\) & \(1\) & \(\Phi_{35}\) \\
\hline
 \(2713\) & \((3,3,3),(3,3,3),(9,3),(3,3,3)\) & H.4                & \((4443)\) & \(\langle 729,45\rangle\)   & \(2\) & \(\Phi_{42}\) \\
 \(3305\) & \((9,9),(9,3),(9,3),(9,3)\)       & c.21               & \((0231)\) & \(\langle 729,54\rangle\)   & \(2\) & \(\Phi_{23}\) \\
 \(3941\) & \((9,3),(9,3),(9,3),(9,3)\)       & G.19               & \((2143)\) & \(\langle 729,57\rangle\)   & \(2\) & \(\Phi_{43}\) \\
\(13153\) & \((3,3,3),(9,3),(9,9),(9,3)\)     & c.18               & \((0313)\) & \(\langle 729,49\rangle\)   & \(2\) & \(\Phi_{23}\) \\
\hline
 \(7665\) & \((27,9),(9,3),(9,3),(9,3)\)      & G.16               & \((4231)\) & \(G_1^{7,8}(0,0,\pm 1,1)\)        & \(2\) & \\
\(15265\) & \((27,9),(9,3),(3,3,3),(9,3)\)    & H.4\(\uparrow\)    & \((3313)\) & \(G_{\pm 1}^{7,8}(0,-1,\pm 1,1)\) & \(2\) & \\
\hline
 \(5912\) & \((27,9),(27,9),(3,3,3),(3,3,3)\) & G.16\(\uparrow\uparrow\) & \((1243)\) & \(G_{\pm 1}^{7,10}(0,0,\pm 1,1)\) & \(4\) & \\
\(12685\) & \((27,9),(27,9),(3,3,3),(3,3,3)\) & G.19\(\uparrow\uparrow\) & \((2143)\) & \(G_{\pm 1}^{7,10}(0,1,0,0)\)     & \(4\) & \\
\hline
\end{tabular}
\end{center}
\end{table}

%\newpage

%--------------------------------------------------------------------------------

\subsection{Bicyclic biquadratic Dirichlet fields of type \((3,3)\)}
\label{ss:NewRsltGDH}

In the range \(0<d<2\cdot 10^4\) of real quadratic discriminants \(d\),
we discovered \(22\) variants
of the second \(3\)-class group \(G=\mathrm{G}_3^2(K)\)
of bicyclic biquadratic fields \(K=\mathbb{Q}\left(\sqrt{\strut -1},\sqrt{\strut d}\right)\)
containing the fourth roots of unity and
having a \(3\)-class group of type \((3,3)\).
In Table
\ref{tbl:LiftedVariants},
resp. Table
\ref{tbl:GenuineVariants},
we present the smallest discriminants \(d\)
for which these \(22\) variants occur,
divided into \(11\) \textit{lifted} variants \textit{inherited} from the complex quadratic subfield of \(K\),
resp. \(11\) \textit{intrinsic} or \textit{genuine} variants of \(K\) itself.

%\noindent
About \(20\%\) of these bicyclic biquadratic fields \(K\)
are composita of a real quadratic field \(k_1=\mathbb{Q}\left(\sqrt{\strut d}\right)\) of \(3\)-class rank \(0\)
and its dual complex quadratic field \(k_2=\mathbb{Q}\left(\sqrt{\strut -d}\right)\) of \(3\)-class rank \(2\).
In this case,
the second \(3\)-class group \(\mathrm{G}_3^2(K)\simeq\mathrm{G}_3^2(k_2)\)
is \textit{inherited} from the complex quadratic subfield \(k_2\)
by \textit{lifting} the entire \(3\)-class field tower isomorphically from \(k_2\) to \(K\). 
The discriminants of the quadratic subfields are denoted by
\(\mathrm{d}(k_1)=d\) and \(\mathrm{d}(k_2)=d^\prime\).

%\noindent
Roughly \(80\%\) of these bicyclic biquadratic fields \(K\)
are composita of dual quadratic fields \(k_1\) and \(k_2\) of equal \(3\)-class rank \(1\).
Their second \(3\)-class groups \(\mathrm{G}_3^2(K)\) are intrinsic, genuine
invariants of the bicyclic biquadratic fields \(K\). 

%\newpage

%--------------------------------------------------------------------------------

\subsection{Bicyclic biquadratic Eisenstein fields of type \((3,3)\)}
\label{ss:NewRsltESR}

%\bigskip
%\noindent
In cooperation with A. Azizi, M. Talbi, and A. Derhem
\cite{ATDM},
and based on
\cite{AAIT1,Tb}
we have completely determined all possibilities
for the isomorphism type of the second \(3\)-class group
\(G=\mathrm{G}_3^2(K)\)
of a bicyclic biquadratic base field
\(K=\mathbb{Q}\left(\sqrt{\strut -3},\sqrt{\strut d}\right)\),
containing the third roots of unity,
of type \((3,3)\)
and we are able to draw an impressive resum\'e in comparison to a quadratic base field
\(K=\mathbb{Q}(\sqrt{\strut D})\).
The possibilities are totally disjoint.

\begin{itemize}
\item
For odd coclass \(\mathrm{cc}(G)=2n+1\equiv 1\pmod{2}\),
the groups of biquadratic fields are exclusively mainline vertices of depth \(0\),
whereas the groups of real quadratic fields are vertices of depth \(1\) on branches,
and odd coclass is impossible at all for compex quadratic fields.
However, in the case of coclass \(2n+1\ge 3\)
both kinds of fields have their groups on the same tree
with mainline of transfer kernel type b.10, \(\varkappa=(0,0,4,3)\),
and the other three trees of coclass graph \(\mathcal{G}(3,2n+1)\) are
populated by the groups of neither biquadratic fields nor quadratic fields.
\item
For even coclass \(\mathrm{cc}(G)=2n\equiv 0\pmod{2}\),
the separation is even more striking.
While the groups of biquadratic fields are restricted to the single tree
whose mainline vertices share the transfer kernel type b.10, \(\varkappa=(0,0,4,3)\),
exactly this tree is entirely forbidden for any quadratic field
and the groups of real and complex quadratic fields are located
on all the other two, resp. five, trees and on the sporadic part of
coclass graph \(\mathcal{G}(3,2n)\), where \(2n=2\), resp. \(2n\ge 4\).
\end{itemize}

%\newpage

%--------------------------------------------------------------------------------

\begin{figure}[ht]
\caption{Distribution of \(\mathrm{G}_3^2(K)\) on the coclass graph \(\mathcal{G}(3,1)\)}
\label{fig:WimanBlackburn1}

% Coclass Graph G(3,1)
\setlength{\unitlength}{1cm}
\begin{picture}(16,15)(-8,-14)

% scale of orders
\put(-8,0.5){\makebox(0,0)[cb]{Order \(3^n\)}}
\put(-8,0){\line(0,-1){12}}
\multiput(-8.1,0)(0,-2){7}{\line(1,0){0.2}}
\put(-8.2,0){\makebox(0,0)[rc]{\(9\)}}
\put(-7.8,0){\makebox(0,0)[lc]{\(3^2\)}}
\put(-8.2,-2){\makebox(0,0)[rc]{\(27\)}}
\put(-7.8,-2){\makebox(0,0)[lc]{\(3^3\)}}
\put(-8.2,-4){\makebox(0,0)[rc]{\(81\)}}
\put(-7.8,-4){\makebox(0,0)[lc]{\(3^4\)}}
\put(-8.2,-6){\makebox(0,0)[rc]{\(243\)}}
\put(-7.8,-6){\makebox(0,0)[lc]{\(3^5\)}}
\put(-8.2,-8){\makebox(0,0)[rc]{\(729\)}}
\put(-7.8,-8){\makebox(0,0)[lc]{\(3^6\)}}
\put(-8.2,-10){\makebox(0,0)[rc]{\(2\,187\)}}
\put(-7.8,-10){\makebox(0,0)[lc]{\(3^7\)}}
\put(-8.2,-12){\makebox(0,0)[rc]{\(6\,561\)}}
\put(-7.8,-12){\makebox(0,0)[lc]{\(3^8\)}}

% abelian vertices
\put(-0.1,-0.1){\framebox(0.2,0.2){}}
\put(-2.1,-0.1){\framebox(0.2,0.2){}}
% metabelian vertices with defect k=0
\multiput(0,-2)(0,-2){5}{\circle*{0.2}}
\multiput(-2,-2)(0,-2){6}{\circle*{0.2}}
\multiput(-4,-4)(0,-2){5}{\circle*{0.2}}
\multiput(-6,-4)(0,-4){3}{\circle*{0.2}}
% metabelian vertices with defect k=1
\multiput(2,-6)(0,-2){4}{\circle*{0.1}}
\multiput(4,-6)(0,-2){4}{\circle*{0.1}}
\multiput(6,-6)(0,-2){4}{\circle*{0.1}}

% directed edges
\multiput(0,0)(0,-2){5}{\line(0,-1){2}}
\multiput(0,0)(0,-2){6}{\line(-1,-1){2}}
\multiput(0,-2)(0,-2){5}{\line(-2,-1){4}}
\multiput(0,-2)(0,-4){3}{\line(-3,-1){6}}
\multiput(0,-4)(0,-2){4}{\line(1,-1){2}}
\multiput(0,-4)(0,-2){4}{\line(2,-1){4}}
\multiput(0,-4)(0,-2){4}{\line(3,-1){6}}

% infinite main line
\put(0,-10){\vector(0,-1){2}}
\put(0.2,-11.5){\makebox(0,0)[lc]{main}}
\put(0.2,-12){\makebox(0,0)[lc]{line}}
\put(0,-12.3){\makebox(0,0)[rc]{\(\mathcal{T}(\langle 9,2\rangle)\)}}

% isomorphism classes of sporadic vertices
\put(0.2,0){\makebox(0,0)[lt]{\(C_3\times C_3\)}}
\put(-2.2,0){\makebox(0,0)[rt]{\(C_9\)}}
\put(0.2,-2){\makebox(0,0)[lt]{\(G^3_0(0,0)\)}}
\put(-2.2,-2){\makebox(0,0)[rt]{\(G^3_0(0,1)\)}}
\put(-4,-4.5){\makebox(0,0)[cc]{\(\mathrm{Syl}_3(A_9)\)}}

% 2 vertices of the stem of isoclinism family Phi1
\put(-3,0){\makebox(0,0)[cc]{\(\Phi_1\)}}
\put(-2.1,0.1){\makebox(0,0)[rb]{\(\langle 1\rangle\)}}
\put(-0.1,0.1){\makebox(0,0)[rb]{\(\langle 2\rangle\)}}

% 3 vertices of the stem of isoclinism family Phi2
\put(2,-2){\makebox(0,0)[cc]{\(\Phi_2\)}}
\put(-2.1,-1.9){\makebox(0,0)[rb]{\(\langle 4\rangle\)}}
\put(-0.1,-1.9){\makebox(0,0)[rb]{\(\langle 3\rangle\)}}

% 3 vertices of the stem of isoclinism family Phi3
\put(2,-4){\makebox(0,0)[cc]{\(\Phi_3\)}}
\put(-6.1,-3.9){\makebox(0,0)[rb]{\(\langle 8\rangle\)}}
\put(-4.1,-3.9){\makebox(0,0)[rb]{\(\langle 7\rangle\)}}
\put(-2.1,-3.9){\makebox(0,0)[rb]{\(\langle 10\rangle\)}}
\put(-0.1,-3.9){\makebox(0,0)[rb]{\(\langle 9\rangle\)}}

% 3 vertices of the stem of isoclinism family Phi9
\put(-4,-6.5){\makebox(0,0)[cc]{\(\Phi_9\)}}
\put(-4.1,-5.9){\makebox(0,0)[rb]{\(\langle 25\rangle\)}}
\put(-2.1,-5.9){\makebox(0,0)[rb]{\(\langle 27\rangle\)}}
\put(-0.1,-5.9){\makebox(0,0)[rb]{\(\langle 26\rangle\)}}

% 3 vertices of the stem of isoclinism family Phi10
\put(4,-6.5){\makebox(0,0)[cc]{\(\Phi_{10}\)}}
\put(2.1,-5.9){\makebox(0,0)[lb]{\(\langle 28\rangle\)}}
\put(4.1,-5.9){\makebox(0,0)[lb]{\(\langle 30\rangle\)}}
\put(6.1,-5.9){\makebox(0,0)[lb]{\(\langle 29\rangle\)}}

% 3 vertices of the stem of isoclinism family Phi35
\put(-4,-8.5){\makebox(0,0)[cc]{\(\Phi_{35}\)}}
\put(-6.1,-7.9){\makebox(0,0)[rb]{\(\langle 98\rangle\)}}
\put(-4.1,-7.9){\makebox(0,0)[rb]{\(\langle 97\rangle\)}}
\put(-2.1,-7.9){\makebox(0,0)[rb]{\(\langle 96\rangle\)}}
\put(-0.1,-7.9){\makebox(0,0)[rb]{\(\langle 95\rangle\)}}

% 3 vertices of the stem of isoclinism family Phi36
\put(4,-8.5){\makebox(0,0)[cc]{\(\Phi_{36}\)}}
\put(2.1,-7.9){\makebox(0,0)[lb]{\(\langle 100\rangle\)}}
\put(4.1,-7.9){\makebox(0,0)[lb]{\(\langle 99\rangle\)}}
\put(6.1,-7.9){\makebox(0,0)[lb]{\(\langle 101\rangle\)}}

\put(-0.1,-9.9){\makebox(0,0)[rb]{\(\langle 386\rangle\)}}

% isomorphism classes of coclass families
\put(0,-13){\makebox(0,0)[cc]{\(G^n_0(0,0)\)}}
\put(-2,-13){\makebox(0,0)[cc]{\(G^n_0(0,1)\)}}
\put(-4,-13){\makebox(0,0)[cc]{\(G^n_0(1,0)\)}}
\put(-6,-13){\makebox(0,0)[cc]{\(G^n_0(-1,0)\)}}
\put(2,-13){\makebox(0,0)[cc]{\(G^n_1(0,-1)\)}}
\put(4,-13){\makebox(0,0)[cc]{\(G^n_1(0,0)\)}}
\put(6,-13){\makebox(0,0)[cc]{\(G^n_1(0,1)\)}}

% TKT of sporadic vertices
\put(2.5,0){\makebox(0,0)[cc]{\textbf{TKT:}}}
\put(3.5,0){\makebox(0,0)[cc]{a.1}}
\put(3.5,-0.5){\makebox(0,0)[cc]{\((0000)\)}}
\put(1.8,-0.7){\framebox(2.9,1){}}
\put(-6,-2){\makebox(0,0)[cc]{\textbf{TKT:}}}
\put(-5,-2){\makebox(0,0)[cc]{A.1}}
\put(-5,-2.5){\makebox(0,0)[cc]{\((1111)\)}}
\put(-6.7,-2.7){\framebox(2.9,1){}}

% TKT of coclass families
\put(-8,-14){\makebox(0,0)[cc]{\textbf{TKT:}}}
\put(0,-14){\makebox(0,0)[cc]{a.1}}
\put(-2,-14){\makebox(0,0)[cc]{a.2}}
\put(-4,-14){\makebox(0,0)[cc]{a.3}}
\put(-6,-14){\makebox(0,0)[cc]{a.3}}
\put(2,-14){\makebox(0,0)[cc]{a.1}}
\put(4,-14){\makebox(0,0)[cc]{a.1}}
\put(6,-14){\makebox(0,0)[cc]{a.1}}
\put(0,-14.5){\makebox(0,0)[cc]{\((0000)\)}}
\put(-2,-14.5){\makebox(0,0)[cc]{\((1000)\)}}
\put(-4,-14.5){\makebox(0,0)[cc]{\((2000)\)}}
\put(-6,-14.5){\makebox(0,0)[cc]{\((2000)\)}}
\put(2,-14.5){\makebox(0,0)[cc]{\((0000)\)}}
\put(4,-14.5){\makebox(0,0)[cc]{\((0000)\)}}
\put(6,-14.5){\makebox(0,0)[cc]{\((0000)\)}}
\put(-8.7,-14.7){\framebox(15.4,1){}}

% distribution of 2nd class groups
\put(0,0){\oval(3,1.5)}
\put(0,-4){\oval(3,1.5)}
\put(0,-8){\oval(3,1.5)}
\put(0.5,-1.1){\makebox(0,0)[cc]{\underbar{\textbf{605}}}}
\put(0.5,-5.1){\makebox(0,0)[cc]{\underbar{\textbf{197}}}}
\put(0.5,-9.1){\makebox(0,0)[cc]{\underbar{\textbf{42}}}}

\end{picture}

\end{figure}

Since the behavior of these biquadratic fields \(K\)
with respect to second \(3\)-class groups
is totally different from quadratic fields,
the following Figures
\ref{fig:WimanBlackburn1}--\ref{fig:TreeTyp33Cocl3}
visualize the distribution of their second \(3\)-class group \(G=\mathrm{G}_3^2(K)\)
on the coclass graphs \(\mathcal{G}(3,r)\), \(1\le r\le 3\).

\noindent
In Figures
\ref{fig:WimanBlackburn1}--\ref{fig:TreeTyp33Cocl3}
the actual distribution of the \(930\) second \(3\)-class groups \(G=G_3^2(K)\)
of bicyclic biquadratic number fields \(K=\mathbb{Q}\left(\sqrt{\strut -3},\sqrt{\strut d}\right)\)
of type \((3,3)\) with discriminant \(0<d<5\cdot 10^4\) is represented by
underlined boldface counters of hits of the vertices surrounded by the adjacent oval.
Isomorphisms among the extensions \(L_i\vert K\), \(1\le i\le 4\),
cause severe constraints on the group \(G\).

\noindent
We point out that only every other mainline vertex of \(\mathcal{G}(3,1)\)
is populated by second \(3\)-class groups \(\mathrm{G}_3^2(K)\) of quartic fields \(K\) in Figure
\ref{fig:WimanBlackburn1}
in contrast to the distribution of the groups \(\mathrm{G}_3^2(K)\) of quadratic fields \(K\) in Figure
\ref{fig:Distr3Cocl1}.

%\newpage

%--------------------------------------------------------------------------------

\begin{figure}[ht]
\caption{Distribution of \(\mathrm{G}_3^2(K)\) on the tree \(\mathcal{T}(\langle 243,3\rangle)\) of \(\mathcal{G}(3,2)\)}
\label{fig:TreeBTyp33Cocl2}

% Coclass 2 tree T(B)
\setlength{\unitlength}{1cm}
\begin{picture}(14,14.5)(-6,-13.5)

% scale of orders
\put(-5,0.5){\makebox(0,0)[cb]{Order \(3^n\)}}
\put(-5,0){\line(0,-1){12}}
\multiput(-5.1,0)(0,-2){7}{\line(1,0){0.2}}
\put(-5.2,0){\makebox(0,0)[rc]{\(243\)}}
\put(-4.8,0){\makebox(0,0)[lc]{\(3^5\)}}
\put(-5.2,-2){\makebox(0,0)[rc]{\(729\)}}
\put(-4.8,-2){\makebox(0,0)[lc]{\(3^6\)}}
\put(-5.2,-4){\makebox(0,0)[rc]{\(2\,187\)}}
\put(-4.8,-4){\makebox(0,0)[lc]{\(3^7\)}}
\put(-5.2,-6){\makebox(0,0)[rc]{\(6\,561\)}}
\put(-4.8,-6){\makebox(0,0)[lc]{\(3^8\)}}
\put(-5.2,-8){\makebox(0,0)[rc]{\(19\,683\)}}
\put(-4.8,-8){\makebox(0,0)[lc]{\(3^9\)}}
\put(-5.2,-10){\makebox(0,0)[rc]{\(59\,049\)}}
\put(-4.8,-10){\makebox(0,0)[lc]{\(3^{10}\)}}
\put(-5.2,-12){\makebox(0,0)[rc]{\(177\,147\)}}
\put(-4.8,-12){\makebox(0,0)[lc]{\(3^{11}\)}}

% vertices and directed edges of main line
\multiput(0,0)(0,-2){6}{\circle*{0.2}}
\multiput(0,0)(0,-2){5}{\line(0,-1){2}}

% infinite main line
\put(0,-10){\vector(0,-1){2}}
\put(0.2,-11.5){\makebox(0,0)[lc]{main}}
\put(0.2,-12){\makebox(0,0)[lc]{line}}
\put(0,-12.3){\makebox(0,0)[rc]{\(\mathcal{T}(\langle 729,40\rangle)\)}}
% pathological infinite main lines
\put(7.2,-5.8){\makebox(0,0)[lc]{\(*3\)}}
\multiput(7,-4)(1,0){2}{\vector(0,-1){2}}
\put(7.5,-6.5){\makebox(0,0)[cc]{non-}}
\put(7.5,-7){\makebox(0,0)[cc]{metabelian}}
\put(7.5,-7.5){\makebox(0,0)[cc]{main}}
\put(7.5,-8){\makebox(0,0)[cc]{lines}}

% vertices of depth 1
\multiput(-1,-2)(0,-2){6}{\circle*{0.2}}
\multiput(-2,-2)(0,-2){6}{\circle*{0.2}}
\multiput(-3,-2)(0,-2){6}{\circle*{0.2}}
\multiput(1.95,-4.05)(0,-2){5}{\framebox(0.1,0.1){}}
\multiput(3,-2)(0,-2){6}{\circle*{0.1}}
\multiput(4,-2)(0,-2){6}{\circle*{0.1}}
\multiput(5,-2)(1,0){2}{\circle*{0.1}}
\multiput(1,-2)(7,0){2}{\circle*{0.1}}
% directed edges to vertices of depth 1
\multiput(0,0)(0,-2){6}{\line(-1,-2){1}}
\multiput(0,0)(0,-2){6}{\line(-1,-1){2}}
\multiput(0,0)(0,-2){6}{\line(-3,-2){3}}
\multiput(0,-2)(0,-2){5}{\line(1,-1){2}}
\multiput(0,0)(0,-2){6}{\line(3,-2){3}}
\multiput(0,0)(0,-2){6}{\line(2,-1){4}}
\put(0,0){\line(1,-2){1}}
\put(0,0){\line(5,-2){5}}
\put(0,0){\line(3,-1){6}}
\put(0,0){\line(4,-1){8}}

% vertices of depth 2
\put(-4.05,-4.05){\framebox(0.1,0.1){}}
\put(5.95,-4.05){\framebox(0.1,0.1){}}
\multiput(6.9,-4.1)(1,0){2}{\framebox(0.2,0.2){}}
\multiput(4.95,-4.05)(0,-2){5}{\framebox(0.1,0.1){}}
% directed edges to vertices of depth 2
\put(-1,-2){\line(-3,-2){3}}
\multiput(3,-2)(1,0){4}{\line(1,-1){2}}
\multiput(4,-4)(0,-2){4}{\line(1,-2){1}}

% stems of isoclinism families
\put(-2.5,0.5){\makebox(0,0)[cc]{\(\Phi_{6}\)}}
\put(-2.5,-2.5){\makebox(0,0)[cc]{\(\Phi_{23}\)}}
\put(2,-2.5){\makebox(0,0)[cc]{\(\Phi_{41}\)}}
\put(7,-2.5){\makebox(0,0)[cc]{\(\Phi_{40}\)}}
% isomorphism classes and multiplicity counters
\put(-0.1,0.5){\makebox(0,0)[rc]{\(\langle 3\rangle\)}}
\put(-3.1,-1.8){\makebox(0,0)[rc]{\(\langle 42\rangle\)}}
\put(-2.1,-1.8){\makebox(0,0)[rc]{\(\langle 43\rangle\)}}
\put(-1.1,-1.8){\makebox(0,0)[rc]{\(\langle 41\rangle\)}}
\put(-0.1,-1.8){\makebox(0,0)[rc]{\(\langle 40\rangle\)}}
\put(1.1,-1.8){\makebox(0,0)[lc]{\(\langle 39\rangle\)}}
\put(3.1,-1.8){\makebox(0,0)[lc]{\(\langle 38\rangle\)}}
\put(4.1,-1.8){\makebox(0,0)[lc]{\(\langle 37\rangle\)}}
\put(5.1,-1.8){\makebox(0,0)[lc]{\(\langle 34\rangle\)}}
\put(6.1,-1.8){\makebox(0,0)[lc]{\(\langle 35\rangle\)}}
\put(8.1,-1.8){\makebox(0,0)[lc]{\(\langle 36\rangle\)}}
%\put(3.1,-1.8){\makebox(0,0)[lc]{\(2\)}}
%\put(4.1,-1.8){\makebox(0,0)[lc]{\(4\)}}
\put(-3.9,-3.8){\makebox(0,0)[rc]{\(3*\)}}
\multiput(-2.1,-3.8)(0,-4){3}{\makebox(0,0)[rc]{\(2*\)}}
\multiput(-1.1,-3.8)(0,-4){3}{\makebox(0,0)[rc]{\(2*\)}}
\put(2.1,-3.8){\makebox(0,0)[lc]{\(*4\)}}
\put(5.1,-3.8){\makebox(0,0)[lc]{\(*5\)}}
\put(6.1,-3.8){\makebox(0,0)[lc]{\(*6\)}}
\multiput(7.2,-3.8)(1,0){2}{\makebox(0,0)[lc]{\(*6\)}}
\multiput(2.1,-5.8)(0,-4){2}{\makebox(0,0)[lc]{\(*3\)}}
\multiput(2.1,-7.8)(0,-4){2}{\makebox(0,0)[lc]{\(*2\)}}
\multiput(3,-3.8)(0,-4){3}{\makebox(0,0)[lc]{\(*5\)}}
\multiput(3,-5.8)(0,-4){2}{\makebox(0,0)[lc]{\(*6\)}}
\multiput(4.1,-5.8)(0,-4){2}{\makebox(0,0)[lc]{\(*2\)}}
\multiput(5.1,-5.8)(0,-4){2}{\makebox(0,0)[lc]{\(*12\)}}
\multiput(5.1,-7.8)(0,-4){2}{\makebox(0,0)[lc]{\(*8\)}}
\put(-4,-13){\makebox(0,0)[cc]{\textbf{TKT:}}}
\put(-3,-13){\makebox(0,0)[cc]{d.23}}
\put(-2,-13){\makebox(0,0)[cc]{d.25}}
\put(-1,-13){\makebox(0,0)[cc]{d.19}}
\put(0,-13){\makebox(0,0)[cc]{b.10}}
%\put(3,-13){\makebox(0,0)[cc]{b.10}}
\put(4,-13){\makebox(0,0)[cc]{b.10}}
\put(-3,-13.5){\makebox(0,0)[cc]{\((1043)\)}}
\put(-2,-13.5){\makebox(0,0)[cc]{\((2043)\)}}
\put(-1,-13.5){\makebox(0,0)[cc]{\((4043)\)}}
\put(0,-13.5){\makebox(0,0)[cc]{\((0043)\)}}
%\put(3,-13.5){\makebox(0,0)[cc]{\((0043)\)}}
\put(4,-13.5){\makebox(0,0)[cc]{\((0043)\)}}
\put(-4.8,-13.7){\framebox(9.6,1){}}

% distribution of 2nd class groups
\put(0,-2){\oval(1.5,1.5)}
\put(4.8,-2){\oval(2,1.5)}
\put(4,-6){\oval(1,1.5)}
\put(4,-10){\oval(1,1.5)}
\put(0.3,-3){\makebox(0,0)[cc]{\underbar{\textbf{3}}}}
\put(4.5,-3){\makebox(0,0)[cc]{\underbar{\textbf{59}}}}
\put(3.5,-6.9){\makebox(0,0)[cc]{\underbar{\textbf{17}}}}
\put(3.5,-10.9){\makebox(0,0)[cc]{\underbar{\textbf{1}}}}

\end{picture}

\end{figure}
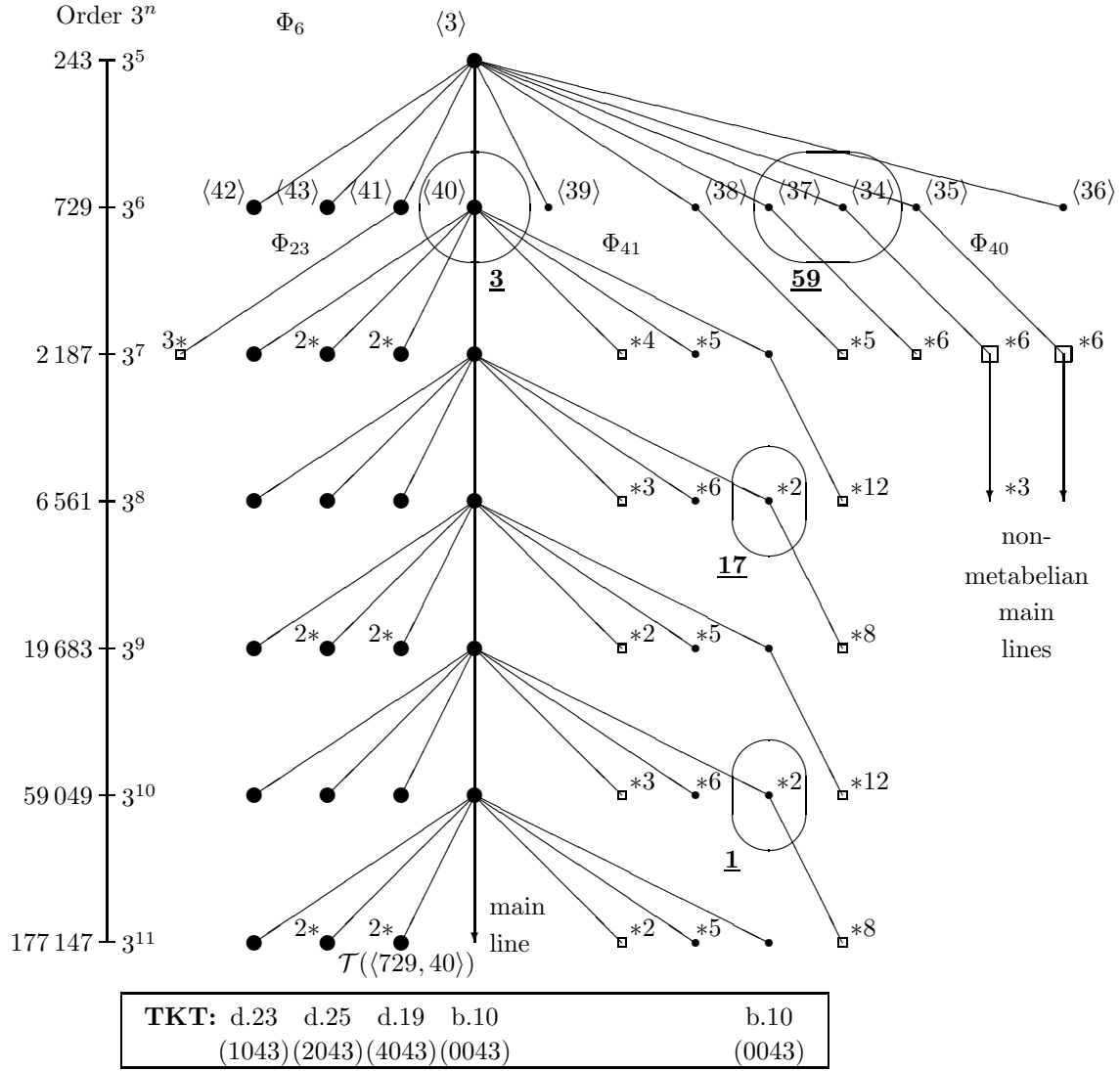

%\newpage

%--------------------------------------------------------------------------------

%\bigskip
\noindent
Vertices of the tree \(\mathcal{T}(\langle 243,3\rangle)\)
of coclass graph \(\mathcal{G}(3,2)\) in Figure
\ref{fig:TreeBTyp33Cocl2}
are classified according to their defect \(k(G)\)
by using different symbols:

\begin{enumerate}
\item
big full discs {\Large \(\bullet\)} denote metabelian groups with defect \(k(G)=0\) and centre of type \((3,3)\),
\item
small full discs {\footnotesize \(\bullet\)} denote metabelian groups with \(k(G)=1\) and cyclic centre of order \(3\),
\item
small contour squares {\tiny \(\square\)} denote terminal non-metabelian groups,
\item
large contour squares \(\square\) denote infinitely capable non-metabelian groups,
giving rise to non-metabelian mainlines
\cite{AHL,ELNO}.
\end{enumerate}

%\newpage

%--------------------------------------------------------------------------------

\noindent
This tree is completely forbidden for quadratic fields.
A symbol \(n\ast\) adjacent to a vertex denotes the multiplicity of a batch
of \(n\) immediate descendants of a common parent.
Numbers in angles denote identifiers in the SmallGroups library
\cite{BEO,GAP},
where we omit the orders, which are given on the left hand scale.
The symbols \(\Phi_s\) denote isoclinism families
\cite{Hl,Ef,Jm}.
Transfer kernel types, briefly TKT,
\cite[Thm. 2.5, Tbl. 6--7]{Ma2}
in the bottom rectangle concern all vertices located vertically above.
The periodicity with length \(2\) of branches,
\(\mathcal{B}(j)\simeq\mathcal{B}(j+2)\) for \(j\ge 7\),
sets in with branch \(\mathcal{B}(7)\), having root of order \(3^7\).

%\newpage

%--------------------------------------------------------------------------------

\begin{figure}[ht]
\caption{Distribution of \(\mathrm{G}_3^2(K)\) on the tree \(\mathcal{T}(G_0^{5,7}(0,0,0,0))\) of \(\mathcal{G}(3,3)\)}
\label{fig:TreeTyp33Cocl3}

% Coclass 3 tree T(B)
\setlength{\unitlength}{1cm}
\begin{picture}(14,14.5)(-7,-13.5)

% scale of orders
\put(-5,0.5){\makebox(0,0)[cb]{Order \(3^n\)}}
\put(-5,0){\line(0,-1){12}}
\multiput(-5.1,0)(0,-2){7}{\line(1,0){0.2}}
\put(-5.2,0){\makebox(0,0)[rc]{\(2\,187\)}}
\put(-4.8,0){\makebox(0,0)[lc]{\(3^7\)}}
\put(-5.2,-2){\makebox(0,0)[rc]{\(6\,561\)}}
\put(-4.8,-2){\makebox(0,0)[lc]{\(3^8\)}}
\put(-5.2,-4){\makebox(0,0)[rc]{\(19\,683\)}}
\put(-4.8,-4){\makebox(0,0)[lc]{\(3^9\)}}
\put(-5.2,-6){\makebox(0,0)[rc]{\(59\,049\)}}
\put(-4.8,-6){\makebox(0,0)[lc]{\(3^{10}\)}}
\put(-5.2,-8){\makebox(0,0)[rc]{\(177\,147\)}}
\put(-4.8,-8){\makebox(0,0)[lc]{\(3^{11}\)}}
\put(-5.2,-10){\makebox(0,0)[rc]{\(531\,441\)}}
\put(-4.8,-10){\makebox(0,0)[lc]{\(3^{12}\)}}
\put(-5.2,-12){\makebox(0,0)[rc]{\(1\,594\,323\)}}
\put(-4.8,-12){\makebox(0,0)[lc]{\(3^{13}\)}}

% vertices and directed edges of main line
\multiput(0,0)(0,-2){6}{\circle*{0.2}}
\multiput(0,0)(0,-2){5}{\line(0,-1){2}}

% infinite main line
\put(0,-10){\vector(0,-1){2}}
\put(0.2,-11.5){\makebox(0,0)[lc]{main}}
\put(0.2,-12){\makebox(0,0)[lc]{line}}
\put(0,-12.4){\makebox(0,0)[rc]{\(\mathcal{T}(G_0^{5,7}(0,0,0,0))\)}}

% vertices of depth 1
\multiput(-1,-2)(0,-2){6}{\circle*{0.2}}
\multiput(-2,-2)(0,-2){6}{\circle*{0.2}}
\multiput(-3,-2)(0,-2){6}{\circle*{0.2}}
\multiput(3,-2)(0,-2){6}{\circle*{0.1}}
% directed edges to vertices of depth 1
\multiput(0,0)(0,-2){6}{\line(-1,-2){1}}
\multiput(0,0)(0,-2){6}{\line(-1,-1){2}}
\multiput(0,0)(0,-2){6}{\line(-3,-2){3}}
\multiput(0,0)(0,-2){6}{\line(3,-2){3}}

% isomorphism classes and multiplicity counters
\put(0,0.5){\makebox(0,0)[cc]{\(G_0^{5,7}(0,0,0,0)\simeq\langle 2187,64\rangle\)}}
\put(3.1,-1.8){\makebox(0,0)[lc]{\(*6\)}}
\multiput(-2.1,-1.8)(0,-4){3}{\makebox(0,0)[rc]{\(2*\)}}
\multiput(-1.1,-1.8)(0,-4){3}{\makebox(0,0)[rc]{\(2*\)}}
\multiput(3.1,-3.8)(0,-4){3}{\makebox(0,0)[lc]{\(*6\)}}
\multiput(3.1,-5.8)(0,-4){2}{\makebox(0,0)[lc]{\(*9\)}}
\put(-4,-13){\makebox(0,0)[cc]{\textbf{TKT:}}}
\put(-3,-13){\makebox(0,0)[cc]{d.23}}
\put(-2,-13){\makebox(0,0)[cc]{d.25}}
\put(-1,-13){\makebox(0,0)[cc]{d.19}}
\put(0,-13){\makebox(0,0)[cc]{b.10}}
\put(3,-13){\makebox(0,0)[cc]{b.10}}
\put(-3,-13.5){\makebox(0,0)[cc]{\((1043)\)}}
\put(-2,-13.5){\makebox(0,0)[cc]{\((2043)\)}}
\put(-1,-13.5){\makebox(0,0)[cc]{\((4043)\)}}
\put(0,-13.5){\makebox(0,0)[cc]{\((0043)\)}}
\put(3,-13.5){\makebox(0,0)[cc]{\((0043)\)}}
\put(-4.8,-13.7){\framebox(8.6,1){}}

% distribution of 2nd class groups
\put(0,-2){\oval(1,1.5)}
\put(0.4,-3){\makebox(0,0)[cc]{\underbar{\textbf{5}}}}

\end{picture}

\end{figure}
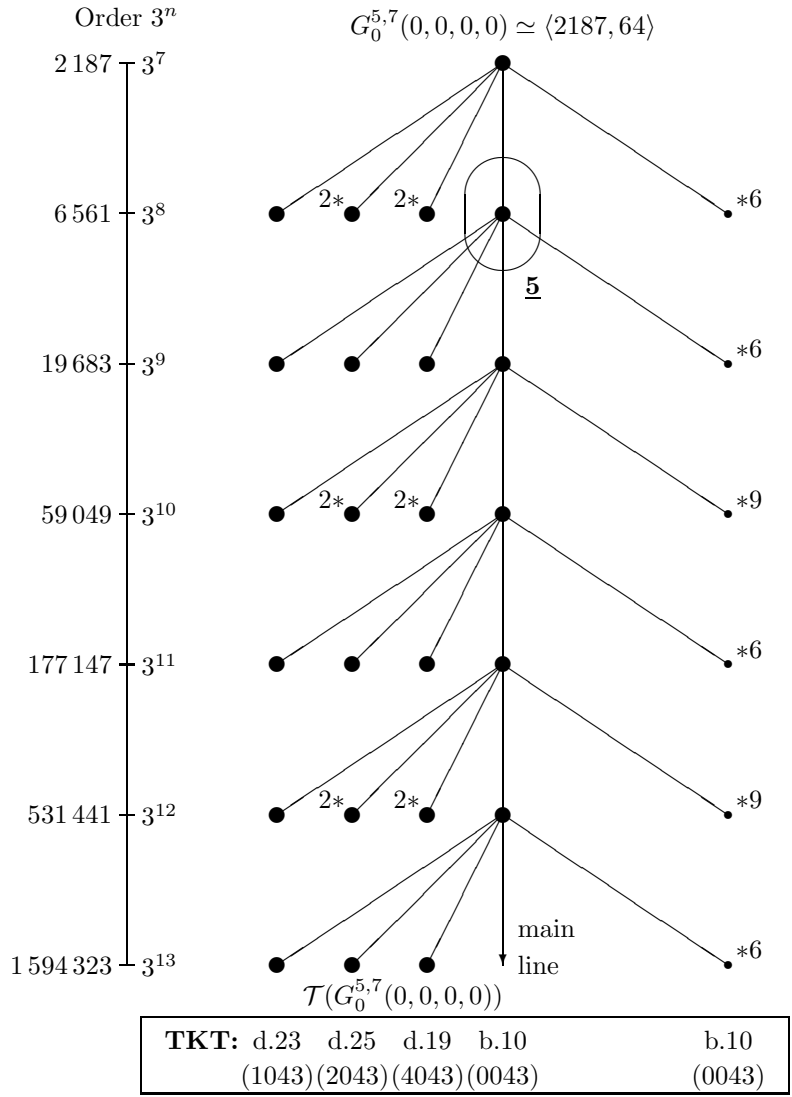

%\newpage

%--------------------------------------------------------------------------------

\bigskip
\noindent
Vertices of the metabelian skeleton of tree \(\mathcal{T}(G_0^{5,7}(0,0,0,0))\)
of coclass graph \(\mathcal{G}(3,3)\) in Figure
\ref{fig:TreeTyp33Cocl3}
are classified according to their defect \(k(G)\)
by using different symbols:

\begin{enumerate}
\item
big full discs {\Large \(\bullet\)} denote metabelian groups with defect \(k(G)=0\) and centre of type \((3,3)\),
\item
small full discs {\footnotesize \(\bullet\)} denote metabelian groups with \(k(G)=1\) and cyclic centre of order \(3\).
\end{enumerate}

%\newpage

%--------------------------------------------------------------------------------

\noindent
The symbol \(n\ast\) denotes a batch of \(n\) siblings of a common parent.
Transfer kernel types, briefly TKT,
\cite[Thm. 2.5, Tbl. 6--7]{Ma2}
in the bottom rectangle concern all vertices located vertically above.
Metabelian periodicity with length \(2\) of branches,
\(\mathcal{B}(j)\simeq\mathcal{B}(j+2)\) for \(j\ge 8\),
sets in with branch \(\mathcal{B}(8)\), having root of order \(3^8\).

%\newpage

%--------------------------------------------------------------------------------

\renewcommand{\arraystretch}{1.1}

\begin{table}[ht]
\caption{\(11\) variants of \(G=\mathrm{G}_3^2(K)\) for \(930\) \(K=\mathbb{Q}\left(\sqrt{\strut -3},\sqrt{\strut d}\right)\)}
\label{tbl:11Variants}
\begin{center}
\begin{tabular}{|r||c||c|c||c|c|c|}
\hline
  \(d\)   & \(\tau(G)=(\mathrm{Cl}_3(L_i))_{1\le i\le 4}\) & Type  & \(\varkappa(G)\) & \(G\)    & \(\mathrm{cc}(G)\) & \(\Phi\) \\
\hline
  \(229\) & \((3),(3),(3),(3)\)               & a.1                & \((0000)\) & \(\langle 9,2\rangle\)      & \(1\) & \(\Phi_1\) \\
  \(469\) & \((9,3),(3,3),(3,3),(3,3)\)       & a*.1               & \((0000)\) & \(\langle 81,9\rangle\)     & \(1\) & \(\Phi_3\) \\
 \(7453\) & \((27,9),(3,3),(3,3),(3,3)\)      & a*.1\(\uparrow\)   & \((0000)\) & \(\langle 729,95\rangle\)   & \(1\) & \(\Phi_{35}\) \\
\hline
 \(2177\) & \((9,3),(9,3),(3,3,3),(3,3,3)\)   & b.10               & \((0043)\) & \(\langle 729,37\rangle\)   & \(2\) & \(\Phi_{41}\) \\
 \(2589\) & \((9,3),(9,3),(3,3,3),(3,3,3)\)   & b.10               & \((0043)\) & \(\langle 729,34\rangle\)   & \(2\) & \(\Phi_{40}\) \\
\(17609\) & \((9,3),(9,9),(3,3,3),(3,3,3)\)   & b*.10              & \((0043)\) & \(\langle 729,40\rangle\)   & \(2\) & \(\Phi_{23}\) \\
\(14056\) & \((27,9),(9,3),(3,3,3),(3,3,3)\)  & b.10\(\uparrow\)   & \((0043)\) & \(G_{\pm 1}^{7,8}(0,0,0,0)\)  & \(2\) & \\
\(20521\) & \((9,3),(27,9),(3,3,3),(3,3,3)\)  & b.10\(\uparrow\)   & \((0043)\) & \(G_{\pm 1}^{7,8}(0,0,0,0)\)  & \(2\) & \\
\(44581\) & \((81,27),(9,3),(3,3,3),(3,3,3)\) & b.10\(\uparrow^2\) & \((0043)\) & \(G_{\pm 1}^{9,10}(0,0,0,0)\) & \(2\) & \\
\hline
 \(4933\) & \((27,9),(9,9),(3,3,3),(3,3,3)\)  & b*.10\(\uparrow\uparrow\)  & \((0043)\) & \(G_{0}^{6,8}(0,0,0,0)\)  & \(3\) & \\
\hline
\(47597\) & \((27,9),(27,9),(3,3,3),(3,3,3)\) & b.10\(\uparrow\uparrow^2\) & \((0043)\) & \(G_{\pm 1}^{7,10}(0,0,0,0)\) & \(4\) & \\
\hline
\end{tabular}
\end{center}
\end{table}

In the range \(0<d<5\cdot 10^4\) of real quadratic discriminants \(d\),
we discovered \(11\) variants
of the second \(3\)-class group \(G=\mathrm{G}_3^2(K)\)
of bicyclic biquadratic fields \(K=\mathbb{Q}\left(\sqrt{\strut -3},\sqrt{\strut d}\right)\)
having \(3\)-class group of type \((3,3)\).
In Table
\ref{tbl:11Variants}
we present the smallest discriminants \(d\)
for which these \(11\) variants occur.

\noindent
The invariants listed are
the discriminant \(d\) of the real quadratic subfield
\(\mathbb{Q}\left(\sqrt{\strut d}\right)\) of \(K\),
the TTT \(\tau(G)\) of \(G\),
the TKT \(\varkappa(G)\) of \(G\),
with arrows \(\uparrow\) denoting excited states,
the GAP \(4\) identifier of \(G\) in the SmallGroups library
\cite{BEO,GAP},
provided that \(\lvert G\rvert\le 3^6\), otherwise
the symbol \(G_\varrho^{m,n}(\alpha,\beta,\gamma,\delta)\)
for the isomorphism type of \(G\) defined in section \S\
%\cite[p. 490]{Ma2},
\ref{sss:PrmPres2},
if \(\lvert G\rvert\ge 3^8\),
the coclass \(\mathrm{cc}(G)\) of \(G\),
and the isoclinism family \(\Phi\) to which \(G\) belongs,
as far as it is defined in
\cite{Hl,Ef,Jm}.

%\newpage

%--------------------------------------------------------------------------------

\subsection{Stem of isoclinism family \(\Phi_6\)}
\label{ss:StemPhi6}

In this section,
we provide group theoretic foundations
for determining second \(5\)-class groups \(\mathrm{G}_5^2(K)\)
of coclass \(\mathrm{cc}(G)\ge 2\)
for quadratic and quartic number fields \(K\) of type \((5,5)\).
The stem of Hall's isoclinism family \(\Phi_6\) is the key for
a deeper understanding of the \(5\)-principalization of these base fields
in their six unramified cyclic quintic extensions \(L_1,\ldots,L_6\),
which has partially but not completely been investigated by
Heider and Schmithals
\cite{HeSm}
and by Bembom
\cite{Bm}.

The \textit{stem groups} \(G\) of Hall's isoclinism family \(\Phi_6\)
\cite[p. 139]{Hl}
are \(p\)-groups of order \(\lvert G\rvert=p^5\) with odd prime \(p\),
nilpotency class \(\mathrm{cl}(G)=3\), and coclass \(\mathrm{cc}(G)=2\).
They were discovered in 1898 by Bagnera
\cite[pp. 182--183]{Bg},
and were constructed as extensions of \(C_p^3\) by \(C_p^2\), for \(p\ge 5\),
in 1926 by Schreier \cite[pp. 341--345]{Sr2}.
Bagnera also pointed out that these groups do not have an analog for \(p=2\).

Every stem group of isoclinism family \(\Phi_6\) is a \(2\)-generator group \(G=\langle x,y\rangle\)
with main commutator \(s_2=\lbrack y,x\rbrack\) in \(\gamma_2(G)\)
and higher commutators \(s_3=\lbrack s_2,x\rbrack\), \(t_3=\lbrack s_2,y\rbrack\) in \(\gamma_3(G)\),
satisfying the power relations \(s_2^p=s_3^p=t_3^p=1\).
The lower central series of \(G\) is given by
\begin{center}
\(\gamma_2(G)=\langle s_2,s_3,t_3\rangle\) of type \((p,p,p)\),\quad
\(\gamma_3(G)=\langle s_3,t_3\rangle\) of type \((p,p)\),\quad
\(\gamma_4(G)=1\),
\end{center}
and the center by \(\zeta_1(G)=\gamma_3(G)\).
The central quotient \(G/\zeta_1(G)\) is of type \(\Phi_2(1^3)\simeq G_0^3(0,0)\),
the extra special \(p\)-group of order \(p^3\) and exponent \(p\),
and the abelianization \(G/\gamma_2(G)\) is of type \((p,p)\).
Therefore, the lower central structure of these groups uniformly consists of
two bicyclic factors, the \textit{head} \(G/\gamma_2(G)\), and the \textit{tail} \(\gamma_3(G)/\gamma_4(G)\),
separated by the cyclic factor \(\gamma_2(G)/\gamma_3(G)\).

For any stem group \(G\) in \(\Phi_6\),
there exists a nice \(1\)-to-\(1\) correspondence
between the two bicyclic factors,
the head and the tail,
by taking the derived subgroups.

\begin{lemma}
\label{lmm:StemIcl6}
The maximal normal subgroups \(H_i\) of \(G\)
contain the commutator subgroup \(G^\prime=\gamma_2(G)\)
and are given by
\begin{center}
\(H_i=\langle g_i,G^\prime\rangle\) with generators \(g_1=y\) and \(g_i=xy^{i-2}\) for \(2\le i\le p+1\).
\end{center}
Their derived subgroups \(H_i^\prime=(G^\prime)^{g_i-1}\) are given by
\begin{center}
\(H_1^\prime=\langle t_3\rangle\) and
\(H_i^\prime=\langle s_3t_3^{i-2}\rangle\) for \(2\le i\le p+1\).
\end{center}
\end{lemma}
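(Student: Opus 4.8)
The plan is to treat $G^\prime=\gamma_2(G)$ as an \emph{abelian} normal subgroup of $G$ --- it is abelian because $\lbrack\gamma_2(G),\gamma_2(G)\rbrack\le\gamma_4(G)=1$ --- and to exploit that $\gamma_3(G)=\langle s_3,t_3\rangle$ equals the centre $\zeta_1(G)$, so that $s_3$ and $t_3$ are fixed by every conjugation. These two facts reduce the whole computation to a single commutator evaluation.

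First I would pin down the maximal subgroups. Since $G/G^\prime\simeq\mathbb{F}_p^2$ is elementary abelian of rank $2$, every maximal subgroup of $G$ contains $G^\prime$, and the maximal subgroups are precisely the preimages in $G$ of the $p+1$ one-dimensional $\mathbb{F}_p$-subspaces of $G/G^\prime$. With respect to the basis $(\bar x,\bar y)$ these lines are spanned by $\bar y$ and by $\bar x+(i-2)\bar y$ for $2\le i\le p+1$, i.e. the vertical line together with all $p$ possible slopes. Hence $H_1=\langle y,G^\prime\rangle$ and $H_i=\langle g_i,G^\prime\rangle$ with $g_i=xy^{i-2}$ for $2\le i\le p+1$ is the complete list of maximal subgroups, as claimed.

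Next I would establish the general formula $H^\prime=(G^\prime)^{g-1}=\lbrace\lbrack a,g\rbrack\mid a\in G^\prime\rbrace$ for any subgroup of the shape $H=\langle g\rangle G^\prime$ with $G^\prime$ abelian and normal. The right-hand side is a subgroup because $G^\prime$ is abelian, whence $\lbrack ab,g\rbrack=\lbrack a,g\rbrack^{b}\lbrack b,g\rbrack=\lbrack a,g\rbrack\lbrack b,g\rbrack$; it is normalised by $g$ and by $G^\prime$; and the quotient $H/(G^\prime)^{g-1}$ is abelian, being generated by the commuting images of $\bar g$ and of $G^\prime/(G^\prime)^{g-1}$. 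Applying this with $g=g_i$ gives $H_i^\prime=(G^\prime)^{g_i-1}$, which is generated by $s_2^{g_i-1}$, $s_3^{g_i-1}$, $t_3^{g_i-1}$. The last two are trivial because $s_3,t_3\in\zeta_1(G)$, so $H_i^\prime=\langle s_2^{g_i-1}\rangle=\langle\lbrack s_2,g_i\rbrack\rangle$.

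Finally I would evaluate $\lbrack s_2,g_i\rbrack$ using $\lbrack s_2,x\rbrack=s_3$, $\lbrack s_2,y\rbrack=t_3$ and the centrality of $\gamma_3(G)$. For $g_1=y$ this is immediate: $\lbrack s_2,y\rbrack=t_3$, so $H_1^\prime=\langle t_3\rangle$. For $g_i=xy^{i-2}$ with $2\le i\le p+1$, the identity $\lbrack s_2,xy^{i-2}\rbrack=\lbrack s_2,y^{i-2}\rbrack\,\lbrack s_2,x\rbrack^{y^{i-2}}$ together with $\lbrack s_2,x\rbrack=s_3\in\zeta_1(G)$ and $\lbrack s_2,y^{i-2}\rbrack=t_3^{i-2}$ (the iterated commutators in $y$ beyond the first vanish since $\lbrack t_3,y\rbrack=1$) yields $\lbrack s_2,g_i\rbrack=t_3^{i-2}s_3=s_3t_3^{i-2}$, the last equality because $s_3$ and $t_3$ are central and hence commute. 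Therefore $H_i^\prime=\langle s_3t_3^{i-2}\rangle$. I expect the only mildly delicate point to be the bookkeeping in this last expansion and the check that the $g_i$ really exhaust all $p+1$ lines of $G/G^\prime$; both are routine once one uses that $G$ is $2$-generated and that $\gamma_3(G)$ is central.
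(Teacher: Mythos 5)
The paper states Lemma \ref{lmm:StemIcl6} without any proof at all, so there is no argument of the author's to compare yours against; your write-up supplies a correct and complete verification. The three ingredients you isolate are exactly the right ones: since \(G/G^\prime\) is elementary abelian of type \((p,p)\), the Frattini subgroup equals \(G^\prime\) and the \(p+1\) maximal subgroups are the preimages of the lines of \(G/G^\prime\), which your generators \(g_1=y\), \(g_i=xy^{i-2}\) exhaust; since \(G^\prime\) is abelian (because \(\gamma_4(G)=1\)) and normal, the map \(a\mapsto\lbrack a,g_i\rbrack\) is an endomorphism of \(G^\prime\) whose image \((G^\prime)^{g_i-1}\) is normal in \(H_i\) with abelian quotient, forcing \(H_i^\prime=(G^\prime)^{g_i-1}\); and because \(s_3,t_3\in\zeta_1(G)=\gamma_3(G)\), that image is generated by \(\lbrack s_2,g_i\rbrack\) alone. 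Your expansion \(\lbrack s_2,xy^{i-2}\rbrack=\lbrack s_2,y^{i-2}\rbrack\lbrack s_2,x\rbrack^{y^{i-2}}=t_3^{i-2}s_3\) is consistent with the paper's commutator convention \(\lbrack a,b\rbrack=a^{-1}b^{-1}ab\) and with the centrality of \(\gamma_3(G)\), and it checks out at \(i=2\) (giving \(\langle s_3\rangle\)) and at \(i=1\) (giving \(\langle t_3\rangle\)). I see no gap.
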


\noindent
As a consequence of Lemma
\ref{lmm:StemIcl6},
we only have trivial two-step centralizers
\(G^\prime=\chi_2(G)<\chi_3(G)=G\) and the invariants \(e(G)\) and \(s(G)\)
of section \S\
\ref{ss:MtabTyp33CoclGe2}
take the same value \(e=s=3\).

Individual relations for isomorphism classes by James
\cite[pp. 620--621]{Jm}
are given in Table
\ref{tab:RelStemIcl6},
where \(\nu\) denotes the smallest positive quadratic non-residue modulo \(p\)
and \(g\) denotes the smallest positive primitive root modulo \(p\).

\renewcommand{\arraystretch}{1.2}
\begin{table}[ht]
\caption{Relations for the stem groups of \(\Phi_6\)}
\label{tab:RelStemIcl6}
\begin{center}
\begin{tabular}{|l|l|l|l|}
\hline
 stem group             & \(x^p\)        & \(y^p\)                & parameters                                             \\
\hline
 \(\Phi_6(221)_a\)      & \(s_3\)        & \(t_3\)                &                                                        \\
 \(\Phi_6(221)_{b_r}\)  & \(s_3\)        & \(t_3^k\)              & \(1\le r\le\frac{p-1}{2}\), \(k=g^r\)                  \\
 \(\Phi_6(221)_{c_r}\)  & \(s_3^rt_3^r\) & \(s_3^{-\frac{r}{4}}\) & \(r\in\lbrace 1,\nu\rbrace\)                           \\
 \(\Phi_6(221)_{d_0}\)  & \(t_3^\nu\)    & \(s_3\)                &                                                        \\
 \(\Phi_6(221)_{d_r}\)  & \(s_3t_3\)     & \(s_3^k\)              & \(1\le r\le\frac{p-1}{2}\), \(k=\frac{g^{2r+1}-1}{4}\) \\
 \(\Phi_6(21^3)_a\)     & \(1\)          & \(t_3\)                & \(p\ge 5\)                                             \\
 \(\Phi_6(21^3)_{b_r}\) & \(t_3^r\)      & \(1\)                  & \(r\in\lbrace 1,\nu\rbrace\), \(p\ge 5\)               \\
 \(\Phi_6(1^5)\)        & \(1\)          & \(1\)                  &                                                        \\      
\hline
\end{tabular}
\end{center}
\end{table}

These presentations
for \(7\) isomorphism classes of \(3\)-groups,
resp. \(12\) isomorphism classes of \(5\)-groups,
among the stem of \(\Phi_6\)
are now used to calculate the kernels of all transfers
\(\mathrm{T}_i:G/G^\prime\to H_i/H_i^\prime\), \(1\le i\le p+1\),
whose images are given for \(p=5\) in very convenient form by Lemma
\ref{lmm:TransferIcl6},
since the expressions for \textit{inner transfers} are well-behaved \(p\)th powers.
\textit{Outer transfers} always map to \(p\)th powers, anyway.

\begin{lemma}
\label{lmm:TransferIcl6}
For any \(1\le i\le 6\), the image of an arbitrary element
\(gG^\prime\in G/G^\prime\) with representation \(x^jy^\ell G^\prime\), \(0\le j,\ell\le 4\),
under the transfer \(\mathrm{T}_i\) is given by
\(\mathrm{T}_i(x^jy^\ell G^\prime)=x^{pj}y^{p\ell}H_i^\prime\).
\end{lemma}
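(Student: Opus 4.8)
The plan is to compute the Artin transfer $\mathrm{T}_i:G/G^\prime\to H_i/H_i^\prime$ on the two basis elements $xG^\prime,yG^\prime$ of the elementary abelian group $G/G^\prime\simeq(p,p)$ and then invoke multiplicativity, $\mathrm{T}_i(x^jy^\ell G^\prime)=\mathrm{T}_i(xG^\prime)^j\,\mathrm{T}_i(yG^\prime)^\ell$, using that the value of $\mathrm{T}_i$ does not depend on the chosen transversal of $H_i$ in $G$. Note first that by the power relations recorded in Table~\ref{tab:RelStemIcl6} (for $p=5$) we have $x^p,y^p\in\gamma_2(G)=G^\prime\le H_i$ for every $i$, so that the asserted right-hand side $x^{pj}y^{p\ell}H_i^\prime$ indeed lies in $H_i/H_i^\prime$; it therefore suffices to show $\mathrm{T}_i(xG^\prime)=x^pH_i^\prime$ and $\mathrm{T}_i(yG^\prime)=y^pH_i^\prime$ for all $1\le i\le 6$.

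First I would dispose of the \emph{outer} transfers. If $g\in\{x,y\}$ is not contained in $H_i$, then $gH_i$ generates the cyclic quotient $G/H_i$ of order $p$, so $\{1,g,\dots,g^{p-1}\}$ is a transversal of $H_i$ in $G$, and since $g^p\in\gamma_2(G)\le H_i$ the transfer product collapses: all factors are trivial except the last one, which equals $g^p$, hence $\mathrm{T}_i(gG^\prime)=g^pH_i^\prime$. Reading off the generators from Lemma~\ref{lmm:StemIcl6}, this already settles $\mathrm{T}_i(xG^\prime)=x^pH_i^\prime$ for all $i$, $\mathrm{T}_i(yG^\prime)=y^pH_i^\prime$ for $2\le i\le 6$, and $\mathrm{T}_1(xG^\prime)=x^pH_1^\prime$.

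The only genuine computations are the two \emph{inner} transfers $\mathrm{T}_1(yG^\prime)$, where $y=g_1\in H_1$, and $\mathrm{T}_2(xG^\prime)$, where $x=g_2\in H_2$. For $\mathrm{T}_1(yG^\prime)$ I would use the transversal $\{1,x,\dots,x^{p-1}\}$, which gives $\mathrm{T}_1(yG^\prime)=\prod_{k=0}^{p-1}x^{-k}yx^k\bmod H_1^\prime$. A short induction using $[y,x]=s_2$, $[s_2,x]=s_3$ and $\gamma_4(G)=1$ yields $x^{-k}yx^k=y\,s_2^{k}\,s_3^{\binom{k}{2}}$; since $H_1/H_1^\prime$ is abelian the product rearranges to $y^{p}\,s_2^{\sum_k k}\,s_3^{\sum_k\binom{k}{2}}=y^{p}\,s_2^{\binom{p}{2}}\,s_3^{\binom{p}{3}}$. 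The mirror-image manipulation with $[x,y]=s_2^{-1}$, $[s_2,y]=t_3$ and the transversal $\{1,y,\dots,y^{p-1}\}$ gives $\mathrm{T}_2(xG^\prime)=x^{p}\,s_2^{-\binom{p}{2}}\,t_3^{-\binom{p}{3}}\bmod H_2^\prime$.

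The step where $p=5$ is used — and the only real obstacle — is the vanishing of these correction terms. One needs $\binom{p}{2}\equiv 0$ and $\binom{p}{3}\equiv 0\pmod p$ together with the relations $s_2^p=s_3^p=t_3^p=1$. Both congruences hold for every prime $p\ge 5$; in fact $\binom{p}{3}$ fails to be divisible by $p$ exactly when $p=3$, which is precisely why the analogous transfer lemma for the stem of $\Phi_6$ at $p=3$ carries an extra $s_3$-term. For $p=5$ the corrections are therefore trivial, so $\mathrm{T}_1(yG^\prime)=y^pH_1^\prime$ and $\mathrm{T}_2(xG^\prime)=x^pH_2^\prime$, and assembling all cases via multiplicativity yields $\mathrm{T}_i(x^jy^\ell G^\prime)=x^{pj}y^{p\ell}H_i^\prime$ for $1\le i\le 6$.
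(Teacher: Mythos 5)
Your proof is correct and takes essentially the route the paper intends: the paper states the lemma without a formal proof, remarking only that outer transfers are automatically \(p\)th powers while the inner ones are ``well-behaved'', and your explicit computation of the two inner transfers \(\mathrm{T}_1(yG^\prime)\) and \(\mathrm{T}_2(xG^\prime)\) --- with correction exponents \(\binom{p}{2}\) and \(\binom{p}{3}\), both divisible by \(p=5\), killed by \(s_2^p=s_3^p=t_3^p=1\) --- supplies exactly the missing detail, including the correct explanation of why \(p=3\) behaves differently. One small slip in wording: ``settles \(\mathrm{T}_i(xG^\prime)=x^pH_i^\prime\) for all \(i\)'' should read ``for all \(i\ne 2\)'', since \(x=g_2\in H_2\); this is harmless because you then treat \(\mathrm{T}_2(xG^\prime)\) as an inner transfer anyway.
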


\renewcommand{\arraystretch}{1.0}

\begin{table}[ht]
\caption{TKT of corresponding \(p\)-groups in \(\Phi_6\) for \(p\in\lbrace 3,5\rbrace\)}
\label{tab:TrfKerStemIcl6}
\begin{center}
\begin{tabular}{|cl|lc|ll|ccc|}
\hline
 \multicolumn{4}{|c|}{\(p=3\)}                                                    & \multicolumn{5}{|c|}{\(p=5\)}                       \\
\hline
 \multicolumn{2}{|c|}{\(3\)-group}                     & TKT      & \(\varkappa\) & \multicolumn{2}{|c|}{\(5\)-group} & \(\eta\) & \(\varkappa\) & property           \\
\hline
 \(\langle 243,7\rangle\) & \(G_0^{4,5}(1,1,-1,1)\)    & D.\(5\)  & \((4224)\)    & \(\langle 3125,14\rangle\) & \(\Phi_6(221)_a\)      & 6 & \((123456)\)  & identity           \\
 \(\langle 243,4\rangle\) & \(G_0^{4,5}(1,1,1,1)\)     & H.\(4\)  & \((4443)\)    & \(\langle 3125,11\rangle\) & \(\Phi_6(221)_{b_1}\)  & 2 & \((125364)\)  & \(4\)-cycle        \\
 \multicolumn{2}{|c|}{no analog}                       &          &               & \(\langle 3125,7\rangle\)  & \(\Phi_6(221)_{b_2}\)  & 2 & \((126543)\)  & two transpos.      \\
 \(\langle 243,8\rangle\) & \(G_0^{4,5}(0,0,0,1)\)     & c.\(21\) & \((0231)\)    & \(\langle 3125,8\rangle\)  & \(\Phi_6(221)_{c_1}\)  & 1 & \((612435)\)  & \(5\)-cycle        \\
 \(\langle 243,5\rangle\) & \(G_0^{4,5}(0,0,-1,1)\)    & D.\(10\) & \((2241)\)    & \(\langle 3125,13\rangle\) & \(\Phi_6(221)_{c_2}\)  & 1 & \((612435)\)  & \(5\)-cycle        \\
 \(\langle 243,9\rangle\) & \(G_0^{4,5}(0,-1,-1,0)\)   & G.\(19\) & \((2143)\)    & \(\langle 3125,10\rangle\) & \(\Phi_6(221)_{d_0}\)  & 0 & \((214365)\)  & three transpos.    \\
 \(\langle 243,6\rangle\) & \(G_0^{4,5}(0,-1,0,1)\)    & c.\(18\) & \((0313)\)    & \(\langle 3125,12\rangle\) & \(\Phi_6(221)_{d_1}\)  & 0 & \((512643)\)  & \(6\)-cycle        \\
 \multicolumn{2}{|c|}{no analog}                       &          &               & \(\langle 3125,9\rangle\)  & \(\Phi_6(221)_{d_2}\)  & 0 & \((312564)\)  & two \(3\)-cycles   \\
 \multicolumn{2}{|c|}{no analog}                       &          &               & \(\langle 3125,4\rangle\)  & \(\Phi_6(21^3)_a\)     & 2 & \((022222)\)  & nrl.const.with fp. \\
 \multicolumn{2}{|c|}{no analog}                       &          &               & \(\langle 3125,5\rangle\)  & \(\Phi_6(21^3)_{b_1}\) & 1 & \((011111)\)  & nearly constant    \\
 \multicolumn{2}{|c|}{no analog}                       &          &               & \(\langle 3125,6\rangle\)  & \(\Phi_6(21^3)_{b_2}\) & 1 & \((011111)\)  & nearly constant    \\
 \(\langle 243,3\rangle\) & \(G_0^{4,5}(0,0,0,0)\)     & b.\(10\) & \((0043)\)    & \(\langle 3125,3\rangle\)  & \(\Phi_6(1^5)\)        & 6 & \((000000)\)  & constant           \\
\hline
\end{tabular}
\end{center}
\end{table}

In Table \ref{tab:TrfKerStemIcl6},
TKTs \(\varkappa\) of \(3\)-groups in the notation of
\cite[\S\ 3.3]{Ma2}
were determined by Nebelung
\cite[p. 208, Thm. 6.14]{Ne1}
already, using different presentations in equation
(\ref{eqn:PwrCmtPres}),
section \S\
\ref{sss:PrmPres2}.
For \(5\)-groups the TKTs are given here for the first time.
The only exception is the group \(\langle 3125,14\rangle\),
which was discussed in the well-known paper by Taussky
\cite[p. 436, Thm.2]{Ta2}
as an example to show that the coarse TKT \(\kappa=(\mathrm{AAAAAA})\) can occur for \(p=5\),
and also for primes \(p\ge 7\).
A convenient partial characterization is provided by
counters of fixed point transfer kernels, resp. abelianizations of type \((5,5,5)\),
\(\eta=\#\lbrace 1\le i\le 6\mid\kappa(i)=\mathrm{A}\rbrace
=\#\lbrace 1\le i\le 6\mid\tau(i)=(5,5,5)\rbrace\),
which must coincide, according to
\cite[Thm. 7, p. 11]{HeSm}.

\noindent
The correspondence between \(p=3\) and \(p=5\) is due to the formally identical power-commutator presentation.
However, it is partially rather shallow, since corresponding groups can have different properties
with respect to their role on the coclass graphs \(\mathcal{G}(3,2)\) and \(\mathcal{G}(5,2)\).
For example, the \(3\)-groups \(\langle 243,6\rangle\) and \(\langle 243,8\rangle\) are mainline vertices
having the mandatory total first transfer kernel \(\varkappa(1)=0\)
whereas the \(5\)-groups \(\langle 3125,12\rangle\) and \(\langle 3125,8\rangle\) are terminal without total transfer.

%\newpage

%--------------------------------------------------------------------------------

\subsubsection{Top vertices of type \((5,5)\) on \(\mathcal{G}(5,2)\)}
\label{sss:Typ55Cocl2}

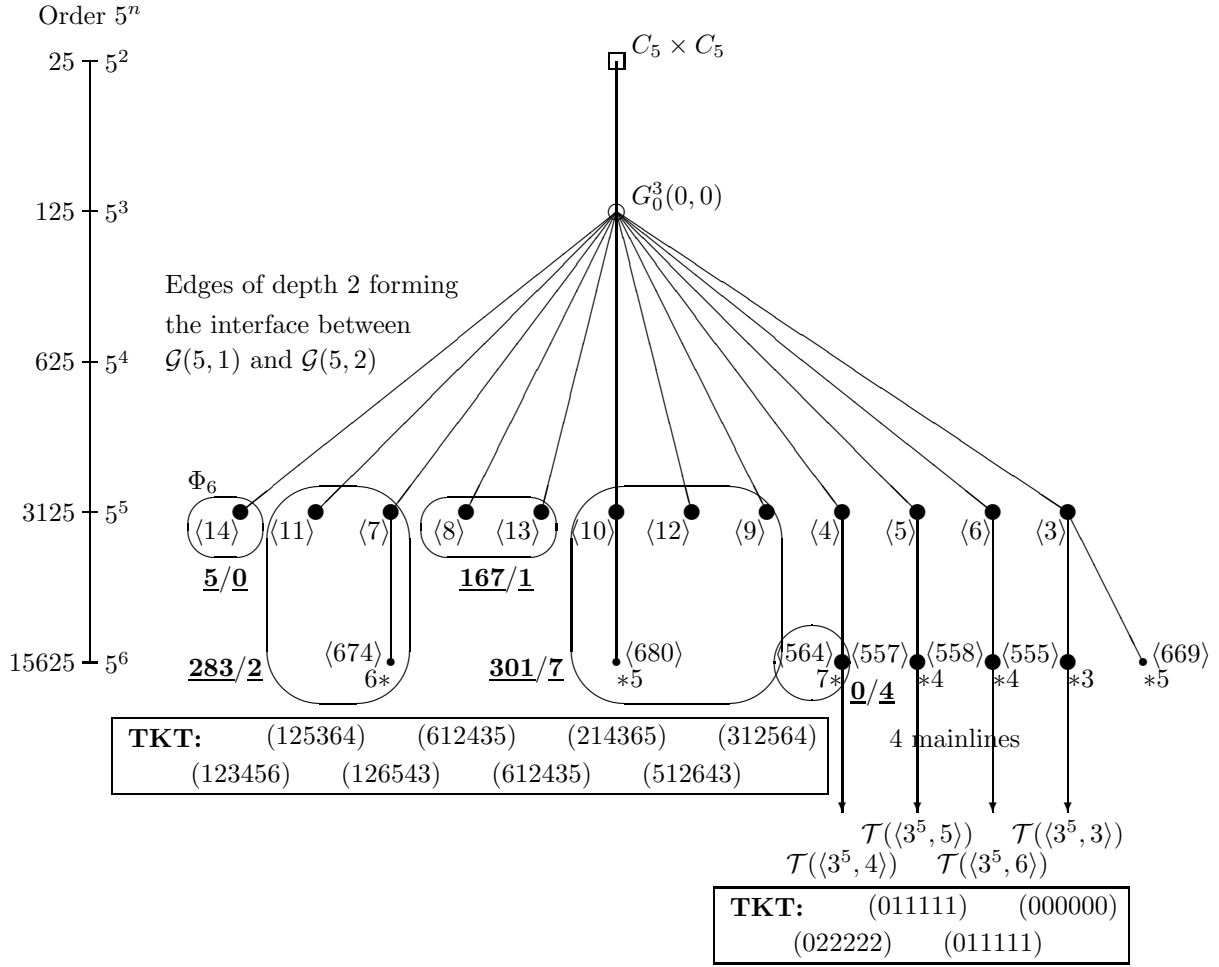
\begin{figure}[ht]
\caption{Sporadic groups and roots of conjectural coclass trees on \(\mathcal{G}(5,2)\)}
\label{fig:Typ55Cocl2}

% Abelianisation of Type (5,5) on Coclass Graph G(5,2)
\setlength{\unitlength}{1cm}
\begin{picture}(16,12.5)(-1,-9.5)

% scale of orders
\put(0,2.5){\makebox(0,0)[cb]{Order \(5^n\)}}
\put(0,2){\line(0,-1){8}}
\multiput(-0.1,2)(0,-2){5}{\line(1,0){0.2}}
\put(-0.2,2){\makebox(0,0)[rc]{\(25\)}}
\put(0.2,2){\makebox(0,0)[lc]{\(5^2\)}}
\put(-0.2,0){\makebox(0,0)[rc]{\(125\)}}
\put(0.2,0){\makebox(0,0)[lc]{\(5^3\)}}
\put(-0.2,-2){\makebox(0,0)[rc]{\(625\)}}
\put(0.2,-2){\makebox(0,0)[lc]{\(5^4\)}}
\put(-0.2,-4){\makebox(0,0)[rc]{\(3125\)}}
\put(0.2,-4){\makebox(0,0)[lc]{\(5^5\)}}
\put(-0.2,-6){\makebox(0,0)[rc]{\(15625\)}}
\put(0.2,-6){\makebox(0,0)[lc]{\(5^6\)}}

% grandparent and parent
\put(7.2,2.2){\makebox(0,0)[lc]{\(C_5\times C_5\)}}
\put(6.9,1.9){\framebox(0.2,0.2){}}
\put(7,2){\line(0,-1){2}}
\put(7,0){\circle{0.2}}
\put(7.2,0.2){\makebox(0,0)[lc]{\(G^3_0(0,0)\)}}

% 12 directed edges from common parent
\put(7,0){\line(-5,-4){5}}
\put(7,0){\line(-1,-1){4}}
\put(7,0){\line(-3,-4){3}}
\put(7,0){\line(-1,-2){2}}
\put(7,0){\line(-1,-4){1}}
\put(7,0){\line(0,-1){4}}
\put(7,0){\line(1,-4){1}}
\put(7,0){\line(1,-2){2}}
\put(7,0){\line(3,-4){3}}
\put(7,0){\line(1,-1){4}}
\put(7,0){\line(5,-4){5}}
\put(7,0){\line(3,-2){6}}
\put(1,-1){\makebox(0,0)[lc]{Edges of depth \(2\) forming}}
\put(1,-1.5){\makebox(0,0)[lc]{the interface between}}
\put(1,-2){\makebox(0,0)[lc]{\(\mathcal{G}(5,1)\) and \(\mathcal{G}(5,2)\)}}

% 12 vertices of the stem of isoclinism family Phi6
\put(1.5,-3.6){\makebox(0,0)[cc]{\(\Phi_6\)}}
\multiput(2,-4)(1,0){12}{\circle*{0.2}}
\put(2,-4.1){\makebox(0,0)[rt]{\(\langle 14\rangle\)}}
\put(3,-4.1){\makebox(0,0)[rt]{\(\langle 11\rangle\)}}
\put(4,-4.1){\makebox(0,0)[rt]{\(\langle 7\rangle\)}}
\put(5,-4.1){\makebox(0,0)[rt]{\(\langle 8\rangle\)}}
\put(6,-4.1){\makebox(0,0)[rt]{\(\langle 13\rangle\)}}
\put(7,-4.1){\makebox(0,0)[rt]{\(\langle 10\rangle\)}}
\put(8,-4.1){\makebox(0,0)[rt]{\(\langle 12\rangle\)}}
\put(9,-4.1){\makebox(0,0)[rt]{\(\langle 9\rangle\)}}
\put(10,-4.1){\makebox(0,0)[rt]{\(\langle 4\rangle\)}}
\put(11,-4.1){\makebox(0,0)[rt]{\(\langle 5\rangle\)}}
\put(12,-4.1){\makebox(0,0)[rt]{\(\langle 6\rangle\)}}
\put(13,-4.1){\makebox(0,0)[rt]{\(\langle 3\rangle\)}}

% descendants of order 15625
\multiput(4,-4)(3,0){2}{\line(0,-1){2}}
\multiput(10,-4)(1,0){4}{\line(0,-1){2}}
\multiput(4,-6)(3,0){2}{\circle*{0.1}}
\multiput(10,-6)(1,0){4}{\circle*{0.2}}
\put(13,-4){\line(1,-2){1}}
\put(14,-6){\circle*{0.1}}
\put(4,-6.1){\makebox(0,0)[rt]{\(6*\)}}
\put(3.9,-5.9){\makebox(0,0)[rc]{\(\langle 674\rangle\)}}
\put(7,-6.1){\makebox(0,0)[lt]{\(*5\)}}
\put(7.1,-5.9){\makebox(0,0)[lc]{\(\langle 680\rangle\)}}
\put(10,-6.1){\makebox(0,0)[rt]{\(7*\)}}
\put(11,-6.1){\makebox(0,0)[lt]{\(*4\)}}
\put(12,-6.1){\makebox(0,0)[lt]{\(*4\)}}
\put(13,-6.1){\makebox(0,0)[lt]{\(*3\)}}
\put(14,-6.1){\makebox(0,0)[lt]{\(*5\)}}

% 4 conjectural coclass trees

% mainline and descendants of [3125,4]
\put(9.9,-5.9){\makebox(0,0)[rc]{\(\langle 564\rangle\)}}
\put(10,-6){\vector(0,-1){2}}
\put(10,-8.5){\makebox(0,0)[ct]{\(\mathcal{T}(\langle 3^5,4\rangle)\)}}

% mainline and descendants of [3125,5]
\put(10.9,-5.9){\makebox(0,0)[rc]{\(\langle 557\rangle\)}}
\put(11,-6){\vector(0,-1){2}}
\put(11,-8.1){\makebox(0,0)[ct]{\(\mathcal{T}(\langle 3^5,5\rangle)\)}}

\put(11.5,-7){\makebox(0,0)[cc]{\(4\) mainlines}}

% mainline and descendants of [3125,6]
\put(11.9,-5.9){\makebox(0,0)[rc]{\(\langle 558\rangle\)}}
\put(12,-6){\vector(0,-1){2}}
\put(12,-8.5){\makebox(0,0)[ct]{\(\mathcal{T}(\langle 3^5,6\rangle)\)}}

% mainline and descendants of [3125,3]
\put(12.9,-5.9){\makebox(0,0)[rc]{\(\langle 555\rangle\)}}
\put(14.1,-5.9){\makebox(0,0)[lc]{\(\langle 669\rangle\)}}
\put(13,-6){\vector(0,-1){2}}
\put(13,-8.1){\makebox(0,0)[ct]{\(\mathcal{T}(\langle 3^5,3\rangle)\)}}

% TKTs of sporadic groups
\put(1,-7){\makebox(0,0)[cc]{\textbf{TKT:}}}
\put(2,-7.5){\makebox(0,0)[cc]{\((123456)\)}}
\put(3,-7){\makebox(0,0)[cc]{\((125364)\)}}
\put(4,-7.5){\makebox(0,0)[cc]{\((126543)\)}}
\put(5,-7){\makebox(0,0)[cc]{\((612435)\)}}
\put(6,-7.5){\makebox(0,0)[cc]{\((612435)\)}}
\put(7,-7){\makebox(0,0)[cc]{\((214365)\)}}
\put(8,-7.5){\makebox(0,0)[cc]{\((512643)\)}}
\put(9,-7){\makebox(0,0)[cc]{\((312564)\)}}
\put(0.3,-7.75){\framebox(9.5,1){}}

% TKTs of coclass trees
\put(9,-9.25){\makebox(0,0)[cc]{\textbf{TKT:}}}
\put(10,-9.75){\makebox(0,0)[cc]{\((022222)\)}}
\put(11,-9.25){\makebox(0,0)[cc]{\((011111)\)}}
\put(12,-9.75){\makebox(0,0)[cc]{\((011111)\)}}
\put(13,-9.25){\makebox(0,0)[cc]{\((000000)\)}}
\put(8.3,-10){\framebox(5.5,1){}}

% distribution of 2nd 3-class groups
\put(1.8,-4.2){\oval(1,0.8)}
\put(1.8,-4.9){\makebox(0,0)[cc]{\(\underbar{\textbf{5}}/\underbar{\textbf{0}}\)}}
\put(3.3,-5.1){\oval(1.9,2.9)}
\put(1.8,-6.1){\makebox(0,0)[cc]{\(\underbar{\textbf{283}}/\underbar{\textbf{2}}\)}}

% distribution of 2nd 5-class groups
\put(5.3,-4.2){\oval(1.8,0.8)}
\put(5.4,-4.9){\makebox(0,0)[cc]{\(\underbar{\textbf{167}}/\underbar{\textbf{1}}\)}}
\put(7.8,-5.1){\oval(2.8,2.9)}
\put(5.8,-6.1){\makebox(0,0)[cc]{\(\underbar{\textbf{301}}/\underbar{\textbf{7}}\)}}

% distribution of 2nd 5-class groups
\put(9.6,-6){\oval(1,1)}
\put(10.4,-6.4){\makebox(0,0)[cc]{\(\underbar{\textbf{0}}/\underbar{\textbf{4}}\)}}

\end{picture}

\end{figure}

Figure
\ref{fig:Typ55Cocl2}
shows the non-CF groups at the top of coclass graph \(\mathcal{G}(5,2)\).
It was constructed by means of the SmallGroups library
\cite{BEO}
of GAP
\cite{GAP}
and MAGMA
\cite{MAGMA}.
The groups are labelled by a number in angles,
which is their identifier in that library.
Additional confirmation was obtained
by explicit descendant calculation
with the aid of the ANUPQ package
\cite{GNO}.

\noindent
The vertices of the coclass graph \(\mathcal{G}(5,2)\) in Figure
\ref{fig:Typ55Cocl2}
are classified by using different symbols:

\begin{enumerate}
\item
a large contour square \(\square\) represents an abelian group,
\item
a big contour circle {\Large \(\circ\)} represents a metabelian group with abelian maximal subgroup,
\item
big full discs {\Large \(\bullet\)} represent metabelian groups with bicyclic centre of type \((5,5)\),
\item
small full discs {\tiny \(\bullet\)} represent metabelian groups with cyclic centre of order \(5\).
\end{enumerate}

\noindent
The actual distribution of the
\(959\), resp. \(377\), second \(5\)-class groups \(G_5^2(K)\)
of complex, resp. real, quadratic number fields \(K=\mathbb{Q}(\sqrt{D})\) of type \((5,5)\)
with discriminant \(-2\,270\,831<D<26\,695\,193\) is represented by
underlined boldface counters (in the format complex/real)
of the hits of vertices surrounded by the adjacent oval.

%\newpage

%--------------------------------------------------------------------------------

\subsubsection{Fixed point principalization problem}
\label{sss:TausskyProblem}

We are pleased to present the solution of a problem posed in 1970 by Taussky
\cite[p. 438, Rem. 1]{Ta2}.
It concerns the lack of realizations, in the form of
second \(5\)-class groups \(\mathrm{G}_5^2(K)\) of number fields \(K\),
of the unique metabelian \(5\)-group \(\langle 5^5,14\rangle\)
with \(6\) fixed point transfer kernels,
that is with coarse TKT \(\kappa=(\mathrm{AAAAAA})\),
but without total transfer kernels \(\varkappa(i)=0\).
Actually, we now have \(5\) realizations
of this very special TKT \(\varkappa=(123456)\) (identity permutation)
for quadratic fields \(K=\mathbb{Q}(\sqrt{D})\),
\(D\in\lbrace -89\,751,-235\,796,-1\,006\,931,-1\,996\,091,\)
\(-2\,187\,064\rbrace\),
in Table
\ref{tbl:CompQuad5x5Details},
and \(4\) further realizations
in Table
\ref{tbl:CyclQrt5x5},
for certain cyclic quartic fields
\(K=\mathbb{Q}\left((\zeta_5-\zeta_5^{-1})\sqrt{D}\right)\),
\(D\in\lbrace 581,753,2\,296,4\,553\rbrace\).

%\newpage

%--------------------------------------------------------------------------------

\subsubsection{Statistical evaluation of second \(5\)-class groups \(\mathrm{G}_5^2(K)\)}
\label{sss:StatScnd5ClgpCocl2}

The possibilities for \(5\)-groups of coclass \(2\)
are more extensive than those for coclass \(1\).

For the \(377\) real quadratic fields \(K=\mathbb{Q}(\sqrt{D})\), \(0<D<26\,695\,193\),
in Table
\ref{tbl:RealQuad5x5},
there occur \(7\) cases of coarse TKT \(\kappa=(\mathrm{BBBBBB})\), for
\(D\in\lbrace 4\,954\,652,7\,216\,401,12\,562\,849,16\,434\,245,18\,434\,456,\)
\(19\,115\,293,20\,473\,841\rbrace\),
a single case of TKT \(\varkappa=(612435)\), for \(D=18\,070\,649\),
\(2\) cases of coarse TKT\(\kappa=(\mathrm{AABBBB})\), for
\(D\in\lbrace 10\,486\,805,18\,834\,493\rbrace\),
and \(4\) cases of the first excited state of TKT \(\varkappa=(022222)\),
for \(D\in\lbrace 7\,306\,081,11\,545\,953,14\,963\,612,22\,042\,632\rbrace\).

%\newpage

%--------------------------------------------------------------------------------

\renewcommand{\arraystretch}{1.1}

\begin{table}[ht]
\caption{\(9\) variants of \(G=\mathrm{G}_5^2(K)\) for \(959\) \(K=\mathbb{Q}(\sqrt{D})\), \(-2\,270\,831\le D<0\)}
\label{tbl:CompQuad5x5}
\begin{center}
\begin{tabular}{|r||c|c||c||c|c||c|c|}
\hline
           \(D\) & \(\tau(K)\)                       & \(\tau(0)\) & \(\kappa(K)\)    & \(G\)              & \(\mathrm{cc}(G)\) &  \(\#\) &   \(\%\) \\
\hline
    \(-11\,199\) & \((5,5^2)^6\)                     & \((5,5,5)\) & \((B^6)\)        &\(\langle 3125,9\vert 10\vert 12\rangle\)& \(2\) & \(301\) & \(31.4\) \\
    \(-12\,451\) & \((5,5,5),(5,5^2)^5\)             & \((5,5,5)\) & \((A,B^5)\)      &\(\langle 3125,8\vert 13\rangle*\)& \(2\) & \(167\) & \(17.4\) \\
    \(-30\,263\) & \((5,5,5)^2,(5,5^2)^4\)           & \((5,5,5)\) & \((A^2,B^4)\)    &\(\langle 3125,7\vert 11\rangle\)& \(2\) & \(283\) & \(29.5\) \\
    \(-89\,751\) & \((5,5,5)^6\)                     & \((5,5,5)\) & \((A^6)\)        & \(\langle 3125,14\rangle*\)      & \(2\) &   \(5\) &   \(  \) \\
\hline
    \(-62\,632\) & \((5,5,5,5^2),(5,5,5),(5,5^2)^4\) &             & \((?,A,B^4)\)    & \(\langle 78125,\#\rangle\)     & \(2\) & \(124\) & \(12.9\) \\
    \(-67\,031\) & \((5,5,5,5,5),(5,5^2)^5\)         &             & \((B^6)\)        & \(\langle 78125,\#\rangle\)     & \(2\) &   \(6\) &   \(  \) \\
    \(-67\,063\) & \((5,5,5,5,5),(5,5,5),(5,5^2)^4\) &             & \((B,A,B^4)\)    & \(\langle 78125,\#\rangle\)     & \(2\) &  \(37\) &  \(3.9\) \\
   \(-280\,847\) & \((5,5,5,5^2),(5,5^2)^5\)         &             & \((?,B^5)\)      & \(\langle 78125,\#\rangle\)     & \(2\) &  \(32\) &  \(3.3\) \\
\hline
   \(-181\,752\) & \((5,5^2,5^2,5^2),(5,5,5),(5,5^2)^4\) &         & \((?,A,B^4)\)    & \(\langle 1953125,\#\rangle\)   & \(2\) &   \(4\) &   \(  \) \\
\hline
\end{tabular}
\end{center}
\end{table}

Among the \(959\) complex quadratic fields \(K=\mathbb{Q}(\sqrt{D})\), \(-2\,270\,831\le D<0\),
in Table
\ref{tbl:CompQuad5x5},
ground states (GS) appear exclusively with sporadic, and mostly terminal, top vertices of \(\mathcal{G}(5,2)\).
The \(5\) cases of TKT \(\varkappa=(123456)\) have been presented separately
in section \S\
\ref{sss:TausskyProblem}
as solutions of Taussky's problem of 1970.
Further, there are \(167\) cases \((17.4\%)\) of TKT \(\varkappa=(612435)\)
(5-cycle with coarse TKT \(\kappa=(\mathrm{BBBABB})\)) starting with \(D=-12\,451\),
which was attempted but not analyzed completely in 1982 by Heider and Schmithals
\cite{HeSm}
and \(283\) cases \((29.5\%)\) of coarse TKT \(\kappa=(\mathrm{AABBBB})\) starting with \(D=-30\,263\).
The remaining \(301\) cases \((31.4\%)\) of coarse TKT \(\kappa=(\mathrm{BBBBBB})\), starting with \(D=-11\,199\) are slightly dominating.

For excited states (ES) of coclass \(2\) as well as of coclass \(1\),
the distinguished first \(5\)-class group \(\mathrm{Cl}_5(K_1)\)
of the non-Galois absolute quintic subfield \(K_1\) of the unramified extension \(L_1\vert K\)
is of \(5\)-rank \(\mathrm{r}_5(K_1)=2\),
which shows impressively that the rank equation for \(p=3\),
\(\mathrm{r}_3(K_i)=\mathrm{r}_3(K)-1\),
by Gras
\cite{Gr}
and Gerth
\cite{Ge}
generalizes to a double inequality for \(p\ge 5\),
\[\mathrm{r}_p(K)-1\le\mathrm{r}_p(K_i)\le\frac{p-1}{2}\cdot(\mathrm{r}_p(K)-1),\]
as predicted, and partially proved, by B\"olling
\cite{Boe}
and Lemmermeyer
\cite{Lm}.

\renewcommand{\arraystretch}{1.1}

\begin{table}[ht]
\caption{\(9\) subvariants of \(G=\mathrm{G}_5^2(K)\) for \(31\) fields \(K=\mathbb{Q}(\sqrt{D})\), \(-89\,751\le D<0\)}
\label{tbl:CompQuad5x5Details}
\begin{center}
\begin{tabular}{|r||c|c||c||c|c||c|}
\hline
           \(D\) & \(\tau(K)\)                       & \(\tau(0)\) & \(\varkappa(K)\) & \(G\)              & \(\mathrm{cc}(G)\) & \(\#\) \\
\hline
    \(-11\,199\) & \((5,5^2)^6\)                     & \((5,5,5)\) & \((512643)\)     & \(\langle 3125,12\rangle*\)     & \(2\) &  \(7\) \\
    \(-17\,944\) & \((5,5^2)^6\)                     & \((5,5,5)\) & \((312564)\)     & \(\langle 3125,9\rangle*\)      & \(2\) &  \(2\) \\
    \(-42\,871\) & \((5,5^2)^6\)                     & \((5,5,5)\) & \((214365)\)     & \(\langle 3125,10\rangle\)      & \(2\) &  \(3\) \\
                 &                                   &             &                  & or \(\langle 15625,680\rangle\) &       &        \\
\hline
    \(-12\,451\) & \((5,5,5),(5,5^2)^5\)             & \((5,5,5)\) & \((612435)\)     & \(\langle 3125,8\rangle*\)      & \(2\) &  \(5\) \\
                 &                                   &             &                  & or \(\langle 3125,13\rangle*\)  &       &        \\
\hline
    \(-30\,263\) & \((5,5,5)^2,(5,5^2)^4\)           & \((5,5,5)\) & \((126543)\)     & \(\langle 3125,7\rangle\)       & \(2\) &  \(4\) \\
                 &                                   &             &                  & or \(\langle 15625,647\rangle\) &       &        \\
    \(-37\,363\) & \((5,5,5)^2,(5,5^2)^4\)           & \((5,5,5)\) & \((125364)\)     & \(\langle 3125,11\rangle*\)     & \(2\) &  \(2\) \\
\hline
    \(-89\,751\) & \((5,5,5)^6\)                     & \((5,5,5)\) & \((123456)\)     & \(\langle 3125,14\rangle*\)     & \(2\) &  \(5\) \\
\hline
    \(-62\,632\) & \((5,5,5,5^2),(5,5,5),(5,5^2)^4\) &             & \((322222)\)     & \(\langle 78125,\#\rangle\)     & \(2\) &  \(1\) \\
    \(-67\,031\) & \((5,5,5,5,5),(5,5^2)^5\)         &             & \((211111)\)     & \(\langle 78125,\#\rangle\)     & \(2\) &  \(1\) \\
    \(-67\,063\) & \((5,5,5,5,5),(5,5,5),(5,5^2)^4\) &             & \((322222)\)     & \(\langle 78125,\#\rangle\)     & \(2\) &  \(2\) \\
\hline
\end{tabular}
\end{center}
\end{table}

\noindent
The transfer target type (TTT) \(\tau(G)\)
of second \(5\)-class groups \(\mathrm{G}_5^2(K)\)
has been computed
for all quadratic number fields \(K=\mathbb{Q}(\sqrt{D})\), having
discriminant \(-2\,270\,831<D<26\,695\,193\) and
\(5\)-class group \(\mathrm{Cl}_5(K)\) of type \((5,5)\),
with the aid of MAGMA
\cite{MAGMA}
As a refinement, we calculated
the transfer kernel type (TKT) \(\varkappa(G)\)
for \(31\) fields \(K=\mathbb{Q}(\sqrt{D})\), \(-89\,751\le D<0\),
as given in Table
\ref{tbl:CompQuad5x5Details}.
This also refines results of Bembom in
\cite[p. 129]{Bm}.
Observe that Bembom does not give TKTs in our sense
and consequently was not able to discover the
distinguished role of \(D=-89\,751\) with respect
to the Taussky problem. 

%\newpage

%--------------------------------------------------------------------------------

\subsubsection{Statistical evaluation of second \(7\)-class groups \(\mathrm{G}_7^2(K)\)}
\label{sss:StatScnd7Clgp}

Among the \(94\) complex quadratic fields \(K\) of type \((7,7)\)
with discriminants \(-10^6\le D<0\),
we found \(7\) variants of the second \(7\)-class group \(G=\mathrm{G}_7^2(K)\),
characterized by different TTT \(\tau(K)\) and Taussky's coarse TKT \(\kappa(K)\),
which are related by
\cite[Thm. 7, p. 11]{HeSm}.

\renewcommand{\arraystretch}{1.1}

\begin{table}[ht]
\caption{\(7\) variants of \(G=\mathrm{G}_7^2(K)\) for \(70\) fields \(K=\mathbb{Q}(\sqrt{D})\), \(-751\,288\le D<0\)}
\label{tbl:CompQuad7x7}
\begin{center}
\begin{tabular}{|r||c|c||c||c|c||c|c|}
\hline
           \(D\) & \(\tau(K)\)                       & \(\tau(0)\) & \(\kappa(K)\) & \(G\)         & \(\mathrm{cc}(G)\) & \(\#\) & \(\%\) \\
\hline
    \(-63\,499\) & \((7,7^2)^8\)                     & \((7,7,7)\) & \((B^8)\)     & \(\langle 16807,10\vert 14\vert 15\vert 16\rangle\)& \(2\) & \(40\) & \(43\) \\
   \(-183\,619\) & \((7,7,7)^2,(7,7^2)^6\)           & \((7,7,7)\) & \((A^2,B^6)\) & \(\langle 16807,11\vert 12\vert 13\rangle\)& \(2\) & \(29\) & \(31\) \\
   \(-227\,860\) & \((7,7,7),(7,7^2)^7\)             & \((7,7,7)\) & \((A,B^7)\)   & \(\langle 16807,8\vert 9\rangle\)& \(2\) &  \(9\) & \(9.6\) \\
         unknown & \((7,7,7)^8\)                     & \((7,7,7)\) & \((A^8)\)     & \(\langle 16807,7\rangle\)& \(2\) &  \( \) & \(  \) \\
\hline
   \(-159\,592\) & \((7,7,7,7,7),(7,7,7),(7,7^2)^6\) &             & \((A^2,B^6)\) &\(\langle 823543,\#\rangle\)& \(2\) &  \(3\) & \(  \) \\
   \(-227\,387\) & \((7,7,7,7^2),(7,7,7),(7,7^2)^6\) &             & \((B,A,B^6)\) &\(\langle 823543,\#\rangle\)& \(2\) &  \(10\) & \(  \) \\
   \(-272\,179\) & \((7,7,7,7^2),(7,7^2)^7\)         &             & \((B^8)\)     &\(\langle 823543,\#\rangle\)& \(2\) &  \(2\) & \(  \) \\
\hline
   \(-673\,611\) & \((7,7,7,7,7,7^2),(7,7,7),(7,7^2)^6\) &         & \((?,A,B^6)\) &\(\langle 40353607,\#\rangle\)& \(2\) &  \(1\) & \(  \) \\
\hline
\end{tabular}
\end{center}
\end{table}

\noindent
In Table
\ref{tbl:CompQuad7x7}
we present the discriminants with smallest absolute values,
corresponding to these variants.
\(\tau(0)\) denotes the \(7\)-class group of \(\mathrm{F}_7^1(K)\).
Using the SmallGroups library
\cite{BEO},
we identified \(78\) \textit{ground states} \((83\%)\)
having their \(G\) among the sporadic top vertices of \(\mathcal{G}(7,2)\)
in the stem of isoclinism family \(\Phi_6\).
Unfortunately, there didn't occur a solution
of Taussky's 1970 fixed point capitulation problem for \(p=7\),
in form of a realization of \(\langle 16807,7\rangle\).
However, there appeared \(15\) \textit{first excited states} \((16\%)\)
with \(G\) located on coclass trees of \(\mathcal{G}(7,2)\),
where the non-Galois subfield \(K_1\) of the distinguished extension \(L_1\)
has a \(7\)-class group of type \((7,7)\),
and, particularly remarkable,
a single \textit{second excited state} for \(D=-673\,611\),
where the maximal \(7\)-rank \(3\) in B\"olling's inequality
\cite{Boe,Lm}
is attained in form of \(\mathrm{Cl}_7(K_1)\simeq(7,7,7)\).

%\newpage

%--------------------------------------------------------------------------------

\subsection{Cyclic quartic fields of type \((5,5)\)}
\label{ss:NewRslt5Mirror}

In cooperation with A. Azizi and M. Talbi
\cite{ATM},
and based on the quintic reflection theorem
\cite{Ks},
we have computed
the isomorphism type of the second \(5\)-class group
\(G=\mathrm{G}_5^2(K)\)
of \(41\) cyclic quartic fields
\(K=\mathbb{Q}\left((\zeta_5-\zeta_5^{-1})\sqrt{D}\right)\),
\(\zeta_5=\exp(\frac{2\pi i}{5})\), \(-15\,419\le D<5\,000\), \(5\nmid D\),
of type \((5,5)\).
Such a field is the \(5\)-dual \lq mirror image\rq\
of the quadratic fields \(\mathbb{Q}(\sqrt{D})\) and \(\mathbb{Q}(\sqrt{5D})\).
Isomorphisms among the extensions \(L_i\vert K\), \(1\le i\le 6\),
cause severe constraints on the group \(G\), as Table
\ref{tbl:CyclQrt5x5},
visualized by Figures
\ref{fig:Distr5Cocl1}
and
\ref{fig:Typ55Cocl2},
shows.

\renewcommand{\arraystretch}{1.1}

\begin{table}[ht]
\caption{\(7\) variants of \(G=\mathrm{G}_5^2(K)\) for \(41\) fields \(K=\mathbb{Q}\left((\zeta_5-\zeta_5^{-1})\sqrt{D}\right)\)}
\label{tbl:CyclQrt5x5}
\begin{center}
\begin{tabular}{|r||c|c||c||c|c||c|c|}
\hline
           \(D\) & \(\tau(K)\)                       & \(\tau(0)\) & \(\varkappa(K)\) & \(G\)              & \(\mathrm{cc}(G)\) & \(\#\) &   \(\%\) \\
\hline
    \(-12\,883\) & \((5,5)^6\)                       &             & \((000000)\)     & \(\langle 125,3\rangle\)        & \(1\) &  \(4\) &\(100\%\) \\
\hline
\hline
         \(257\) & \((5,5,5)^2,(5,5^2)^4\)           & \((5,5,5)\) & \((022222)\)     & \(\langle 3125,4\rangle\)       & \(2\) &  \(9\) & \(24\%\) \\
         \(457\) & \((5,5,5)^2,(5,5^2)^4\)           & \((5,5,5)\) & \((125364)\)     & \(\langle 3125,11\rangle*\)     & \(2\) &  \( \) &   \(  \) \\
         \(508\) & \((5,5,5)^2,(5,5^2)^4\)           & \((5,5,5)\) & \((126543)\)     & \(\langle 3125,7\rangle\)       & \(2\) &  \( \) &   \(  \) \\
                 &                                   &             &                  & or \(\langle 15625,647\rangle\) &       &        &          \\
\hline
         \(581\) & \((5,5,5)^6\)                     & \((5,5,5)\) & \((123456)\)     & \(\langle 3125,14\rangle*\)     & \(2\) &  \(4\) & \(11\%\) \\
\hline
      \(1\,137\) & \((5,5,5,5^2),(5,5,5),(5,5^2)^4\) &             & \((111111)\)     & \(\langle 78125,\#\rangle\)     & \(2\) &  \(3\) &  \(8\%\) \\
\hline
      \(4\,357\) & \((5)^6\)                         &             & \((000000)\)     & \(\langle 25,2\rangle\)     & \(1\) &  \(3\) &  \(8\%\) \\
\hline
\end{tabular}
\end{center}
\end{table}

%\newpage

%--------------------------------------------------------------------------------

\section{\(p\)-Groups with double layered metabelianization of type \((p^2,p)\) or \((p,p,p)\)}
\label{s:DoubleLayer}

For a number field \(K\) with \(p\)-class group \(\mathrm{Cl}_p(K)\)
of type \((p^2,p)\), resp. \((p,p,p)\),
there exist \textit{two layers} of unramified abelian extensions \(L\vert K\),
each containing \(p+1\), resp. \(p^2+p+1\), members.
Extensions in the \textit{first layer} are of relative degree \(p\),
those in the \textit{second layer} are of relative degree \(p^2\).
Consequently,
the second layer tends to be out of the scope of actual computations.
However, there are some exceptions of modest degree.

\subsection{Quadratic fields of type \((9,3)\)}
\label{ss:Qdr9x3}

On the one hand, there is the case \(p=3\)
for quadratic fields \(K=\mathbb{Q}(\sqrt{D})\) with
\(3\)-class group \(\mathrm{Cl}_3(K)\)
of type \((9,3)\) or \((3,3,3)\),
where extensions in the second layer are of absolute degree \(18\).
From the viewpoint of \(3\)-towers,
there are no open problems for
complex quadratic fields \(K\) of type \((3,3,3)\),
since it is known that \(\ell_3(K)=\infty\)
\cite{KoVe,mL},
that is, \(\mathrm{G}_3^\infty(K)\) is always an infinite pro-\(3\) group.

Thus, we focussed on \(3\)-class rank \(2\) and computed
the first layer of the TTT and TKT of all
\(875\) complex quadratic fields \(K\) of type \((9,3)\) with discriminant \(-10^6<D<0\) 
and of all
\(271\) real quadratic fields \(K\) of type \((9,3)\) with discriminant \(0<D<10^7\).
In
\cite{Ma4},
we will show that this information is sufficient
to identify the second \(3\)-class group \(\mathrm{G}_3^2(K)\)
for \(565\) negative discriminants \((65\%)\)
and for \(188\) positive discriminants \((70\%)\).
For the remainder, the second layer of the TTT and TKT is required.

\subsection{Quadratic and biquadratic fields of type \((2,2,2)\)}
\label{ss:BiQdr2x2x2}

On the other hand, we have the case \(p=2\)
for quadratic, resp. quartic, fields \(K\) with
\(2\)-class group \(\mathrm{Cl}_2(K)\)
of type \((4,2)\) or \((2,2,2)\),
where extensions in the second layer are of absolute degree \(8\), resp. \(16\).

We were particularly interested in fields \(K\) of \(2\)-class rank \(3\),
where the \(2\)-tower length \(\ell_2(K)\) is still an open problem.
We found that the coclass tree \(\mathcal{T}(\langle 16,11\rangle)\),
which is the unique tree of coclass graph \(\mathcal{G}(2,2)\)
containing groups with abelianization of type \((2,2,2)\),
is populated by second \(2\)-class groups \(\mathrm{G}_2^2(K)\)
of real quadratic fields \(K\) of type \((2,2,2)\).
The tree \(\mathcal{T}(\langle 16,11\rangle)\) corresponds to
the pro-\(2\) group \(S_5\) in
\cite{Fs,EkFs}
with periodic sequences \(K_x^{i}\), \(46\le i\le 51\),
given by explicit parametrized presentations for \(x\ge 0\) in
\cite{Fs}.
It also corresponds to the so-called family \(\#59\)
with explicit pro-\(2\) presentation given in
\cite{NmOb}.

Further, we obtained deeper results concerning
second \(2\)-class groups \(G=\mathrm{G}_2^2(K)\)
of complex quadratic fields of type \((2,2,2)\),
which have been classified in terms of the
smallest non-abelian lower central quotient \(G/\gamma_3(G)\),
usually coinciding with the root of the coclass tree \(\mathcal{T}\) such that \(G\in\mathcal{T}\),
by E. Benjamin, F. Lemmermeyer, and C. Snyder
\cite{Lm1,BLS}.
Note that these authors use the Hall-Senior classification
\cite{HaSn},
whereas we give identifiers of the SmallGroups library
\cite{BEO}.
The groups \(G\) are mainly, but not exclusively, located
at the terminal top vertices \(\langle 32,32\rangle\) and \(\langle 32,33\rangle\)
of coclass graph \(\mathcal{G}(2,3)\) and on the coclass trees
\(\mathcal{T}(\langle 32,29\rangle)\), \(\mathcal{T}(\langle 32,30\rangle)\), \(\mathcal{T}(\langle 32,35\rangle)\),
corresponding to the families \(\#75\), \(\#76\), \(\#79\) with explicit pro-\(2\) presentations given in
\cite{NmOb}.
These subtrees of \(\mathcal{G}(2,3)\) seem to be populated on every branch,
with the only exception of the root.

We intend to include these results on complex quadratic fields in
\cite{AZTM},
where the principal aim is to investigate
bicyclic biquadratic fields \(K=\mathbb{Q}\left(\sqrt{\strut -1},\sqrt{\strut d}\right)\),
called \textit{special Dirichlet fields} by Hilbert
\cite{Hi},
with \(2\)-class groups of type \((2,2,2)\),
based on work by A. Azizi, A. Zekhnini, and M. Taous
\cite{AzTs,AZT1,AZT2}.
The second \(2\)-class groups \(\mathrm{G}_2^2(K)\)
for certain series of radicands \(d>0\),
for example \(d\in\lbrace 170,730,2314\rbrace\),
seem to be distributed on every branch
of the coclass tree \(\mathcal{T}(\langle 64,140\rangle)\) of coclass graph \(\mathcal{G}(2,3)\),
which corresponds to family \(\#73\) with explicit pro-\(2\) presentation given in
\cite{NmOb}.

%\newpage

%--------------------------------------------------------------------------------

\section{Acknowledgements}
\label{s:Thanks}

The author is indebted to
Nigel Boston, University of Wisconsin, Madison, and
Michael R. Bush, Washington and Lee University, Lexington,
for intriguing discussions about the length of \(p\)-towers
and Schur \(\sigma\)-groups in \S\
\ref{ss:TowerLength}.

Sincere thanks are given to 
Mike F. Newman, Australian National University, Canberra,
for valuable suggestions concerning use of
the SmallGroups library
\cite{BEO}
and ANUPQ package
\cite{GNO}
of GAP 4
\cite{GAP}
and MAGMA
\cite{MAGMA},
and for precious aid in identifying finite metabelian \(p\)-groups,
produced by various approaches to the classification problem,
in particular, by Blackburn
\cite{Bl1},
James
\cite{Jm}, 
Ascione
\cite{As1},
and Nebelung
\cite{Ne1}.

We thank
Abdelmalek Azizi and Mohammed Talbi, Facult\'e des Sciences, Oujda,
and A\"issa Derhem, Casablanca,
for our joint investigation of
bicyclic biquadratic fields containing third roots of unity in \S\
\ref{ss:NewRsltESR}.

Further, we gratefully acknowledge
helpful advice for constructing class fields
\cite{Fi}
with the aid of MAGMA
\cite{MAGMA,BCP,BCFS}
by Claus Fieker, University of Kaiserslautern.

Concerning the coclass graph \(\mathcal{G}(5,1)\) in \S\
\ref{sss:5Cocl1},
we thank Heiko Dietrich, University of Trento,
for making available unpublished details of the tree structure.

%\newpage

%-----------------------------------------------------------------------

%--------------------------------------------------------------------------------

\end{document}